\documentclass[reqno,11pt]{amsart}
\usepackage{amsmath,amssymb,amsthm,graphicx,mathrsfs,url}
\usepackage[usenames,dvipsnames]{color}
\usepackage[colorlinks=true,linkcolor=Red,citecolor=Green]{hyperref}
\usepackage[all]{xy}

\usepackage{verbatim}
\usepackage{latexsym}
\usepackage{eucal}

 \usepackage[applemac]{inputenc}
\usepackage[cyr]{aeguill}

\usepackage{a4wide}

\def\?[#1]{\textbf{[#1]}\marginpar{\Large{\textbf{??}}}}

\setlength{\parskip}{0.05in}

\DeclareGraphicsRule{*}{mps}{*}{}

\newtheorem{theo}{Theorem}
\newtheorem{prop}{Proposition}[section]
\newtheorem{lem}[prop]{Lemma}
\newtheorem{cor}[prop]{Corollary}

\theoremstyle{definition}
\newtheorem{defi}[prop]{Definition}
\newtheorem{rem}[prop]{Remark}
\numberwithin{equation}{section}
\newtheorem*{Acknowledgement}{Acknowledgements}

\DeclareMathOperator{\supp}{supp}
\DeclareMathOperator{\Vol}{dvol}

\DeclareMathOperator{\Tr}{Tr}

\newcommand{\mc}{\mathcal}
\newcommand{\rr}{\mathbb{R}}
\newcommand{\nn}{\mathbb{N}}
\newcommand{\cc}{\mathbb{C}}
\newcommand{\hh}{\mathbb{H}}
\newcommand{\zz}{\mathbb{Z}}

\newcommand{\bX}{\bbar{X}}
\newcommand{\la}{\lambda}
\newcommand{\eps}{\varepsilon}

\newcommand{\pl}{\partial}
\newcommand{\x}{\times}

\newcommand{\til}{\widetilde}
\newcommand{\bbar}{\overline}

\newcommand{\cjd}{\rangle}
\newcommand{\cjg}{\langle}

\newcommand{\demi}{\frac{1}{2}}

\newcommand{\VolR}{\textrm{Vol}_R}

\newcommand{\indic}{\operatorname{1\hskip-2.75pt\relax l}}


\newcommand{\CI}{\mathcal{C}^{\infty}}

\newcommand{\pa}{\partial}
\newcommand{\cC}{\mathcal{C}}
\newcommand{\tvarphi}{\til{\varphi}}

\renewcommand\Re{\operatorname{Re}}

\newcommand\res{\operatorname{res}}
\newcommand\tU{\widetilde{U}}
\newcommand\hU{\widehat{U}}

\newcommand\cf{\operatorname{cf}}
\newcommand\Scal{{\rm Scal}}
\newcommand{\II}{\mathrm{I\hspace{-0.04cm}I}}

\begin{document}

\title[Renormalized volume of punctured surfaces]{Renormalized volume on the Teichm\"uller space of punctured surfaces}
\author{Colin Guillarmou}
\email{cguillar@dma.ens.fr}
\address{DMA, U.M.R. 8553 CNRS, \'Ecole Normale Sup\'erieure, 45 rue d'Ulm,
75230 Paris cedex 05, France}
\author{Sergiu Moroianu}
\email{moroianu@alum.mit.edu}
\address{Institutul de Matematic\u{a} al Academiei Rom\^{a}ne\\
P.O. Box 1-764\\RO-014700
Bu\-cha\-rest, Romania}
\author{Fr\'ed\'eric Rochon}
\email{rochon.frederic@uqam.ca}
\address{D\'epartement de mat\'ematiques, UQ\`AM}
\date{\today}
\begin{abstract}
We define and study the renormalized volume for geometrically finite hyperbolic 
$3$-manifolds, including with rank-$1$ cusps. We prove a variation formula, and show that 
for certain families of convex co-compact hyperbolic metrics $g_\eps$ degenerating 
to a geometrically finite hyperbolic metric $g_0$ with rank-$1$ cusps, 
the renormalized volume converges to the renormalized volume of the limiting metric. 
\end{abstract}
\maketitle

\section{Introduction}

The renormalized volume is a geometric quantity for certain infinite volume hyperbolic 
$3$-dimensional manifolds, namely those which are convex co-compact. Such a manifold $X$ 
can be compactified into a smooth compact manifold with boundary $\bbar{X}$ in a way 
that its metric $g$ has the following property: for any smooth function 
$\rho\in \mc{C}^\infty(\bbar{X})$ which is a boundary defining function 
(i.e., $\rho\ge 0$, $\rho^{-1}(0)=\pl\bbar{X}$ and $d\rho|_{\pl\bbar{X}}$ does not vanish), 
$\rho^2g$ extends to a smooth metric on $\bbar{X}$. This induces a natural 
conformal class on the boundary $M:=\pl\bbar{X}$ by picking the conformal class 
$[h]$ of $h=(\rho^2g)|_{TM}$.  We call $(M,[h])$ the \emph{conformal boundary} of $X$. 
We say that a boundary defining function $\rho$ is a 
\emph{geodesic boundary defining function} in $\bbar{X}$ if $|d\log(\rho)|_g=1$ 
near the boundary $M$.  Notice that such a function induces an \emph{equidistant foliation} 
near $M$, given by the level sets of $\rho$. It turns out that there is a one-to-one 
correspondence $\hat{h}=e^{2\varphi}h\in [h]\mapsto \hat{\rho}$ between geodesic 
boundary defining functions (or equivalently, equidistant foliations) near $M$ and 
the elements of the conformal class $[h]$ on $M$, where $\hat{\rho}$ solves the 
Hamilton-Jacobi equation near $M$
\begin{equation}\label{HamJacintro} 
\Big|\frac{d\hat{\rho}}{\hat{\rho}}\Big|_g=1, \quad (\hat{\rho}^2g)|_{TM}=\hat{h}.
\end{equation} 

The renormalized volume of $(X,g)$ is the  function on $[h]$ defined by 
\[ {\rm Vol}_R(X,g;\hat{h}):={\rm FP}_{z=0} \int_{X}\hat{\rho}^z{\rm dvol}_g\]
where $\hat{\rho}$ is any smooth positive extension to $X$ of the function solving 
\eqref{HamJacintro} and ${\rm FP}_{z=0}$ denotes the finite part (or regular value) at $z=0$ 
of a meromorphic function in the variable $z\in \cc$. 
In a way, this definition has similarities with the renormalization used to define the determinant of the Laplacian on a compact manifold. In fact, the functional $\varphi \mapsto {\rm Vol}_R(X,g;e^{2\varphi}h)$ varies in the same exact way as do the Liouville functional and the logarithm of the determinant of the Laplacian viewed as functionals on $[h]$. Among metrics in the conformal class $[h]$ of constant volume $2\pi|\chi(M)|$, it is maximized at the hyperbolic metric $h^{\rm hyp}\in[h]$, and we define the \emph{renormalized volume of $(X,g)$} by 
\[{\rm Vol}_{R}(X,g):={\rm Vol}_R(X,g; h^{\rm hyp}).\]
We remark that the renormalized volume could equivalently be defined  by  ${\rm Vol}_R(X,g;\hat{h})=a_0$, where $a_0$ is defined by the asymptotic expansion (for some $a_j\in\rr$) as $\epsilon\to 0$
\[ \int_{\hat{\rho}\geq \epsilon}{\rm dvol}_g=a_2\epsilon^{-2}+a_1\log(\epsilon)+a_0+\mc{O}(\epsilon).\]

In this setting, the first general study  was done by Krasnov-Schlenker \cite{KrSc}, although earlier works of Takhtajan-Teo \cite{TaTe} considered this quantity, and for more general Poincar\'e-Einstein manifolds the renormalized volume appeared even earlier in works of Henningson-Skenderis \cite{HeSk} and 
Graham \cite{Gr} in AdS/CFT correspondence. 

When defined in this way, the renormalized volume has many interesting properties:
\begin{itemize} 
\item The renormalized volume is a K\"ahler potential for the Weil-Peterson metric on the Teichm\"uller space of the 
conformal boundary, when viewed as a function on the deformation space of convex co-compact hyperbolic $3$-manifolds. This was proved by Takhtajan-Teo \cite{TaTe} for a class of Kleinian convex co-compact groups, by Krasnov-Schlenker \cite{KrSc} for quasi-Fuchsian manifolds and by Guillarmou-Moroianu \cite{GuMo} for all geometrically finite hyperbolic $3$-manifolds without cusps of rank $1$.
\item ${\rm Vol}_R(X,g)$ can be compared to the volume of the convex core ${\rm Vol}(C(X))$ by 
\[ {\rm Vol}(C(X))-10\chi(M) \leq {\rm Vol}_R(X,g)\leq {\rm Vol}(C(X)).\]
This inequality is proved by Schlenker \cite{Sc} for quasi-Fuchsian manifolds, 
and extended by Bridgeman-Canary \cite{BrCa} to convex co-compact $3$-manifolds with incompressible boundary.
\item Schlenker \cite{Sc} proves that for quasi-Fuchsian manifolds, 
${\rm Vol}_R(X,g)$ is comparable to the Weil-Petersson distance between the two connected components 
$(M,h_\pm)$ of the conformal boundary. Namely he shows that
\begin{equation}\label{compVolWP}
{\rm Vol}_R(X,g)\leq \tfrac{3}{2}\sqrt{2\pi\chi(X)} d_{\rm WP}(h_+,h_-),
 \end{equation}
improving a weaker inequality due to Brock \cite{Br}. Moreover, using \cite{Br}, Schlenker obtains that there exists 
some $k_1,k_2>0$ such that
\[ k_1 d_{\rm WP}(h_+,h_-)-k_2\leq {\rm Vol}_R(X,g).\]
These inequalities have interesting implications about the geometry of hyperbolic $3$-manifolds 
fibering over the circle, cf.\ \cite{KoMc}, \cite{BrBr} .
\item Ciobotaru-Moroianu \cite{CiMo} prove that for almost-Fuchsian manifolds, 
the renormalized volume is positive except at the Fuchsian locus where it 
vanishes\footnote{The normalization to make it $0$ at the Fuchsian locus is actually 
to choose the metric in the conformal boundary to have 
Gaussian curvature $-4$. The same normalization is used in \cite{KrSc}}. 
\item Moroianu \cite{Mo} proves that the renormalized volume has a critical point on the deformation 
space of convex co-compact $3$-manifolds  if the convex core has smooth totally geodesic boundary, 
and the Hessian of ${\rm Vol}_R$ is positive definite there. Another proof appeared recently in \cite{Va}.
\end{itemize}

Like in the estimate \eqref{compVolWP}, it is of interest to understand the properties of ${\rm Vol}_R$ on the deformation space of convex co-compact hyperbolic $3$-manifolds with a given topology. For example, \eqref{compVolWP} shows that ${\rm Vol}_R$ does not explode as one 
approaches the boundary of the Teichm\"uller space viewed as a Bers slice in the quasi-Fuchsian space. 

The first goal of this work is to define the renormalized volume for geometrically finite 
hyperbolic $3$-manifolds, focusing on the rank-$1$ cusps. Contrary to the convex co-compact 
setting, the existence of equidistant foliations via geodesic boundary defining 
functions turns out to be 
quite tricky in the case of rank-$1$ cusps. A geometrically finite hyperbolic manifold
$(X,g)=\Gamma \backslash \hh^3$ with rank-$1$ cusps is the interior of a smooth non-compact manifold $\bbar{X}=\Gamma\backslash (\hh^3\cup\Omega_\Gamma)$ with boundary, where $\Omega_{\Gamma}\subset \mathbb{S}^2$ 
is the discontinuity set of the Kleinian group $\Gamma\subset {\rm PSL}_2(\cc)$. 
The smooth manifold with boundary $\bbar{X}$ has a non-compact boundary $M=\Gamma\backslash \Omega_{\Gamma}$ equipped with a conformal class $[h]$ induced from the hyperbolic metric $g$.
On this conformal boundary $(M,[h])$, we show in Proposition \ref{uniform} that there exists a unique complete hyperbolic metric $h^{\rm hyp}\in [h]$ with finite volume and cusps.
\begin{theo}\label{thintro:1}
Let $(X,g)$ be a geometrically finite hyperbolic $3$-manifold with rank-$1$ cusps, let $(M,[h])$ be its conformal boundary and let $h^{\rm hyp}$ be the complete hyperbolic metric with finite volume in the conformal class $[h]$.
Then there exists a non-negative smooth boundary defining function $\rho$ on 
$\bbar{X}$ such that $\rho^2g|_{TM}=h^{\rm hyp}$ and, outside a finite volume region $\mc{V}\subset X$, 
$|d\log(\rho)|_g=1$. The function $z\to \int_{X\setminus \mc{V}}\rho^z{\rm dvol}_g$ 
admits a meromorphic extension from ${\rm Re}(z)>2$ to a neighborhood of $z=0$.
\end{theo}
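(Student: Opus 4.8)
The plan is to build $\rho$ separately on the convex co-compact part of $\bbar{X}$ and on a neighborhood of each rank-$1$ cusp, and then glue. Away from the rank-$1$ cusps the conformal boundary is compact and $h^{\rm hyp}$ is a smooth metric there, so the Hamilton--Jacobi equation \eqref{HamJacintro} with boundary data $h^{\rm hyp}$ can be solved in a collar $\{\rho<\eps_0\}$ exactly as in the convex co-compact setting \cite{GuMo}, producing a geodesic boundary defining function with $|d\log(\rho)|_g=1$ and $\rho^2g|_{TM}=h^{\rm hyp}$. The issue is entirely local near the rank-$1$ cusps, where $M$ is non-compact and $h^{\rm hyp}$ degenerates to a hyperbolic cusp metric.

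For a rank-$1$ cusp I would pass to the model $\Gamma_0\backslash\hh^3$ with $\Gamma_0=\langle z\mapsto z+1\rangle$, i.e.\ $\{(x,y,t):x\in\rr/\zz,\ y\in\rr,\ t>0\}$ with $g=t^{-2}(dx^2+dy^2+dt^2)$. This cusp carries a core geodesic $\{y=0\}$, and introducing Fermi coordinates $(r,\ell)$ for the $\hh^2$-factor around it (signed distance $r$, with $\sinh r=y/t$ and $t=e^{\ell}/\cosh r$) puts the metric in the form
\begin{equation*}
g=\frac{\cosh^2 r}{e^{2\ell}}\,dx^2+dr^2+\cosh^2 r\,d\ell^2 .
\end{equation*}
The ansatz $\rho=c\,e^{-|r|}$ then satisfies $|d\log(\rho)|_g=|dr|_g=1$ away from the core $\{r=0\}$, while as $r\to\pm\infty$ one checks $\rho^2g|_{TM}\to \tfrac{c^2}{4}(d\ell^2+e^{-2\ell}dx^2)$, a hyperbolic cusp metric on each of the two punctures of $M$ attached to this cusp. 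Choosing $c$ (and correcting $\rho$ by a lower-order term in $\ell$) matches the data $h^{\rm hyp}$ exactly on $M$, and the correspondence in \eqref{HamJacintro} guarantees this solution is the unique smooth one for $|r|$ large. I then glue this cusp solution to the convex co-compact collar with cutoffs, extend $\rho$ to a positive smooth function on $X$, and let $\mc{V}$ be the union of the gluing regions together with a neighborhood $\{|r|<\delta\}$ of each core, which is exactly where the eikonal equation is allowed to fail. Using $\mathrm{dvol}_g=e^{-\ell}\cosh^2 r\,dx\,dr\,d\ell$, and the fact that the core meets $\bbar{X}$ only for $\ell$ bounded below (the limit set $\{y=0,\ t\to0\}$ being excised from $\bbar{X}$), a direct computation shows $\mc{V}$ has finite volume.

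For the meromorphic continuation I would split $\int_{X\setminus\mc{V}}\rho^z\,\mathrm{dvol}_g$ into the convex co-compact collar contribution and the rank-$1$ cusp contributions. The former is meromorphic with poles only at $z=2$ and $z=0$ by the standard expansion of the volume form of a geodesic collar, as in the compact conformal boundary case. For a cusp, in the above coordinates the integrand on $X\setminus\mc{V}$ factorizes as $c^z e^{-z|r|}\cosh^2 r\cdot e^{-\ell}\sim \tfrac{c^z}{4}e^{(2-z)|r|}\cdot e^{-\ell}$ as $|r|\to\infty$, so the $r$-integral converges precisely for $\Re(z)>2$ and continues meromorphically with a single pole at $z=2$, while the $\ell$-integration is over a range bounded below, is independent of $z$, and contributes no singularity near $z=0$. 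Summing the pieces gives the meromorphic extension from $\Re(z)>2$ to a neighborhood of $z=0$.

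The main obstacle is the rank-$1$ cusp analysis: unlike the convex co-compact case, the conformal boundary is non-compact and $h^{\rm hyp}$ blows up towards the punctures, so no exact geodesic boundary defining function exists in a full collar --- the equidistant foliation by level sets of $\rho$ is the family of hypercycles around the core geodesic and necessarily degenerates at the core. The delicate points are therefore (i) verifying that the model solution $\rho=c\,e^{-|r|}$ can be corrected to match $h^{\rm hyp}$ exactly while remaining geodesic for large $|r|$, and (ii) checking that the non-compact, infinite-volume cuspidal geometry still yields convergence for $\Re(z)>2$ and meromorphic continuation past $z=0$, which hinges on the $e^{(2-z)|r|}$ decay identified above together with the finiteness of $\vol(\mc{V})$.
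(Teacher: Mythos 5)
Your reduction to the cusp model and the explicit computation there are essentially correct: in the Fermi-type coordinates $(x,r,\ell)$ the function $c\,e^{-|r|}$ does satisfy $|d\log\rho|_g=1$ away from $\{r=0\}$, it induces the model hyperbolic cusp metric on the two boundary components, the region $\{|r|<\delta\}$ has finite volume because $\ell$ is bounded below on the cusp neighborhood, and the $r$-integral of $e^{-zr}\cosh^2 r$ gives the meromorphic continuation (though it has poles at $z=0$ and $z=-2$ as well as at $z=2$; this is harmless since the theorem permits a pole at $z=0$). This is a genuinely more explicit route than the paper's, which works on the blow-up $\bbar{X}_c$ and never writes down the model solution in closed form.

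The genuine gap is the step you compress into ``correcting $\rho$ by a lower-order term in $\ell$.'' By Proposition \ref{uniform}, $h^{\rm hyp}$ differs from the model cusp metric by a conformal factor $e^{2\varphi}$ with $\varphi\in\mc{C}_r^\infty(\bbar{M})$ which tends to $0$ at the puncture but is not identically zero on any neighborhood of it, depends on the angular variable as well as on $\ell$, and is not a constant rescaling; so no choice of $c$ and no explicit additive term produces it. The corrected function must be $\rho=e^{\omega}c\,e^{-|r|}$ with $\omega$ solving the eikonal equation with boundary value $\varphi$, and the whole content of the theorem is that this Hamilton--Jacobi problem is solvable on a neighborhood of the boundary that is \emph{uniform} up to the puncture, with enough regularity to make the complement $\mc{V}$ of finite volume and to Taylor-expand $e^{z\omega}$ for the meromorphic continuation. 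The method of characteristics alone gives a neighborhood whose size could shrink to zero as the base point approaches the puncture (the equation degenerates there), in which case no finite-volume $\mc{V}$ exists; your appeal to ``the correspondence in \eqref{HamJacintro}'' is circular, since that correspondence is only established for compact conformal boundary. The paper closes exactly this gap by using the rapid vanishing of $\pl_w\varphi$ at the puncture together with the quantitative bounds $|v(s)|\ge |v_0|e^{-Cs}$ and $U(s)\ge s$ on the characteristics (Lemma \ref{bdf.16}), yielding a solution smooth on the blown-up space (Propositions \ref{proprho0} and \ref{deffunccusp}). Note also that your gluing region between the cusp model and the convex co-compact collar reaches the conformal boundary and hence has infinite volume, so cutoffs are not allowed there: the two solutions must coincide near $M$ by uniqueness, which again forces you to solve the boundary problem with data $h^{\rm hyp}$, not the model metric, throughout the cusp region.
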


We define the renormalized volume by  
\[ {\rm Vol}_R(X,g):={\rm Vol}_g(\mc{V})+{\rm FP}_{z=0}\int_{X\setminus \mc{V}}\rho^z{\rm dvol}_g.\]

In fact, in Proposition \ref{deffunccusp}, we show a stronger statement: we prove that 
for each conformal representative in $[h]$ with certain asymptotic properties near the cusp, 
there is an associated geodesic boundary defining function and an equidistant foliation, 
allowing to view ${\rm Vol}_R$ as a function on $[h]$ like in the convex co-compact case. 
In Proposition \ref{vrv.1}, we show a variation formula similar to that of 
the determinant of the Laplacian \cite[Theorem~2.9]{AAR} or the Liouville functional:
\[{\rm Vol}_R(X, g; e^{2\varphi}h^{\rm hyp})= {\rm Vol}_R(X,g; h^{\rm hyp}) -\frac14 \int_{M} (|\nabla\varphi|^2_{h^{\rm hyp}}-2\varphi) \rm dvol_{h^{\rm hyp}}.\]

There is a diffeomorphism $\psi:[0,\eps)_x\x M\to \bbar{X}\setminus \mc{V}$ such that 
$\psi^*\rho=x$, and the metric has a finite expansion in powers of $x$:
\begin{equation}\label{devofg}
\psi^*g= \frac{dx^2+h_0+x^2h_2+x^4h_4}{x^2}\end{equation}
where the coefficients $h_0$, $h_2$ and $h_4$ are symmetric tensors on $M$ and such that $h_0=h^{\rm hyp}$, 
$h_2^0:=h_2-\demi h^{\rm hyp}$ is trace-free and divergence-free with respect to $h^{\rm hyp}$, 
and $h_4=\frac{1}{4}h_0(A^2\cdot,\cdot)$ if $A$ is the endomorphism defined by 
$h_2=h_0(A\cdot,\cdot)$. The tensor $h_2^0$ can thus be identified to a cotangent vector to the Teichm\"uller space 
$\mc{T}(M)$ of $M$ at the metric $h_0=h^{\rm hyp}$, and the pair $(h_0,h^0_2)\in T^*\mc{T}(M)$ characterizes uniquely $g$. 
We call $h_2^0$ the \emph{second fundamental form of $g$ at $M$}.
\begin{theo}\label{thintro:2}
For $t\in(-1,1),$ let $(X,g^t)$ be a smooth family of geometrically finite hyperbolic metrics with cusps of rank $1$ and 
let $h^{t}$ be the unique finite volume hyperbolic representative of the conformal boundary of  
$(X,g^t)$. Then, 
$$
  \left. \pa_t {\rm Vol}_R(X,g^t) \right|_{t=0} = -\frac14 \int_{M} \langle \dot{h}, h_2^0\rangle_{h} 
  {\rm dvol}_{h},
$$
with $\dot{h}=\pl_{t}h^{t}|_{t=0}$, $h=h^{t}|_{t=0}$,  and $h_2^0$ is the second fundamental form of $g=g^t|_{t=0}$ at $M$.
\end{theo}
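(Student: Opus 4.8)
The plan is to differentiate the defining expression for ${\rm Vol}_R$ directly and to convert the resulting bulk integral into a boundary integral over $M$ by exploiting that each $g^t$ is hyperbolic. First I would fix the gauge: using the diffeomorphism $\psi$ of \eqref{devofg}, put every metric in the normal form $g^t=x^{-2}(dx^2+h^t_x)$ on the common collar $[0,\eps)_x\times M$, with $h^t_x=h^t_0+x^2h^t_2+x^4h^t_4$ and $h_0^t=h^t$, and arrange the geodesic boundary defining function $\rho=x$ and the finite-volume region $\mc V$ to be \emph{independent} of $t$. This is legitimate since ${\rm Vol}_R$ does not depend on the choice of $\mc V$, and since the pulled-back boundary metric differs from $h^t$ only by a Lie derivative, which will drop out below. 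In this gauge $\dot g=x^{-2}\dot h_x$ has no normal component, so $\Tr_{g}\dot g=\Tr_{h_x}\dot h_x$, and differentiating under the finite part — justified by the uniformity in Theorem \ref{thintro:1} — gives
\begin{equation*}
\left.\pa_t{\rm Vol}_R(X,g^t)\right|_{t=0}=\tfrac12\,{\rm FP}_{z=0}\int_X x^z\,\Tr_{g}\dot g\;{\rm dvol}_{g}.
\end{equation*}

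Second, I would use hyperbolicity. Since $R(g^t)\equiv-6$ and $\operatorname{Ric}(g^t)=-2g^t$, the linearized scalar curvature vanishes, $\delta R=-\Delta_g(\Tr_g\dot g)+\operatorname{div}_g\operatorname{div}_g\dot g+2\Tr_g\dot g=0$, so that
\begin{equation*}
\Tr_g\dot g=\tfrac12\bigl(\Delta_g(\Tr_g\dot g)-\operatorname{div}_g\operatorname{div}_g\dot g\bigr)
\end{equation*}
is a total divergence. Inserting this and integrating by parts moves all derivatives onto the weight $x^z$; the bulk terms then carry explicit factors of $z$ and contribute only through the residue at $z=0$, while the genuine boundary terms are supported at $\{x=0\}=M$. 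Taking ${\rm FP}_{z=0}$ thus localizes the whole expression to $M$.

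Third, I would evaluate the boundary contribution from the expansion \eqref{devofg}. Writing $h_x=h_0\bigl((I+\tfrac{x^2}2A)^2\cdot,\cdot\bigr)$, so that the volume density is $x^{-3}\det(I+\tfrac{x^2}2A)\,{\rm dvol}_{h_0}=x^{-3}\bigl(1+\tfrac12x^2+\tfrac14(\det A)x^4\bigr)\,{\rm dvol}_{h_0}$ — here $\Tr A=1$ because $h_2^0=h_2-\tfrac12h_0$ is trace-free — one reads off the coefficient at the critical order $x^2$. The trace-free and divergence-free properties of $h_2^0$ guarantee that the trace part and the gauge (Lie-derivative) part of $\dot h$ pair to zero, so only the transverse-traceless part survives and the result collapses to $-\tfrac14\int_M\langle\dot h,h_2^0\rangle_h\,{\rm dvol}_h$; the constant $-\tfrac14$ is fixed by the bookkeeping of the $z$-residues and is consistent with the conformal variation of Proposition \ref{vrv.1}.

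The main obstacle is not the formal bulk computation, which is essentially the convex co-compact one, but the control of the rank-$1$ cusp ends. Because $M$ is non-compact with cusps, every integration by parts potentially produces boundary terms at the cusps of $M$, and the exchange of $\pa_t$ with the meromorphic continuation and with the (now non-compactly supported) integrals must be justified. I would handle this using the precise asymptotics of $h^{\rm hyp}$, of $h_2^0$ and of $\dot h$ in the cuspidal region supplied by Propositions \ref{uniform} and \ref{deffunccusp}, showing that these cusp boundary terms decay and vanish in the limit and that all integrals over $M$ converge absolutely near the cusps. Establishing this decay — equivalently, that the cusp ends contribute neither to the finite part nor to its $t$-derivative — is where the real work lies.
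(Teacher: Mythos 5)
Your overall strategy coincides with the paper's: both arguments rest on the fact that for a family of hyperbolic metrics the linearized volume form $\tfrac12 \Tr_g\dot g\,{\rm dvol}_g$ is, via the vanishing of the linearized scalar curvature, a total divergence, so the variation of the regularized volume localizes to a boundary term that is then read off from the expansion \eqref{devofg}. The only structural difference is bookkeeping: you regularize with the weight $x^z$ and extract ${\rm FP}_{z=0}$, whereas the paper (Theorem \ref{variationVolR}) truncates at $\{\rho=\delta\}$, proves the Schl\"afli-type identity
\begin{equation*}
\left.\pa_t{\rm Vol}(\bbar{X}\setminus V_\delta,g^t)\right|_{t=0}=\frac12\int_{\rho=\delta}\Big(\dot H+\frac12\langle\dot g,\II\rangle_g\Big)\,{\rm dvol}_g
\end{equation*}
on the finite-volume complement, and sends $\delta\to 0$. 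These are equivalent ways of organizing the same integration by parts, and your reconciliation of $h_2-h_0$ with $h_2^0$ via the fixed area of the $h^t$ is also how the two statements of the theorem agree. The gauge-fixing step (pulling back by $(\phi^t)^{-1}\circ\phi^0$ to freeze the collar) is likewise exactly what the paper does.

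The genuine gap is the step you yourself flag as ``where the real work lies'' but do not carry out, and it is precisely the new content of the theorem relative to the convex co-compact case. The issue is not primarily decay of the two-dimensional boundary integrands along the cusps of $M$ (those are controlled by Propositions \ref{uniform} and \ref{deffunccusp}); it is that the three-dimensional integration by parts of $\Delta_g\Tr_g\dot g+d^*\delta^g\dot g$ produces a flux term through the cusp face $\cf$ of $\bbar{X}_c$, i.e.\ through the hypersurfaces $\{R=\la\}$ in the model \eqref{Uj'}, and one must prove this flux tends to $0$ as $\la\to 0$. The paper does this concretely: applying Green's formula on $\{R\ge\la,\ \rho\ge\delta\}$, it shows ${\rm Area}\big(\phi'(\cf\times\{\la\})\cap(\bbar X\setminus V_\delta)\big)=\mc{O}(\la^2)$ while $\pl_R\Tr_g(\dot g)$ and $\delta^g(\dot g)(\pl_R)$ remain uniformly bounded (using \eqref{gellinv} and the fact that a smooth family of geometrically finite metrics is, by definition, standard in the cusp model up to diffeomorphism), so the cusp face contributes nothing. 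In your $x^z$ formulation the same estimate is needed to justify both the meromorphic continuation of $\int_X x^z\Tr_g\dot g\,{\rm dvol}_g$ and the absence of a residual term at $\cf$ after integrating by parts against $x^z$; without it the proof is incomplete. Everything else in your outline matches the paper's computation.
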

The equivalent result was shown by Krasnov-Schlenker \cite{KrSc} 
(see also \cite{GuMo} for another proof) in the convex co-compact setting. 
This implies, using a theorem of Marden \cite{Ma2}, that the deformation space 
of a geometrically finite hyperbolic $3$-manifold $(X,g)$ with rank-$1$ cusps 
can be viewed as a Lagrangian submanifold $\mc{H}\subset T^*\mc{T}(M)$, 
the graph of the exact $1$-form on $\mc{T}(M)$ given by the exterior differential of
the renormalized volume functional ${\rm Vol}_R(X,\cdot):\mc{T}(M)\to \rr$. Equivalently,
the restriction to $\mc{H}$ of the Liouville $1$-form on $T^*\mc{T}(M)$ is exact, 
and a primitive is given by ${\rm Vol}_R(X,\cdot )$ if we identify $\mc{H}$ with $\mc{T}(M)$
by the canonical projection; we refer to \cite{GuMo} for details 
in the convex co-compact setting.

Ahlfors-Bers simultaneous uniformisation theorem shows that if $(M,h_-)$ and $(M,h_+)$ are two hyperbolic surfaces of finite volume with $n$ cusps, there exists a unique (up to diffeomorphism) complete 
hyperbolic metric $g_{(h_-,h_+)}$ on the cylinder $X:=\rr_t\x M$, which is realized as a quotient $\Gamma\backslash\hh^3$ for some quasi-Fuchsian group $\Gamma\subset {\rm PSL}_2(\cc)$ and the obtained 
manifold is geometrically finite with cusps of rank-$1$. 
The quasi-Fuchsian space is the deformation space of such quasi-Fuchsian groups and it identifies to $\mc{T}(M)\x \mc{T}(M)$ where $\mc{T}(M)$ is the Teichm\"uller space of $M$.
Fixing $h_-$, the map $h_+\mapsto g_{(h_-,h_+)}$ provides an embedding of $\mc{T}(M)$ into the quasi-Fuchsian deformation space  
and we view the renormalized volume as a function on $\mc{T}(M)$: $h_+\mapsto {\rm Vol}_R(X,g_{(h_-,h_+)})$.
We extend to the case with punctures the result proved in the convex co-compact case by Takhtajan-Teo \cite{TaTe}, and later by Krasnov-Schlenker \cite{KrSc}, Guillarmou-Moroianu \cite{GuMo}.
\begin{theo}\label{Kahlerpot}
Set $h_-=h_0\in \mc{T}(M)$, the map $V_{h_0}: h_+\mapsto {\rm Vol}_R(X,g_{(h_0,h_+)})$ is a K\"ahler potential for Weil-Petersson metric on $\mc{T}(M)$, more precisely $\bbar{\pl}\pl  V_{h_0}=\frac{i}{16}\omega_{\rm WP}$ where $\omega_{\rm WP}$ is the Weil-Petersson symplectic form.
\end{theo}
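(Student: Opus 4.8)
The plan is to deduce the result from the first-variation formula of Theorem~\ref{thintro:2} by a second differentiation, following the strategy used in the cusp-free case by Takhtajan--Teo \cite{TaTe}, Krasnov--Schlenker \cite{KrSc} and Guillarmou--Moroianu \cite{GuMo}; the genuinely new work is to control all the objects near the cusps. Fixing $h_-=h_0$, the conformal boundary of $g_{(h_0,h_+)}$ is the disjoint union $M_-\sqcup M_+$ of two copies of $M$, and a deformation of $h_+$ moves only the $M_+$ component. Theorem~\ref{thintro:2} then identifies the differential of $V_{h_0}$ on $\mc{T}(M)$: for a tangent vector $\dot h$ at $[h_+]$,
\[
 dV_{h_0}(\dot h)=-\tfrac14\int_{M}\langle \dot h,h_2^0\rangle_{h_+}\,{\rm dvol}_{h_+},
\]
where $h_2^0$ is the second fundamental form of $g_{(h_0,h_+)}$ on the $M_+$ side. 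Since $h_2^0$ is trace-free and divergence-free with respect to $h_+$ by the expansion \eqref{devofg}, it is the real part of a holomorphic quadratic differential $q=q(h_+)$ on the Riemann surface $(M,[h_+])$. In the present finite-volume setting I expect $q$ to be a cusp form, i.e.\ a holomorphic quadratic differential with at worst simple poles at the punctures, so that $q$ is integrable and pairs with the harmonic Beltrami differentials that represent $T_{[h_+]}\mc{T}(M)$.

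Next I would make the complex structure explicit. Representing a tangent vector by a harmonic Beltrami differential $\mu$ and using the pairing $\langle q,\mu\rangle=\int_M q\,\mu$ between quadratic and Beltrami differentials, the formula above shows that, after tracking the normalizations of the pairings, the $(1,0)$-part of $dV_{h_0}$ is
\[
 \partial V_{h_0}=-\tfrac14\,q(h_+),
\]
that is, $\partial V_{h_0}$ is, up to the universal constant, the holomorphic quadratic differential itself viewed as a $(1,0)$-form on $\mc{T}(M)$; the real structure $h_2^0=\Re(q)$ is precisely what distributes the real pairing between the $\partial$ and $\bar\partial$ parts. With the Weil--Petersson form $\omega_{\rm WP}=\im\langle\cdot,\cdot\rangle_{\rm WP}$ coming from $\langle\mu_1,\mu_2\rangle_{\rm WP}=\int_M\mu_1\bar\mu_2\,{\rm dvol}_{h^{\rm hyp}}$, proving the theorem amounts to computing the anti-holomorphic derivative of $h_+\mapsto q(h_+)$ and matching it with this pairing.

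The crux is therefore $\bar\partial\partial V_{h_0}=-\tfrac14\,\bar\partial q$. The essential analytic input is that $q(h_+)$ depends holomorphically on the complex structure of $h_+$ in the quasi-Fuchsian (Bers) parametrization: $q$ is, up to a constant, the Schwarzian derivative of the projective structure induced on $M_+$ by the developing map of $g_{(h_0,h_+)}$, which is the Bers coordinate and hence holomorphic in $h_+$. Consequently the holomorphic variation of $q$ contributes nothing, while the anti-holomorphic variation is governed by an Ahlfors-type second variation formula (the ``two lemmas'' of Ahlfors, as exploited in \cite{TaTe,GuMo}) relating $\partial_{\bar t}$ of a Schwarzian-type quadratic differential to the Weil--Petersson metric. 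Carrying this out should reproduce an identity of the form $\bar\partial q=c\,\omega_{\rm WP}$ with $c=-\tfrac{i}{4}$, the constant being fixed by the coefficient $-\tfrac14$ in Theorem~\ref{thintro:2} together with the normalizations of the two pairings, and hence the stated $\bar\partial\partial V_{h_0}=\tfrac{i}{16}\omega_{\rm WP}$.

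The main obstacle will be the cusp analysis rather than the formal computation, which is already in place in the cusp-free references. I would need: first, that $q$ is genuinely a cusp form with at most simple poles at the punctures, which requires reading off the behavior of $h_2^0$ near the cusps from \eqref{devofg} and the structure of $g$ there; second, uniform decay estimates for the harmonic Beltrami differentials near the cusps so that all integrals converge and differentiation under the integral sign is legitimate; and third, a justification that the cusp regions contribute negligibly to the second-variation formula, so that the Ahlfors computation valid on compact pieces extends to the finite-volume surface. Establishing the holomorphic dependence of $q$ on $h_+$ with control up to the cusps is the key technical point; once this uniformity near the punctures is secured, the remaining steps parallel the convex co-compact proof of \cite{GuMo}.
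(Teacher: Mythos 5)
Your strategy is sound and shares its first step with the paper: both identify $\pl V_{h_0}$ with a multiple of the holomorphic quadratic differential whose real part is $h_2^0$, via Theorem \ref{thintro:2} together with Epstein's expansion of the equidistant foliation, which yields $\pl V_{h_-}(h_+)=\frac14\Theta_{h_+}(h_-)$ with $\Theta_{h_+}$ the Bers embedding (your expectation that this is an integrable differential with at most simple poles is confirmed by the cusp-symmetric-tensor statement in Proposition \ref{deffunccusp}). From there the two arguments genuinely diverge. You propose to compute $\bbar{\pl}$ of this quadratic differential at an \emph{arbitrary} quasi-Fuchsian point by Ahlfors-type second-variation formulas, with the attendant uniform estimates near the punctures; this is essentially the Takhtajan--Teo route, and in fact the route of the independent proof by Park--Takhtajan--Teo \cite{PTT} mentioned in the introduction. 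The paper instead avoids any second-variation computation of the Schwarzian away from the diagonal: (i) at the Fuchsian locus $h_+=h_-$ the metric is explicitly $x^{-2}(dx^2+(1+\frac{x^2}{2})^2h_0)$, $V_{h_0}$ has a critical point there, and the Hessian is computed directly by adapting \cite{GMS} to be $\frac18|\cdot|^2_{\rm WP}$, which gives $\bbar{\pl}\pl V_{h_0}(h_0)=\frac{i}{16}\omega_{\rm WP}$ at that single point; (ii) quasi-Fuchsian reciprocity, i.e.\ the symmetry $\cjg\mc{L}_{Y_-}\Theta_{h_+},Y_+\cjd=\cjg\mc{L}_{Y_+}\Theta_{h_-},Y_-\cjd$ coming from the symmetry of $\nabla^2{\rm Vol}_R$ in the two ends, shows that $\mc{L}_{Y_-}(\pl V_{h_-})$ is exact in $h_+$, hence that the $(1,1)$-form $\bbar{\pl}\pl V_{h_-}$ is independent of $h_-$; evaluating at $h_-=h_+$ concludes. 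The paper's route confines the hard analysis to one explicit computation on the diagonal plus a formal symmetry argument, whereas yours would establish the stronger pointwise identity $\bbar{\pl}\,\Theta_{\cdot}(h_0)=c\,\omega_{\rm WP}$ at the cost of carrying the Ahlfors machinery, with cusp control, at every point of quasi-Fuchsian space.

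One clause in your argument should be removed rather than relied upon: you assert that $q(h_+)$ ``depends holomorphically on the complex structure of $h_+$'' because it is the Bers coordinate. If that were literally true, $\bbar{\pl}\pl V_{h_0}$ would vanish identically and the theorem would be false. What is true (and what the paper uses) is that $h_-\mapsto\Theta_{h_+}(h_-)$ is holomorphic in the \emph{opposite} end $h_-$ for fixed $h_+$; the derivative $\bbar{\pl}_{h_+}\Theta_{h_+}(h_0)$ is exactly the non-trivial quantity your Ahlfors computation must produce, so no cancellation of a ``holomorphic part'' in $h_+$ is available.
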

Notice that the same result was proved independently by Park-Takhtajan-Teo \cite{PTT}. 

Our last result consists in analyzing the renormalized volume of families of convex co-compact hyperbolic $3$-manifolds degenerating to a geometrically finite manifold with rank-$1$ cusps. We define precisely an \emph{admissible degeneration of convex co-compact metrics} in Definition \ref{sec:assumptions}, but essentially such a family of metrics $(g_\eps)_{\eps>0}$ on $X$ corresponds to having a disjoint union $H=\cup_{j=1}^{j_1}H_j$ of $j_1$ simple curves in $M=\pl\bbar{X}$ such that
\begin{enumerate}
\item outside a uniform neighborhood $\mc{U}$ of $H$, $\rho^2g_\eps$ converges smoothly to a metric on $\bbar{X}\setminus \mc{U}$ if $\rho$ is a fixed boundary defining function of $\pl\bbar{X}$;
\item in $\mc{U}$ near $H_j$, the metric $g_\eps$ is isometric to a certain region of $\cjg \gamma^\eps_j\cjd\backslash \hh^3$ where $\gamma_j^\eps\in {\rm PSL}_2(\cc)$ is a loxodromic element converging as $\eps\to 0$ to a parabolic element $\gamma_j$ in such a way that $\alpha_j(\eps)/\ell_j(\eps)$ converges , where $\ell_j(\eps)$ and $\alpha_j(\eps)$ are respectively  the translation length and the holonomy angle of $\gamma_j^\eps$ (i.e., $\gamma_j^\eps$ is conjugated to $z\mapsto e^{\ell(\eps)+i\alpha_j(\eps)}z$).
\end{enumerate}

Our last theorem is 
\begin{theo}
Assume $g_\eps$ is an admissible degeneration of convex co-compact hyperbolic metrics on $X$, in the sense of Definition \ref{sec:assumptions}, to a geometrically finite hyperbolic metric $g_0$ with rank-$1$ cusps on $X$. Then 
\[ \lim_{\eps\to 0}{\rm Vol}(X,g_\eps)={\rm Vol}_R(X,g_0).\]
\end{theo}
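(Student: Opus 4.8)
The plan is to compare the two expressions for the renormalized volume after cutting $X$ into the same geometric pieces for all $\eps$. First I would fix, for each short core geodesic of $g_\eps$ (the axis of $\gamma_j^\eps$, of length $\ell_j(\eps)\to 0$), a Margulis tube $\mc{V}_\eps^j$ sitting in the interior of $X$ and bounded away from the conformal boundary $M$, and set $\mc{V}_\eps=\bigcup_j\mc{V}_\eps^j$. Since $\hat\rho_\eps$ is bounded below on $\mc{V}_\eps$, the global finite-part integral defining ${\rm Vol}_R(X,g_\eps)$ splits as
\begin{equation*}
{\rm Vol}_R(X,g_\eps)={\rm Vol}_{g_\eps}(\mc{V}_\eps)+{\rm FP}_{z=0}\int_{X\setminus\mc{V}_\eps}\hat\rho_\eps^z\,{\rm dvol}_{g_\eps},
\end{equation*}
which is exactly the shape of the definition used for $g_0$ in Theorem \ref{thintro:1}, with $\mc{V}_\eps$ playing the role of the finite-volume region $\mc{V}$ around the rank-$1$ cusps. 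It therefore suffices to prove (a) ${\rm Vol}_{g_\eps}(\mc{V}_\eps)\to{\rm Vol}_{g_0}(\mc{V})$ and (b) ${\rm FP}_{z=0}\int_{X\setminus\mc{V}_\eps}\hat\rho_\eps^z\,{\rm dvol}_{g_\eps}\to{\rm FP}_{z=0}\int_{X\setminus\mc{V}}\rho_0^z\,{\rm dvol}_{g_0}$.

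For (b) I would split $X\setminus\mc{V}_\eps$ into a bulk piece $\mc{W}=X\setminus\mc{U}$, where $\mc{U}$ is a fixed neighborhood of $H=\bigcup_j H_j$, and the collars $\mc{U}\setminus\mc{V}_\eps$ joining the tubes to $M$. On $\mc{W}$, assumption (1) of Definition \ref{sec:assumptions} gives smooth convergence $\rho^2 g_\eps\to\rho^2 g_0$, and on the thick part of the boundary surface the hyperbolic representatives $h_\eps^{\rm hyp}$ converge smoothly to $h_0^{\rm hyp}$ (the Deligne--Mumford picture of a hyperbolic surface developing cusps as the $H_j$ are pinched). Consequently the geodesic boundary defining functions and the coefficients $h_0,h_2,h_4$ of the expansion \eqref{devofg} converge on $\mc{W}$; writing ${\rm dvol}_{g}=x^{-3}\det\!\big(I+\tfrac{x^2}{2}A\big)\,dx\,{\rm dvol}_{h_0}$ from \eqref{devofg} exhibits the finite part over $\mc{W}$ as an integral over $M$ of quantities depending continuously on this boundary data, so that ${\rm FP}_{z=0}$ and $\lim_{\eps\to 0}$ commute there by dominated convergence.

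The collar contribution, together with claim (a), is the heart of the matter and the main obstacle. Here assumption (2) identifies $g_\eps$ with an explicit region of $\langle\gamma_j^\eps\rangle\backslash\hh^3$, which I would write in Fermi coordinates $(r,u,\theta)$ about the core geodesic with metric $dr^2+\cosh^2 r\,du^2+\sinh^2 r\,d\theta^2$ and identification $(u,\theta)\sim(u+\ell_j(\eps),\theta+\alpha_j(\eps))$; the conformal boundary $M$ sits at $r\to\infty$, the Margulis tube is $\{r\le R_\eps\}$, and the geodesic boundary defining function is determined by the requirement $\hat\rho_\eps^2 g_\eps|_{TM}=h_\eps^{\rm hyp}$, i.e.\ by the hyperbolic collar of $H_j$. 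The rank-$1$ cusp model $\langle\gamma_j\rangle\backslash\hh^3$ is the analogous computation in horospherical coordinates. I would carry out the $z$-integration in closed form on these models, obtaining the volume of the tube and the finite part over the collar as explicit functions of $\ell_j(\eps)$, $\alpha_j(\eps)$ and $R_\eps$, and then let $\eps\to 0$. The delicate features are that two regularizations interact --- the finite part ${\rm FP}_{z=0}$ coming from $r\to\infty$ and the collapse of the injectivity radius along the core as $\ell_j(\eps)\to 0$ --- and that the tube radius $R_\eps$ must be chosen to match the boundary of the cusp region $\mc{V}$; it is precisely the convergence of the ratio $\alpha_j(\eps)/\ell_j(\eps)$ that keeps the modulus of the boundary annulus, and hence the closed-form expressions, under control so that the limit exists and equals the cusp value.

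Finally I would check that the interface $\partial\mc{V}_\eps$ and the cut $\partial\mc{U}$ can be chosen uniformly in $\eps$ so that the cutoff-dependent terms produced in the bulk and collar computations cancel upon reassembly, up to $o(1)$ as $\eps\to 0$. Combining the bulk convergence of (b) with the explicit collar limit and with (a) then gives ${\rm Vol}_R(X,g_\eps)\to{\rm Vol}_R(X,g_0)$. The two points I expect to require the most care are the uniform geometric control of the collar as the injectivity radius degenerates, and the justification --- via the explicit model rather than an abstract continuity argument --- that $\lim_{\eps\to 0}$ may be interchanged with ${\rm FP}_{z=0}$.
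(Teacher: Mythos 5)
Your overall architecture---split $X$ into a bulk piece where $\rho^2g_\eps$ converges smoothly and a degenerating piece described by the explicit elementary quotient, and worry about the interaction of ${\rm FP}_{z=0}$ with the collapse---matches the paper's strategy (Propositions \ref{limoutsidecusp} and \ref{nearthecusp}). But there is a genuine gap at what you correctly identify as the heart of the matter. Your plan is to ``carry out the $z$-integration in closed form on these models, obtaining the volume of the tube and the finite part over the collar as explicit functions of $\ell_j(\eps)$, $\alpha_j(\eps)$ and $R_\eps$.'' This cannot be done, because the geodesic boundary defining function near the pinching curve is \emph{not} determined by the model $\langle\gamma_j^\eps\rangle\backslash\hh^3$ alone: it is pinned by the condition $\hat\rho_\eps^2 g_\eps|_{TM}=h^{\rm hyp}_\eps$, and the uniformizing conformal factor $\varphi_\eps$ of the degenerating boundary surface is a global, non-explicit object. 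Concretely, writing $\hat\rho_\eps=e^{\omega_\eps}U$ with $U=u/R$, the finite part over the collar produces boundary terms involving the Taylor coefficients $a_0=\varphi_\eps$ and $a_2=-\tfrac14(|d\varphi_\eps|^2_{h_\eps}+\cdots)$ of $\omega_\eps$ (Proposition \ref{bdf.19a}), integrated over bands $|v|\lesssim\ell_j(\eps)$ that shrink onto the pinching curve. Showing these terms tend to $0$ requires (i) uniform $\mc{C}^0$ convergence $\varphi_\eps\to\varphi_0$ with $\varphi_0$ smooth up to the cusp (Propositions \ref{uniform} and \ref{cf.1}), and (ii) the statement that the Dirichlet energy $\int_{I_\eps}|d\varphi_\eps|^2_{h_\eps}$ over a shrinking band vanishes in the limit (Corollary \ref{cor:limitenergy}). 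Neither ingredient appears in your proposal, and without them the collar limit cannot be evaluated; the same issue infects your bulk step, where the paper must pass through the conformal anomaly formula (Proposition \ref{vrv.1}) to compare the finite parts for the model representative and the hyperbolic representative, again invoking the energy convergence.

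A secondary problem is the decomposition itself. Cutting out Margulis tubes $\mc{V}_\eps$ around the core geodesics does not reproduce the structure of the $\eps=0$ definition: the limiting rank-$1$ cusp region $\mc{V}$ is a neighborhood of the cusp \emph{face} in the compactification $\bbar{X}_c$, not the limit of interior tubes, and the claim ${\rm Vol}_{g_\eps}(\mc{V}_\eps)\to{\rm Vol}_{g_0}(\mc{V})$ is itself a nontrivial assertion sensitive to the choice of $R_\eps$ (the tube volumes behave like $\ell_j(\eps)\sinh^2 R_\eps$ and need not converge to the cusp volume without a matched choice of truncation). The paper instead works on the blow-up $\bbar{\mc{U}}_\ell$ of a fixed neighborhood of $H_j$ in $\bbar{\bf X}\times[0,\ell_0)$, splitting it into a finite-volume interior region (handled by dominated convergence in the coordinates $(u,v/u,\ell/u,w)$), a region along the front face, and corner regions, with the boundary defining function controlled uniformly down to $\ell=0$ by the Hamilton--Jacobi analysis of Propositions \ref{proprho0} and \ref{model2}. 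Your Fermi-coordinate picture of the tube is not wrong as geometry, but it does not interface with the renormalization at the conformal boundary, which is where all the delicate cancellations occur.
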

We show in Proposition \ref{SchottkyOK} that such admissible degenerations happen 
for instance on the boundary of the classical Schottky space. In \cite[Theorem 1.3]{BrCa}, Bridgeman and Canary show that in other asymptotic regime (when the radius of injectivity of the hyperbolic metric in the domain of discontinuity is going to $0$), the renormalized volume tends to $-\infty$.

\begin{figure}\label{fig:foliation1}
\centering
\def\svgwidth{15em}
\scalebox{1.05}{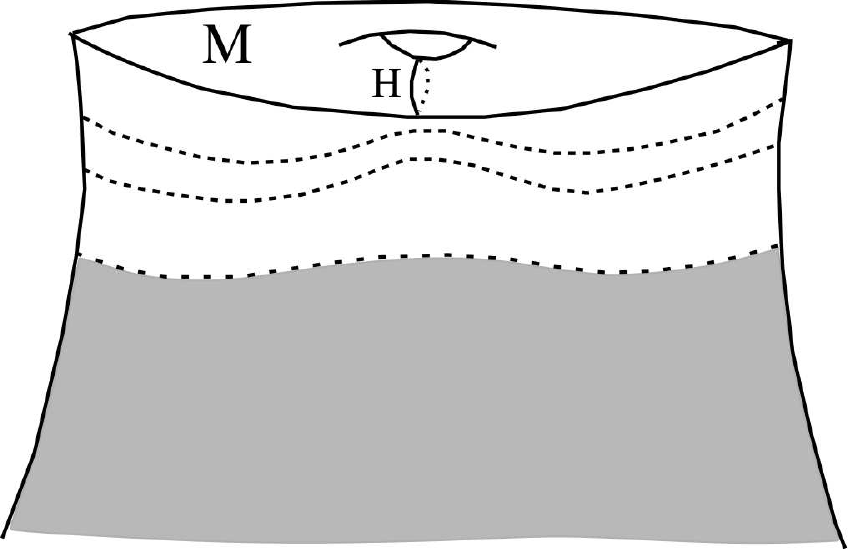}\quad \quad 
\scalebox{0.55}{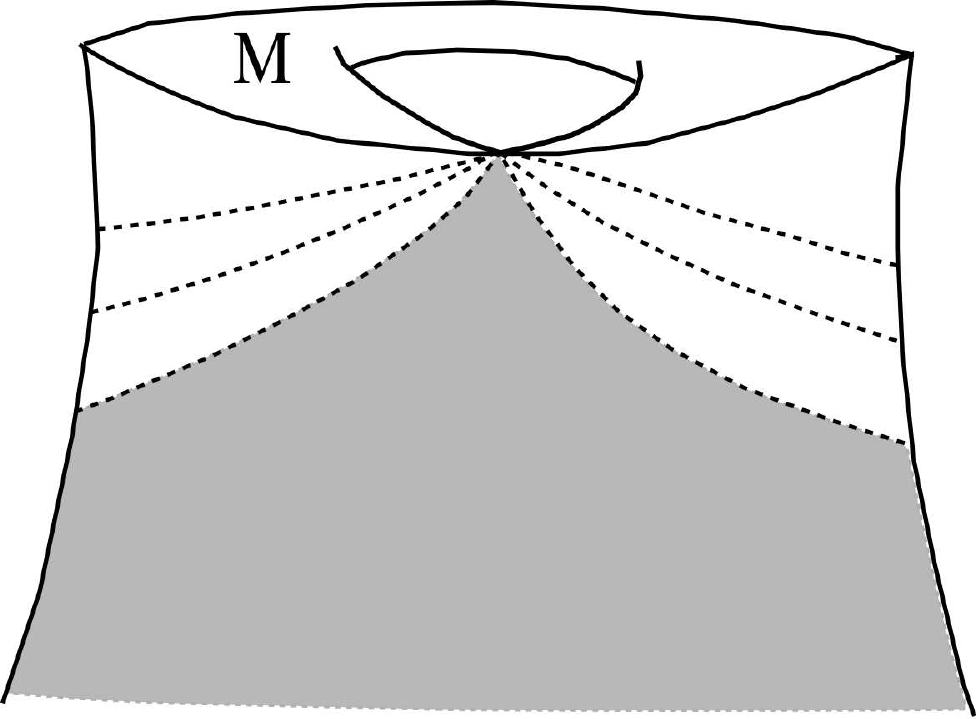}
\caption{We consider a case when the curve $H$ is being pinched in the boundary $M$. 
The equidistant foliation is represented by the dotted lines. 
The first picture corresponds to the convex co-compact case and the second picture is 
the hyperbolic $3$-manifold with a rank-$1$ cusp. The dark regions are the convex cores.} 
\end{figure}

\begin{Acknowledgement}
F.~R.\ was partially supported by NSERC, FRQNT and a Canada research chair. 
C.~G.\ and F.~R.\ were partially supported by grant ANR-13-BS01-0007-01. 
S.~M.\ was partially supported by the CNCS project PN-II-RU-TE-2012-3-0492.
We thank R.\ Canary and J.-M.\ Schlenker for helpful discussions.
\end{Acknowledgement}

\section{Renormalized volume for geometrically finite hyperbolic $3$-manifolds} \label{rv.1}

\subsection{Geometrically finite hyperbolic $3$-manifolds}\label{geofinite}
In this Section, we recall the geometry of geometrically finite hyperbolic manifolds 
of dimension $3$. For more details, we refer to the paper of Bowditch \cite{Bo}, 
see also Mazzeo-Phillips \cite{MaPh} or Guillarmou-Mazzeo \cite{GuMa}.
A manifold $X$ of dimension $3$ is said to be geometrically finite hyperbolic if it can be realized as a quotient
$X=\Gamma\backslash \hh^{3}$ by a Kleinian group $\Gamma\subset {\rm PSL}_{2}(\cc)\simeq {\rm PSO}(3,1)$, 
so that its action on $\hh^3$ has a fundamental domain with finitely many side. In higher dimension, 
this definition is not very natural and the correct one is given by Bowditch, however we shall restrict here to the $3$-dimensional case. 
If we view $\hh^3$ as the open unit ball in $\rr^3$, it can be naturally compactified into the closed unit ball 
$\bbar{\hh^3}=\hh^3\cup \mathbb{S}^2$, and elements of ${\rm PSL}_2(\cc)$ acts on $\bbar{\hh^3}$.
We say that $X$ has cusps if $\Gamma$ contains parabolic elements in ${\rm PSL}_2(\cc)$,  i.e. elements which fix only one 
point in the closed unit ball $\bbar{\hh^3}$. If for each point $p\in \mathbb{S}^2$ fixed by a parabolic transformation $\gamma_p\in \Gamma$, the subgroup 
$\Gamma_p\subset \Gamma$ fixing $p$ is the cyclic group generated by the element $\gamma_p$, then we say that $X$ has only \emph{cusps of rank $1$}, and we will make this assumption for what follows.\footnote{cusps of rank $2$ are trivial to deal with for what concerns renormalized volume questions, since they generate ends with finite volume in $X$.}

 We view $\hh^3$ as the unit ball in $\rr^3$. We can add to $X$ a conformal boundary by defining 
\[\bbar{X}:=\Gamma\backslash (\hh^3\cup \Omega)\]
where $\Omega\subset S^2$ is the domain of discontinuity of the group $\Gamma$, ie. the complement in $S^2$ of the limit set $\Lambda_\Gamma$ consisiting of accumulation points in the closed unit ball of the orbit of any given point $m\in\hh^3$. The manifold $\bbar{X}$ is a smooth manifold with boundary and its 
boundary 
\[M:=\Gamma\backslash\Omega=\pl\bbar{X}\] 
is a union of smooth Riemann surfaces, which has cusps if and only
$\Gamma$ has rank-$1$ cusps. It inherits a conformal class  which is defined to be 
the conformal class of $\rho^2g|_{TM}$ where $g$ is the hyperbolic metric on $X$ and $\rho$ is 
any smooth boundary defining function in $\bbar{X}$ (ie. $\rho\ge 0$, $M=\{\rho=0\}$ and $d\rho|_{M}$ never vanishes on 
$M$). Note that $\bbar{X}$ is not compact if $\Gamma$ has cusps.

The important geometric fact that we shall use is the following:   
there exists a compact set $\mc{K}\subset \bbar{X}$ such that $\bbar{X}\setminus \mc{K}=\cup_{j=1}^{j_1}\mc{U}^c_j$ where $\mc{U}^c_j$ are disjoint open sets of $\bbar{X}$, called \emph{cusp neighbourhoods}, so that $g$ on $\mc{U}^c_j\cap X$ is isometric through a map $\iota_j$ to 
\begin{equation}\label{Uj}
 \begin{gathered}
 \{(z=y+ix,w)\in \hh^2 \x (\rr/\tfrac{1}{2}\zz);\, |z|>R_j\},\,\, \\
  \textrm{ with metric } g=\frac{dx^2+dy^2+dw^2}{x^2} 
 \end{gathered}
 \end{equation}  
for some $R_j>0$; here $\hh^2=\{z\in \cc; {\rm Im}(z)>0\}$ is viewed as the upper half plane. 
We shall therefore identify $\mc{U}_j^c$ with the region in \eqref{Uj}.
Here $j_0$ is the number of rank-$1$ cusps.
The compact $\mc{K}$ in $\bbar{X}$ decomposes further into $\mc{K}=\mc{K}_0\cup \mc{U}^r$ where $\mc{K}_0$ is compact in $X$ and $\mc{U}^r$ is a compact set of $\bbar{X}$ such that  the hyperbolic metric $g$ in the interior of  $\mc{U}^r$ near $M$ is of the 
form $g=\bbar{g}/\rho^2$ where $\rho$ is a smooth boundary defining function of $M$ 
and $\bbar{g}$ is a smooth metric on $\mc{K}$.   
The boundary $M$ is a non compact Riemann surface with $2j_0$ cusps, and $M$ equipped with the conformal class $[\rho^2g|_{TM}]$ is called the \emph{conformal boundary} of $X$. 
Notice that, using an inversion $(v+iu)=-1/(y+ix)$ in the $\hh^2$ factor of \eqref{Uj}, 
the neighborhood $\mc{U}_j^c\cap X$ with metric $g$ is also isometric to 
\begin{equation}\label{Uj'} 
\begin{gathered}
 \{(z=v+iu,w)\in \hh^2\x (\rr/\tfrac{1}{2}\zz);\, |z|<R_j^{-1}\},\,\,\\  \textrm{ with metric } 
 g=\frac{du^2+dv^2+(u^2+v^2)^2dw^2}{u^2}. 
\end{gathered}
\end{equation}
Using this model for $\mc{U}_j^c$, we see that we can compactify $\bbar{X}$ into a compact smooth manifold with boundary, 
denoted $\bbar{{\bf X}}$, by compactifying the open set \eqref{Uj'} to 
\begin{equation}\label{Uj'compact}  
\{(z=v+iu,w)\in \bbar{\hh^2}\x (\rr/\tfrac{1}{2}\zz);\, |z|<R_j^{-1}\}
\end{equation} 
if $\bbar{\hh^2}$ is the closed upper half-plane of $\cc$, and with the smooth structure given by 
the smooth structure on $\bbar{\hh^2}\x (\rr/\tfrac{1}{2}\zz)$. This compactification amounts to adding 
a circle at each cusp of $\bbar{X}$, and clearly the interior of $\bbar{{\bf X}}$ is $X$ and $\bbar{X}$ is an open set in $\bbar{{\bf X}}$. 
We denote by $H_j$ each of these circles defined by $\{u=v=0\}$ in \eqref{Uj'compact}, 
and let $H:=\cup_{j=1}^{j_1}H_j$. 

There is another natural compactification of $\bbar{X}$ (and $X$) that arises, which corresponds to the 
real blow-up of $H$ in $\pl\bbar{{\bf X}}$ in $\bbar{{\bf X}}$, which we denote by $\bbar{X}_c$. 
This is a smooth manifold with corners of codimensions $2$ defined as follows: 
by taking the representation \eqref{Uj'} of  $\mc{U}_j^c$, we see 
that this has closure  in $\bbar{X}$ diffeomorphic to 
\[ \{ (u,v,w)\in \rr_+\x \rr\x (\rr/\tfrac{1}{2}\zz); u^2+v^2<R_j^{-2}\}\]
and to define $\bbar{X}_c$, we replace this chart by the chart
\[ \{ (r,\theta,w)\in [0,R_j^{-1})\x [-\tfrac{\pi}{2},\tfrac{\pi}2]\x (\rr/\tfrac{1}{2}\zz)\}\]
where $r:=\sqrt{u^2+v^2}$ and $\theta:=\arctan \frac{v}{u}$. This corresponds to a real blow-up 
(denoted $\bbar{X}_c=[\bbar{{\bf X}};H]$ in \cite[Chap. 5]{Me}) of the 
submanifold 
$\{(u,v,w)\in \bbar{\mc{U}_j^c}; u=v=0\}$, which is a circle. 
In this way, the manifold with corners $\bbar{X}_c$ has two boundary hypersurfaces. 
One, given by $\theta=\pm\frac{\pi}2$, is denoted $\bbar M$ and is a compactification 
of $M$ to a smooth surface with boundary,  while the other, the \emph{cusp face}, 
denoted $\cf$, is given by $r=0$ and is diffeomorphic to a cylinder
$[-\frac{\pi}2,\frac{\pi}2]_{\theta}\times (\rr/\tfrac12 \zz)_{w}$ if there is only one cusp 
of rank 1.  More generally,  the connected components of $\cf$ are 
in one-to-one correspondence with the cusps of rank 1 of $X$ with each connected component 
diffeomorphic to $[-\frac{\pi}2,\frac{\pi}2]_{\theta}\times (\rr/\frac12 \zz)_{w}$. 
We will use this extended space $\bbar{X}_c$ to describe the 
analytic structure of the geodesic boundary defining function of $M$ in $\bbar{X}$ 
near the cusps, which allows us to define the renormalized volume in that setting. 
To summarize, we have the following manifolds and inclusions  
\[ \pl\bbar{X}=M\subset \bbar{M},\quad  X \subset \bbar{X} \subset \bbar{{\bf X}} , \quad X \subset \bbar{X} \subset \bbar{X}_c=[\bbar{{\bf X}};H].\]

\begin{figure}
\setlength{\unitlength}{0.7cm}
\begin{picture}(7,5)(0,-0.5)
\thicklines
\put(3,0){\oval(2,2)[t]}
\put(2.4,0.4){$\cf$}

\put(2.5,4){$u$}

\put(4,0){\vector(1,0){3}}
\put(0,0){\line(1,0){2}}
\put(7,-0.4){$v$}
\put(1,-0.7){$\bbar M$}

\put(3,1){\vector(0,1){3}}

\put(0,0.2){$\bbar{X}_c\setminus\mathcal{V}$}
\put(4.7,0.2){$\bbar{X}_c\setminus\mathcal{V}$}
\put(3.5,2){$\mathcal{V}$}

\thinlines
\put(0,1){\line(1,0){0.2}}
\put(0.3,1){\line(1,0){0.2}}
\put(0.6,1){\line(1,0){0.2}}
\put(0.9,1){\line(1,0){0.2}}
\put(1.2,1){\line(1,0){0.2}}
\put(1.5,1){\line(1,0){0.2}}
\put(1.8,1){\line(1,0){0.2}}
\qbezier(2.1,1)(2.3,1)(2.6,0.9)
\qbezier(3.9,1)(3.7,1)(3.4,0.9)
\put(4.0,1){\line(1,0){0.2}}
\put(4.3,1){\line(1,0){0.2}}
\put(4.6,1){\line(1,0){0.2}}
\put(4.9,1){\line(1,0){0.2}}
\put(5.2,1){\line(1,0){0.2}}
\put(5.5,1){\line(1,0){0.2}}
\put(5.8,1){\line(1,0){0.2}}
\put(6.1,1){\line(1,0){0.2}}
\put(6.4,1){\line(1,0){0.2}}
\put(6.7,1){\line(1,0){0.2}}

\put(0,1){\line(0,1){0.2}}
\put(0,1.3){\line(0,1){0.2}}
\put(0,1.6){\line(0,1){0.2}}
\put(0,1.9){\line(0,1){0.2}}
\put(0,2.2){\line(0,1){0.2}}
\put(0,2.5){\line(0,1){0.2}}
\put(0,2.8){\line(0,1){0.2}}
\put(0,3.1){\line(0,1){0.2}}
\put(0,3.4){\line(0,1){0.2}}
\put(0,3.7){\line(0,1){0.2}}

\put(0.2,1){\line(0,1){0.2}}
\put(0.2,1.3){\line(0,1){0.2}}
\put(0.2,1.6){\line(0,1){0.2}}
\put(0.2,1.9){\line(0,1){0.2}}
\put(0.2,2.2){\line(0,1){0.2}}
\put(0.2,2.5){\line(0,1){0.2}}
\put(0.2,2.8){\line(0,1){0.2}}
\put(0.2,3.1){\line(0,1){0.2}}
\put(0.2,3.4){\line(0,1){0.2}}
\put(0.2,3.7){\line(0,1){0.2}}

\put(0.4,1){\line(0,1){0.2}}
\put(0.4,1.3){\line(0,1){0.2}}
\put(0.4,1.6){\line(0,1){0.2}}
\put(0.4,1.9){\line(0,1){0.2}}
\put(0.4,2.2){\line(0,1){0.2}}
\put(0.4,2.5){\line(0,1){0.2}}
\put(0.4,2.8){\line(0,1){0.2}}
\put(0.4,3.1){\line(0,1){0.2}}
\put(0.4,3.4){\line(0,1){0.2}}
\put(0.4,3.7){\line(0,1){0.2}}

\put(0.6,1){\line(0,1){0.2}}
\put(0.6,1.3){\line(0,1){0.2}}
\put(0.6,1.6){\line(0,1){0.2}}
\put(0.6,1.9){\line(0,1){0.2}}
\put(0.6,2.2){\line(0,1){0.2}}
\put(0.6,2.5){\line(0,1){0.2}}
\put(0.6,2.8){\line(0,1){0.2}}
\put(0.6,3.1){\line(0,1){0.2}}
\put(0.6,3.4){\line(0,1){0.2}}
\put(0.6,3.7){\line(0,1){0.2}}

\put(0.8,1){\line(0,1){0.2}}
\put(0.8,1.3){\line(0,1){0.2}}
\put(0.8,1.6){\line(0,1){0.2}}
\put(0.8,1.9){\line(0,1){0.2}}
\put(0.8,2.2){\line(0,1){0.2}}
\put(0.8,2.5){\line(0,1){0.2}}
\put(0.8,2.8){\line(0,1){0.2}}
\put(0.8,3.1){\line(0,1){0.2}}
\put(0.8,3.4){\line(0,1){0.2}}
\put(0.8,3.7){\line(0,1){0.2}}

\put(1,1){\line(0,1){0.2}}
\put(1,1.3){\line(0,1){0.2}}
\put(1,1.6){\line(0,1){0.2}}
\put(1,1.9){\line(0,1){0.2}}
\put(1,2.2){\line(0,1){0.2}}
\put(1,2.5){\line(0,1){0.2}}
\put(1,2.8){\line(0,1){0.2}}
\put(1,3.1){\line(0,1){0.2}}
\put(1,3.4){\line(0,1){0.2}}
\put(1,3.7){\line(0,1){0.2}}

\put(1.2,1){\line(0,1){0.2}}
\put(1.2,1.3){\line(0,1){0.2}}
\put(1.2,1.6){\line(0,1){0.2}}
\put(1.2,1.9){\line(0,1){0.2}}
\put(1.2,2.2){\line(0,1){0.2}}
\put(1.2,2.5){\line(0,1){0.2}}
\put(1.2,2.8){\line(0,1){0.2}}
\put(1.2,3.1){\line(0,1){0.2}}
\put(1.2,3.4){\line(0,1){0.2}}
\put(1.2,3.7){\line(0,1){0.2}}

\put(1.4,1){\line(0,1){0.2}}
\put(1.4,1.3){\line(0,1){0.2}}
\put(1.4,1.6){\line(0,1){0.2}}
\put(1.4,1.9){\line(0,1){0.2}}
\put(1.4,2.2){\line(0,1){0.2}}
\put(1.4,2.5){\line(0,1){0.2}}
\put(1.4,2.8){\line(0,1){0.2}}
\put(1.4,3.1){\line(0,1){0.2}}
\put(1.4,3.4){\line(0,1){0.2}}
\put(1.4,3.7){\line(0,1){0.2}}

\put(1.6,1){\line(0,1){0.2}}
\put(1.6,1.3){\line(0,1){0.2}}
\put(1.6,1.6){\line(0,1){0.2}}
\put(1.6,1.9){\line(0,1){0.2}}
\put(1.6,2.2){\line(0,1){0.2}}
\put(1.6,2.5){\line(0,1){0.2}}
\put(1.6,2.8){\line(0,1){0.2}}
\put(1.6,3.1){\line(0,1){0.2}}
\put(1.6,3.4){\line(0,1){0.2}}
\put(1.6,3.7){\line(0,1){0.2}}

\put(1.8,1){\line(0,1){0.2}}
\put(1.8,1.3){\line(0,1){0.2}}
\put(1.8,1.6){\line(0,1){0.2}}
\put(1.8,1.9){\line(0,1){0.2}}
\put(1.8,2.2){\line(0,1){0.2}}
\put(1.8,2.5){\line(0,1){0.2}}
\put(1.8,2.8){\line(0,1){0.2}}
\put(1.8,3.1){\line(0,1){0.2}}
\put(1.8,3.4){\line(0,1){0.2}}
\put(1.8,3.7){\line(0,1){0.2}}

\put(2,1){\line(0,1){0.2}}
\put(2,1.3){\line(0,1){0.2}}
\put(2,1.6){\line(0,1){0.2}}
\put(2,1.9){\line(0,1){0.2}}
\put(2,2.2){\line(0,1){0.2}}
\put(2,2.5){\line(0,1){0.2}}
\put(2,2.8){\line(0,1){0.2}}
\put(2,3.1){\line(0,1){0.2}}
\put(2,3.4){\line(0,1){0.2}}
\put(2,3.7){\line(0,1){0.2}}

\put(2.2,1){\line(0,1){0.2}}
\put(2.2,1.3){\line(0,1){0.2}}
\put(2.2,1.6){\line(0,1){0.2}}
\put(2.2,1.9){\line(0,1){0.2}}
\put(2.2,2.2){\line(0,1){0.2}}
\put(2.2,2.5){\line(0,1){0.2}}
\put(2.2,2.8){\line(0,1){0.2}}
\put(2.2,3.1){\line(0,1){0.2}}
\put(2.2,3.4){\line(0,1){0.2}}
\put(2.2,3.7){\line(0,1){0.2}}

\put(2.4,1){\line(0,1){0.2}}
\put(2.4,1.3){\line(0,1){0.2}}
\put(2.4,1.6){\line(0,1){0.2}}
\put(2.4,1.9){\line(0,1){0.2}}
\put(2.4,2.2){\line(0,1){0.2}}
\put(2.4,2.5){\line(0,1){0.2}}
\put(2.4,2.8){\line(0,1){0.2}}
\put(2.4,3.1){\line(0,1){0.2}}
\put(2.4,3.4){\line(0,1){0.2}}
\put(2.4,3.7){\line(0,1){0.2}}

\put(2.6,1){\line(0,1){0.2}}
\put(2.6,1.3){\line(0,1){0.2}}
\put(2.6,1.6){\line(0,1){0.2}}
\put(2.6,1.9){\line(0,1){0.2}}
\put(2.6,2.2){\line(0,1){0.2}}
\put(2.6,2.5){\line(0,1){0.2}}
\put(2.6,2.8){\line(0,1){0.2}}
\put(2.6,3.1){\line(0,1){0.2}}
\put(2.6,3.4){\line(0,1){0.2}}
\put(2.6,3.7){\line(0,1){0.2}}

\put(2.8,1){\line(0,1){0.2}}
\put(2.8,1.3){\line(0,1){0.2}}
\put(2.8,1.6){\line(0,1){0.2}}
\put(2.8,1.9){\line(0,1){0.2}}
\put(2.8,2.2){\line(0,1){0.2}}
\put(2.8,2.5){\line(0,1){0.2}}
\put(2.8,2.8){\line(0,1){0.2}}
\put(2.8,3.1){\line(0,1){0.2}}
\put(2.8,3.4){\line(0,1){0.2}}
\put(2.8,3.7){\line(0,1){0.2}}

\put(3,1){\line(0,1){0.2}}
\put(3,1.3){\line(0,1){0.2}}
\put(3,1.6){\line(0,1){0.2}}
\put(3,1.9){\line(0,1){0.2}}
\put(3,2.2){\line(0,1){0.2}}
\put(3,2.5){\line(0,1){0.2}}
\put(3,2.8){\line(0,1){0.2}}
\put(3,3.1){\line(0,1){0.2}}
\put(3,3.4){\line(0,1){0.2}}
\put(3,3.7){\line(0,1){0.2}}

\put(3.2,1){\line(0,1){0.2}}
\put(3.2,1.3){\line(0,1){0.2}}
\put(3.2,1.6){\line(0,1){0.2}}
\put(3.2,1.9){\line(0,1){0.2}}
\put(3.2,2.2){\line(0,1){0.2}}
\put(3.2,2.5){\line(0,1){0.2}}
\put(3.2,2.8){\line(0,1){0.2}}
\put(3.2,3.1){\line(0,1){0.2}}
\put(3.2,3.4){\line(0,1){0.2}}
\put(3.2,3.7){\line(0,1){0.2}}

\put(3.4,1){\line(0,1){0.2}}
\put(3.4,1.3){\line(0,1){0.2}}
\put(3.4,1.6){\line(0,1){0.2}}
\put(3.4,1.9){\line(0,1){0.2}}
\put(3.4,2.2){\line(0,1){0.2}}
\put(3.4,2.5){\line(0,1){0.2}}
\put(3.4,2.8){\line(0,1){0.2}}
\put(3.4,3.1){\line(0,1){0.2}}
\put(3.4,3.4){\line(0,1){0.2}}
\put(3.4,3.7){\line(0,1){0.2}}

\put(3.6,1){\line(0,1){0.2}}
\put(3.6,1.3){\line(0,1){0.2}}
\put(3.6,1.6){\line(0,1){0.2}}
\put(3.6,1.9){\line(0,1){0.2}}
\put(3.6,2.2){\line(0,1){0.2}}
\put(3.6,2.5){\line(0,1){0.2}}
\put(3.6,2.8){\line(0,1){0.2}}
\put(3.6,3.1){\line(0,1){0.2}}
\put(3.6,3.4){\line(0,1){0.2}}
\put(3.6,3.7){\line(0,1){0.2}}

\put(3.8,1){\line(0,1){0.2}}
\put(3.8,1.3){\line(0,1){0.2}}
\put(3.8,1.6){\line(0,1){0.2}}
\put(3.8,1.9){\line(0,1){0.2}}
\put(3.8,2.2){\line(0,1){0.2}}
\put(3.8,2.5){\line(0,1){0.2}}
\put(3.8,2.8){\line(0,1){0.2}}
\put(3.8,3.1){\line(0,1){0.2}}
\put(3.8,3.4){\line(0,1){0.2}}
\put(3.8,3.7){\line(0,1){0.2}}

\put(4,1){\line(0,1){0.2}}
\put(4,1.3){\line(0,1){0.2}}
\put(4,1.6){\line(0,1){0.2}}
\put(4,1.9){\line(0,1){0.2}}
\put(4,2.2){\line(0,1){0.2}}
\put(4,2.5){\line(0,1){0.2}}
\put(4,2.8){\line(0,1){0.2}}
\put(4,3.1){\line(0,1){0.2}}
\put(4,3.4){\line(0,1){0.2}}
\put(4,3.7){\line(0,1){0.2}}

\put(4.2,1){\line(0,1){0.2}}
\put(4.2,1.3){\line(0,1){0.2}}
\put(4.2,1.6){\line(0,1){0.2}}
\put(4.2,1.9){\line(0,1){0.2}}
\put(4.2,2.2){\line(0,1){0.2}}
\put(4.2,2.5){\line(0,1){0.2}}
\put(4.2,2.8){\line(0,1){0.2}}
\put(4.2,3.1){\line(0,1){0.2}}
\put(4.2,3.4){\line(0,1){0.2}}
\put(4.2,3.7){\line(0,1){0.2}}

\put(4.4,1){\line(0,1){0.2}}
\put(4.4,1.3){\line(0,1){0.2}}
\put(4.4,1.6){\line(0,1){0.2}}
\put(4.4,1.9){\line(0,1){0.2}}
\put(4.4,2.2){\line(0,1){0.2}}
\put(4.4,2.5){\line(0,1){0.2}}
\put(4.4,2.8){\line(0,1){0.2}}
\put(4.4,3.1){\line(0,1){0.2}}
\put(4.4,3.4){\line(0,1){0.2}}
\put(4.4,3.7){\line(0,1){0.2}}

\put(4.6,1){\line(0,1){0.2}}
\put(4.6,1.3){\line(0,1){0.2}}
\put(4.6,1.6){\line(0,1){0.2}}
\put(4.6,1.9){\line(0,1){0.2}}
\put(4.6,2.2){\line(0,1){0.2}}
\put(4.6,2.5){\line(0,1){0.2}}
\put(4.6,2.8){\line(0,1){0.2}}
\put(4.6,3.1){\line(0,1){0.2}}
\put(4.6,3.4){\line(0,1){0.2}}
\put(4.6,3.7){\line(0,1){0.2}}

\put(4.8,1){\line(0,1){0.2}}
\put(4.8,1.3){\line(0,1){0.2}}
\put(4.8,1.6){\line(0,1){0.2}}
\put(4.8,1.9){\line(0,1){0.2}}
\put(4.8,2.2){\line(0,1){0.2}}
\put(4.8,2.5){\line(0,1){0.2}}
\put(4.8,2.8){\line(0,1){0.2}}
\put(4.8,3.1){\line(0,1){0.2}}
\put(4.8,3.4){\line(0,1){0.2}}
\put(4.8,3.7){\line(0,1){0.2}}

\put(5,1){\line(0,1){0.2}}
\put(5,1.3){\line(0,1){0.2}}
\put(5,1.6){\line(0,1){0.2}}
\put(5,1.9){\line(0,1){0.2}}
\put(5,2.2){\line(0,1){0.2}}
\put(5,2.5){\line(0,1){0.2}}
\put(5,2.8){\line(0,1){0.2}}
\put(5,3.1){\line(0,1){0.2}}
\put(5,3.4){\line(0,1){0.2}}
\put(5,3.7){\line(0,1){0.2}}

\put(5.2,1){\line(0,1){0.2}}
\put(5.2,1.3){\line(0,1){0.2}}
\put(5.2,1.6){\line(0,1){0.2}}
\put(5.2,1.9){\line(0,1){0.2}}
\put(5.2,2.2){\line(0,1){0.2}}
\put(5.2,2.5){\line(0,1){0.2}}
\put(5.2,2.8){\line(0,1){0.2}}
\put(5.2,3.1){\line(0,1){0.2}}
\put(5.2,3.4){\line(0,1){0.2}}
\put(5.2,3.7){\line(0,1){0.2}}

\put(5.4,1){\line(0,1){0.2}}
\put(5.4,1.3){\line(0,1){0.2}}
\put(5.4,1.6){\line(0,1){0.2}}
\put(5.4,1.9){\line(0,1){0.2}}
\put(5.4,2.2){\line(0,1){0.2}}
\put(5.4,2.5){\line(0,1){0.2}}
\put(5.4,2.8){\line(0,1){0.2}}
\put(5.4,3.1){\line(0,1){0.2}}
\put(5.4,3.4){\line(0,1){0.2}}
\put(5.4,3.7){\line(0,1){0.2}}

\put(5.6,1){\line(0,1){0.2}}
\put(5.6,1.3){\line(0,1){0.2}}
\put(5.6,1.6){\line(0,1){0.2}}
\put(5.6,1.9){\line(0,1){0.2}}
\put(5.6,2.2){\line(0,1){0.2}}
\put(5.6,2.5){\line(0,1){0.2}}
\put(5.6,2.8){\line(0,1){0.2}}
\put(5.6,3.1){\line(0,1){0.2}}
\put(5.6,3.4){\line(0,1){0.2}}
\put(5.6,3.7){\line(0,1){0.2}}

\put(5.8,1){\line(0,1){0.2}}
\put(5.8,1.3){\line(0,1){0.2}}
\put(5.8,1.6){\line(0,1){0.2}}
\put(5.8,1.9){\line(0,1){0.2}}
\put(5.8,2.2){\line(0,1){0.2}}
\put(5.8,2.5){\line(0,1){0.2}}
\put(5.8,2.8){\line(0,1){0.2}}
\put(5.8,3.1){\line(0,1){0.2}}
\put(5.8,3.4){\line(0,1){0.2}}
\put(5.8,3.7){\line(0,1){0.2}}

\put(6,1){\line(0,1){0.2}}
\put(6,1.3){\line(0,1){0.2}}
\put(6,1.6){\line(0,1){0.2}}
\put(6,1.9){\line(0,1){0.2}}
\put(6,2.2){\line(0,1){0.2}}
\put(6,2.5){\line(0,1){0.2}}
\put(6,2.8){\line(0,1){0.2}}
\put(6,3.1){\line(0,1){0.2}}
\put(6,3.4){\line(0,1){0.2}}
\put(6,3.7){\line(0,1){0.2}}

\put(6.2,1){\line(0,1){0.2}}
\put(6.2,1.3){\line(0,1){0.2}}
\put(6.2,1.6){\line(0,1){0.2}}
\put(6.2,1.9){\line(0,1){0.2}}
\put(6.2,2.2){\line(0,1){0.2}}
\put(6.2,2.5){\line(0,1){0.2}}
\put(6.2,2.8){\line(0,1){0.2}}
\put(6.2,3.1){\line(0,1){0.2}}
\put(6.2,3.4){\line(0,1){0.2}}
\put(6.2,3.7){\line(0,1){0.2}}

\put(6.4,1){\line(0,1){0.2}}
\put(6.4,1.3){\line(0,1){0.2}}
\put(6.4,1.6){\line(0,1){0.2}}
\put(6.4,1.9){\line(0,1){0.2}}
\put(6.4,2.2){\line(0,1){0.2}}
\put(6.4,2.5){\line(0,1){0.2}}
\put(6.4,2.8){\line(0,1){0.2}}
\put(6.4,3.1){\line(0,1){0.2}}
\put(6.4,3.4){\line(0,1){0.2}}
\put(6.4,3.7){\line(0,1){0.2}}

\put(6.6,1){\line(0,1){0.2}}
\put(6.6,1.3){\line(0,1){0.2}}
\put(6.6,1.6){\line(0,1){0.2}}
\put(6.6,1.9){\line(0,1){0.2}}
\put(6.6,2.2){\line(0,1){0.2}}
\put(6.6,2.5){\line(0,1){0.2}}
\put(6.6,2.8){\line(0,1){0.2}}
\put(6.6,3.1){\line(0,1){0.2}}
\put(6.6,3.4){\line(0,1){0.2}}
\put(6.6,3.7){\line(0,1){0.2}}

\put(6.8,1){\line(0,1){0.2}}
\put(6.8,1.3){\line(0,1){0.2}}
\put(6.8,1.6){\line(0,1){0.2}}
\put(6.8,1.9){\line(0,1){0.2}}
\put(6.8,2.2){\line(0,1){0.2}}
\put(6.8,2.5){\line(0,1){0.2}}
\put(6.8,2.8){\line(0,1){0.2}}
\put(6.8,3.1){\line(0,1){0.2}}
\put(6.8,3.4){\line(0,1){0.2}}
\put(6.8,3.7){\line(0,1){0.2}}

\end{picture}
\caption{The manifold with corners $\bbar{X}_c$ (the circle variable $w$ is not represented). The region $\mc{V}$
has finite volume and appears in the statement of Proposition \ref{deffunccusp}. It corresponds to the region where the geodesic boundary defining function is well-defined.}\label{tx.1}
\end{figure}

\subsection{Renormalized volume in the convex co-compact case}\label{convcocomp}
A geometrically finite hyperbolic $3$-manifold $X=\Gamma\backslash \hh^3$ with no cusps is called \emph{convex co-compact}. Such a manifold $X$
can be decomposed as $X=\mc{K}\cup \mc{U}$ where $\mc{K}\subset X$ is a compact region with smooth boundary and  
$\mc{U}$ is isometric to 
\begin{equation}\label{productform} 
M\x (0, \delta)_\rho, \textrm{ with metric } g=\frac{d\rho^2+h(({\rm Id}+\demi \rho^2A)^2\cdot,\cdot)}{\rho^2}
\end{equation}
where $M=\Gamma\backslash \Omega$ is a compact surface (not necessarily connected), $h$ is a metric on $M$, $A$ is a symmetric endomorphism of $TM$ satisfying the trace and divergence properties 
\begin{equation}\label{constraint}
{\rm Tr}_h(A)=-\tfrac{1}{2}{\rm Scal}_{h}, \quad \delta_{h}(A)=\tfrac{1}{2} d \,{\rm Scal}_{h},
\end{equation}
see \cite[Th 7.4]{FeGr} or \cite{KrSc} for details. The product form \eqref{productform} will also be written
\[g=\frac{d\rho^2+h_0+\rho^2h_2+\rho^4h_4}{\rho^2} \]
with $h_0=h$, $h_2(\cdot,\cdot)=h(A\cdot,\cdot)$ and $h_4(\cdot,\cdot):=\frac{1}{4}h(A^2\cdot,\cdot)$.
The manifold $M$ is compact and, when equipped with the conformal class $[\rho^2g|_{TM}]=[h]$, is the conformal 
boundary of $X$. As above, $X$ can be compactified smoothly into $\bbar{X}$ with boundary $\pl\bbar{X}=M$ and $\rho$, viewed as a function on $X\setminus \mc{K}$ is a smooth boundary defining function. The function $\rho$ in $\mc{U}$ so that the metric has the form \eqref{productform} is not unique and is characterized by the property
\[  \Big|\frac{d\rho}{\rho}\Big|_g=1\,\, \textrm{ in }\,\, \mc{U} , \quad \textrm{ and } (\rho^2g)|_{TM}=h.\] 
In fact, for each metric $\hat{h}$ conformal to $h$, there is a unique function $\hat{\rho}$ near $\pl\bbar{X}$ 
satisfying $|d\hat{\rho}/\hat{\rho}|_{g}=1$ and $\hat{\rho}^2g|_{\hat{\rho}=0}=\hat{h}$, and we call $\hat{\rho}$ the \emph{geodesic boundary defining function associated to the conformal representative $\hat{h}$}. We just recall briefly the argument of Graham \cite{Gr}, as it will be useful later for the cusp case: take $\rho$ a boundary defining function of $\bbar{X}$, then the structure of the hyperbolic metric on $\hh^3$ near its boundary implies that 
$\bar{g}=\rho^2g$ is smooth up to $\pl\bbar{X}$ and $|d\rho/\rho|_g$ is smooth on 
$\bbar{X}$ and equal to $1$ at $\pl\bbar{X}$ (that follows from the fact that $g$ has curvature $-1$ in $\mc{U}$, 
see \cite{MM}), then writing 
$h:=(\rho^2g)|_{TM}$ and 
$\hat{\rho}=\rho e^{\omega}$ the equation $|d\hat{\rho}/\hat{\rho}|_g=1$ with the condition $\hat{\rho}^2g|_{\pl\bbar{X}}=
\hat{h}=e^{2\varphi}h$ for some $\varphi\in \mc{C}^\infty(M)$  is equivalent to the equation
\[ 2\cjg d\omega,d\rho\cjd_{\bar{g}}+\rho|d\omega|^2_{\bar{g}}=\frac{1-|d\rho^2|_{\bar{g}}}{\rho}, \textrm{ with boundary condition }\omega|_{\pl\bbar{X}}=\varphi.\]
This is a non-characteristic Hamilton-Jacobi equation with smooth coefficients which can be solved near the boundary by the method of characteristics and  the solution is unique. We then extend $\hat{\rho}$ smoothly outside 
this neighborhood as a positive function in any fashion. 
The form of the metric $g$ in the collar neighborhood of $M=\pl\bbar{X}$ induced by the gradient flow of $\hat{\rho}$ with respect to 
the metric $\hat{\rho}^2g$ is then of the form $g=(d\hat{\rho}^2+\hat{h}(\hat{\rho}))/\hat{\rho}^2$ for some one-parameter smooth family $\hat{h}(\hat{\rho})$ of metrics on $M$ parametrized by $\hat{\rho}$, and 
the constant sectional curvature $-1$ implies the form \eqref{productform} with \eqref{constraint} (using Gauss and Codazzi constraint equations).\\

If $\rho$ is a geodesic boundary defining function near $\pl\bbar{X}$ associated to a conformal representative $h\in[h]$, extended smoothly as a positive function on 
$X$, then the form  \eqref{productform} of the metric in $\mc{U}$ implies that the Riemannian volume measure in $\mc{U}$ has the form 
$\rho^{-3}{\rm dvol}_g= G(\rho)d\rho\, d{\rm vol}_{h}$ for some smooth function $G\in \mc{C}^\infty([0,\delta))$. 
It is then direct to see (see \cite{Al,GMS} for details) that 
\[ H(z):= \int_{X} \rho^z {\rm dvol}_g\] 
has a meromorphic extension from $\{z\in \cc; {\rm Re}(z)>2\}$ to $\cc$, with a simple pole at $z=0$  and the value of the finite part of $H(z)$ at $z=0$ is independent of the value of $\rho$ in any fixed compact set $\mc{K}\subset X$: in fact 
\[ {\rm FP}_{z=0}H(z)=\Big({\rm FP}_{z=0}\int_{X\setminus \mc{K}} \rho^z {\rm dvol}_g\Big)+ {\rm Vol}_g(\mc{K}).\]
We define the renormalized volume of $X$ with respect to the conformal representative $h\in [h]$ by
\[ {\rm Vol}_R(X,h):={\rm FP}_{z=0} \int_{X} \rho^z {\rm dvol}_g.\]
As a function on the set of metrics in the conformal class $[h]$ with volume $2\pi\chi(\pl\bbar{X})$, 
the functional ${\rm Vol}_R(X,h)$ has a unique maximum at $h=h^{\rm hyp}$, the unique hyperbolic metric in the conformal class, 
see for instance \cite[Prop. 3.1]{GMS}.

\begin{defi}\label{renormvol}
Let $X$ be a convex co-compact hyperbolic $3$-manifold with conformal boundary $(M,[h])$ a Riemann surface
admitting a hyperbolic metric, ie. $M$ does not contain genus-$1$ connected components. 
Let $h^{\rm hyp}\in [h]$ be the unique hyperbolic representative in the conformal class $[h]$, and let $\rho$ be the geodesic boundary defining function 
associated to $h^{\rm hyp}$, defined uniquely near $M$  and extended smoothly as a positive function in $X$. The renormalized volume of $X$ is defined to be 
\[{\rm Vol}_R(X):= {\rm FP}_{z=0} \int_{X} \rho^z {\rm dvol}_g ={\rm Vol}_R(X,h^{\rm hyp})
\]
where $g$ is the hyperbolic metric on $X$.
\end{defi}
The choice of the conformal representative $h^{\rm hyp}\in[h]$ to be hyperbolic is important and 
yields particularly interesting properties of the renormalized volume related to quasi-Fuchsian reciprocity and 
construction of K\"ahler potential for the Weil-Peterson metric; see \cite{KrSc, GMS}.

\subsection{Uniformisation of Riemann surfaces with cusps}\label{Sec:uniform}

\begin{defi}
A \emph{hyperbolic cusp} is a region $\{y>R\}$ of the quotient $\cjg z\to z+\tfrac{1}{2}\cjd\backslash \hh^2$ for some $R>0$, 
where $z=y+iw$ are coordinates on the hyperbolic half-plane $\hh^2$. 
\end{defi}
This set is 
also isometric to 
\[ \Big((R,\infty)_y\x (\rr/\tfrac{1}{2}\zz)_w , 
h=\frac{dy^2+dw^2}{y^2}\Big)\simeq \Big((0,\tfrac{1}{R})_v\x (\rr/\tfrac{1}{2}\zz)_w , h=\frac{dv^2}{v^2}+v^2dw^2\Big).\]
A \emph{surface with hyperbolic cusps} $(M,h)$ is a surface isometric outside a compact set to a finite disjoint union of
hyperbolic cusps.

We can compactify $M$ into a smooth compact surface $\bbar{M}$ with boundary by replacing each cusp end 
$(0,\tfrac{1}{R})_v\x (\rr/\tfrac{1}{2}\zz)_w$ by $[0,\tfrac{1}{R})_v\x (\rr/\tfrac{1}{2}\zz)_w$, that is, by adding circles at infinity 
of the cusp end. 

We can also compactify $M$ to a compact surface $\Sigma$ by adding a finite number of points, 
one for each cusp. Define a conformal coordinate
near such a point by $\zeta=\exp(4\pi(-y+iw))$. (The factor $4\pi$ is needed in order for
$e^{4\pi iw}$ to be well-defined for $w\in\rr/\tfrac{1}{2}\zz$.)
We compute
\begin{equation}\label{hdz}
|d\zeta|^2= (4\pi)^2 |\zeta|^2 (dy^2+dw^2) = (4\pi)^2 |\zeta|^2 y^2 h.
\end{equation}
Since $h$ is conformal to $|d\zeta|^2$, we get in this way a conformal structure on $\Sigma$. 
If $M$ is oriented, $\Sigma$ becomes a compact Riemann surface.

If we take a boundary defining function $\rho$ in a geometrically finite hyperbolic $3$-manifold
with a certain behaviour near the cusps, we see that the conformal infinity $M =\pl\bbar{X}$ will have
a metric with a hyperbolic cusp in the conformal class: indeed, set $\rho$ to be a smooth boundary defining function for $M$ in $\bbar{X}$ such that 
\[ \rho=\frac{x}{\sqrt{x^2+y^2}} \,\, \textrm{ in  }\,\,\mc{U}^c_j .\] 
Then the metric $h:=\rho^2g|_{TM}$ is a smooth metric on $M$ which is given near the cusps, 
that is in $\mc{U}_j^c\cap M\simeq\{y\in \rr;  |y|>R_j\}\x (\rr/\tfrac{1}{2}\zz)$,  by 
\[
h=\frac{dy^2 + dw^2}{y^2},\quad |y|>R_j, \,\, w\in \rr/\tfrac{1}{2}\zz
\]
or equivalently, using the coordinates $(u,v,w)$ of \eqref{Uj'}, $\rho=u/\sqrt{u^2+v^2}$ and
\[ h=\frac{dv^2}{v^2} + v^2dw^2,\quad 0<|v|<1/R_j, \,\, w\in \rr/\tfrac{1}{2}\zz.\]

We define on $\bbar{M}$ the space 
$\dot{\mc{C}}^\infty(\bbar{M})$ to be the Fr\'echet subspace of $\mc{C}^\infty(\bbar{M})$ consisting of functions vanishing 
to infinite order at $\pl \bbar{M}$. We also define $\mc{C}_r^\infty(\bbar{M})$
to be the subspace of $\mc{C}^\infty(\bbar{M})$ consisting of functions $f$ such that 
$\pl_w f\in \dot{\mc{C}}^\infty(\bbar{M})$.
This corresponds to smooth functions whose Taylor series at the boundary
is of the form  
\begin{equation}\label{Taylor}
f(x,\theta)\sim\sum_{k=0}^\infty a_k v^k
\end{equation} 
where $a_k$ are constants, rather than functions of $w$. 
On a surface $(M,h)$ with hyperbolic cusps, we say that a symmetric tensor $h'\in C^\infty(M;S^2(T^*M))$ 
is a \emph{cusp symmetric tensor} if there exist $A\in C^\infty(\bbar{M};{\rm End}(T\bbar{M}))$ 
self-adjoint with respect to $h$ such that $h'(\cdot,\cdot)=h(A\cdot,\cdot)$.

We first claim the following uniformisation theorem, see \cite[Theorem 3]{Rochon-Zhang} 
for a related result for K\"ahler-Einstein metrics.
\begin{prop}\label{uniform}
Let $h$ be a metric on a surface $M$ with hyperbolic cusps and let $\bbar{M}$ be the compactification as above. 
There exists a unique conformal factor $\varphi\in \mc{C}^\infty(M)\cap L^\infty(M)$ 
such that $h^{\rm hyp}=e^{2\varphi}h$ has constant curvature $-1$ on $M$. Moreover,
$\varphi\in \mc{C}_r^\infty(\bbar{M})$ and $\varphi|_{\pl \bbar{M}}=0$. More precisely,
in every cusp of $M$,
\[\varphi(v,w)+\log(1+av)\in \dot{\mc{C}}^\infty(\bbar{M})\]
for some $a\in\rr$ depending on the cusp.
\end{prop}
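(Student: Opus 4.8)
The plan is to recast the problem as a prescribed-curvature equation and then analyze it in the cusps, where the background metric is already hyperbolic. Writing $h^{\rm hyp}=e^{2\varphi}h$, the condition that $h^{\rm hyp}$ have curvature $-1$ is equivalent to the semilinear elliptic equation
\begin{equation}\label{pc}
\Delta_h\varphi=e^{2\varphi}+K_h,
\end{equation}
where $\Delta_h=\operatorname{div}_h\operatorname{grad}_h$ is the (nonpositive) Laplace--Beltrami operator and $K_h$ the Gauss curvature of $h$. In a cusp, using the coordinates $(v,w)$ with $h=\frac{dv^2}{v^2}+v^2dw^2$ and $K_h\equiv-1$, one has $\Delta_h=v^2\pl_v^2+2v\pl_v+v^{-2}\pl_w^2$, and the crucial elementary observation is that for every $a\in\rr$ the $w$-independent function $\varphi=-\log(1+av)$ solves \eqref{pc} exactly. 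These explicit solutions will serve both as barriers near the cusps and as the model to which the true solution is compared.

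First I would establish existence and uniqueness of a bounded solution. Uniqueness is the easier half: if $\varphi_1,\varphi_2$ are two bounded solutions then $\psi:=\varphi_1-\varphi_2$ satisfies $\Delta_h\psi=c\,\psi$ with $c=2\int_0^1 e^{2(\varphi_2+s\psi)}\,ds>0$; since (by the cusp analysis below) $\psi\to0$ at each cusp, a positive maximum of $\psi$ would be attained at an interior point, contradicting the maximum principle, and likewise for $\min\psi$, so $\psi\equiv0$. For existence I would solve \eqref{pc} with Dirichlet data $\varphi=0$ on the compact surfaces with boundary $M_\eps=\{v\ge\eps\}$ --- where the monotonicity of $\varphi\mapsto\Delta_h\varphi-e^{2\varphi}$ makes the Dirichlet problem uniquely solvable --- and then let $\eps\to0$. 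A large constant is a supersolution, giving a uniform upper bound, while the functions $-\log(1+av)$ provide explicit lower barriers in each cusp, uniform in $\eps$; the delicate point is a uniform lower bound on the compact core, which is where the hypothesis $\chi(M)<0$ (equivalently, that $M$ carries hyperbolic cusps) must enter, for instance through a Moser--Trudinger/variational estimate for the energy $E(\varphi)=\int_M(\demi|d\varphi|_h^2+K_h\varphi+\demi e^{2\varphi})\,{\rm dvol}_h$ adapted to the finite-volume cusp ends. With uniform $C^0$ bounds, interior elliptic estimates give $\mc{C}^\infty_{\rm loc}$ convergence of a subsequence to a bounded smooth solution $\varphi$ of \eqref{pc} on $M$; one may alternatively simply invoke the classical uniformization of the punctured Riemann surface $\Sigma$.

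The heart of the proof, and the main obstacle, is the cusp asymptotics, which I would carry out in two steps. Let $\varphi_0(v)$ denote the average of $\varphi$ over $w\in\rr/\demi\zz$ and $\psi:=\varphi-\varphi_0$. Step one: show $\psi\in\dot{\mc C}^\infty(\bbar M)$. Expanding in the characters $e^{4\pi ikw}$, the mode $\varphi_k$ ($k\ne0$) feels the potential $-(4\pi k)^2v^{-2}$ coming from $v^{-2}\pl_w^2$; after the substitution $v=e^{-t}$ the governing operator becomes $\pl_t^2-\pl_t-(4\pi k)^2e^{2t}$, whose exploding potential forces $\varphi_k$ and all its derivatives to decay like $\exp(-c/v)$, faster than any power of $v$. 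Since the nonlinearity couples the modes, I would phrase this as an energy estimate for the zero-$w$-average part $\psi$, whose equation $\Delta_h\psi=e^{2\varphi}-\overline{e^{2\varphi}}$ has a bounded right-hand side of size $O(\psi)$ while the cross-sectional Laplacian contributes a mass $\ge(4\pi)^2v^{-2}$ on zero-average functions; this spectral gap yields the claimed super-exponential, hence infinite-order, vanishing of $\psi$. In particular $\pl_w\varphi\in\dot{\mc C}^\infty(\bbar M)$, so $\varphi\in\mc C_r^\infty(\bbar M)$.

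Step two: match the radial profile. Averaging \eqref{pc} and using $\psi,\,e^{2\psi}-1\in\dot{\mc C}^\infty$ shows that $\varphi_0$ satisfies the reduced ODE $v^2\varphi_0''+2v\varphi_0'=e^{2\varphi_0}-1+R(v)$ with $R\in\dot{\mc C}^\infty$. The indicial roots of the linearization at $v=0$ are $1$ and $-2$, so boundedness of $\varphi_0$ excludes the $v^{-2}$ behaviour and forces $\varphi_0(v)=-av+O(v^2)$ for $a:=-\varphi_0'(0)$; since $-\log(1+av)$ is the exact bounded solution with the same linear term, writing $\varphi_0=-\log(1+av)+\rho$ gives for $\rho$ a linear ODE $v^2\rho''+2v\rho'-q(v)\rho=\til R(v)$ with $q(0)=2$ and $\til R\in\dot{\mc C}^\infty$. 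Because the $r=1$ mode has been absorbed into $a$ and boundedness kills the $r=-2$ mode, iterating this ODE against the infinite-order forcing yields $\rho=O(v^N)$ for all $N$, i.e.\ $\rho\in\dot{\mc C}^\infty$. Combining the two steps, $\varphi+\log(1+av)=\rho+\psi\in\dot{\mc C}^\infty(\bbar M)$ in each cusp, which also gives $\varphi|_{\pl\bbar M}=0$ and completes the proof. I expect Step one (propagating the infinite-order vanishing through the nonlinear coupling) together with the uniform lower bound in the existence argument to be the two points requiring the most care.
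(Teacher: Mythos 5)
Your plan is correct, but it follows a genuinely different route from the paper. The paper's proof is complex-analytic and very short: existence and uniqueness come straight from Poincar\'e--Koebe uniformization of the punctured Riemann surface $\Sigma$ (using that $\pi_1(M)$ is free and non-abelian), and the cusp asymptotics are obtained by observing that the isometry $\Phi$ between the cusp of $h$ and the cusp of $h^{\rm hyp}$ is a holomorphic map with a removable singularity at the puncture, $\Phi(\zeta)=\zeta f(\zeta)$ with $f(0)\neq 0$; the conformal factor is then literally the ratio $\Phi^*h/h$ computed from $h=|d\zeta|^2/(|\zeta|^2\log^2|\zeta|)$, which yields $\varphi+\log(1-\tfrac{\log|f(0)|}{4\pi}v)\in\dot{\mc{C}}^\infty(\bbar{M})$ in one stroke and identifies $a$ as $-\log|f(0)|/4\pi$. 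You instead work entirely on the real-analytic side: your exact solutions $-\log(1+av)$ of $v^2\varphi''+2v\varphi'=e^{2\varphi}-1$ (which I checked do solve the equation, consistently with the paper's answer) play the role of the paper's explicit ratio, your energy estimate with the cross-sectional spectral gap $(4\pi)^2v^{-2}$ replaces the holomorphy of $\Phi$ in killing the $w$-dependence to infinite order, and your indicial-root analysis of the averaged ODE replaces the Taylor expansion of $f$ at $0$. The PDE route is longer and the two places you flag do need real care --- the uniform lower bound in the exhaustion (where $\chi(M)<0$ must enter, exactly as the paper's appeal to a non-abelian free $\pi_1$ does), and the step where boundedness of $\varphi_0$ for the \emph{nonlinear} ODE forces $\varphi_0\to 0$ and $\varphi_0=-av+O(v^2)$ before the linearized indicial computation applies (integrate $(v^2\varphi_0')'=e^{2\varphi_0}-1+R$ twice to exclude the $v^{-2}$ and $\log v$ behaviours); also note that your maximum-principle uniqueness presupposes the cusp decay for \emph{both} solutions, so it must come after the asymptotic analysis or be replaced by uniqueness of the complete hyperbolic metric in a conformal class. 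What your approach buys is robustness: it does not use integrability of the structure, it produces barriers and $C^0$ estimates that are uniform in parameters, and it is in fact the same curvature-equation/maximum-principle machinery the paper itself deploys later (Proposition \ref{cf.1} and Corollary \ref{cor:limitenergy}) to control $\varphi_\eps$ along a degenerating family --- something the one-shot complex-analytic argument does not directly give.
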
 
\begin{proof} 
The surface $(M,h)$ is conformal to the compact Riemann surface $\Sigma$ with a finite number of points removed, hence its 
fundamental group is non-commutative and free.
The Poincar\'e--Koebe uniformization theorem implies that 
$M$ with its induced conformal structure is conformal to a complete 
hyperbolic quotient. In other words, there exists a unique conformal factor $\varphi\in \mc{C}^\infty(M)$
such that the Riemannian metric $h^{\rm hyp}=e^{2\varphi}h$ is hyperbolic and complete.
The complete hyperbolic metric on a punctured Riemann surface 
is known to have hyperbolic cusps near the punctures, 
hence there exist isometries $\Phi$ between the hyperbolic cusps 
of $h$ and $h^{\rm hyp}$ near the punctures. Such a $\Phi$ is 
a holomorphic self-map of $\Sigma$ defined only near the punctures,
and $\Phi^*h=e^{2\varphi}h$. 

Note that $\Phi$ is an isometry, hence it is proper. It follows that it extends continuously
as the identity on the punctures. The possible singularities of $\Phi$ at the punctures are thus 
removable since the target surface $\Sigma$ is compact, so in terms of the complex variable
$\zeta=\exp(4\pi(-y+iw))$, we have $\Phi(\zeta)=\zeta f(\zeta)$ with $f(0)=\Phi'(0)\neq 0$. 
Then using \eqref{hdz}
\[h=\frac{|d\zeta|^2}{16\pi^2 |\zeta|^2 y^2}= \frac{1}{|\zeta|^2\log^2|\zeta|}|d\zeta|^2. \]
This implies
\begin{align*}
\Phi^*h ={}& \frac{|\Phi'(\zeta) d\zeta|^2}{|\zeta f(\zeta)|^2\log^2|\zeta f(\zeta)|}=
\frac{|1+\zeta \frac{f'(\zeta)}{f(\zeta)}|^2}{\left(1+\frac{\log|f(\zeta)|}{\log|\zeta|}\right)^2}h.
\end{align*}
In terms of the boundary-defining function $v=1/y$, 
\begin{align*}
\log|\zeta|=-\frac{4\pi}{v},&& \zeta \frac{f'(\zeta)}{f(\zeta)} \in \dot{\mc{C}}^\infty(\bbar{M}),&&
\log|f(\zeta)|-\log|f(0)|\in \dot{\mc{C}}^\infty(\bbar{M}).
\end{align*}
Thus the conformal factor $e^{2\varphi}=\frac{\Phi^* h}{h}$ satifies 
$\varphi+\log(1-\frac{\log|f(0)|}{4\pi}v)\in \dot{\mc{C}}^\infty(\bbar{M})$ near the cusp.
\end{proof}

\subsection{The renormalized volume of geometrically finite hyperbolic $3$-manifolds}

We now wish to define a renormalized volume for a geometrically finite hyperbolic 
$3$-manifold $X$ with rank-$1$ cusps. We proceed like in the convex co-compact case, 
by first uniformizing the conformal infinity $(M,[h])$ with the choice of the finite volume 
hyperbolic metric $h$ in the conformal class
and then constructing a geodesic boundary defining function $\rho$ in $\bbar{X}$ associated to $h$. The difficulty here is that the conformal boundary is non-compact 
and it is not clear what the behavior of the function $\rho$ near the cusp in $\bbar{X}$ is.
We proceed as in Section \ref{Sec:uniform}: we start by choosing $\rho$ as a smooth boundary defining function near $\pl\bbar{X}=M$ which is equal to $\rho=u/\sqrt{u^2+v^2}$ in the model \eqref{Uj'} of each cusp neighborhood $\mc{U}_j^c$, the metric $h\in [h]$ 
obtained by $h=\rho^2g|_{TM}$ in the conformal infinity is then hyperbolic outside a compact region of $M$. Then by Proposition \ref{uniform} we know that there exists a hyperbolic metric $h^{\rm hyp}=e^{2\varphi}h$, with $\varphi\in \mc{C}^\infty(\bbar{M})$ and $\varphi|_{\pl \bbar{M}}=0$. 
We obtain the following Proposition, whose proof is done in Section \ref{hamilton}.
\begin{prop}\label{deffunccusp}
Let $X$ be a geometrically finite hyperbolic $3$-manifold with rank-1 cusps. Let $(M, [h])$ be the conformal infinity and $h^{\rm hyp}$ be the complete hyperbolic metric with cusps in the conformal class obtained from Proposition \ref{uniform}.   For each $\psi\in {\mathcal{C}}^{\infty}_r(\bbar{M})$, consider the conformal representative $\hat{h}:=e^{2\psi}h^{\rm hyp} $. 
There exists a smooth boundary defining function $\hat{\rho}\in \mc{C}^\infty(\bbar{X}_c)$ 
of the boundary hypersurface $\bbar{M}$ in $\bbar{X}_c$ and a closed set $\mc{V}\subset \bbar{X}_c$
with finite volume, intersecting $\pl \bbar{X}_c$ only at ${\rm cf}$, such that 
\begin{equation}\label{geodforXc}
\Big| \frac{d\hat{\rho}}{\hat{\rho}}\Big|_g=1 \textrm{ in }\bbar{X}_c\setminus \mc{V}, \quad \hat{\rho}^2g|_{TM}= \hat{h}.\end{equation}
The function $\hat{\rho}$ is defined uniquely near $\bbar{M}$.
There is a smooth diffeomorphism $\phi: \bbar M\times [0,\epsilon)_x\to \bbar{X}_c\setminus \mc{V}$ such that $\phi^*\hat{\rho}=x$ and $\phi^*g$ admits a finite expansion in powers of $x$,
\begin{equation}
     \phi^*g=  \frac{dx^2+ \hat{h}_0 + x^2\hat{h}_2 + \frac14 x^4 \hat{h}_4}{x^2}
\label{mai.1}\end{equation}
where the coefficients $\hat{h}_0=\hat{h}$ and $\hat{h}_2,\hat{h}_4$ are cusp symmetric tensors such that 
$$
\Tr_{\hat{h}_0}(\hat{h}_2)= -\frac12 \Scal_{\hat{h}_0} \quad \mbox{and} \quad  \delta_{\hat{h}_0}(\hat{h}_2)= \frac12 d \, \Scal_{\hat{h}_0}
$$
and $\hat{h}_4(\cdot,\cdot)= \frac14 \hat{h}_0(A^2\cdot,\cdot)$ if $\hat{h}_2(\cdot,\cdot)= \hat{h}_0(A\cdot,\cdot)$.
Finally, the function  
$H(z):= \int_{X} \hat{\rho}^z {\rm dvol}_g$
admits a meromorphic extension from ${\rm Re}(z)>2$ to a neighborhood of $z=0$, with pole of order $1$ at $z=0$.
\end{prop}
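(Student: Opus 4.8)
The plan is to reduce the construction of $\hat\rho$ to solving a Hamilton--Jacobi (eikonal) equation, and to treat a neighbourhood of the cusp face $\cf$ separately from the rest of the boundary. Writing $\hat\rho=\rho e^{\om}$ with $\rho$ the fixed defining function of Section~\ref{geofinite} and $\bar g=\rho^2g$, the equation $|d\hat\rho/\hat\rho|_g=1$ becomes, exactly as in the argument of Graham recalled in Section~\ref{convcocomp}, a non-characteristic first order equation for $\om$ with boundary value $\om|_{\bbar M}=\varphi+\psi$, where $e^{2\varphi}$ is the factor produced by Proposition~\ref{uniform}. Away from a neighbourhood of $\cf$ the coefficients are smooth and uniformly non-characteristic, so the method of characteristics produces a unique local solution near $\bbar M$, and the constant curvature $-1$ of $g$ together with the Gauss--Codazzi equations forces the normal form \eqref{mai.1} with the stated trace, divergence and $\hat h_4$ relations; this part is identical to the convex co-compact case. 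The genuinely new point, and what I expect to be the main obstacle, is to understand the solution up to and along $\cf$, where $\bbar M$ degenerates into the two cusps of $M$ attached to a single rank-$1$ cusp of $X$.

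For this I would work in the explicit model \eqref{Uj}. By the $w$-invariance of the cusp one looks for $\om=\om(x,y)$, so that the eikonal equation reduces to $2\om_x+x(\om_x^2+\om_y^2)=0$. Its characteristics can be integrated in closed form: the quantity $\om_y$ is constant along each characteristic, and the characteristic curves turn out to be exactly the geodesic semicircles $x^2+y^2=\mathrm{const}$ orthogonal to $\{x=0\}$. For $\psi=0$ this yields, in terms of the angle $\theta$ with $\tan\theta=x/y$, the geodesic defining function $\hat\rho=2\tan(\theta/2)$, that is $\hat\rho_+=2x/(\sqrt{x^2+y^2}+y)$ near the cusp $y\to+\infty$ and, by the $y\mapsto-y$ symmetry, $\hat\rho_-=2x/(\sqrt{x^2+y^2}-y)$ near the cusp $y\to-\infty$. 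In the blow-up coordinates $(r,\theta,w)$ defining $\bbar X_c$, these are smooth functions of $\theta$ up to $r=0$, so each of the two local solutions extends smoothly to $\cf$. For general $\psi\in\mc C_r^\infty(\bbar M)$, whose Taylor expansion at the cusp carries no $w$-dependence, I would obtain the corresponding solutions as a perturbation of this explicit model, the $\mc C_r^\infty$ structure guaranteeing the smoothness up to $\cf$.

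The two solutions $\hat\rho_\pm$ agree to zeroth order along $\{y=0\}$ (both equal $2$ there) but not to first order, which reflects the collision of the two equidistant foliations emanating from the two cusps of $M$ along a \emph{cut locus}. I would therefore define $\mc V$ to be a neighbourhood of this cut locus $\{y=0\}=\{v=0\}$, which in $\bbar X_c$ runs from the point $\theta=0$ of $\cf$ into the bulk. A direct computation with ${\rm dvol}_g=x^{-3}dx\,dy\,dw$ shows that such a wedge $\{|y|<\eps x\}$ has finite volume, and by construction $\mc V$ meets $\pl\bbar X_c$ only along $\cf$. Outside $\mc V$ one takes $\hat\rho=\hat\rho_+$ on the $y>0$ side and $\hat\rho_-$ on the $y<0$ side, glued to the bulk solution of the first paragraph; inside $\mc V$, where the two branches match only continuously, I extend $\hat\rho$ to an arbitrary smooth positive function. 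This produces $\hat\rho\in\mc C^\infty(\bbar X_c)$ satisfying \eqref{geodforXc} on $\bbar X_c\setminus\mc V$, unique near $\bbar M$ by uniqueness in the method of characteristics, and the normal form \eqref{mai.1} holds on $\bbar X_c\setminus\mc V$ with cusp symmetric tensors, since the endomorphism $A$ is smooth up to $\bbar M$ including at the cusp ends.

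Finally, for the meromorphic continuation I would split $H(z)=\int_{\mc V}\hat\rho^z\,{\rm dvol}_g+\int_{X\setminus\mc V}\hat\rho^z\,{\rm dvol}_g$. On $\mc V$ the function $\hat\rho$ is bounded (it is close to $2$) and ${\rm Vol}_g(\mc V)<\infty$, so this term is entire in $z$. On $X\setminus\mc V$ the normal form \eqref{mai.1} gives $\hat\rho^{-3}\,{\rm dvol}_g=G(\hat\rho)\,d\hat\rho\,{\rm dvol}_{\hat h}$ with $G$ a polynomial coming from $\det({\rm Id}+\tfrac12\hat\rho^2A)$, so the $\hat\rho$-integral is an elementary Mellin transform producing the pole of order one at $z=0$ exactly as in the convex co-compact case; the transverse integral over $\bbar M$ converges near the cusps because $\hat h$ has finite-area hyperbolic cusp ends and $A$ is bounded there. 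The main difficulty throughout is the construction and smoothness of $\hat\rho$ up to $\cf$ together with the correct identification of $\mc V$ as the collision locus of the two foliations; once this is established, the expansion and the meromorphic extension follow along the lines already developed in the compact-boundary setting.
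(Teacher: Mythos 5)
Your outline follows the same route as the paper's proof: reduce to a non-characteristic Hamilton--Jacobi equation for $\om$ with boundary value $\varphi+\psi$, handle the bulk by the method of characteristics exactly as in the convex co-compact case, analyze a neighbourhood of the cusp in the explicit model, take $\mc{V}$ to be a finite-volume wedge around the vertical geodesic $\{y=0\}$ over the parabolic fixed point, and get the meromorphic continuation from the normal form \eqref{mai.1} by a Mellin transform. Your closed-form solution $\hat\rho_\pm=2x/(\sqrt{x^2+y^2}\pm y)$ in the pure model is correct (one checks $|d\log\hat\rho_\pm|_g=1$ and that the induced boundary metric is $(dy^2+dw^2)/y^2$), and it matches the paper's expansion $\om=\varphi_0+a_2U^2+\mc{O}(U^3)$ with $a_2=\tfrac14$ when $\varphi_0=0$; your identification of the wedge $\{|y|<\eps x\}$ as the collision locus of the two foliations, and its finite volume, also agree with the paper.

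The genuine gap is the sentence asserting that for general $\psi\in\mc{C}^\infty_r(\bbar M)$ you ``would obtain the corresponding solutions as a perturbation of this explicit model, the $\mc{C}^\infty_r$ structure guaranteeing the smoothness up to $\cf$.'' This is precisely the hard step, and it does not follow from a soft perturbation argument because the perturbation is singular: written in coordinates $(U,v,w)$ that are smooth on the blow-up $\bbar{X}_c$, the eikonal equation \eqref{equchar} contains the term $\tfrac{U}{R^2}|\pl_w\om|^2$ and related terms whose coefficients blow up like $v^{-2}$ at $\cf$, so smooth dependence of the characteristics on their initial data cannot be invoked up to the corner $\cf\cap\bbar{M}$. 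What the paper actually proves (Proposition \ref{proprho0}) is a bootstrap: (i) $p_w$ is constant along each characteristic, hence equals $\pl_w\varphi_0(v_0,w_0)=\mc{O}(|v_0|^\infty)$; (ii) an a priori estimate (Lemma \ref{bdf.16}) gives $|v(s)|\ge |v_0|e^{-Cs}$ and $U(s)\ge s$ along the characteristics, so that $p_w/R^2=\mc{O}(|v_0|^\infty)$ uniformly; only then does the characteristic system extend smoothly to $v_0=0$, yielding both the smoothness of $\om$ up to $\cf$ and the rapid vanishing $\pl_w\om=\mc{O}(|v|^\infty)$. That last conclusion is also what you need, and do not justify, to assert that $\hat h_2,\hat h_4$ are cusp symmetric tensors and that the collar diffeomorphism $\phi$ is smooth up to $\cf$. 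Supplying this quantitative control of the characteristics near the corner is what is required to close your argument; the remainder of the proposal is sound.
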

A smooth boundary defining function of $\bbar{M}$ in $\bbar{X}_c$ is called 
\emph{geodesic boundary defining function} associated to $\hat{h}$ if it satisfies 
\eqref{geodforXc}. Similarly to the convex co-compact case, the value of the finite part 
at $z=0$ of the integral in any compact subset $\mc{V}\subset \bbar{X}_c\setminus \bbar{M}$ 
is independent of the value of $\rho$ in $\mc{V}$:
\[{\rm FP}_{z=0}H(z)=\Big({\rm FP}_{z=0}\int_{X\setminus 
\mc{V}} \rho^z {\rm dvol}_g\Big)+ {\rm Vol}_g(\mc{V}).\]
This is a consequence of the fact that $\mc{V}$ has finite volume. In other words, ${\rm FP}_{z=0}H(z)$ 
depends only on the values of $\rho$ in arbitrarily small neighborhoods of $\bbar{M}$ in $\bbar{X}_c$, 
and thus it depends only on the conformal representative $\hat{h}\in[h]$ in the conformal infinity.

Now we can define the renormalized volume in this setting:
\begin{defi}\label{renormvolbis}
Let $X$ be a  geometrically finite hyperbolic $3$-manifold with rank-$1$ cusps, and 
with conformal boundary $(M ,[h])$ admitting a complete hyperbolic metric. 
Let $h^{\rm hyp}\in [h]$ be the unique hyperbolic representative in the conformal class $[h]$. 
For $\psi\in\mathcal{C}_r^{\infty}(\bbar{M})$, let $\hat{\rho}\in \mc{C}^\infty(\bbar{X}_c)$ 
be the geodesic boundary defining function of $\bbar{M}$ associated to 
$\hat{h}:=e^{2\psi}h^{\rm hyp}$ defined uniquely by Proposition \ref{deffunccusp} 
in a neighborhood of $\bbar{M}$ in $\bbar{X}_c$ and extended in any fashion as 
a positive smooth function in $\bbar{X}_c\setminus \bbar{M}$. 
The renormalized volume of $X$ associated to the conformal representative 
$\hat{h}=e^{2\psi}h^{\rm hyp}$ is defined to be 
\[{\rm Vol}_R(X,\hat{h}):= {\rm FP}_{z=0} \int_{X} \hat{\rho}^z \,{\rm dvol}_g\]
where $g$ is the hyperbolic metric on $X$.  We define the renormalized volume of $X$ by
$$  {\rm Vol}_R(X):= {\rm Vol}_R(X,h^{\rm hyp}).$$  
\end{defi}

\section{Formation of a cusp from Schottky groups}\label{sec:Schottky}

\subsection{Notations} We shall use mainly the representation of $\hh^3$ as a half-space $\rr^+_x\x \cc_z$ in $\rr^3$, 
the boundary then becomes $\pl\hh^3=\{0\}\x \cc\simeq \cc$. 

The intersection of $\hh^3=\rr^+_x \x \cc_z$ 
with the Euclidean ball of radius $r>0$ centered at a boundary point
$z\in \pl\hh^3$ is called a \emph{half-ball} of $\hh^3$, and we denote it by $B(z,r)$.
The boundary of a half-ball $B(z,r)$ in $\hh^3$ is called a \emph{half-sphere} of center $z\in\cc$ and radius $r>0$, and is denoted by $H(z,r)$. 
In terms of hyperbolic geometry, $B(z,r)$ is an unbounded domain with totally geodesic boundary given by the half-sphere $H(z,r)=\pl B(z,r)\cap \hh^3$
We say that the ball is \emph{supported} by the disc $D(z,r)\subset \cc$ of center $z$ and radius $r$ 
(this corresponds to $\pl\hh^3\cap \bbar{B(z,r)}$). Similarly we say that $H(z,r)$ is \emph{supported} by the circle
$C(z,r)=\pl D(z,r)$ in $\pl\hh^3\simeq \cc$ (this corresponds to $\pl\hh^3\cap \bbar{H(z,r)}$).

\subsection{Schottky groups}\label{Schottky}
We shall analyze the behaviour of  $\VolR(X^{\eps})$ for a family $(X^{\eps})_{\eps>0}$ 
of convex co-compact hyperbolic $3$-manifolds such that, as $\eps \to 0$,
$X^\eps$ converges to a hyperbolic $3$-manifold $X^0$ with rank-$1$ cusps. Here, we take 
$\eps\geq 0$ to be a continuous parameter, but one could of course also consider sequences.
The case that we consider is a smooth (in $\eps>0$) family of Schottky groups $\Gamma^\eps$ with certain loxodromic generators of ${\rm PSL}_2(\cc)$ 
converging to parabolic transformations of ${\rm PSL}_2(\cc)$.

We recall that a \emph{marked Schottky group} $\Gamma\subset {\rm PSL}_2(\cc)$ of genus $g$ 
with \emph{standard generators} $\gamma_1,\dots,\gamma_g\in {\rm PSL}_2(\cc)$ is a group 
generated by these generators  such that there exist $2g$ disjoint Jordan curves 
$(C_{\pm j})_{j=1\dots,g}$ in $\mathbb{S}^2=\pl \hh^{3}$  bounding a connected open domain 
$D\subset \mathbb{S}^2$ with $\gamma_j(D)\cap D=\emptyset$ and $\gamma_j(C_{-j})=C_{+j}$. 
Then $\Gamma$ is free and contains only  loxodromic elements, with fundamental domain $D\cup_{j}C_{\pm j} \subset \mathbb{S}^2$ for the action of $\Gamma$ on the discontinuity set $\Omega\subset \mathbb{S}^2$ (which is connected open and dense set in $\mathbb{S}^2$). It is shown by Chuckrow \cite{Ch} that every set of $g$ free generators of a Schottky group is in fact a set of standard generators.
A Schottky group is said to be a \emph{classical Schottky group} if there is some set 
of free generators for which the curves $C_{\pm j}$ can be taken to be circles. 
A family of circles associated to the generators satisfying the conditions as above 
will be called a set of \emph{adapted circles}. Such a set is of course not unique.
We can view $\Gamma$ as a discrete group of isometries acting freely and discontinuously 
on $\hh^3$, and as a group of conformal transformations acting freely and discontinuously 
on the discontinuity set $\Omega\subset \mathbb{S}^2$. To define the Schottky space $\mc{S}_g$, 
we follow Chuckrow \cite{Ch}:  ${\rm PSL}_2(\cc)$ 
identifies with  $P_3(\cc)\setminus Y$ where $P_3(\cc)$ is the 
$3$-dimensional complex projective space, and $Y$ the algebraic submanifold 
$\{\gamma\in{\rm PSL}_2(\cc);\det\gamma=0 \}$. Consider the subset $\mc{U}_g$ of 
$({\rm PSL}_2(\cc))^g$
consisting of groups with $g$ generators $\gamma_1,\dots,\gamma_g$ such that there are 
at least 3 distincts fixed points among those of $\gamma_j$. Then  $\mc{U}_g$ 
is an open and connected subset of $(P_3(\cc)\setminus Y)^g$.
There is an action of ${\rm PSL}_2(\cc)$ on $\mc{U}_g$ by conjugation:  
\[ (B,( \gamma_1,\dots,\gamma_g) )\mapsto ( B\gamma_1 B^{-1},\dots, B\gamma_g B^{-1})
\]
and $\mc{U}_g/{\rm PSL}_2(\cc)$ is a complex manifold of dimension $3g-3$ with 
the natural topology inherited from $({\rm PSL}_2(\cc))^g$. One way of fixing 
coordinates on this space is to fix $3$ distincts fixed points of the generators 
by conjugating the group with an appropriate element of ${\rm PSL}_2(\cc)$. 
More precisely near a $\Gamma \in \mc{U}_g/{\rm PSL}_2(\cc)$, up to conjugation, 
we can choose the generators $\gamma_j$ so that $0$, $1$ and $\infty$ are 
the three distinct fixed points among the generators, then there are local 
complex coordinates on $\mc{U}_g/{\rm PSL}_2(\cc)$ near $[\Gamma]$ 
given by the coefficients $a_j,b_j,c_j,d_j\in \cc$ so that 
$\gamma_j(z)=(a_jz+ b_j)/(c_jz + d_j)$ with $a_jd_j-b_jc_j=1$ 
(notice that $3$ complex parameters among the  $\gamma_j$ are fixed).
The Schottky space $\mc{S}_g$ is the subset of $\mc{U}_g/{\rm PSL}_2(\cc)$ 
corresponding to equivalence classes of marked Schottky groups. 
For a group $\Gamma \in \mc{S}_g$, we can always choose the three distinct 
fixed points to be the repulsive and attractive fixed point of $\gamma_1$ 
and the repulsive fixed point of $\gamma_2$, and one then gets global complex 
coordinates by conjugating the groups so that $0$ and $\infty$ are the attractive 
and repulsive fixed point of $\gamma_1$ and $1$ is the repulsive point of $\gamma_2$. 
This system of coordinates is not well adapted to the description of groups 
tending to the boundary of $\mc{S}_g$ with $\gamma_1$ becoming parabolic.
Chuckrow \cite[Lemma~5]{Ch} showed that $\mc{S}_g$ is a connected open subset of 
$\mc{U}_g/{\rm PSL}_2(\cc)$.  The Schottky classical space $\mc{S}_g^0$ 
is the open subset of those groups which are classical. 
Chuckrow \cite[Theorems~4 and 5]{Ch} also showed that boundary points in $\pl\mc{S}_g$ are free groups 
with $g$ generators, without elliptic transformations, which either have a 
parabolic element or are not discontinuous, and both cases may happen. Marden \cite{Ma} 
proved that groups in $\pl\mc{S}_g$ are discrete, that 
$\mc{S}_g\setminus \bbar{\mc{S}_g^0}$ is a non-empty open set, and groups in 
$\pl\mc{S}^0_{g}$ are discontinuous. Later Jorgensen-Marden-Maskit \cite{JMM} proved that 
all points in $\pl \mc{S}_g^0$ are geometrically finite Kleinian group with parabolic 
elements. Thus $\mc{S}_g^0$ is better behaved and we will only 
focus on this space.

\subsection{Admissible Schottky groups}
We consider a sequence of classical Schottky groups $\Gamma^\eps\in \mc{S}_{g}^0$ where $(\gamma_1^\eps,\dots,\gamma_g^{\eps})$ 
converge to $(\gamma^0_1,\dots,\gamma^0_g)$ with $\Gamma^0$ generated by these elements in 
$\pl\mc{S}_g^0\cap \pl\mc{S}_g$ as $\eps\to 0$, so that $\Gamma^0$ 
is a geometrically finite free group with $g$ generators, with parabolic elements. We assume that 
$\gamma_j^\eps$ is smooth in $\eps\in [0,1]$ for $j\leq g$.
We use the coordinates on $\mc{S}_g$ as above, so that the fixed points of $\gamma_1^\eps$ are $0$ and $\infty$, and the repulsive point of $\gamma_2^\eps$ is $1$.
For notational simplicity, we shall sometime remove the $0$ superscript for the limiting objects at $\eps=0$, for instance we shall use $\gamma_j$ for $\gamma_j^0$.
We write these elements of ${\rm PSL}_2(\cc)$ as
\[\begin{gathered}
\gamma_j(z)=\frac{a_jz+b_j}{c_jz+d_j}, \quad \gamma^\eps_j(z)=\frac{a_j(\eps)z+b_j(\eps)}{c_j(\eps)z+d_j(\eps)}, \\
\textrm{ with } a_jd_j-b_jc_j=1  \textrm{ and } 
a_j(\eps)d_j(\eps)-b_j(\eps)c_j(\eps)=1.
\end{gathered}\]
For $j<j_0$, the fixed points of $\gamma_j^\eps$ are denoted $p_{\pm j}(\eps)$ and given by 
\begin{equation}
\label{fixedpoints} 
p_{\pm j}(\eps)= \tfrac{a_j(\eps)-d_j(\eps)}{2c_j(\eps)}\pm \tfrac{\sqrt{\Tr(\gamma_j^\eps)^2-4}}{2c_j(\eps)}. 
\end{equation}
(we use the determination of $\sqrt{\cdot}$ on $\cc\setminus \rr_-$). 
Up to possibly exchanging $\gamma_j^\eps$ by its inverse in our choice of generators, 
we can assume that 
$p_{+j}(\eps)$ is the attractive, and $p_{-j}(\eps)$ the repulsive fixed point. 
The geodesic in $\hh^3$ relating $p_{-j}(\eps)$ to $p_{+j}(\eps)$ is called the \emph{axis} of $\gamma_j^\eps$. The Euclidean distance in $\cc$ between the two fixed points of $\gamma_j^\eps$ is 
\begin{equation}\label{p_+-p_-} 
|p_{+j}(\eps)-p_{-j}(\eps)|= \frac{|\Tr(\gamma_j^\eps)^2-4|^\demi}{|c_j(\eps)|}.
\end{equation}
Take a family of adapted circles $C_{\pm j}^\eps$ bounding a fundamental domain $D^\eps$. 
Notice that $D^\eps$ has compact closure contained in the region bounded by $C_{-1}^\eps$ 
and $C_{+1}^\eps$ in $\cc$. If the circles $C_{\pm 1}^\eps$ are not contained in a compact 
set of $\cc$ independent of $\eps$, then all fixed points of a subsequence of $\gamma^\eps_j$ 
for $j>1$ tend to $\infty$, and that is not possible since 
the limiting transformations $\gamma_j$ and $\gamma_k$ can not have common fixed 
points if $j\not= k$, according to \cite[Lemma 2.3]{Ma}. For the same reason, $D^\eps$ can not shrink to $0$ and more generally to a point of $\cc$. Up to extraction of a  subsequence $\eps_n\to 0$, 
the circles $C_{\pm j}^{\eps_n}$ then converge to circles or points, and for 
$j=1$ the limits $C_{\pm 1}$ are circles. If they are disjoint then the limiting domain $D^0$ is non-empty
and thus, if some circle $C_{\pm j}^{\eps_n}$ converge to a point $p$, we obtain a contradiction since $\gamma_j$ would have to map $D$ to $p$. This shows in that case that all $C_{\pm j}^{\eps_n}$ converge to circles $C_{\pm j}$. If $C_{+1}=C_{-1}$, then since $\gamma_1^\eps\to \gamma_1$, we necessarily have that 
$\gamma_1$ is elliptic or the Identity, but this can not happen by \cite{Ch} since $\Gamma^0$ can not contain elliptic elements and must be a free group with $g$ generators given by $\gamma_1,\dots \gamma_g$.
We thus conclude that $D^{\eps_n}\to D^0$ non-empty, bounded by circles $C_{\pm j}$. Necessarily, at least two of the circles $C_{\pm j}$ must intersect at a point since we assume that $\Gamma^0$ is not in $\mc{S}_g^0$.
We will make the assumption that the limiting circles satisfy
\begin{equation}\label{assumpcircles}
C_{\pm j}\cap (C_{+k}\cup C_{-k})=\emptyset,   \textrm{ if }  j\not=k.
\end{equation}
Thus there are $g-j_0$ of the generators $\gamma_j$ that are parabolic for some $0<j_0\leq g-1$. Without loss of generality, we choose them to be $\gamma_{j}$ for $j=j_0+1,\dots,g$.
For $j\leq j_0$ the $\gamma_j$ are loxodromic. If $j>j_0$, we have $\Tr(\gamma_j)=2$ at the limit 
and the unique fixed point of the parabolic transformation is $p_j=\frac{a_j-d_j}{2c_j}$, and  
we have that $|\Tr(\gamma_j^\eps)-2|^\demi/|c_j(\eps)|\to 0$ as $\eps\to 0$. 
The fundamental domain $D^\eps$ for $\Gamma^\eps$ is uniformly bounded
in $\cc$, and $c_j(\eps)\not=0$ for $j>1$ in order to have adapted circles $C_{\pm j}^{\eps}$ associated to $\gamma_j^{\eps}$. To be adapted, the disk bounded by the circle $C_{+j}^\eps$
has to contain the point $z=a_j(\eps)/c_j(\eps)$ (which is the image of $\infty$ by $\gamma_j^\eps$) and the disk bounded by the 
circle $C_{-j}^\eps$ has to contain $z=-d_j(\eps)/c_j(\eps)$ (which is mapped to $\infty$ by $\gamma_j^\eps$); since $C_{\pm j}^\eps$ also contains 
$p_{\pm j}(\eps)$, we deduce that when $j>j_0$ and $\eps\to 0$, the fact that $p_{\pm j}(\eps)\to p_j$ implies that 
the radius of $C_{\pm j}^\eps$ is bounded below by $(|\Tr(\gamma_j^\eps)|-|\Tr(\gamma_j^\eps)-2|^\demi)/4|c_j(\eps)|$ for small $\eps>0$. In particular $c_j(\eps)$ converge to $c_j\not=0$ as otherwise the radius
of the adapted circles would tend to $\infty$.
There is a subsequence $\eps_n$ where for each $j$ there are adapted circles $C_{\pm j}^{\eps_n}$ 
associated to the $\gamma_j^{\eps_n}$ that converge to circles $C_{\pm j}^0$ (also denoted $C_{\pm j}$),
 which are tangent if and only if $j>j_0$ and with $C_{+j}\cap C_{-j}=p_j$ being the fixed point of $\gamma_j$.  Then the limiting (tangent) adapted circles for $j>j_0$ have radius bounded below by 
$1/2|c_j|$. Moreover, an easy computation shows that $\gamma_j$ preserve the line 
$c_j^{-1}(a_j+\rr)\subset \cc$ that we call the \emph{axis} of $\gamma_j$. 

The element $\gamma_1^\eps$ is of the form 
\[\gamma_1^\eps(z)=q_1(\eps)z,\quad q_1(\eps)\in \cc ,\,\, |q_1(\eps)|>1\]
and each $\gamma_j^{\eps}$ for $j>1$ can be written 
as the transformation of $\cc\simeq \mathbb{S}^2\setminus{\{\infty\}}$ satisfying
\[\theta_j^\eps\circ \gamma_j^{\eps}(z)=q_j(\eps)\theta_j^\eps(z), \quad \theta_j^\eps(z):=-\frac{z-p_{-j}(\eps)}{z-p_{+j}(\eps)}\]  
where $p_{\pm j}(\eps)\in \cc$ are the two fixed points of $\gamma_j^{\eps}$ and $q_j(\eps)\in\cc$ is the complex multiplier with $|q_j(\eps)|>1$ (we take $p_{+j}(\eps)$ to be the attractive fixed point).
The multiplier will also be written as 
\begin{equation}\label{qk} 
q_j(\eps)=e^{\ell_j(\eps)+i\alpha_j(\eps)}\end{equation}
for some $\ell_j(\eps)>0$ and $\alpha_j(\eps)\in [0,2\pi)$. Since for $j> j_0$, $\gamma_j^{\eps}$ converge to a parabolic element $\gamma_j$ with fixed point $p_j$, then $q_j(\eps)\to 1$ since  
$\Tr(\gamma_j^{\eps})^2-4=(q_j(\eps)-1)^2/q_j(\eps)$ must converge to $0$. 
The axis of $\gamma_j^\eps$ is mapped to $\rr^+\x \{0\}\subset \hh^3$ by 
$(\theta_j^{\eps})^{-1}$ in the half-space model $\hh^3=\rr_x^+\x \cc_z$

The set $D^\eps$ is a fundamental domain for the action 
of $\Gamma^\eps$ on the discontinuity set $\Omega^\eps\subset \cc$. 
The group acts properly discontinuously on $\Omega^\eps$ by conformal transformations 
and the quotient $M^\eps=\Gamma^\eps\backslash D^\eps=\Gamma^\eps\backslash \Omega^\eps$ 
is a closed Riemann surface of genus $g$, with conformal structure induced by that 
of $\cc$. It is the conformal boundary of the hyperbolic $3$-manifold 
$X^\eps:=\Gamma^\eps\backslash \hh^3$. We denote by $F^\eps\subset \hh^3$ 
the fundamental domain for the group $\Gamma^\eps$ with totally geodesic boundary 
satisfying $\pl F^\eps\cap \pl\hh^3=D^\eps$; 
in particular $X^\eps=\Gamma^\eps\backslash F^\eps$. 
Up to extraction of a subsequence, these fundamental domains converge to $D^0$ and $F^0$ (that we also denote $D$ and $F$) 
and $X^0=\Gamma^0\backslash F^0$ is a geometrically finite hyperbolic manifold 
(that we also denote $X$).

We define the parameters 
\begin{equation}
\label{parameter}
\la_j(\eps):=\frac{|p_{+j}(\eps)-p_{-j}(\eps)|}{\ell_j(\eps)}, \quad \nu_j(\eps):=\frac{\alpha_j(\eps)}{\ell_j(\eps)}.
\end{equation}
Then since $c_j(\eps)$ is smooth in $[0,1]$ and ${\rm Tr}(\gamma_j^\eps)=q_j(\eps)^\demi+q_j(\eps)^{-\demi}$, 
we see from \eqref{p_+-p_-} that if $\nu_j(\eps)$ is smooth, then $\la_j(\eps)$ extends smoothly in 
$\eps\in[0,1]$ and
\begin{equation}\label{limitlaj}
\la_j:=\lim_{\eps\to 0}\la_j(\eps)= (1+\nu_j^2)^\demi/|c_j|.
\end{equation}
In fact, 
if $\nu_j(\eps)$ is smooth, then by \eqref{fixedpoints} we have that $\frac{p_{+j}(\eps)-p_{-j}(\eps)}{\ell_j(\eps)}$ extends smoothly to $\eps=0$ with
\begin{equation}\label{convline}
\frac{p_{+j}(\eps)-p_{-j}(\eps)}{\ell_j(\eps)} \to (1+i\nu_j)/c_j \,\, \textrm{ as }\eps\to 0.
\end{equation}
\begin{defi}\label{qadmissible}
For a smooth family of multipliers $\eps\to q(\eps)\in C^\infty([0,1];\cc \setminus D(0,1))$ 
with $q(0)=1$ and $|q(\eps)|>1$ if $\eps>0$, we say that  $q(\eps)$ 
is \emph{admissible} if $q(\eps)=e^{\ell(\eps)(1+i\nu(\eps))}$ for some real valued 
functions $\ell(\eps),\nu(\eps)$ smooth in $\eps\in[0,1]$.
We say that a smooth family $\Gamma^\eps$ of classical Schottky groups of genus $g$ 
is admissible if each multiplier $q_j(\eps)$ of the generator $\gamma_j^\eps$
is either admissible or $\gamma_j=\gamma_j^0$ is loxodromic, and if  there is a subsequence $\eps_n\to 0$ 
for which there are $2g$ adapted circles $C_{\pm j}^{\eps_n}$ converging to $C_{\pm j}$
such that if two of the limiting $2g$ circles $C_{\pm j}$ intersect, this can only be 
$C_{+j}\cap C_{-j}=\{p_j\}$ for $j$ so that $\gamma_j$ is parabolic with fixed point $p_j$.
\end{defi}

\subsection{Canonical circles}
The adapted circles $C_{\pm j}^\eps$ associated to the elements $\gamma_j^\eps$ can actually be taken smoothly in $\eps>0$, but they are not in general of the best form to get a local model description of the geometry with respect to $\eps\to 0$. In addition it is not clear if they can be taken smoothly down to 
$\eps=0$, but we will show below that if the family of Schottky groups $\Gamma^\eps$ is admissible, then 
we can find a smooth family of fundamental domains down to $\eps=0$, which are bounded by 
pieces of circles near the punctures.

For this purpose and to obtain a nice description of the degeneration near the punctures, we define the notion of \emph{canonical circles} for a loxodromic transformation.

If $\gamma\in {\rm PSL}_2(\cc)$ is loxodromic with fixed points $p_-$ and $p_+$ and multiplier 
$q=e^{\ell(1+i\nu)}$ with $\ell>0$ and $\theta \circ \gamma \circ \theta^{-1}(z)=qz$ for some $\theta\in {\rm PSL}_2(\cc)$, the canonical circles for $\gamma$ will be the circles 
\begin{equation}\label{goodloxo} 
\begin{gathered}
\til{C}_{\pm}:=  \theta^{-1}(\{ z; |z|=e^{\pm \frac{\ell}2} \}) =\{z\in \cc; |z-z_{\pm}|=r\},\,\,\, \textrm{with } \\
z_{\mp}:= p_{\pm}+\frac{p_{\mp}-p_{\pm}}{1-e^{-\ell}},\quad 
r= \frac{|p_{+}-p_{-}|}{2\sinh(\ell/2)}.
\end{gathered}
\end{equation}

\begin{lem}\label{reasonforgoodcircle}
Let $\gamma\in {\rm PSL}_2(\cc)$ be loxodromic with multiplier $q=e^{\ell(1+i\nu)}$ and fixed points 
$p_-,p_+\in\cc$, and let $\til{C}_\mp$ be its associated canonical circles, defined by \eqref{goodloxo}.
Then the transformation $\gamma$ maps the exterior of the disk $\til{D}_{-}$ bounded by $\til{C}_{-}$ 
to the interior of the disk $\til{D}_{+}$ bounded by $\til{C}_{+}$.
\end{lem}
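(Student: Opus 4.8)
The plan is to push everything through the normalizing coordinate $w=\theta(z)$, in which $\gamma$ becomes the linear map $w\mapsto qw$ and the two canonical circles become the concentric circles $\{|w|=e^{\pm\ell/2}\}$. In that picture the statement is transparent; the only genuinely delicate point is bookkeeping, because $\theta$ sends the attractive fixed point $p_+$ to $\infty$ and therefore interchanges bounded and unbounded regions. So the main obstacle is not any estimate but getting the inside/outside correspondences right.

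First I would fix the normalization of $\theta$. I take $\theta(z)=\frac{z-p_-}{z-p_+}$; the unimodular ambiguity (for instance the sign $-1$ used elsewhere in the text) is irrelevant, since only $|\theta(z)|$ enters. This sends the repulsive fixed point $p_-$ to $0$ and the attractive fixed point $p_+$ to $\infty$, and conjugates $\gamma$ to $w\mapsto qw$ with $|q|=e^{\ell}>1$. I then record that $\tilde C_\pm=\theta^{-1}(\{|w|=e^{\pm\ell/2}\})$ is the Apollonius locus $\{|z-p_-|=e^{\pm\ell/2}|z-p_+|\}$; computing the center and radius of this Apollonius circle recovers the explicit formulas \eqref{goodloxo} for $z_\pm$ and $r$, which confirms that $\theta$ as normalized above is exactly the one implicit in the definition of the canonical circles. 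This computation is routine and I would not carry it out in detail.

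Second, I would identify the two disks. Since $e^{-\ell/2}<1$, the bounded disk $\tilde D_-$ enclosed by $\tilde C_-$ is the region $\{|z-p_-|<e^{-\ell/2}|z-p_+|\}$, which contains $p_-$; hence $\theta(\tilde D_-)=\{|w|<e^{-\ell/2}\}$, and the exterior $\mathbb{C}\setminus\overline{\tilde D_-}$ corresponds to $\{|w|>e^{-\ell/2}\}$. Dually, since $e^{\ell/2}>1$, the bounded disk $\tilde D_+$ enclosed by $\tilde C_+$ is $\{|z-p_-|>e^{\ell/2}|z-p_+|\}$, which contains $p_+$; because $\theta(p_+)=\infty$, this \emph{bounded} disk corresponds to the \emph{unbounded} region $\{|w|>e^{\ell/2}\}$ in the $w$-plane. (One checks directly that the first region is unbounded and the second is bounded by comparing $|z-p_-|/|z-p_+|\to 1$ as $z\to\infty$ with $e^{\mp\ell/2}$.) This inversion of inside and outside is precisely the step to handle carefully.

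Finally I would read off the dynamics. In the $w$-coordinate $\gamma$ multiplies moduli by $|q|=e^{\ell}$, so it sends $\{|w|>e^{-\ell/2}\}$ onto $\{|w|>e^{\ell/2}\}$. Translating back through $\theta^{-1}$ using the two correspondences of the previous step yields $\gamma(\mathbb{C}\setminus\overline{\tilde D_-})=\tilde D_+$, which is exactly the claim. Thus, once the two inside/outside identifications are pinned down, the conclusion is immediate from the scaling by $e^{\ell}$.
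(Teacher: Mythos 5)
Your proof is correct and follows essentially the same route as the paper: conjugate $\gamma$ to the dilation $w\mapsto qw$ by the M\"obius map sending $p_-\mapsto 0$, $p_+\mapsto\infty$ (the paper writes this normalizer as $(\eta\psi\phi)^{-1}$, which equals $-\tfrac{z-p_-}{z-p_+}$, so your sign remark is apt), identify $\til{C}_\pm$ with the concentric circles $\{|w|=e^{\pm\ell/2}\}$, and read off the conclusion from the scaling by $|q|=e^{\ell}$. The paper compresses the inside/outside bookkeeping into "the Lemma follows directly," whereas you spell it out via the Apollonius description; this is the same argument, just more explicit.
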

\begin{proof}
Consider now the two concentric circles $S_{\pm}:=\{|z|=e^{\pm\demi \ell}\}$ 
and let $m(q)$ be the complex dilation by $q$ in $\cc$. 
Consider the transformations 
\begin{equation}\label{phipsieta}
\phi(z)=\frac{z-1}{z+1}, \quad  \psi(z)=z+\frac{p_++p_-}{p_+-p_-},\quad 
\eta(z)=\frac{p_+-p_-(\eps)}{2} z.\end{equation} 
The composition $\eta\psi\phi$ maps $\{0,\infty\}$ to $\{p_-,p_+\}$ and 
$\gamma=(\eta\psi\phi) m(q)(\eta\psi\phi)^{-1}$. The circles $\til{C}_\pm$ are mapped by 
$\theta:=(\eta\psi\phi)^{-1}$ to $\{|z|=e^{\pm \ell/2}\}$ and the Lemma follows directly.
\end{proof} 

We shall denote by $\til{C}^\eps_{\pm j}$ the canonical circles of $\gamma_j^\eps$; a priori they are not adapted circles for the group. For $j>j_0$, assuming that $\nu_j(\eps)$ is smooth in $\eps\in[0,1]$, we see by \eqref{goodloxo} and \eqref{convline} that $\til{C}^\eps_{\pm j}$ extends smoothly to $\eps=0$ with limiting circles $\til{C}^0_{\pm j}$, tangent at $p_j$, with  radius $r_j$ and center $z_{\pm j}$ given by   
\[ z_{\pm j}= p_j\mp \frac{(1+i\nu_j)}{c_j}, \quad r_j=\la_j=\frac{\sqrt{1+\nu_j^2}}{|c_j|}.\] 
Note that the lines passing through the centers $z_{+j}$ and $z_{-j}$ intersect 
the axis of the parabolic transformation $\gamma_j$ at an angle $\arctan(\nu_j)$.

\subsection{Good fundamental domains}
For $\Gamma^\eps$ a family of admissible Schottky groups we have a subsequence 
of fundamental domains $F^{\eps_n}$ with totally geodesic boundary and with 
$D^{\eps_n}=\pl\hh^3\cap \pl F^{\eps_n}$ bounded by the adapted circles 
$C_{\pm j}^{\eps_n}$, and 
$F^{\eps_n}$ and $D^{\eps_n}$ are converging to $F^0$ and $D^0$, where $D^0$ is bounded by 
the limiting circles $C_{\pm j}$. From the limiting domain $F^0$,
we shall construct new fundamental domains $\til{F}^\eps$ for $\Gamma^\eps$ for small $\eps\geq 0$, 
called good fundamental domains and constructed by combining canonical circles with the limiting 
adapted circles $C_{\pm j}$. The domain $\til{D}^\eps=\pl \til{F}^\eps \cap \pl \hh^3$
will be bounded by Jordan curves instead of circles, but their form near the parabolic points $p_j$ will 
be a good model for the geometry as $\eps\to 0$ near the punctures. 

Notice that we can always choose $\delta_0>0$ and $\eps_0>0$ small enough  so that for each $j>j_0$ and $\delta\in (0,\delta_0)$, for all $0<\eps\le \eps_0$ the half-ball $B(p_j,\delta)\subset \hh^3$ is at positive Euclidean distance from 
all connected components of $\pl F^0\setminus \pl F^0\cap \pl \hh^3$ except those half-spheres supported by $C_{\pm j}$.  

Recall that $\til{C}_{\pm j}^\eps$ are the canonical circles of $\gamma_j^\eps$, and denote by $\til{D}_{\pm j}^\eps\subset \cc$ the disk bounded by $\til{C}_{\pm j}^\eps$. 
We then show the existence of good fundamental domains:
\begin{lem}\label{goodFD}
There exists $\delta \in (0,\delta_0)$ such that for all $\eps \in [0,\eps_0]$, 
there exist fundamental 
domains $\til{F}^\eps$ for $\Gamma^\eps$ acting on $\hh^3$ with the following properties:
\begin{itemize}
\item the boundary $\pl \widetilde{F}^\eps$ is a smooth in $\eps\in[0,\eps_0]$ collection of $2g$  smooth hypersurfaces  $(H_{\pm j}^\eps)_{j=1,\dots,g}$ homeomorphic to half-spheres: more precisely 
$H_{\pm j}^\eps$ is the image of $H_{\pm j}^0$
by a smooth in $\eps\in[0,\eps_0]$ family of diffeomorphisms of $\bbar{\hh^3}$ 
equal to ${\rm Id}$ at $\eps= 0$. 
The closures of $H_{\pm j}^\eps$ in $\bbar{\hh^3}$ are all disjoint except when $\eps=0$ 
where $\bbar{H^0_{-j}}\cap \bbar{H^{0}_{+j}}=\{p_j\}$ for $j>j_0$.
\item each $\gamma_j^\eps$ maps the exterior of the compact domain bounded by $H_{-j}^\eps$ 
in $\bbar{\hh^3}$ to the interior of the compact domain bounded by $H_{+j}^\eps$ 
in $\bbar{\hh^3}$.
\item For each $j>j_0$, 
$\til{F}^\eps \cap B(p_j,\delta)=\til{F}_j^\eps\cap B(p_j,\delta)$ 
if $\til{F}_j^\eps$ is the fundamental domain 
with totally geodesic boundary for the cyclic group $\cjg \gamma_j^\eps\cjd$ and
satisfying $\pl \til{F}_j^\eps\cap \pl \hh^3=\cc\setminus (\til{D}_{+j}^\eps\cup \til{D}_{-j}^\eps)$.
\end{itemize}
\end{lem}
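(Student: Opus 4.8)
The plan is to construct the good fundamental domain $\til{F}^\eps$ by gluing two pieces: near each parabolic point $p_j$ (for $j>j_0$) we use the totally geodesic domain $\til{F}_j^\eps$ bounded by the half-spheres supported on the canonical circles $\til{C}_{\pm j}^\eps$, and away from the parabolic points we use (a smooth deformation of) the half-spheres supported on the limiting adapted circles $C_{\pm j}$. The key structural input is Lemma \ref{reasonforgoodcircle}, which guarantees that $\gamma_j^\eps$ maps the exterior of $\til{D}_{-j}^\eps$ to the interior of $\til{D}_{+j}^\eps$; this is exactly the adaptedness condition, so the canonical circles already have the correct dynamical behaviour and can legitimately replace the adapted circles. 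The smoothness in $\eps$ down to $\eps=0$ comes from the explicit formulas \eqref{goodloxo} together with \eqref{convline}, which show that $\til{C}_{\pm j}^\eps$ extends smoothly to $\eps=0$ with limiting tangent circles meeting at $p_j$.

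First I would fix $\delta\in(0,\delta_0)$ small enough that the half-balls $B(p_j,\delta)$ are pairwise disjoint and, by the choice of $\delta_0$ preceding the lemma, at positive distance from all boundary components of $\pl F^0$ except the two half-spheres supported by $C_{\pm j}$. Inside $B(p_j,\delta)$ I declare $\til{F}^\eps$ to coincide with $\til{F}_j^\eps$; this gives the third bullet by construction. Outside $\cup_j B(p_j,\delta)$ the limiting circles $C_{\pm j}$ are honestly disjoint (assumption \eqref{assumpcircles}), so the half-spheres they support bound a genuine fundamental domain region with the required separation, and this persists for small $\eps$ after a smooth deformation. The content is then to interpolate, inside each annular region $B(p_j,\delta)\setminus B(p_j,\delta')$, between the canonical half-sphere $H_{\pm j}^\eps$ near $p_j$ and the adapted half-sphere near $C_{\pm j}$ away from $p_j$, producing a single smooth hypersurface $H_{\pm j}^\eps$ realized as the image of $H_{\pm j}^0$ under a family of diffeomorphisms of $\bbar{\hh^3}$ that is smooth in $\eps\in[0,\eps_0]$ and equals the identity at $\eps=0$.

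The main obstacle is verifying that the interpolated surfaces $H_{\pm j}^\eps$ still form the boundary of a \emph{fundamental} domain, i.e.\ that the second bullet (the mapping property $\gamma_j^\eps(\text{ext }H_{-j}^\eps)=\text{int }H_{+j}^\eps$) survives the gluing, not just on the two pure pieces where it holds by Lemma \ref{reasonforgoodcircle} and by the adaptedness of the limiting configuration. The delicate point is that $\gamma_j^\eps$ need not map the interpolating piece of $H_{-j}^\eps$ exactly onto the interpolating piece of $H_{+j}^\eps$; one must instead arrange the interpolation on the $+j$ side to be \emph{defined as the $\gamma_j^\eps$-image} of the chosen interpolation on the $-j$ side, and then check that the resulting curve matches the already-fixed pieces smoothly at both ends of the annulus. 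Here the compatibility at the end near $C_{\pm j}$ uses that the adapted and canonical circles both carry the same dynamics for $\gamma_j^\eps$, while compatibility at the end near $p_j$ is automatic since both descriptions agree with $\til{F}_j^\eps$ there. The disjointness of the closures $\bbar{H_{\pm j}^\eps}$ for $\eps>0$, degenerating to the single tangency $\bbar{H_{-j}^0}\cap\bbar{H_{+j}^0}=\{p_j\}$, then follows from the explicit limiting geometry of the canonical circles together with \eqref{assumpcircles} controlling all the cross terms $j\neq k$; I expect this openness/transversality bookkeeping, rather than any single estimate, to be where the real work lies.
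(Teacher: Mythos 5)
Your proposal follows essentially the same route as the paper: canonical circles near each parabolic point (with Lemma \ref{reasonforgoodcircle} supplying the mapping property), the limiting adapted circles away from them, an interpolation in an intermediate annulus whose $+j$ side is the $\gamma_j$-image of the $-j$ side, and a perturbation in $\eps$. The one place where the paper is cleaner than your sketch is precisely the matching you flag at the adapted end: since $\gamma_j^\eps(C_{-j})\neq C_{+j}$ for $\eps>0$, gluing the $\gamma_j^\eps$-image of the interpolating piece to a separately prescribed piece near $C_{+j}$ would not close up exactly, so the paper instead performs the whole interpolation only at $\eps=0$ (where $\gamma_j^0(C_{-j})=C_{+j}$ does hold, after conjugating $p_j$ to $\infty$ so that $\gamma_j^0$ becomes a translation), then sets $H_{-j}^\eps:=A_j^\eps H_{-j}^0$ for an explicit M\"obius family $A_j^\eps\to{\rm Id}$ carrying $\til{C}_{-j}^0$ to $\til{C}_{-j}^\eps$ and defines the \emph{entire} hypersurface $H_{+j}^\eps:=\gamma_j^\eps(H_{-j}^\eps)$, so the mapping property is automatic and disjointness follows by continuity from the $\eps=0$ configuration.
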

\begin{proof} 
For $j\leq j_0$, we can take $H_{-j}^\eps=H_j^0$ to be the half-spheres supported by the circles $C_{-j}^0$ and 
$H_{+j}^\eps=\gamma_j^\eps(H_j^0)$. Since $\gamma_j^\eps\to \gamma_j$ and $C_{\pm j}$ are adapted circles for the limiting $\gamma_j$, clearly for small enough $\eps$ the hypersurfaces  $H_{\pm j}^\eps$ satisfy the desired properties. Now we deal with the more delicate part, that is when $j>j_0$ and the limiting $\gamma_j$ is parabolic.
We take $\delta$ small, but independent of $\eps\geq 0$, and take a large $C>0$ so that  
$B(p_j,C\delta)$ is at positive distance from 
the half-spheres supported by the limiting $C_{\pm k}$ at $\pl\hh^3$ for $k\not =j$. Note that $C$ can be taken large by taking $\delta$ small (for instance $C\simeq \delta^{-1/2}$ works).
We start with $\eps=0$, where we will modify $F^0$ to $\til{F}^0$ near the parabolic points $p_j$.
 We may assume that 
$C_{-j}^0\not=\til{C}_{-j}^0$ as otherwise it suffices to take $H_{\pm j}^\eps$ to be the half-spheres supported by $\til{D}_{\pm j}^\eps$, which satisfy the desired properties.
By conjugating  by $\phi: z\mapsto 1/(z-p_j)$ the parabolic element $\gamma_j$ becomes a parabolic transformation fixing $\infty$, thus of the form $z\mapsto z+c$ for some $c\in \cc$, 
which in $\hh^3$ acts by $T_c:(x,z)\to (x,z+c)$. 
The half-balls $B(p_j,\delta)$ and $B(p_j,C\delta)$ are mapped by the Poincar\'e extension $\Phi$ of $\phi$ to (the interior of) $\hh^3\setminus B(0,\delta^{-1})$ and $\hh^3\setminus B(0,(C\delta)^{-1})$. The circles $C_{-j}^0$ and $\til{C}_{-j}^0$ are mapped to lines $L$ and $\til{L}$ of $\cc$ with respective tangent vector $\tau\in \cc$ and $\til{\tau}\in\cc$,  
and $\phi(C_{+j}^0)$ and $\phi(\til{C}_{-j}^0)$ are images of these lines by $z\mapsto z+c$, that is $L+c$ and 
$\til{L}+c$. The half-spheres of $\hh^3$ supported by $C_{-j}^0$ and $\til{C}_{-j}^0$ are mapped to 
vertical planes $\rr^+\x L$ and $\rr^+\x \til{L}$ by $\Phi$, and the image of the half-spheres supported by 
$C_{+j}^0$ and $\til{C}_{+j}^0$ are $\rr^+\x (L+c)$ and $\rr^+\x (\til{L}+c)$. Note that 
$S:=\bbar{\Phi(F^0 \cap B(p_j,C\delta))}
\cap \pl\hh^3$ and $\til{S}:=\bbar{\Phi(\til{F}_j^0\cap B(p_j,C\delta))}\cap \pl\hh^3$ are strips in 
$\cc \setminus D(0,(C\delta)^{-1})$ bounded by 
$L$ and $L+c$ (resp. $\til{L}$ and $\til{L}+c$).
For the following part of the proof, we recommend the reader to see Figure~\ref{fig:FD} while reading the argument.

\begin{figure}
\centering
\def\svgwidth{25em}
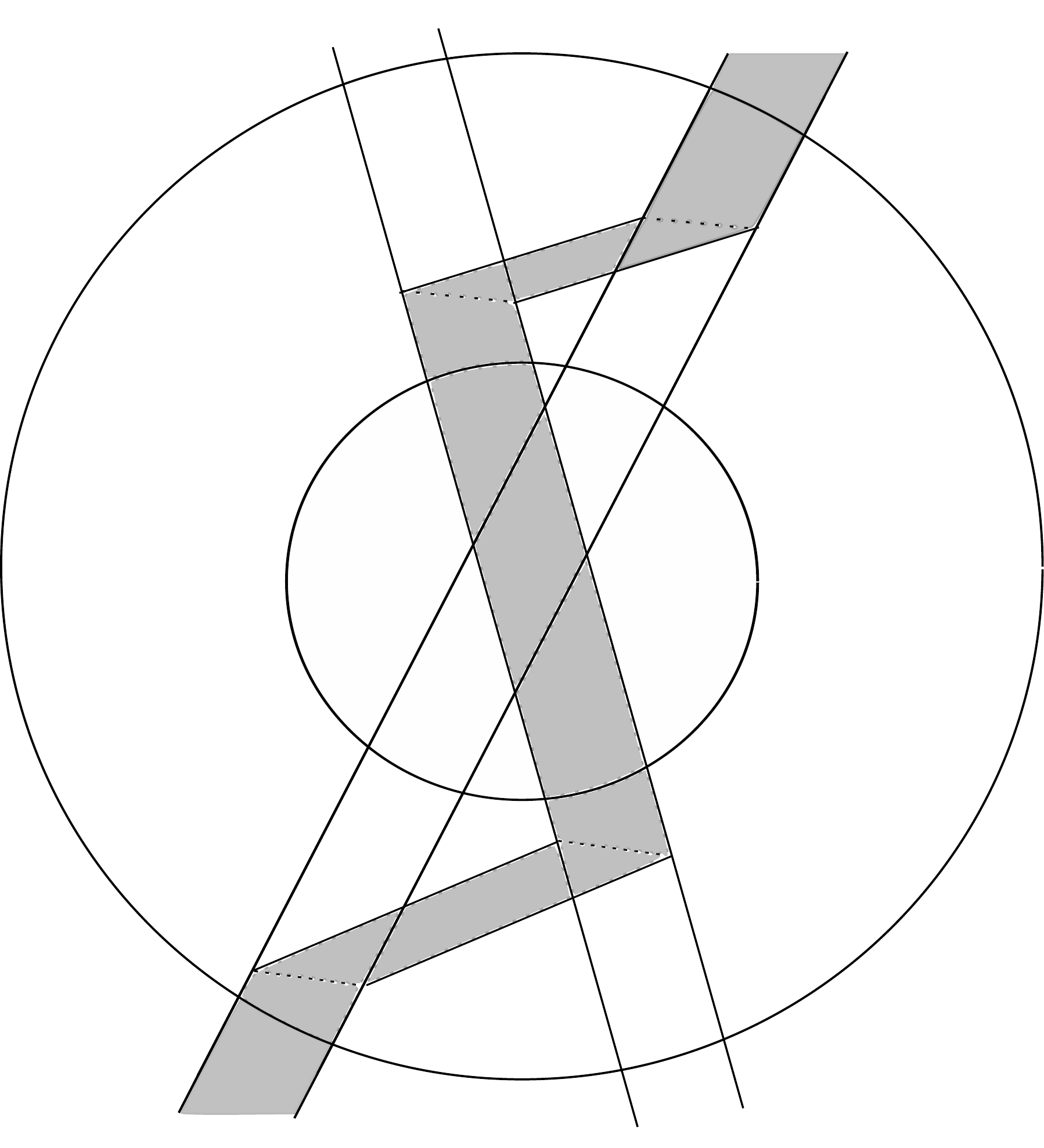
\caption{The new fundamental domain $\mc{D}$ in $\pl \hh^3\simeq \cc$ 
before smoothing is given by the dark region} 
\label{fig:FD}
\end{figure}
For $C>0$ large and $\delta>0$ small, consider the annulus 
$A_\delta:=\{(C\delta)^{-1}<|z|<\delta^{-1}\}$ in $\cc$ viewed as the boundary of the half-space $\hh^3$.  If $\delta>0$ is chosen very small, then in $A_\delta$ 
the strips bounded by $L$ and $L+c$ and the strips bounded by $\til{L}$ and $\til{L}+c$ are at a positive 
distance. We can then take two segments $T_1$ and $T_2$ in $A_\delta$ with extremities on $L$ 
and $\til{L}$, which are transverse to the lines with tangent vector $c\in\cc$. Then 
$P_i:=\cup_{t=\in[0,1]}(T_i+tc)$ for $i=1,2$ are two parallelograms with vertices on $L$, $L+c$, $\til{L}$, $\til{L}+c$.  
Then there is a unique fundamental domain $\mc{D}\subset \cc$ for the translation $z\to z+c$, 
with boundary made of two piecewise linear curves $Z$ and $Z+c$, 
with $Z$ containing $5$ segments, and such that $\mc{D}$ is equal to $\til{S}$ 
outside $|z|<\delta^{-1}$, to $S$ inside $|z|<(C\delta)^{-1}$, and contains 
the parallelograms $P_1$ and $P_2$. The two points of $\mc{D}$ at the largest distance from $\til{S}$ are 
vertices $v_1$ and $v_2$ of $P_1$ and $P_2$ (we choose $v_1$ to be the one on 
$L$), and there is a homotopy $h_t$ (for $t\in[0,1]$) between $\mc{D}$ and $\til{S}$ 
which can be done in the obvious way by moving $v_1$ along $L$ 
toward $v_1':=L\cap \til{L}$ and $v_2$ along $L+c$ toward $v_2':=(\til{L}+c)\cap(L+c)$
linearly in $t$. By choosing $C>0$ large enough, 
there exists a height $x_0\in ((C\delta)^{-1},\delta^{-1})$ so that 
in the half-space $\hh^3$, $\cup_{t\in]0,1]} (\{tx_0\}\x h_t(P_1\cup P_2))$ is contained in 
 $B(0,\delta^{-1})\setminus B(0,(C\delta)^{-1})$. 
We thus take the fundamental domain $\mc{F}\subset \rr^+_x\x \cc=\hh^3$ for the quotient 
$\cjg T_c\cjd\backslash \hh^3$ given by 
\[\mc{F}= \Big(\bigcup_{t\in(0,1]} (\{tx_0\}\x h_{t}(\mc{D}))\Big) \cup ( [x_0,+\infty)\x \til{S}).\]
This has a piecewise smooth boundary, and can be smoothed out by an arbitrarily small 
perturbation in $B(0,\delta^{-1})\setminus B(0,(C\delta)^{-1})$. For convenience we keep 
the same notation for the smoothed fundamental domain. 
By construction, $\Phi^{-1}(\mc{F})\cap B(p_j,C\delta)$ gives the desired modification 
of $F^0$ inside $B(p_j,C\delta)$ to produce $\til{F}^0$. This construction defines 
the hypersurfaces $H_{\pm j}^0$, which are the connected components of 
$\pl \til{F}^0\setminus \pa\til{F}^0\cap \pl\hh^3$.

Next we want to use a perturbation argument to construct $H_{\pm j}^\eps$ from $H_{\pm j}^0$. For each $j>j_0$, there exists a smooth family in $\eps\in [0,\eps_0]$ of M\"obius 
transformations $A_j^\eps\in {\rm PSL}_2(\cc)$ which map $\til{C}_{-j}^0$ onto $\til{C}_{-j}^\eps$. 
 It is just a composition of a translation and a dilation, and equals ${\rm Id}$ at $\eps=0$. 
Then $H_{-j}^\eps:= A_j^\eps H_{-j}^0$ is a smooth hypersurface and  
define $H^\eps_{+j}:=\gamma_j^\eps(H^\eps_{-j})$; both hypersurfaces are disjoint from other $H_{\pm k}^\eps$ for small $\eps$ since it is the case for $\eps=0$. 
The point $d_j/c_j\in\cc$ that is mapped to $\infty$ by  $\gamma_j$ is in the disk $D_{-j}$ bounded by $C_{-j}$, and since $A_j^\eps\to {\rm Id}$ as $\eps\to 0$, 
we see that for $\eps>0$ small enough $d_j(\eps)/c_j(\eps)$ is in the domain bounded by the curve 
$\pl H^\eps_{-j}\cap \pl\hh^3 \subset \cc$ and thus property 2) in the Lemma is satisfied for this choice of $H_{\pm j}^\eps$. By construction, in $B(p_j,\delta)$ the hypersurfaces $H_{\pm j}^\eps$ are given by pieces of half-spheres supported by $\til{C}^\eps_{\pm j}$, and  outside $B(p_j,\delta)$ they are arbitrarily close 
to $H_{\pm j}^0$ since $A_j^\eps\to {\rm Id}$ in $\mc{C}^k$-norms, thus we deduce that 
$H_{-j}^\eps\cap H_{+j}^\eps=\emptyset$ for $\epsilon>0$ small enough. These conditions ensure that the domain $\til{F}^\eps$ bounded by the hypersurfaces $H_{\pm j}^\eps$ is a fundamental domain for $\Gamma^\eps$ satisfying all the desired properties of the Lemma.
\end{proof}

\section{Analysis of the model degeneration} \label{analysis model}
In this section, we shall describe more precisely the model geometry for the degeneracy to a rank-$1$ cusp.
Let $\gamma_L\in {\rm PSL}_2(\cc)$ be loxodromic with multiplier $q=e^{\ell(1+i\nu)}$ and 
fixed points $p_-=0$ and $p_+=\la \ell$ for some $\la>0$; we write $L=(\ell,\nu,\la)$ and we take
\begin{equation}\label{regionforq}
L\in \mc{Q}:=(0,1]\x [-N,N]\x [N^{-1},N]
\end{equation} 
for some $N>0$ fixed. Using \eqref{convline}, the set of those $\gamma_L$ such that $L\in \mc{Q}$ has closure such that the boundary $\{\ell=0\}$ corresponds to parabolic elements 
\[ \gamma_{L}(z)=\frac{z}{cz+1} \textrm{ with }c=\frac{1+i\nu}{\la}, \,\, L=(0,\nu,\la)\]
fixing $p_-=p_+=0$.
We denote by $\bbar{\mc{Q}}$ the closure of $\mc{Q}$, and we define
the \emph{parabolic boundary} of $\mc{Q}$ as the set $\{\ell=0\}$. There is a smooth fibration 
\begin{equation}\label{fibrationPi}
\begin{gathered}
\Pi: \mc{X}\to \bbar{\mc{Q}}, \quad \textrm{with fibers the manifolds }\\ 
\Pi^{-1}(L)=\bbar{X}_L:=\cjg \gamma_L\cjd \backslash (\hh^3\cup \Omega_L)
\end{gathered}
\end{equation} 
where $\Omega_L=\pl\bbar{\hh^3}\setminus \{0,\la\ell\}$ the discontinuity set of the cyclic group
$\cjg \gamma_L\cjd$. A \emph{cusp region} of $\bbar{X}_L$ is the image of a neighborhood $B(0,\delta)$ 
of $0\in\pl\bbar{\hh^3}$ by the covering map $\pi_{\gamma_L}: (\hh^3\cup \Omega_L) \to \bbar{X}_L$ 
and we say that $\cup_{(0,\la,\nu)\in \bbar{\mc{Q}}}\pi_{\gamma_L}(B(0,\delta))$ is the \emph{cusp region} of 
$\mc{X}$.

If $|q|>1$, consider the isometry of the hyperbolic space 
$\hh^3=\rr^+_x\x \cc_z$
\begin{equation}\label{Lq}
m(q): (x,z)\mapsto (|q|x,qz)
\end{equation} 
and the quotient of $\hh^3$ by the elementary group $\cjg m(q)\cjd$ generated by $m(q)$ 
\begin{equation}\label{modelLq}
 X_{m(q)}:=\cjg m(q)\cjd \backslash \hh^3  \textrm{ with covering map } \pi_{m(q)}:  \hh^3 \to \cjg m(q)\cjd \backslash \hh^3.
\end{equation}

\begin{lem}\label{admissible}
For $L=(\ell,\nu,\la)\in \mc{Q}$, let $\gamma_L\in {\rm PSL}_2(\cc)$ be loxodromic with multiplier $q=e^{\ell(1+i\nu)}$ and fixed points 
$p_-=0$ and $p_+=\la \ell\in(0,\infty)$, and let $\til{C}^L_\pm$ be its associated canonical circles, defined by \eqref{goodloxo}.
Let $\til{D}^L_{\pm}\subset \cc$ be the disk  bounded by $\til{C}^L_{\pm}$ and $\til{F}_L\subset \hh^3$ the fundamental domain 
for the cyclic group $\cjg \gamma_L\cjd$ with totally geodesic boundary  satisfying
$\pl \til{F}_L\cap \pl \hh^3=\cc\setminus (\til{D}^L_{+}\cup \til{D}^L_{-})$.
Let $\pi_{\gamma_L}:\hh^3\to \cjg \gamma_L\cjd\backslash \hh^3$ 
denote the covering map, then  for $\delta>0$ small and for $\lambda \ell <\delta$, the set 
\begin{equation}\label{ULqk} 
\mc{U}_L^\delta:=\pi_{\gamma_L}(B(0,\delta)\cap \til{F}_L)
\end{equation}
is isometric to 
\begin{equation}\label{quotientLq} 
\pi_{m(q)}\Big(\{(x,z)\in \hh^3\setminus B(e,\rho); \,\, e^{-\demi\ell}
\leq \sqrt{x^2+|z|^2} \leq e^{\demi\ell}\}\Big)
\end{equation}
where  $e(L)\in \cc\subset\pl\hh^3$ and $\rho(L)>0$ 
have asymptotics for small $\ell$ 
\begin{equation}\label{ckrhok}
\begin{gathered}
e(L)=-1-\tfrac{\la^2\ell^2}{\delta^2}
+\mc{O}(\tfrac{\la^4\ell^4}{\delta^4}),\quad 
\rho(L)=\tfrac{\la\ell}{\delta}
+\mc{O}(\tfrac{\la^2\ell^2}{\delta^2}).
\end{gathered}\end{equation} 
The isometry from \eqref{ULqk} to \eqref{quotientLq} is given by 
\begin{equation}\label{Psij} 
\Theta_L(x,z):= \Big(\frac{x\la \ell}{|z-\la\ell|^2+x^2}, 
\frac{-x^2-|z|^2+\la \ell z}{|z-\la\ell|^2+x^2}\Big).
\end{equation}
\end{lem}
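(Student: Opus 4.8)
The plan is to exhibit $\Theta_L$ as the Poincar\'e extension of the M\"obius transformation $\phi$ conjugating $\gamma_L$ to the standard dilation $m(q)$, and then to trace through the images of the three half-spheres bounding the region $B(0,\delta)\cap\til F_L$. First I would set $\phi(z):=\tfrac{-z}{z-\la\ell}$, which sends the fixed points $p_-=0$, $p_+=\la\ell$ of $\gamma_L$ to $0,\infty$ and $\infty$ to $-1$. Since $\phi$ carries the repulsive (resp.\ attractive) fixed point of $\gamma_L$ to $0$ (resp.\ $\infty$) and conjugacy preserves the multiplier, $\phi\gamma_L\phi^{-1}$ is the dilation $z\mapsto qz$, so $\phi$ is a legitimate choice of the map $\theta$ in \eqref{goodloxo}. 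A short computation with the explicit data of \eqref{goodloxo} confirms this normalization: solving $|\phi(z)|=e^{\pm\demi\ell}$ gives exactly the Apollonius circles $C(z_\pm,r)$, so $\til C^L_\pm=\phi^{-1}(\{|z|=e^{\pm\demi\ell}\})$. Next I would verify, using the quaternionic formula $\hat\phi(w)=(aw+b)(cw+d)^{-1}$ for the Poincar\'e extension (with $w=z+xj$), that $\hat\phi$ is precisely the map $\Theta_L$ of \eqref{Psij}. Because the Poincar\'e extension is a homomorphism into ${\rm Isom}(\hh^3)$ and the extension of $z\mapsto qz$ is the isometry $m(q)$ of \eqref{Lq}, one obtains the intertwining $\Theta_L\circ\gamma_L=m(q)\circ\Theta_L$ on $\hh^3$; hence $\Theta_L$ descends to an isometry $\bbar{\Theta}_L$ of the quotients satisfying $\bbar{\Theta}_L\circ\pi_{\gamma_L}=\pi_{m(q)}\circ\Theta_L$.

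This reduces the lemma to computing $\Theta_L(B(0,\delta)\cap\til F_L)$, since $\bbar{\Theta}_L(\mc{U}_L^\delta)=\pi_{m(q)}(\Theta_L(B(0,\delta)\cap\til F_L))$ and $\bbar{\Theta}_L$ is an isometry. As $\Theta_L$ is a bijection, the image of the intersection is the intersection of the images, so I would treat the two factors separately. The half-spheres supported by $\til C^L_\pm$ map, by the first paragraph, to the half-spheres $\{\sqrt{x^2+|z|^2}=e^{\pm\demi\ell}\}$; checking on which side of each $\til C^L_\pm$ the pole $\phi^{-1}(\infty)=\la\ell$ lies shows that $\til F_L$ (whose boundary trace is $\cc\setminus(\til D^L_+\cup\til D^L_-)$) maps onto the shell $\{e^{-\demi\ell}\le\sqrt{x^2+|z|^2}\le e^{\demi\ell}\}$, which is the standard fundamental domain for $\cjg m(q)\cjd$. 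For the half-ball $B(0,\delta)$: because $\la\ell<\delta$, the pole $\la\ell$ lies inside the disk $D(0,\delta)$, so $\phi$ maps $C(0,\delta)$ to a genuine circle $C(e,\rho)$ and sends the interior disk (containing $\la\ell$) to the exterior of $C(e,\rho)$; consequently $\Theta_L(B(0,\delta))=\hh^3\setminus\bbar{B(e,\rho)}$. Intersecting, $\Theta_L(B(0,\delta)\cap\til F_L)=\{e^{-\demi\ell}\le\sqrt{x^2+|z|^2}\le e^{\demi\ell}\}\setminus\bbar{B(e,\rho)}$, which is \eqref{quotientLq}.

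It remains to identify $e$ and $\rho$ and their asymptotics \eqref{ckrhok}. Since $\la\ell\in\rr$ and $C(0,\delta)$ is centred at the origin, the entire configuration is symmetric under complex conjugation, so the image circle $C(e,\rho)$ has real centre and the two real points $\pm\delta$ of $C(0,\delta)$ are carried to the endpoints of a diameter. Hence $e=\demi(\phi(\delta)+\phi(-\delta))$ and $\rho=\demi|\phi(\delta)-\phi(-\delta)|$; a direct evaluation gives $e=-\delta^2/(\delta^2-\la^2\ell^2)$ and $\rho=\delta\la\ell/(\delta^2-\la^2\ell^2)$, and expanding $(1-\la^2\ell^2/\delta^2)^{-1}$ as a geometric series yields \eqref{ckrhok}.

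Every step is explicit, so there is no analytic difficulty; the work is bookkeeping. The step I expect to demand the most care is getting the orientations right: verifying that $\phi$ is exactly the normalization $\theta$ of \eqref{goodloxo} rather than some competing dilate of it, and tracking which side of each of the three circles maps to which side, so that $\til F_L$ really maps to the shell and $B(0,\delta)$ to the \emph{complement} of $B(e,\rho)$ and not to $B(e,\rho)$ itself. The quaternionic identification $\Theta_L=\hat\phi$ is routine but must be carried out attentively because $j$ does not commute with $\cc$.
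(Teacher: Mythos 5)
Your proposal is correct and follows essentially the same route as the paper: the paper also realizes $\Theta_L$ as the Poincar\'e extension of the M\"obius map conjugating $\gamma_L$ to $m(q)$ (written there as $(\eta\psi\phi)^{-1}$, which simplifies to your $z\mapsto -z/(z-\la\ell)$) and then computes the image of $B(0,\delta)$ as the complement of a half-ball, obtaining the same center $-\delta^2/(\delta^2-\la^2\ell^2)$ and radius $\delta\la\ell/|\delta^2-\la^2\ell^2|$. The only (immaterial) difference is that the paper finds the image circle by pushing $B(0,\delta)$ through the three elementary factors in turn, whereas you evaluate the single map at $\pm\delta$ and invoke the reflection symmetry in the real axis.
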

\begin{proof} We use the notations of the proof of Lemma \ref{reasonforgoodcircle}.
We have a composition $\eta\psi\phi$ which maps $\{0,\infty\}$ to $\{0,\la\ell\}$ and 
$\gamma=\eta\psi\phi \, m(q)(\eta\psi\phi)^{-1}$. We define 
$\Theta$ to be the Poincar\'e extension of $\theta=(\eta\psi\phi)^{-1}$ to the half-space 
$\hh^3$, thus given by \eqref{Psij}.
We check that the image of $B(0,\delta)$ under $(\eta\psi\phi)^{-1}$ is the complement of the half-ball 
$B(e,\rho)$ as claimed in the statement of the Lemma: 
 $\psi^{-1}\eta^{-1}$ maps $B(0,\delta)$ to the half-ball centered at $(x, z)=(0,-1)$ 
 and radius $2\delta/\la\ell$, then 
$\phi^{-1}$ maps to the complement of the half-ball with center and radius
\[(x,z)=(0,-1-\tfrac{\la^2\ell^2}
{\delta^2-\la^2\ell^2}),\quad  
\rho=\tfrac{\delta \la\ell}
{|\delta^2-\la^2\ell^2|} \]
which proves the claim.
\end{proof}

The following Proposition describes the model manifold $X_{m(q)}$ with more appropriate coordinates; the proof involves a sequence of tedious (and not very enlightning) computations, we thus have deferred its proof in the Appendix.
 \begin{prop}\label{model2} 
Assume that $L=(\ell,\nu,\la)\in \mc{Q}$ with the notation \eqref{regionforq}, then
there is an isometry $\Phi_L$ between the solid torus
\eqref{modelLq} and the manifold $(\rr/\tfrac{1}{2}\zz)_w\x \hh^2_{\zeta=v+iu}$ 
equipped with the metric
\begin{equation}\label{geps}
\begin{gathered}
g_L=  \frac{{du}^2+{dv}^2+((1+\nu^2)R^4-4\nu^2\ell^2u^2)dw^2
+ 2\nu(R^2-2u^2)dwdv+4\nu uv dudw}{u^2}\end{gathered} 
\end{equation}
where $R:=\sqrt{u^2+v^2+\ell^2}$. 
With $e(L),\rho(L)$ given by \eqref{ckrhok}, the neighborhood 
\begin{equation}\label{quotientLq_2} 
\pi_{m(q)}\Big(\{(x,z)\in \hh^3\setminus B(e,\rho); \,\, e^{-\demi\ell}
\leq \sqrt{x^2+|z|^2} \leq e^{\demi\ell}\}\Big)
\end{equation}
is mapped by $\Phi_L$ to the set
\begin{equation}\label{W}  
\mc{W}^\delta_{L}:=\pi_{w}\Big(\{ (w,\zeta)\in [-\tfrac{1}{4},\tfrac{1}{4})\x \hh^2;  
\, |\zeta-v_{L}(w)|<\tau_{L}(w)\}\Big)
\end{equation}
where $\pi_w: \rr\x \hh^2\to (\rr/\tfrac{1}{2}\zz)\x \hh^2$ is the covering map,
 and $\tau_{L}(w),v_{L}(w)$ are smooth functions of $w\in[-\frac{1}{4},\frac{1}{4})$ 
 which converge uniformly as $\ell\to 0$ to 
some $\tau_{\la,\nu}(w)$ and $v_{\la,\nu}(w)$ satisfying $v_{\la,\nu}(w)=\mc{O}(\delta^3)$ and 
$\tau_{\la,\nu}(w)=2\delta/\la+\mc{O}(\delta^3)$ uniformly in $|w|<1/4$. 
Finally,  the map 
\[(L,x,z)\mapsto (L,\Phi_L\circ \Theta_L(x,z))\in (\mc{Q}\x (\rr/\tfrac{1}{2}\zz)\x \bbar{\hh^2})\]
extends smoothly to a neighborhood of the cusp region of  $\mc{X}$ and is a diffeomorphism with image $V\setminus \{\ell=0, \zeta=0\}$ where $V$ is some neighborhood of $\{\ell=0, \zeta=0\}$.
\end{prop}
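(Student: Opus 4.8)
The plan is to build $\Phi_L$ as an explicit composition of three coordinate changes, verify the metric formula \eqref{geps} by direct substitution, read off the image of the region \eqref{quotientLq_2} together with the asymptotics of $\tau_L,v_L$, and finally control the $\ell\to 0$ behaviour of the composite $\Phi_L\circ\Theta_L$. First I would pass to cylindrical (Fermi) coordinates $(t,r,\vartheta_0)$ around the oriented axis of $m(q)$, namely the geodesic from $0$ to $\infty$ in $\hh^3=\rr^+_x\x\cc_z$, in which the hyperbolic metric reads $g=dr^2+\cosh^2 r\,dt^2+\sinh^2 r\,d\vartheta_0^2$, with $t$ the signed arclength along the axis, $r$ the distance to it and $\vartheta_0$ the rotation angle. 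Since $|q|=e^\ell$ and $\arg q=\nu\ell$, the generator acts by $(t,r,\vartheta_0)\mapsto(t+\ell,r,\vartheta_0+\nu\ell)$. Setting $w=t/(2\ell)$ and untwisting the angle by $\vartheta=\vartheta_0-\nu t$ turns this action into the translation $w\mapsto w+\demi$ fixing $(r,\vartheta)$, so that $X_{m(q)}\cong(\rr/\demi\zz)_w\x\hh^2$ with the $\hh^2$-factor carried by the geodesic polar coordinates $(r,\vartheta)$. The last step is to realize this $\hh^2$ as the upper half-plane $\hh^2_{\zeta=v+iu}$ by the isometry sending the polar origin to the point $i\ell$; equivalently $r=d_{\hh^2}(\zeta,i\ell)$ and $\vartheta$ is the signed angle at $i\ell$. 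This is where the quantity $R=\sqrt{u^2+v^2+\ell^2}$ enters, through the standard identity $\cosh r=\cosh d_{\hh^2}(\zeta,i\ell)=R^2/(2\ell u)$.

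With these coordinates in hand the verification of \eqref{geps} is a bookkeeping computation. The flat part $dr^2+\sinh^2 r\,d\vartheta^2$ is just the half-plane metric $(du^2+dv^2)/u^2$; from $dt=2\ell\,dw$ and $\cosh^2 r=R^4/(4\ell^2u^2)$ one gets the coefficient $R^4/u^2$ of $dw^2$ coming from $\cosh^2 r\,dt^2$; and expanding $\sinh^2 r\,d\vartheta_0^2=\sinh^2 r(d\vartheta+2\nu\ell\,dw)^2$ with $\sinh^2 r=(R^4-4\ell^2u^2)/(4\ell^2u^2)$ produces the extra $dw^2$-term and, after writing $2\ell\sinh^2 r\,d\vartheta=u^{-2}\big((R^2-2u^2)dv+2uv\,du\big)$ in the $(u,v)$-chart, exactly the cross terms $2\nu(R^2-2u^2)dw\,dv+4\nu uv\,du\,dw$ of \eqref{geps}. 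In particular the $dw^2$ coefficient assembles as $\big((1+\nu^2)R^4-4\nu^2\ell^2u^2\big)/u^2$, as claimed. For the second assertion I would compose with $\Theta_L$ of Lemma \ref{admissible}: the shell $e^{-\ell/2}\le\sqrt{x^2+|z|^2}\le e^{\ell/2}$ is precisely $-\ell/2\le t\le\ell/2$, i.e.\ $w\in[-\tfrac14,\tfrac14)$, while the removed half-ball $B(e,\rho)$ with $e,\rho$ from \eqref{ckrhok} is carried by $\Phi_L$ to round discs $\{|\zeta-v_L(w)|<\tau_L(w)\}$ in each cross-section; substituting the expansions \eqref{ckrhok} into the explicit formulas for $\Phi_L$ yields the uniform limits $v_{\la,\nu}(w)=\mc{O}(\delta^3)$ and $\tau_{\la,\nu}(w)=2\delta/\la+\mc{O}(\delta^3)$, giving \eqref{W}.

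The main obstacle is the last assertion, the smooth extension of $(L,x,z)\mapsto(L,\Phi_L\circ\Theta_L(x,z))$ up to the parabolic boundary $\{\ell=0\}$ and the diffeomorphism property onto $V\setminus\{\ell=0,\zeta=0\}$. The difficulty is that neither factor extends on its own: the untwisting $w=t/(2\ell)$ divides by $\ell$, $\Phi_L$ sends the axis to the $\ell$-dependent point $i\ell$, and $\Theta_L$ degenerates to the constant $(0,-1)$ as $\la\ell\to 0$; only the composite is regular, the divergences compensating. To see this I would not take limits factor by factor but instead write the composite in the single chart $(\ell,\nu,\la,w,\zeta)$, use $R^2=u^2+v^2+\ell^2$ and the identity $\cosh r=R^2/(2\ell u)$ to eliminate $r$ and $t$ in favour of $(w,\zeta,\ell)$, and check that the resulting formulas are smooth functions of $L=(\ell,\nu,\la)\in\bbar{\mc{Q}}$ down to $\ell=0$, where $g_L$ specializes to the rank-$1$ cusp metric \eqref{Uj'}, namely $g_0=(du^2+dv^2+(u^2+v^2)^2dw^2)/u^2$, and the center $i\ell$ collapses to the cusp point $\zeta=0$. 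The point $\{\ell=0,\zeta=0\}$ must be excised because it corresponds to the cusp at infinity, which is not a point of the manifold; smoothness together with uniform nondegeneracy of the Jacobian away from it then upgrades the map to the required diffeomorphism onto $V\setminus\{\ell=0,\zeta=0\}$, with $V$ a neighborhood of the excised circle fixed once $\delta$ is chosen small.
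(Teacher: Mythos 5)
Your construction is correct and is essentially the paper's own proof: the passage to Fermi coordinates $(t,r,\vartheta_0)$ about the axis, the untwisting $\vartheta=\vartheta_0-\nu t$, $w=t/(2\ell)$, and the identification of each totally geodesic cross-section with $\hh^2$ sending the axis point to $i\ell$ (so that $\cosh r=R^2/(2\ell u)$ produces the coefficients of \eqref{geps}) is exactly the composition $\Xi_L\circ\Upsilon_L$ of the Appendix, where $\Upsilon_L$ realizes those Fermi coordinates concretely via stereographic projection of the half-spheres $\{\sqrt{x^2+|z|^2}=\mathrm{const}\}$ and the rescaling $u'=\ell\hat{u}$, $v'=\ell\hat{v}$. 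The only points where you stop short of the paper are routine computations rather than missing ideas: the explicit rational formulas \eqref{r2}--\eqref{ucirctheta} for $\Upsilon_L\circ\Theta_L$, which are what make the claimed cancellation of the $1/\ell$ singularities in the composite visible, and the corresponding expansion of the image half-discs that yields the stated asymptotics of $v_L$ and $\tau_L$.
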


Notice that when $\ell\to 0$, the limiting model in Proposition \ref{model2} is $(\rr/\demi \zz)_w\x \hh^2_{\zeta=v+iu}$ equipped with the metric 
\begin{equation}\label{model1g0}
\begin{gathered}
g_0=  \frac{{du}^2+{dv}^2+(1+\nu^2)(u^2+v^2)^2dw^2+ 2\nu(v^2-u^2)dwdv+4\nu uv dudw}{u^2}\end{gathered}. 
\end{equation}
Writing $x:=\frac{u}{u^2+v^2},y:=-\frac{v}{u^2+v^2}$, this becomes
\[g_0=  \frac{{dx}^2+{dy}^2+(1+\nu^2)dw^2+ 2\nu dwdy}{x^2}\] 
and thus taking $(x',y',w')=(\frac{x}{\sqrt{1+\nu^2}},\frac{y}{1+\nu^2},w+\frac{\nu y}{1+\nu^2})$ and 
the inverted coordinates
$(u'=\frac{x'}{x'^2+y'^2},v'=-\frac{y'}{x'^2+y'^2},w')$, we obtain 
\begin{equation}\label{model2g0}
g_0=\frac{{dx'}^2+{dy'}^2+dw'^2}{x'^2}=\frac{{du'}^2+{dv'}^2+(u'^2+v'^2)^2dw'^2}{u'^2}
\end{equation}
which is exactly the model metric of \eqref{Uj'}. We can then write this change of variable  
\begin{equation}\label{uvtou'v'}
\begin{gathered}
u'=(1+\nu^2)^{3/2}u(1-\tfrac{\nu^2u^2}{u^2(1+\nu^2)+v^2}), \quad v'=(1+\nu^2)v(1-\tfrac{\nu^2u^2}{u^2(1+\nu^2)+v^2})\\
w'=w-\frac{\nu}{1+\nu^2}\frac{v}{v^2+u^2}
\end{gathered}
\end{equation}
and if we take the fundamental domain $[-\tfrac{1}{4},\tfrac{1}{4}]_w\x \hh^2_{iu+v}$ for $(\rr/\demi \zz)_w\x \hh^2$, 
we see that the corresponding fundamental domain in the coordinates $(u',v',w')$ for the action $w'\mapsto w'+\demi$ becomes 
\begin{equation}\label{mcD}
\mc{D}:=\Big\{(w', iu'+v')\in \rr \x \hh^2; w'+\tfrac{\nu}{1+\nu^2}\tfrac{v}{v^2+u^2}\in [-\tfrac{1}{4},\tfrac{1}{4}]\Big\}.\end{equation} 
This explicit isometry will be used later since it is sometime more convenient to work in the model \eqref{model2g0} than in the model \eqref{model1g0}.

The function $U:=\tfrac{u}{R}$ in $\mc{W}^\delta_{L}$ defines the boundary corresponding to 
$\pl \bbar{X}_L$ via $\Phi_L\circ\Theta_L$. 
We will see later that, near the cusp, this function is a boundary defining function on a space that compactifies $\mc{X}$ as a manifold with corners. 
This function will essentially give the form of the equidistant foliation near the pinched geodesic.  
\begin{lem}\label{lem:hyperb}
Let $U:=\tfrac{u}{R}$ be the chosen boundary defining function in $\mc{W}^\delta_{L}$, 
then the metric $h_{L}:=(1+\nu^2)(U^2g_L)|_{U=0}$ in the conformal infinity induced by the defining function $U\sqrt{1+\nu^2}$ 
is given by 
\[ \begin{split}
h_L:=(1+\nu^2)\Big( \frac{dv^2}{v^2+\ell^2}+(v^2+\ell^2)(1+\nu^2)dw^2
+2\nu dvdw\Big)\end{split}\]
and has constant Gaussian curvature $-1$.
\end{lem}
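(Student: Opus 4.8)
The plan is to prove the lemma in two stages: first an explicit identification of the tensor $h_L$, and then a curvature computation.

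For the first stage I would simply compute $(1+\nu^2)(U^2g_L)|_{U=0}$ by hand. Since $U=u/R$ with $R=\sqrt{u^2+v^2+\ell^2}$, we have $U^2g_L=R^{-2}N$, where $N$ denotes the numerator of the expression \eqref{geps} for $g_L$. The boundary $\{U=0\}$ is the locus $\{u=0\}$, and restricting a symmetric $2$-tensor to $TM$ amounts to discarding every term containing $du$; thus the $du^2$ and $4\nu uv\,du\,dw$ terms drop out. On $\{u=0\}$ one has $R^2=v^2+\ell^2$, so the $dw^2$ coefficient $(1+\nu^2)R^4-4\nu^2\ell^2u^2$ becomes $(1+\nu^2)(v^2+\ell^2)^2$ and the $dw\,dv$ coefficient $2\nu(R^2-2u^2)$ becomes $2\nu(v^2+\ell^2)$. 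Dividing by $R^2=v^2+\ell^2$ and multiplying by $(1+\nu^2)$ gives precisely the claimed formula for $h_L$. This stage is pure bookkeeping.

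For the second stage, write $h_L=(1+\nu^2)\tilde h$ with $\tilde h=\frac{dv^2}{v^2+\ell^2}+(1+\nu^2)(v^2+\ell^2)dw^2+2\nu\,dv\,dw$, and complete the square to obtain $\tilde h=(v^2+\ell^2)^{-1}\beta^2+(v^2+\ell^2)dw^2$ with $\beta:=dv+\nu(v^2+\ell^2)dw$. I would then introduce the orthonormal coframe $\theta^1:=(v^2+\ell^2)^{-1/2}\beta$ and $\theta^2:=(v^2+\ell^2)^{1/2}dw$, so that $\tilde h=(\theta^1)^2+(\theta^2)^2$ and $\theta^1\wedge\theta^2=dv\wedge dw$. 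Computing $d\theta^1$ and $d\theta^2$, solving Cartan's first structure equations for the connection $1$-form $\omega^1_2$, and evaluating $d\omega^1_2$ yields $d\omega^1_2=-(1+\nu^2)\,\theta^1\wedge\theta^2$, so that the Gaussian curvature of $\tilde h$ equals $-(1+\nu^2)$. Since rescaling a surface metric by a positive constant $c$ multiplies its curvature by $c^{-1}$, the metric $h_L=(1+\nu^2)\tilde h$ has constant curvature $-(1+\nu^2)/(1+\nu^2)=-1$, as claimed.

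The only genuine subtlety, and what I expect to be the main obstacle, is the cross term $2\nu\,dv\,dw$: the $1$-form $\beta$ in the completed square is \emph{not} closed, since $d\beta=2\nu v\,dv\wedge dw\neq0$, so one cannot remove the twist by a change of coordinates and reduce to the standard diagonal cusp metric $\frac{dv^2}{v^2+\ell^2}+(v^2+\ell^2)dw^2$ (which on its own already has curvature $-1$). The moving-frame computation handles this transparently: the nonzero $d\beta$ is exactly what promotes the curvature from $-1$ to $-(1+\nu^2)$, a factor then undone by the overall conformal constant $1+\nu^2$. A Brioschi-formula computation applied directly to the non-diagonal coefficients is an equivalent but less illuminating alternative.
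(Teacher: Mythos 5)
Your argument is correct, but it takes a genuinely different route from the paper's. You establish the formula for $h_L$ by direct restriction (the paper treats this as immediate and omits it) and then compute the curvature by moving frames: after completing the square, the coframe $\theta^1=(v^2+\ell^2)^{-1/2}\beta$, $\theta^2=(v^2+\ell^2)^{1/2}dw$ has $d\theta^1=\nu v(v^2+\ell^2)^{-1/2}dv\wedge dw$ and $d\theta^2=v(v^2+\ell^2)^{-1/2}dv\wedge dw$, the connection form is $v(v^2+\ell^2)^{-1}\nu\,dv+v(1+\nu^2)dw$ up to an overall sign, and its exterior derivative is $(1+\nu^2)\,dv\wedge dw=(1+\nu^2)\,\theta^1\wedge\theta^2$, giving $K_{\tilde h}=-(1+\nu^2)$ in any consistent convention; dividing by the conformal constant yields $-1$. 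The paper instead observes that $h^\flat_L:=\frac{\ell^2}{(1+\nu^2)(v^2+\ell^2)}h_L$ becomes the constant-coefficient, hence flat, metric $d\theta^2+\ell^2(1+\nu^2)dw^2+2\ell\nu\,d\theta\,dw$ in the variable $\theta=\arctan(v/\ell)$, and then applies the formula for the Gaussian curvature under a conformal change, reducing everything to $\cos^2\theta\,\partial_\theta^2\log\cos\theta=-1$. Your version avoids having to guess the flat conformal representative; the paper's avoids connection forms altogether. Both are short and correct.

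One side remark in your last paragraph is wrong, although it does not affect the proof. The non-closedness of $\beta=dv+\nu(v^2+\ell^2)dw$ does not show that the cross term is irremovable: completing the square in the other slot gives $\tilde h=\frac{dv^2}{(1+\nu^2)(v^2+\ell^2)}+(1+\nu^2)(v^2+\ell^2)\,\alpha^2$ with $\alpha=dw+\frac{\nu\,dv}{(1+\nu^2)(v^2+\ell^2)}$, and $\alpha$ \emph{is} exact, namely $\alpha=d\bigl(w+\tfrac{\nu}{(1+\nu^2)\ell}\arctan(v/\ell)\bigr)$. This is exactly the diagonalization the paper performs in \eqref{isometrictomodel}, where $h_L$ is rewritten as the standard metric $\frac{dv^2}{v^2+\ell^2}+(v^2+\ell^2)\,dw'^2$ after a change of the angular coordinate (the only subtlety being global: the new angle is not $\tfrac12$-periodic in the same way). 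So the ``twist'' is removable locally, and the curvature $-1$ can also be read off that way; what your frame computation buys is that you never need to exhibit the change of variable.
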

\begin{proof} First we notice that $h^{\flat}_L:=\tfrac{\ell^2}{(1+\nu^2)(v^2+\ell^2)}h_L$ is flat, since it is given by
\[h^\flat_L= d\theta^2+\ell^2(1+\nu^2)dw^2+2\ell\nu
d\theta dw\]
with $\theta:=\arctan(v/\ell)$, and thus the Gaussian curvature of $h_L$ is given by 
\[\frac{1}{2(1+\nu^2)} \frac{\ell^2}{v^2+\ell^2}\Delta_{h^\flat_{L}}\Big(\log\Big(\frac{v^2+\ell^2}{\ell^2}\Big)\Big)=
(\cos \theta)^2\pl_\theta^2(\log \cos(\theta))=-1\]
which finishes the proof.
\end{proof}

\section{Formation of a cusp on surfaces}\label{formationofcusp}

In this section, we discuss the uniformisation on a Riemann surface when there is a degeneration to a surface with cusps.

We start by setting the assumptions.
Let $N$ be a compact surface of genus $g\ge 2$ and $h_\eps$ a family of smooth metrics on $N$ for $\eps>0$. 
Assume that there is a finite family of disjoint smooth embedded circles $(H_j)_{j=1,\dots,j_1}$ on $N$ (for some $j_1\in \nn$) 
which satisfies the following properties: 
there exist $A>a>0$ and some connected open neighborhoods $\mc{Z}_j^\eps\subset (\rr/\tfrac{1}{2}\zz)\x (-A,A)$ 
of $(\rr/\tfrac{1}{2}\zz)\x\{0\}$ and some neighborhood $\mc{Y}_j^\eps$ of $H_j$ in $N$
such that $(\rr/\tfrac{1}{2}\zz)\x (-a,a) \subset \mc{Z}_j^\eps$, 
and there exist some  smooth diffeomorphisms
\[\psi_j^\eps : \mc{Z}_j^\eps \to \mc{Y}_j^\eps\]
and some parameters $\nu_j(\eps),\ell_j(\eps)$ converging to $\nu_j\in\rr$ and $0$ as $\eps\to 0$, such that 
\begin{equation}\label{heps} 
{\psi_j^\eps}^* h_{\eps}= (1+\nu_j(\eps)^2)\left(\tfrac{dv^2}{v^2+ \ell_j(\eps)^2} + 
(v^2+\ell_j(\eps)^2)(1+\nu_j(\eps)^2)dw^2+2\nu_j(\eps) dvdw\right),
\end{equation}
where $w\in (\rr/\tfrac{1}{2}\zz)$ is an angle variable and $v$ is the coordinate obtained 
by projecting on the second factor. Moreover, we ask that $\psi_j^\eps$ is converging in 
$\mc{C}^k$-norms for all $k\in\nn_0$ to some smooth diffeomorphisms 
$\psi_j^0: \mc{Z}_j^0\setminus ((\rr/\tfrac{1}{2}\zz)\x\{0\}) \to 
\mc{Y}_j^0 \setminus H_j$ where 
$\mc{Z}_j^0={\rm Int}(\cap_{\eps>0}\mc{Z}_j^\eps)$ 
and $\mc{Y}_j^0={\rm Int}(\cap_{\eps>0}\mc{Y}_j^\eps)$. We finally assume 
that the metric $h_{\eps}$ converges in $\mc{C}^k$-norms on compact sets of $M:=N\setminus H$ 
for all $k\in\nn_0$ to a smooth metric $h_{0}$ defined on $M $ where 
$H:=\bigcup_{j=1}^{j_0}H_j$.
Thus, for $\eps>0$, the metric $h_{\eps}$ is smooth on $N$, while for $\eps=0$, $h_{0}$ 
is a complete metric on $M$ of finite volume with cusp ends. 

Notice that near $H_j$ the metric \eqref{heps} can be rewritten under the more standard form 
\begin{equation}\label{isometrictomodel} 
\begin{gathered}
h_\eps= \tfrac{dv^2}{v^2+\ell_j(\eps)^2}+(v^2+\ell_j(\eps)^2){dw'}^2, \\ 
\textrm{ with }w':=w(1+\nu_j(\eps)^2)-\tfrac{\nu_j(\eps)}{\ell_j(\eps)}
\arccos\left(\tfrac{v}{\sqrt{v^2+\ell_j(\eps)^2}}\right)
\end{gathered}
\end{equation}
which shows that 
$H_j=\{v=0\}$ is a closed geodesic of length $\demi\ell_j(\eps)(1+\nu_j(\eps)^2)$ in this neighborhood. Since 
$\ell^{-1}\arccos(v/\sqrt{v^2+\ell^2})=-\int_{v}^\infty 1/(t^2+\ell^2)dt$, we see that for the limiting case $\ell_j(\eps)=0$, the change of coordinates above is only well defined (and smooth) in the region $\{v>0\}$. But changing $\arccos(v/\sqrt{v^2+\ell_j(\eps)^2})$ to $\arccos(-v/\sqrt{v^2+\ell_j(\eps)^2})$, we get a 
smooth change of coordinates at $\eps=0$ in $\{v<0\}$.
We use the model \eqref{heps} instead of \eqref{isometrictomodel} since it is more suitable to our $3$-dimensional model of Proposition \ref{model2} for the rank-$1$ cusp formation.  

We can compactify smoothly $M$ into $\bbar{M}$ by using $\psi_j^0$: it suffices to compactify the 
charts $\mc{W}_j^0\setminus ((\rr/\tfrac{1}{2} \zz)\x \{0\})$ made of two disjoint connected components
$\{v>0\}$ and $\{v<0\}$ by attaching a circle at $v=0$ on each  connected component 
and defining the smooth structure by saying that $v$ and $w$ are smooth functions.  
The obtained surface is a smooth surface with $2j_1$ boundary components and 
with interior given by $M$. It is important to notice that the isometry between \eqref{heps} and \eqref{isometrictomodel} at $\ell_j=0$ (ie. $\eps=0$) is not smooth at $v=0$ since $F_0(v)=-1/v$, thus the smooth 
compactification we take for $M$ using $\psi_j^0$ is not the same as the one used in the beginning of Section \ref{Sec:uniform}, which corresponds rather to compactifying by using the coordinates  $(w',v)$ putting metric under the form \eqref{isometrictomodel}.

By the uniformisation theorem, we can find for each $\eps>0$ a unique function 
$\varphi_{\eps}\in\CI(N)$ such that the conformal metric 
\[ h^{\rm hyp}_{\eps}=e^{2\varphi_{\eps}}h_{\eps}\] 
is hyperbolic.  Similarly, for $\eps=0$, Proposition \ref{uniform} insures that 
we can find $\varphi_0\in \CI(M)$ such that $h^{\rm hyp}_{0}= e^{2\varphi_0}h_{0}$ 
is a complete hyperbolic metric of finite volume with cusps on $M$. In fact, if  $(w',v)$ 
are the coordinates above putting the metric under the form \eqref{isometrictomodel}, 
Proposition \ref{uniform} shows that $\varphi_0(w',v)$ admits a smooth extension 
from each connected component of $\{v\not=0\}$ to both $\{v\geq 0\}$ and $\{v\leq 0\}$ 
(it is smooth from each side but not globally on an open interval containing $0$) with 
$\varphi_0|_{v=0}=0$ and $\pl_{w'}\varphi_0=\mc{O}(|v|^\infty)$ near $v=0$. Viewing now 
$\varphi_0$ as a function of $(w,v)$, we get 
$\varphi_0(w',v)=\varphi(w(1+\nu^2)-\frac{\nu}{v}, v)$ in $v>0$ and 
$\varphi_0(w',v)=\varphi(w(1+\nu^2)+\frac{\nu}{v}, v)$
in $v<0$, and we easily see that $\varphi_0$ admits a smooth 
extension to $\bbar{M}$ such that $\varphi_0|_{\pl\bbar{M}}=0$ and $\pl_w\varphi_0$ 
vanishes to infinite order at 
 $\pl\bbar{M}=\{v=0\}$.

\begin{prop}\label{cf.1}
Under the assumptions above, we have, as $\eps\to 0$,
$$
       \| \varphi_{\eps}-\varphi_0\|_{\cC^0} \to 0.
$$
\end{prop}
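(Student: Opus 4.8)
The plan is to realize each uniformization as a solution of one semilinear elliptic equation and then to compare $\varphi_\eps$ with $\varphi_0$ by the maximum principle, exploiting that in the collars of the pinching circles $h_\eps$ is \emph{already} hyperbolic (Lemma \ref{lem:hyperb}). With $\Delta_h$ the Laplace--Beltrami operator (so $\Delta_h f\le 0$ at an interior maximum), the Gauss curvature equation for a conformal change shows that $e^{2\varphi_\eps}h_\eps$ has curvature $-1$ if and only if
\[\Delta_{h_\eps}\varphi_\eps=K_{h_\eps}+e^{2\varphi_\eps}\quad\text{on }N,\]
and likewise $\Delta_{h_0}\varphi_0=K_{h_0}+e^{2\varphi_0}$ on $M$. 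The first point I record is that the curvatures are uniformly pinched for small $\eps$: in each collar $K_{h_\eps}\equiv-1$ by \eqref{heps} and Lemma \ref{lem:hyperb}, while on the complement of the collars, which lies in a fixed compact subset of $M$ once $\eps$ is small, the $\cC^2$ convergence $h_\eps\to h_0$ forces $K_{h_\eps}\to-1$. Hence there is $C>0$ with $-C\le K_{h_\eps}\le-\demi$ on all of $N$ for $\eps$ small.

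From this I get uniform $\cC^0$ bounds. At a maximum of $\varphi_\eps$ one has $\Delta_{h_\eps}\varphi_\eps\le0$, so $e^{2\varphi_\eps}\le -K_{h_\eps}\le C$; at a minimum $\Delta_{h_\eps}\varphi_\eps\ge0$, so $e^{2\varphi_\eps}\ge -K_{h_\eps}\ge\demi$. Therefore $\demi\log\demi\le\varphi_\eps\le\demi\log C$ on $N$, uniformly in $\eps$. With these bounds the right-hand side of the equation is uniformly bounded on any compact $\mathcal K\subset M$, and $h_\eps\to h_0$ in $\CI(\mathcal K)$; interior Schauder estimates and a bootstrap then give uniform $\CI$ bounds on compact subsets of $M$. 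By Arzel\`a--Ascoli and a diagonal argument, a subsequence converges in $\CI_{\mathrm{loc}}(M)$ to a bounded $\varphi_\infty$ solving $\Delta_{h_0}\varphi_\infty=K_{h_0}+e^{2\varphi_\infty}$. Since $\varphi_\infty$ is bounded, $e^{2\varphi_\infty}h_0$ is bi-Lipschitz to $h_0$, hence complete of finite volume, and by the uniqueness in Proposition \ref{uniform} it must equal $h^{\rm hyp}_0=e^{2\varphi_0}h_0$; thus $\varphi_\infty=\varphi_0$. As the subsequential limit is forced, the whole family converges: $\varphi_\eps\to\varphi_0$ in $\CI_{\mathrm{loc}}(M)$, in particular uniformly on compact subsets of $M$.

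It remains to upgrade this to uniform convergence up to the pinching locus. Fix $\eta>0$. Since $\varphi_0\to0$ at $\pl\bbar M$ (Proposition \ref{uniform}), choose $a>0$ so small that $|\varphi_0|<\eta/3$ on all the collar pieces $\{0<|v|\le a\}$. On the fixed circles $\{|v|=a\}$, which are compact in $M$, the interior convergence gives $|\varphi_\eps-\varphi_0|<\eta/3$, hence $|\varphi_\eps|<2\eta/3$ there, for $\eps$ small. Inside the collar $\{|v|\le a\}$ the metric $h_\eps$ is hyperbolic, so $\varphi_\eps$ solves $\Delta_{h_\eps}\varphi_\eps=-1+e^{2\varphi_\eps}$, and the constants $2\eta/3$ and $-2\eta/3$ are respectively a super- and a sub-solution of this equation (because $-1+e^{2c}\ge0\ge\Delta c$ for $c\ge0$, and reversely for $c\le0$). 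Comparison on the collar, whose boundary values are controlled by the previous line, yields $|\varphi_\eps|<2\eta/3$ throughout the collar; combined with $|\varphi_0|<\eta/3$ there this gives $|\varphi_\eps-\varphi_0|<\eta$ on the collars. Together with the bound on the thick part, $\sup_N|\varphi_\eps-\varphi_0|<\eta$ for $\eps$ small, which is the claim.

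The main obstacle is the behaviour near the pinched circles $H$: one must propagate the thick-part convergence across the degenerating neck where the background metrics $h_\eps$ do not converge to $h_0$. What makes this tractable, and is the crux of the argument, is precisely that $h_\eps$ is \emph{exactly} hyperbolic in the collars by Lemma \ref{lem:hyperb}, so that $\varphi\equiv0$ is an exact solution there and constant barriers are available; coupled with $\varphi_0\to0$ at the cusps this localizes the whole difficulty to the cheap comparison above. The uniform pinching $-C\le K_{h_\eps}\le-\demi$ is the other essential ingredient, as it simultaneously provides the a priori $\cC^0$ bounds and, through boundedness of $\varphi_\infty$, guarantees that the subsequential limit is the complete finite-volume metric, forcing $\varphi_\infty=\varphi_0$.
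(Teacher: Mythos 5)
Your argument has a genuine gap at its very first step, the uniform curvature pinching. You assert that on the complement of the collars ``the $\cC^2$ convergence $h_\eps\to h_0$ forces $K_{h_\eps}\to-1$''. It does not: it forces $K_{h_\eps}\to K_{h_0}$, and in the setting of Section \ref{formationofcusp} the limit metric $h_0$ is only assumed to be the exact hyperbolic model \emph{near} $H$; away from the collars it is an arbitrary smooth metric (in the application it is $(\rho_0^2g_\eps)|_{TM}$, which has no reason to be hyperbolic, or even negatively curved, on the thick part). So the claimed bound $K_{h_\eps}\le-\demi$ on all of $N$ is unjustified. This matters because your a priori \emph{lower} bound on $\varphi_\eps$ comes from evaluating the equation at an interior minimum: there $e^{2\varphi_\eps}\ge-K_{h_\eps}$, which is vacuous wherever $K_{h_\eps}\ge0$. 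Without a uniform lower bound on $\varphi_\eps$ the Schauder/Arzel\`a--Ascoli step collapses (the interior estimates require $\|\varphi_\eps\|_{L^\infty}$ on both sides), and it is not easy to recover it by other means, e.g.\ the Green's function of $(N,h_\eps)$ degenerates along the pinching geodesic, so the Gauss--Bonnet area identity does not convert into a pointwise bound. The upper bound $e^{2\varphi_\eps}\le-K_{h_\eps}\le C$ at the maximum is fine, as are your collar comparison with constant barriers and the identification of the subsequential limit via the uniqueness in Proposition \ref{uniform}; the failure is isolated in the lower $\cC^0$ bound, but it is load-bearing.

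The paper's proof avoids exactly this by first conformally absorbing (a mollified, $\eps$-dependent extension of) $\varphi_0$ into the background: setting $\til h_\eps=e^{2\tvarphi}h_\eps$ with $\tvarphi|_{\eps=0}=\varphi_0$, one gets $R_{\til h_\eps}=-2+o(1)$ \emph{uniformly on all of $N$} --- on the thick part because $\til h_\eps\to h_0^{\rm hyp}$ which is genuinely hyperbolic, and near $H$ because $R_{h_\eps}=-2$ there and the correction $2\Delta_{h_\eps}\tvarphi$ is shown to be uniformly small using $\pl_w\tvarphi=\mc{O}((\ell_j(\eps)+|v|)^\infty)$. After this change the two-sided maximum-principle evaluation gives $\tvarphi_\eps=\varphi_\eps-\tvarphi=o(1)$ uniformly in one stroke, with no compactness argument and no separate treatment of the neck. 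If you want to keep your architecture, the fix is precisely this preliminary conformal change (or any other device producing a uniform two-sided pinch of the background curvature); as written, the proof is incomplete.
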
 
\begin{proof}
Let $\tvarphi$ be a continuous function on $N\x [0,\eps_0)_\eps$ whose restriction to $\eps=0$ is given by $\varphi_0$
and such that $\tvarphi$ is smooth on $(N\x [0,\eps_0))\setminus (H\x \{0\})$. Moreover, we ask that  
\begin{equation}\label{plw}
\pl_w\tvarphi=\mc{O}((\ell_j(\eps)+|v|)^\infty) 
\end{equation} 
near $H_j\x \{0\}$; for instance this can be achieved by writing $\varphi_0=\varphi_{0,1}+\varphi_{0,2}$ with ${\rm supp}(\varphi_{0,1})\cap H=\emptyset$, and $\supp(\varphi_{0,2})\subset \cup_j\mc{Y}_j^\eps$ (where 
$\mc{Y}_j^\eps$ is the collar neighborhood with coordinates $v,w$ as above) and then taking $\tvarphi=\tvarphi_{1}+\tvarphi_{2}$ where $\tvarphi_{2}$ is supported in $\cup_j\mc{Y}_j^\eps$ and given in $\mc{Y}_j^\eps$
by 
\begin{equation}\label{tvarphi02}
\tvarphi_{2}(v,w,\eps)= \frac{1}{\ell_j(\eps)}\int \chi(\tfrac{v-v'}{\ell_j(\eps)})\varphi_{0,2}(v',w)dv'
\end{equation} 
where $\chi\in \mc{C}_0^\infty(\rr)$ satisfies $\int \chi=1$, $\chi\geq 0$ and $\chi(0)=1$. Using that, near $H_j$, 
$\pl_w\varphi_{0,2}=\pl_w\varphi_0=\mc{O}(|v|^\infty)$, we obtain the claim.
Consider the new family of metrics
$$
    \til{h}_{\eps}= e^{2\tvarphi} h_{\eps}, \quad \eps\in [0,\eps_0),
$$
and set $\tvarphi_{\eps}:= \varphi_{\eps}-\tvarphi(\cdot,\eps)$ so that $h^{\rm hyp}_{\eps}= e^{2\varphi_{\eps}}h_{\eps}= 
e^{2\tvarphi_{\eps}}\til{h}_{\eps}$.
Notice that $\tvarphi_0=0$ and that $R_{\til{h}_0}=-2$ where $R$ denotes the scalar curvature.  Thus, outside any fixed open set containing $H$, we will have that $R_{\til{h}_{\eps}}=-2+o(1)$ as $\eps\to 0$, by the fact that $h_\eps\to h_0$ on $M$ in $\mc{C}^k$-norms on compact sets of $M$.  On the other hand, near $H$, $R_{h_{\eps}}=-2$ by Lemma \ref{lem:hyperb}, so by the formula for the scalar curvature under conformal changes of metrics, we have that
$$
    R_{\til{h}_{\eps}}= e^{-2\tvarphi(\cdot,\eps)}( -2 + 2\Delta_{h_{\eps}}\tvarphi(\cdot,\eps)) \quad \mbox{near} \; H.
$$
The Laplacian $\Delta_{h_\eps}$ near $H_j$ is given by 
\[ \Delta_{h_\eps}=-\pl_v (v^2+\ell_j(\eps)^2)\pl_v -\frac{(1+\nu_j(\eps)^2)^{-1}}{v^2+\ell_j(\eps)^2}\pl_w^2+2\frac{\nu_j(\eps)}{1+\nu_j(\eps)^2}\pl_v\pl_w\]
therefore using \eqref{plw}, \eqref{tvarphi02} and the fact that $\varphi_0\in \mc{C}^\infty(\bbar{M})$, 
we deduce that $R_{\til{h}_{\eps}}$ converges uniformly to $R_{\til{h}_0}$ near $H$ as $\eps\to 0$ and thus $R_{\til{h}_\eps}=-2+o(1)$ uniformly.
In particular, for $\eps$ sufficiently small, $R_{\til{h}_{\eps}}$ will be negative.

Now, again by the formula for the curvature under conformal changes of metrics, we have that
\begin{equation}
     -2= e^{-2\tvarphi_{\eps}}( R_{\til{h}_{\eps}}+ 2\Delta_{\til{h}_{\eps}}\tvarphi_{\eps}).
\label{cf.2}\end{equation}
Thus, for $\eps>0$ sufficiently small so that $R_{\til{h}_{\eps}}$ is negative, 
we see that if $\tvarphi_{\eps}$ attains its maximum at $p$, then
$$
    -2\ge e^{-2\varphi_{\eps}(p)}R_{\til{h}_{\eps}(p)} \; \Longrightarrow \; e^{2\tvarphi_{\eps}(p)}\le \frac{R_{\til{h}_{\eps}}(p)}{-2} \; 
    \Longrightarrow \; \tvarphi_{\eps}(p) \le \frac12 \log \left( \frac{R_{\til{h}_{\eps}}(p)}{-2} \right)=o(1).
$$
Similarly, if $\tvarphi_{\eps}$ attains its minimum at $q$, then
$$
            \tvarphi_{\eps}(q)\ge \frac12 \log \left( \frac{R_{\til{h}_{\eps}}(q)}{-2}\right)=o(1).
$$
Consequently, $\tvarphi_{\eps}\to 0$ uniformly on $M$ as $\eps\to 0$.   Since 
$\varphi_{\eps}-\varphi_0= \tvarphi_{\eps} + \tvarphi(\cdot,\eps)-\varphi_0$
and $||\tvarphi(\cdot,\eps)-\varphi_0||_{L^\infty}=o(1)$ as $\eps\to 0$, the result follows.
\end{proof}

\begin{rem}
A recent result of Melrose-Zhu \cite{MelZhu} shows that in fact $\varphi_{\epsilon}$ admits a polyhomogeneous expansion on the manifold with corners obtained from $N\times [0,1)_{\eps}$ by blowing up  $H\times \{0\}$.
\end{rem}

The following corollary will be useful to deal with the limit of the renormalized volume under the formation of a rank-1 cusp.

\begin{cor} \label{cor:limitenergy}
Let $I_\eps\subset [-1,1]$ with size $|I_\eps|\to 0$, and let $e^{2\varphi_{\eps}}$ be the uniformisation factor for $h_\eps$ on $M$ 
so that $h^{\rm hyp}_\eps=e^{2\varphi_\eps}h_\eps$ is hyperbolic. We have in each collar neighborhood $\mc{C}_j$ of 
$H_j$,
\begin{equation}\label{cf.3}
\begin{gathered}
\lim_{\eps\to 0} \int_{\rr/\demi\zz}\int_{I_\eps} |d\varphi_{\eps}|^2_{h_{\eps}} dv dw =0, \\
 \lim_{\eps\to 0} \int_{M} |d\varphi_{\eps}|^2_{h_{\eps}} {\rm dvol}_{h_\eps} = 
\int_{M} |d\varphi_{0}|^2_{h_{0}} {\rm dvol}_{h_0}.\\
 \end{gathered}
 \end{equation}
\end{cor}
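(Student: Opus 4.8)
The plan is to exploit the conformal invariance of the Dirichlet energy in dimension two, which lets us replace $h_\eps$ by any convenient conformal representative, and to reduce both statements to the $L^\infty$ control already supplied by Proposition \ref{cf.1}. Writing $\varphi_\eps = \tvarphi(\cdot,\eps) + \tvarphi_\eps$ as in the proof of that proposition, I would first establish the auxiliary fact that $\tvarphi_\eps\to 0$ in energy, i.e.\ $\int_M |d\tvarphi_\eps|^2_{h_\eps}\,dvol_{h_\eps}\to 0$. For fixed $\eps>0$ everything lives on the compact surface $N$ (the metric $h_\eps$ is smooth across $H=\{v=0\}$), so there are no boundary terms and the curvature equation \eqref{cf.2}, rewritten as $\Delta_{\til h_\eps}\tvarphi_\eps = -e^{2\tvarphi_\eps}-\demi R_{\til h_\eps}=:f_\eps$, can be paired with $\tvarphi_\eps$ and integrated, giving $\int_N |d\tvarphi_\eps|^2_{\til h_\eps}\,dvol_{\til h_\eps} = \int_N f_\eps\,\tvarphi_\eps\,dvol_{\til h_\eps}$. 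By the two-dimensional conformal invariance of the energy density, the left-hand side equals $\int_N|d\tvarphi_\eps|^2_{h_\eps}\,dvol_{h_\eps}$.

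The proof of Proposition \ref{cf.1} already yields $\|\tvarphi_\eps\|_{L^\infty}\to 0$ and $R_{\til h_\eps}=-2+o(1)$ uniformly, hence $\|f_\eps\|_{L^\infty}\to 0$. Together with the uniform bound $\mathrm{vol}(N,\til h_\eps)=\int_N e^{2\tvarphi}\,dvol_{h_\eps}\le C$ (valid because $h_\eps\to h_0$ away from $H$ and is the explicit model \eqref{heps} of bounded volume near $H$, while $\tvarphi$ stays uniformly bounded), the right-hand side is controlled by $\|f_\eps\|_{L^\infty}\|\tvarphi_\eps\|_{L^\infty}\mathrm{vol}(N,\til h_\eps)\to 0$. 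This proves $\int_M|d\tvarphi_\eps|^2_{h_\eps}\,dvol_{h_\eps}\to 0$.

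For the first limit in \eqref{cf.3} I would split $|d\varphi_\eps|^2\le 2|d\tvarphi_\eps|^2 + 2|d\tvarphi|^2$. On the collar the $\tvarphi_\eps$ term contributes at most $\tfrac{2}{1+\nu_j(\eps)^2}\int_M|d\tvarphi_\eps|^2_{h_\eps}\,dvol_{h_\eps}\to 0$ (using $dvol_{h_\eps}=(1+\nu_j(\eps)^2)\,dv\,dw$ there), independently of $I_\eps$. For the $\tvarphi$ term the key point is that in the collar $\tvarphi=\tvarphi_2$ has a \emph{uniformly bounded} energy density: from \eqref{heps} one computes $|d\tvarphi_2|^2_{h_\eps}=(v^2+\ell^2)(\pl_v\tvarphi_2)^2-\tfrac{2\nu}{1+\nu^2}(\pl_v\tvarphi_2)(\pl_w\tvarphi_2)+\tfrac{(\pl_w\tvarphi_2)^2}{(1+\nu^2)(v^2+\ell^2)}$, and the mollified form \eqref{tvarphi02} keeps $\pl_v\tvarphi_2$ uniformly bounded while \eqref{plw} forces $\pl_w\tvarphi_2=\mc{O}((\ell+|v|)^\infty)$, so the last term stays bounded for $v\in[-1,1]$. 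Hence $\int_{\rr/\demi\zz}\int_{I_\eps}|d\tvarphi|^2_{h_\eps}\,dv\,dw\le C|I_\eps|\to 0$, which gives the first claim.

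Finally, for the second limit I would prove $\int_M|d\tvarphi|^2_{h_\eps}\,dvol_{h_\eps}\to\int_M|d\varphi_0|^2_{h_0}\,dvol_{h_0}$ and transfer to $\varphi_\eps$: expanding $E_\eps(\varphi_\eps)=E_\eps(\tvarphi)+2\langle d\tvarphi,d\tvarphi_\eps\rangle_{h_\eps}+E_\eps(\tvarphi_\eps)$, the last term tends to $0$ and the cross term is bounded by $2\sqrt{E_\eps(\tvarphi)E_\eps(\tvarphi_\eps)}\to 0$ once $E_\eps(\tvarphi)$ is known to be bounded. To prove convergence of $E_\eps(\tvarphi)$ I would cut $M$ into a fixed compact piece $K_{a_0}=\{|v|\ge a_0\}$ and the cusp collars $\{|v|<a_0\}$: on $K_{a_0}$ one has $h_\eps\to h_0$ and $\tvarphi\to\varphi_0$ in $\mc{C}^k$ (the mollification is trivial away from $H$), so the compact integrals converge for each fixed $a_0$; on the collars the uniform bound above, together with the analogous estimate $|d\varphi_0|^2_{h_0}=\mc{O}(1)$ for the limiting cusp (where $\pl_v\varphi_0$ is bounded and $\pl_w\varphi_0=\mc{O}(|v|^\infty)$), bounds the tails by $Ca_0$ uniformly in $\eps$ and at $\eps=0$. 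An $\eps$-$\delta$ argument in $a_0$ then yields the convergence. I expect the collar region, where $h_\eps$ degenerates as $\ell_j(\eps)\to 0$, to be the main obstacle; the two devices that defeat it are the conformal invariance of the two-dimensional energy (moving the estimate into the gauge $\til h_\eps$, where the curvature equation makes the energy bound elementary) and the explicit mollified structure of $\tvarphi$, which supplies the uniform pointwise control of its energy density needed for the tail estimates.
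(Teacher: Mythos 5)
Your proof is correct, and its overall architecture parallels the paper's: choose a reference function, show that the Dirichlet energy of its difference with $\varphi_\eps$ tends to zero by pairing the curvature equation with the conformal factor and integrating by parts, then transfer both limits to $\varphi_\eps$ using a uniform pointwise bound on the energy density of the reference. The genuine difference lies in the choice of reference and of gauge. The paper subtracts $\varphi_0$ itself and proves $\int_M|d(\varphi_\eps-\varphi_0)|^2_{h_\eps}\,{\rm dvol}_{h_\eps}=o(1)$ by expanding the square into $\int_M(\varphi_\eps\Delta_{h_\eps}\varphi_\eps+\varphi_0\Delta_{h_\eps}\varphi_0-2\varphi_0\Delta_{h_\eps}\varphi_\eps)\,{\rm dvol}_{h_\eps}$ and invoking the uniform convergence of $\Delta_{h_\eps}\varphi_\eps$ and of $\Delta_{h_\eps}\varphi_0$ to $\Delta_{h_0}\varphi_0$ (the latter requiring a separate verification from \eqref{heps} and the regularity of $\varphi_0$). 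You instead subtract the mollified $\tvarphi$ and obtain $\int_M|d\tvarphi_\eps|^2_{h_\eps}\,{\rm dvol}_{h_\eps}=\int_N\tvarphi_\eps f_\eps\,{\rm dvol}_{\til h_\eps}$ directly from \eqref{cf.2} in the gauge $\til h_\eps$, where $\|f_\eps\|_{L^\infty}\to 0$ and $\|\tvarphi_\eps\|_{L^\infty}\to 0$ are exactly the outputs of the proof of Proposition \ref{cf.1}, and conformal invariance of the two\-dimensional energy converts the estimate back to the $h_\eps$-gauge for free. Your route buys a cleaner key estimate (a single $L^\infty\times L^\infty\times\mathrm{vol}$ bound instead of three $C^0$-convergent Laplacian terms), at the price of having to bound the energy density of $\tvarphi$ rather than of $\varphi_0$ on the collar; but that computation ($\pl_v\tvarphi_2$ bounded via the mollification \eqref{tvarphi02}, $\pl_w\tvarphi_2=\mc{O}((\ell+|v|)^\infty)$ via \eqref{plw}) is the exact analogue of the paper's bound \eqref{cf.4} for $|d\varphi_0|^2_{h_\eps}$, so nothing is lost. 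Both arguments then conclude identically: triangle inequality for the first limit, Cauchy--Schwarz plus convergence of the reference energy (your cut-off argument in $a_0$ makes explicit what the paper merely asserts) for the second.
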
  
\begin{proof}
Since $\varphi_0\in \mc{C}^\infty(\bbar{M})$ with $\pl_w\varphi_0\in\dot{\mc{C}}^\infty(\bbar{M})$, we see from the form of the 
metric $h_\eps$ in \eqref{heps}
that $|d\varphi_0|^2_{h_\eps}\in L^\infty$ with uniform bound with respect to $\eps$, and so
\begin{equation}
   \lim_{\eps\to 0} \int_{\rr/\demi\zz} \int_{I_\eps} |d\varphi_0|^2_{h_\eps} dv dw=0.
\label{cf.4}\end{equation}
On the other hand, we know that
\begin{equation}
      2\Delta_{h_{\eps}}\varphi_{\eps}= -2 e^{2\varphi_{\eps}} -R_{h_{\eps}}, \quad \mbox{for} \;
       \eps \in [0,\eps_0).
\label{cf.4a}\end{equation}
By the previous proposition, we therefore have that $\| \Delta_{h_{\eps}}\varphi_{\eps}-
\Delta_{h_0}\varphi_0\|_{\cC^0(M)}=o(1)$.  Moreover, the form of the metric \eqref{heps} and the fact that 
$\varphi_0\in \mc{C}^\infty(\bbar{M})$ and $\pl_w\varphi_0\in\dot{\mc{C}}^\infty(\bbar{M})$ imply that
 $\| \Delta_{h_{\eps}}\varphi_0 -\Delta_{h_0}\varphi_0\|_{\cC^0(M)}= o(1)$. Now
we combine these facts and use integration by parts to show that
\begin{equation}
  \int_{M} | d(\varphi_{\eps}-\varphi_0)|_{h_{\eps}}^2 {\rm dvol}_{h_{\eps}}= \int_{M} (\varphi_{\eps}\Delta_{h_{\eps}}
  \varphi_{\eps} + \varphi_0\Delta_{h_{\eps}}\varphi_0 -2 \varphi_0 \Delta_{h_{\eps}}\varphi_{\eps}){\rm dvol}_{h_\eps} =o(1).
\label{cf.3a}\end{equation}
The boundary terms at $H$ are $0$ by the properties of $h_\eps$ and $\varphi_\eps$.
In particular, as $\eps\to 0$
\begin{equation}
  \int_{\rr/\demi \zz} \int_{I_\eps} | d(\varphi_{\eps}-\varphi_0)|^2_{h_{\eps}} dv dw =o(1).
\label{cf.5}\end{equation}
The first result in the Corollary then follows by combining \eqref{cf.4} and \eqref{cf.5} 
and using the triangle inequality. 
The second result follow from \eqref{cf.3a} and using the fact that 
\begin{equation}
\begin{aligned}
   \lim_{\eps\to 0} \int_{M}  | d\varphi_0|^2_{h_{\eps}} {\rm dvol}_{h_\eps} &= \int_{M} | d\varphi_0|^2_{h_{0}}
   {\rm dvol}_{h_0}.
\end{aligned}
\end{equation}
This ends the proof.
\end{proof}

\section{The boundary defining function used to define the renormalized volume}\label{bdf.0}

In this Section, we analyze the geodesic boundary defining function corresponding to the hyperbolic representative in the conformal infinity when we have a family of convex co-compact hyperbolic $3$-manifold converging to a 
geometrically finite hyperbolic $3$-manifold with rank-$1$ cusps.

\subsection{Geometric assumptions on the family of metrics}\label{sec:assumptions} 
We fix a compact manifold with boundary $\bbar{{\bf X}}$ and a family of hyperbolic convex co-compact metrics 
$g_\eps$, with $\eps>0$, on the interior $X$ of $\bbar{{\bf X}}$. 
\begin{defi}\label{admisdeg}
We say that \emph{the family $g_\eps$ is an admissible degeneration of convex co-compact hyperbolic metrics on $X$} if $g_\eps$ are convex co-compact hyperbolic metrics satisfying  
the following properties (below, $\hh^2$ denotes the open upper half-plane in $\cc$ and $\bbar{\hh^2}$ the closed upper half-plane; we use the topology of $\cc$ to define bounded sets in $\bbar{\hh^2}$):

\textbf{Assumption 1 (Model near the cusp)}. There exists a family of $j_1$ disjoint simples curves $H_1,\dots,H_{j_1}$ in 
$\pl\bbar{{\bf X}}$, and disjoint open neighborhoods $\mc{U}_j^\eps\subset \bbar{{\bf X}}$ of $H_j$, 
there are diffeomorphisms 
$\Psi_j^\eps: \mc{W}^\eps_j\to \mc{U}^\eps_j$ where
$\mc{W}_j^\eps\subset (\rr/\demi\zz)_w\x \bbar{\hh^2_{\zeta}}$  are bounded open sets  
containing $(\rr/\demi\zz)\x \{\zeta\in\bbar{\hh^2}; |\zeta|<r_j\}$ for some $r_j>0$, and for $\zeta=v+iu$
\begin{equation}\label{metricmodel}
\begin{gathered} {\Psi_j^\eps}^*g_\eps= 
 \frac{{du}^2+{dv}^2+((1+\nu_j(\eps)^2)R^4-4\ell_j(\eps)^2\nu_j(\eps)^2u^2)dw^2}{u^2}\\
+\frac{2\nu_j(\eps)(R^2-2u^2)dwdv+4\nu_j(\eps)uv dudw}{u^2}
\end{gathered} 
\end{equation} 
for some $\ell_j(\eps)\to 0$ and $\nu_j(\eps)\to \nu_j\in\rr$ as $\eps\to 0$, with $R:=\sqrt{u^2+v^2+\ell_j(\eps)^2}$.

\textbf{Assumption 2 (Convergence outside the cusp)}. 
There exists a hyperbolic metric $g_0$ on $X$ such that  for any fixed boundary defining function $\rho\in \mc{C}^\infty(\bbar{{\bf X}})$, $\rho^2g_\eps\to \rho^2g_0$ in all $\mc{C}^k$-norms on compact sets of $\bbar{\bf{X}}\setminus \cup_{j}H_j$ as $\eps\to 0$. If $\mc{W}_j^0:={\rm Int}(\cap_{\eps>0}\mc{W}_j^\eps)\subset (\rr/\demi\zz)_w\x \bbar{\hh^2}$
and $\mc{U}_j^0:={\rm Int}(\cap_{\eps>0}\, \mc{U}_j^\eps)\subset \bbar{{\bf X}}$, then  $\Psi^\eps_j$ converge to a smooth diffeomorphism $\Psi_j^0:\mc{W}_j^0\setminus (\rr/\demi\zz)\x \{0\}\to \mc{U}_j^0\setminus H_j$ in all $\mc{C}^k$-norms.
\end{defi}

Under these assumptions, the metric $g_0$ has rank-$1$ cusps. This follows from the convergence of $\Psi_j^\eps,\mc{U}_j^\eps$, $\mc{W}_j^\eps$ and the fact that \eqref{metricmodel} has a limiting metric as 
$\eps\to 0$ which is isometric to a neighborhood \eqref{Uj'} of a rank-$1$ cusp.
The \emph{degenerating curve} $H\subset \pl\bbar{X}$ is the submanifold given by $H:=\cup_{j=1}^{j_1}H_j$.
\begin{prop}\label{SchottkyOK}
Let $\Gamma^\eps\subset {\rm PSL}_{2}(\cc)$ be an admissible family of classical Schottky groups of genus ${\bf g}$ in the sense of Definition \ref{qadmissible}. Then for each $\eps>0$, $X^\eps:=\Gamma^\eps\backslash \hh^3$ 
is isometric to $(X,g_\eps)$ where $X$ is the interior of a solid torus of genus ${\bf g}$ and $g_\eps$ is an admissible degeneration of convex co-compact hyperbolic metrics on $X$ in the sense of Definition \ref{admisdeg}.
\end{prop}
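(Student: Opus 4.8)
The plan is to realize all the quotients $X^\eps=\Gamma^\eps\backslash\hh^3$ as hyperbolic metrics $g_\eps$ on a single fixed manifold $X$ by means of the good fundamental domains of Lemma \ref{goodFD}, and then to verify the two assumptions of Definition \ref{admisdeg} separately near and away from the forming cusps. For $\eps>0$ the group $\Gamma^\eps$ is a classical Schottky group, hence purely loxodromic, so each $g_\eps$ is convex co-compact as required. By Lemma \ref{goodFD}, for $\eps\in[0,\eps_0]$ the good fundamental domains $\til{F}^\eps$ have bounding hypersurfaces $H^\eps_{\pm j}$ moving by a family of diffeomorphisms of $\bbar{\hh^3}$ that is smooth in $\eps$ and equal to the identity at $\eps=0$, with faces identified by $\gamma^\eps_j$. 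Gluing $\til{F}^\eps$ along these identifications recovers $X^\eps$, and since both the domains and the gluing maps depend smoothly on $\eps$, the quotients form a smooth family; fixing one identification yields diffeomorphisms $X^\eps\cong X$, where $X$ is the interior of a solid torus of genus $g$, under which the hyperbolic metrics become a family $g_\eps$ on $X$. The compactification $\bbar{{\bf X}}$ is obtained by adjoining the conformal boundary together with a circle $H_j$ at each parabolic point $p_j$, as in Section \ref{geofinite}; the degenerating curves $H_1,\dots,H_{j_1}$, $j_1=g-j_0$, are exactly these circles, one for each generator $\gamma_j$ with $j>j_0$ whose multiplier becomes parabolic.

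Next I would check Assumption 1. Fix $j>j_0$ and conjugate $\gamma^\eps_j$ by the Euclidean similarity $B^\eps_j(z)=e^{-i\arg(p_{+j}(\eps)-p_{-j}(\eps))}(z-p_{-j}(\eps))$, which sends its fixed points \eqref{fixedpoints} to $0$ and $\la_j(\eps)\ell_j(\eps)\in(0,\infty)$ and preserves the multiplier $q_j(\eps)=e^{\ell_j(\eps)(1+i\nu_j(\eps))}$; thus $B^\eps_j\gamma^\eps_j(B^\eps_j)^{-1}=\gamma_{L_j(\eps)}$ in the normalization of Lemma \ref{admissible}, with $L_j(\eps)=(\ell_j(\eps),\nu_j(\eps),\la_j(\eps))\in\mc{Q}$ for $N$ large and $\eps$ small. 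By the third property of Lemma \ref{goodFD}, near $p_j$ the domain $\til{F}^\eps$ coincides with the cyclic fundamental domain $\til{F}^\eps_j$, so a neighborhood of $H_j$ in $X^\eps$ is isometric, through $B^\eps_j$, to the cyclic-quotient region $\mc{U}^\delta_{L_j(\eps)}$ of Lemma \ref{admissible} (legitimate since $\la_j(\eps)\ell_j(\eps)\to 0<\delta$). Composing the isometry $\Theta_{L_j(\eps)}$ with the isometry $\Phi_{L_j(\eps)}$ of Proposition \ref{model2} produces an isometry onto $\mc{W}^\delta_{L_j(\eps)}\subset(\rr/\demi\zz)_w\x\hh^2$ carrying $g_\eps$ to the model metric \eqref{geps}, which is precisely \eqref{metricmodel} with parameters $\ell_j(\eps),\nu_j(\eps)$. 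Taking $\Psi^\eps_j$ to be the inverse of the composite isometry then gives the required chart, with $\mc{W}^\eps_j=\mc{W}^\delta_{L_j(\eps)}$ bounded and containing $(\rr/\demi\zz)\x\{|\zeta|<r_j\}$ since $\tau_{\la_j,\nu_j}(w)=2\delta/\la_j+\mc{O}(\delta^3)$ by Proposition \ref{model2}.

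Finally I would verify Assumption 2 and identify the limit. The parameters $L_j(\eps)$ extend smoothly to $\eps=0$: $\ell_j,\nu_j$ are smooth by Definition \ref{qadmissible}, while $\la_j(\eps)=|p_{+j}(\eps)-p_{-j}(\eps)|/\ell_j(\eps)\to\sqrt{1+\nu_j^2}/|c_j|$ smoothly by \eqref{convline}, which also shows the similarities $B^\eps_j$ are smooth in $\eps$ down to $0$; hence $L_j(0)=(0,\nu_j,\la_j)$ lies in the parabolic boundary of $\bbar{\mc{Q}}$. The last assertion of Proposition \ref{model2}, that $(L,x,z)\mapsto(L,\Phi_L\circ\Theta_L(x,z))$ extends smoothly across $\{\ell=0\}$ away from the cusp point $\zeta=0$, then gives immediately that $\Psi^\eps_j\to\Psi^0_j$ in every $\mc{C}^k$-norm on $\mc{W}^0_j\setminus(\rr/\demi\zz)\x\{0\}$, which is the convergence of $\Psi^\eps_j$ demanded in Assumption 2. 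Away from the $H_j$, the domains $\til{F}^\eps$ and the identifications $\gamma^\eps_j$ converge smoothly to $\til{F}^0$ and $\gamma^0_j$ by Lemma \ref{goodFD}, and since $g_\eps$ is locally a pushforward of the fixed hyperbolic metric on $\hh^3$, for any boundary defining function $\rho$ the rescaled metrics $\rho^2g_\eps$ converge in all $\mc{C}^k$-norms on compact subsets of $\bbar{{\bf X}}\setminus\cup_jH_j$ to $\rho^2g_0$, where $g_0$ is the hyperbolic metric of $X^0=\Gamma^0\backslash\hh^3$, geometrically finite with rank-$1$ cusps at the $p_j$. Both assumptions hold, so $g_\eps$ is an admissible degeneration.

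The main obstacle is matching the abstract cusp model to the true geometry: one must know that a full neighborhood of the forming cusp in the whole quotient $X^\eps$ is genuinely described by the elementary cyclic quotient $\cjg\gamma^\eps_j\cjd\backslash\hh^3$ — this is the content of the third property of Lemma \ref{goodFD} — and that the resulting chart stays smooth all the way to $\eps=0$, which rests on the uniform-in-$L$ smoothness of $\Phi_L\circ\Theta_L$ across the parabolic boundary supplied by Proposition \ref{model2}.
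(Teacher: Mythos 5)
Your argument is correct and follows essentially the same route as the paper: realize the quotients via the good fundamental domains of Lemma \ref{goodFD}, use the third property of that lemma together with Lemma \ref{admissible} and Proposition \ref{model2} to produce the cusp charts $\Psi_j^\eps=(\Phi_{L_j(\eps)}\circ\Theta_{L_j(\eps)})^{-1}$ verifying Assumption 1, and invoke the final smooth-extension statement of Proposition \ref{model2} plus smooth convergence of the domains and gluing maps for Assumption 2. The only difference is that where you say "fixing one identification yields diffeomorphisms $X^\eps\cong X$", the paper makes this precise by assembling the compactified quotients into a fibration of manifolds with corners over $[0,1]$ and trivializing it by parallel transport along a chosen connection, which is how one guarantees the identifications are smooth down to $\eps=0$ and compatible with the cusp charts.
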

\begin{proof} 
We can write the hyperbolic manifold as 
$X^\eps=\Gamma^\eps\backslash \til{F}^\eps$ where $\til{F}^\eps$ 
are good fundamental domains constructed in Lemma \ref{goodFD}. 
The metric on $X^\eps$ is the hyperbolic metric $g_{\hh^3}$ on $\til{F}^\eps$, 
which descends smoothly to the quotient by $\Gamma^\eps$. In fact, we can also consider the closure  
$\bbar{X^\eps}$ obtained from the action of $\Gamma^\eps$ on the closure of 
$\til{F}^\eps$ in $\hh^3\cup\Omega^\eps$ where $\Omega^\eps\subset \mathbb{S}^2$ is the set of discontinuity of $\Gamma^\eps$.   
 These can be  put together into a smooth fibration
\begin{equation}
    \Pi: \mathcal{X}\to [0,1]
\label{fibre.1}\end{equation}
such that $\Pi^{-1}(\eps)= \bbar{X^{\eps}}$ has interior equipped with the complete hyperbolic metric $\hat{g}_{\eps}$ induced from $g_{\hh^3}$.  For $\eps>0$, $X^{\eps}$ is naturally the interior of a solid torus $\bbar{X^{\eps}}$ of genus ${\bf g}$, while  when $\eps=0$, there are cusps of rank 1. So as we have seen in Section~\ref{rv.1}, the conformal compactification is no longer a solid torus, it is a solid torus with a circle removed for each rank-1 cusp.  In fact, by Lemma \ref{admissible} and Proposition \ref{model2}, 
for each cusp point $p_j$, we have an isometry 
$\Psi_j^\eps:=(\Phi_j^\eps\circ \Theta_j^\eps)^{-1}$ from a neighborhood of $\zeta=0$ in $(\rr/\tfrac{1}{2}\zz)_w\x \hh^2_{\zeta}$ to a neighborhood of $p_j$ in $\til{F}^\eps$, where $\Theta_j^\eps=\Theta_{L(\gamma_j^\eps)}$ 
is given by  
\eqref{Psij} and $\Phi_j^\eps=\Phi_{L(\gamma_j^\eps)}$ is given by  Proposition \ref{model2}
with $L(\gamma_j^\eps)=(\ell_j(\eps),\nu_j(\eps),\la_j(\eps))$ smooth in $\eps \in [0,1]$ (in Section \ref{analysis model} we take the fixed points 
$p_{-j}^\eps=0$ and $p_{+j}^\eps=\la_j(\eps) \ell_j(\eps)$ but we can always reduce to this case by composing with a smooth family of translations are rotations). 
Moreover, these combine to give a smooth diffeomorphism $\Psi_j: \mathcal{W}_j \to \mathcal{U}_j$ from a neighborhood $\mathcal{W}_j$ of $\{\zeta=0,\eps=0\}$ in $(\rr/\tfrac{1}{2}\zz)_w\x \bbar{\hh^2_\zeta}\times [0,1]_{\eps}\setminus \{\zeta=\eps=0\}$ into a neighborhood $\mathcal{U}_j \subset \mathcal{X}$ of the cusp point $p_j$ in $\Pi^{-1}(0)\subset \mathcal{X}$. This follows from the last statement of Proposition \ref{model2}.

The diffeomorphisms $\Psi_j$ give us a natural way to compactify uniformly down to $\eps=0$ by simply replacing $\mathcal{W}_j$ by its closure $\bbar{\mathcal{W}}_j$ in $(\rr/\tfrac{1}{2}\zz)_w\x \bbar{\hh^2_\zeta}\times [0,1]_{\eps}$.  Indeed, we can consider a compactification 
\begin{equation}
 \bbar \Pi: \bbar{\mathcal{X}} \to [0,1]\label{fibre.2}\end{equation}
of \eqref{fibre.1} such that $\bbar{\Pi}^{-1}(\eps)= \bbar{X^{\eps}}$ for $\eps>0$ and $\bbar{\Psi}_j: \bbar{\mathcal{W}}_j\to \bbar{\mathcal{U}}_j$, which restrict to $\Psi_j$ on $\mathcal{W}_j$, is a diffeomorphism from $\bbar{\mathcal{W}}_j$ to a neighborhood $\bbar{\mathcal{U}}_j$ of the circle $H_j\subset {\bbar{\Pi}^{-1}(0)}$ corresponding to the cusp point $p_j$.  Here, $\bbar{\mathcal{X}}$ is now a compact manifold with corners and $\bbar{\Pi}$ is a surjective submersion.  Moreover, the fibres of $\bbar{\Pi}$ are manifolds with boundary, more precisely solid tori of genus ${\bf g}$.  Choosing a horizontal connection for \eqref{fibre.2}, we can then use parallel transport to obtain a commutative diagram
\begin{equation}
\xymatrix{
    \bbar{\mathcal{X}}  \ar[r]^-{G} \ar[dr]^{\bbar{\Pi}} &  \bbar{X}\times [0,1] \ar[d]^{\operatorname{pr}_2}\\
                     & [0,1] 
}
\label{fibre.3}\end{equation}
where $\operatorname{pr}_2: \bbar{X}\times [0,1]\to [0,1]$ is the projection on the second factor,  $\bbar X$ is a fixed manifold with boundary and $G$ is a diffeomorphism of manifolds with corners.  In the statement of Proposition~\ref{SchottkyOK}, it suffices then to take $X$ to be the interior of $\bbar X$ with family of metrics $g_{\eps}=  G_* \hat{g}_{\eps}$ on the slices $\operatorname{pr}_2^{-1}(\eps)$
where $\hat{g}_{\eps}$ is the induced family of hyperbolic metrics on the fibres of $\Pi: \mathcal{X}\to [0,1]$.  The family of diffeomorphisms associated to each cusp point $p_j$ in Definition~\ref{admisdeg} can then be taken to be $G \circ \Psi_j(\cdot,\eps)$ for $\eps\ge 0$.
\end{proof}

\subsection{The Hamilton-Jacobi equation outside the cusps}\label{outsidecusp}
We consider an admissible degenerating family of convex co-compact metrics $g_\eps$ on $X={\rm Int}(\bbar{{\bf X}})$ in the sense of Definition \ref{admisdeg} and we keep the notations of Section \ref{sec:assumptions}. The manifold $(X,g_0)$ is geometrically finite hyperbolic with cusps of rank-$1$ and 
$\bbar{X}=\bbar{{\bf X}}\setminus H$ where $H$ is the degenerating curve in the boundary of $\bbar{{\bf X}}$. Recall that $\mc{K}$ is a compact subset of $\pl\bbar{X}$ where Assumption 2 is satisfied in Definition \ref{admisdeg}. 
Let $h_0^{\rm hyp}$ be the uniformizing metric on the conformal boundary 
$M=\pl\bbar{{\bf X}}\setminus H=\pl\bbar{X}$, given by Proposition \ref{uniform}; it is a complete hyperbolic metric with finite volume.
We define $\rho_0$ to be the geodesic boundary defining function of $M$ in $\mc{K}$ near $\pl\bbar{X}$ to be the solution of the Hamilton-Jacobi equation
\[ 
\left|\frac{d\rho_0}{\rho_0}\right|_{g_0}^2=1, \quad (\rho_0^2g_0)|_{TM}=h_0^{\rm hyp}.
\]
The equation is non-characteristic at $M\cap \mc{K}$ and has a unique solution near $M\cap\mc{K}$, just as in the convex co-compact case (see the discussion of Section \ref{convcocomp}).
We first want to define a geodesic boundary defining function for $g_\eps$ by the equation 
\begin{equation}\label{hamjab0}  
\left|\frac{d\hat{\rho}_\eps}{\hat{\rho}_\eps}\right|_{g_{\eps}}^2=1, \quad \hat{\omega}_\eps|_{\rho=0}=0
\end{equation}
where $\hat{\rho}_\eps= e^{\hat{\omega}_\eps}\rho_0$; notice that $\hat{\omega}_0=0$. We first show 
\begin{lem}\label{lemhatomega}
There exists $\delta>0$ such that for all $\eps\geq 0$, the Hamilton-Jacobi equation \eqref{hamjab0} has a solution $\hat{\omega}_\eps$ in 
$\mc{K}\cap \{\rho_0<\delta\}$ and $\hat{\omega}_\eps$ converges to $0$ in $\mc{C}^k$-norms there for all $k$.
\end{lem}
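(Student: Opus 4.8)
The plan is to reduce \eqref{hamjab0} to exactly the non-characteristic Hamilton–Jacobi equation of the convex co-compact case in Section~\ref{convcocomp}, now carrying the parameter $\eps$, and then to solve it by the method of characteristics, using that at $\eps=0$ the solution is identically zero. First I would restrict to the compact collar $\mc{K}\cap\{\rho_0<\delta_0\}$, which stays away from the degenerating curve $H$, so that Assumption~2 of Definition~\ref{admisdeg} applies there. Since $\rho_0$ is a smooth boundary defining function on this region, $\bar{g}_\eps:=\rho_0^2 g_\eps$ is a smooth metric up to $M$, and because $\rho_0/\rho$ is a fixed smooth positive factor relative to any fixed boundary defining function $\rho$, Assumption~2 gives $\bar{g}_\eps\to\bar{g}_0=\rho_0^2 g_0$ in all $\mc{C}^k$-norms on the collar. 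Writing $\hat{\rho}_\eps=e^{\hat{\omega}_\eps}\rho_0$ so that $d\hat{\rho}_\eps/\hat{\rho}_\eps=d\hat{\omega}_\eps+d\rho_0/\rho_0$, and using $|\cdot|^2_{g_\eps}=\rho_0^2|\cdot|^2_{\bar{g}_\eps}$, equation \eqref{hamjab0} becomes, after multiplying by $\rho_0$,
\[
\rho_0\,|d\hat{\omega}_\eps|^2_{\bar{g}_\eps}+2\langle d\hat{\omega}_\eps,d\rho_0\rangle_{\bar{g}_\eps}=\frac{1-|d\rho_0|^2_{\bar{g}_\eps}}{\rho_0},\qquad \hat{\omega}_\eps|_{\rho_0=0}=0.
\]

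Next I would verify that the right-hand side is smooth up to $M$ and depends continuously on $\eps$. Since each $g_\eps$ is hyperbolic and $\rho_0$ is a boundary defining function, the analysis recalled in Section~\ref{convcocomp} (ultimately the fact that $g_\eps$ has curvature $-1$, see \cite{MM}) gives $|d\rho_0/\rho_0|_{g_\eps}=1$ on $M$, that is $|d\rho_0|^2_{\bar{g}_\eps}=1$ along $\{\rho_0=0\}$. Hence $1-|d\rho_0|^2_{\bar{g}_\eps}$ is smooth and vanishes on $\{\rho_0=0\}$, so it is divisible by $\rho_0$ and the quotient $f_\eps:=(1-|d\rho_0|^2_{\bar{g}_\eps})/\rho_0$ extends smoothly up to $M$, with $f_\eps\to f_0$ in every $\mc{C}^k$-norm. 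For $\eps=0$, because $\rho_0$ is the geodesic boundary defining function of $g_0$ we have $|d\rho_0|^2_{\bar{g}_0}\equiv 1$ near $M$, so $f_0\equiv 0$ and the unique solution with the prescribed boundary data is $\hat{\omega}_0\equiv 0$.

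Finally I would run the characteristics. The equation is non-characteristic at $M$ uniformly in $\eps$, since the coefficient of $\partial_{\rho_0}\hat{\omega}_\eps$ in the linear term $2\langle d\hat{\omega}_\eps,d\rho_0\rangle_{\bar{g}_\eps}$ equals $2|d\rho_0|^2_{\bar{g}_\eps}=2$ on $M$; combined with $\hat{\omega}_\eps|_M=0$ (so the tangential part of $d\hat{\omega}_\eps$ vanishes on $M$) this determines $\partial_{\rho_0}\hat{\omega}_\eps|_{\rho_0=0}=\tfrac12 f_\eps|_{\rho_0=0}$. The full gradient being fixed on $M$, the solution is flowed out by the characteristic ODE system, whose coefficients depend on $\eps$ only through $\bar{g}_\eps$ and $f_\eps$. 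At $\eps=0$ these characteristics are precisely the $\rho_0$-lines, which foliate the whole collar $\{\rho_0<\delta_0\}$ and carry $\hat{\omega}_0\equiv 0$. By continuous dependence of the characteristic flow on the coefficients and initial data, for $\eps$ small the characteristics stay $\mc{C}^k$-close to the $\rho_0$-lines, their projection to $M$ remains a diffeomorphism, and they flow out a uniform collar $\{\rho_0<\delta\}$ for some $\delta\in(0,\delta_0)$ independent of $\eps$. This produces a smooth solution $\hat{\omega}_\eps$ on $\mc{K}\cap\{\rho_0<\delta\}$ depending $\mc{C}^k$-continuously on $\eps$, and since $\hat{\omega}_0\equiv 0$ we get $\hat{\omega}_\eps\to 0$ in every $\mc{C}^k$-norm.

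The hard part is the uniformity in $\eps$: one must guarantee that the existence time $\delta$ of the characteristic flow and the local-diffeomorphism property of the flow-out map do not degenerate as $\eps\to 0$. This is exactly where the $\mc{C}^k$-convergence $\bar{g}_\eps\to\bar{g}_0$ and $f_\eps\to 0$ on the \emph{fixed} compact collar (which is kept away from $H$) is essential, so that the characteristic system is a uniformly small and uniformly smooth perturbation of the trivial system at $\eps=0$, whose solution is $\hat{\omega}_0\equiv 0$.
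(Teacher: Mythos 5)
Your proposal is correct and follows essentially the same route as the paper: rewrite \eqref{hamjab0} as the non-characteristic Hamilton--Jacobi equation $2\langle d\hat{\omega}_\eps,d\rho_0\rangle_{\bar{g}_\eps}+\rho_0|d\hat{\omega}_\eps|^2_{\bar{g}_\eps}=(1-|d\rho_0|^2_{\bar{g}_\eps})/\rho_0$ with $\hat{\omega}_\eps|_{\rho_0=0}=0$, observe that $\bar{g}_\eps\to\bar{g}_0$ in $\mc{C}^\infty$ on the fixed compact collar away from $H$, and solve by the method of characteristics uniformly in $\eps$, with the limit solution $\hat{\omega}_0\equiv 0$. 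You supply somewhat more detail than the paper on the smoothness of the right-hand side and the uniformity of the flow-out, but the argument is the same.
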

\begin{proof} The equation can be written as  
\[ 2\cjg d\hat{\omega}_\eps,d\rho_0\cjd_{\bar{g}_\eps}+\rho_0|d\hat{\omega}_\eps|^2_{\bar{g}_\eps}=\frac{1-|d\rho_0^2|_{\bar{g}_\eps}}{\rho_0}, 
\textrm{ with boundary condition }\hat{\omega}_\eps|_{\rho_0=0}=0,\]
where $\bar{g}_\eps:=\rho_0^2g_\eps$ converges in $\mc{C}^\infty(\mc{K};S^2T^*\bbar{X})$ to $\rho^2g_0$ as 
$\eps\to 0$. 
This is a uniform family (with respect to $\eps$) of non-characteristic Hamiton-Jacobi equations, which converge in $\mc{C}^\infty(\mc{K})$ to a non-characteristic Hamiton-Jacobi equation as $\eps\to 0$. This is solved by the method of characteristics and thus it admits a solution in a uniform neighborhood of $\rho_0=0$, converging 
smoothly to $\hat{\omega}_0=0$ as $\eps\to 0$.
\end{proof}
Notice that $\hat{\rho}_\eps$ is not exactly the geodesic boundary function that we would need to compute 
the renormalized volume but we will see later that the renormalized volume there can be expressed easily in terms 
of this boundary defining function. The function we are interested in is 
\begin{equation}\label{rhoepsinK}
\rho_\eps=e^{\omega_\eps}\hat{\rho}_\eps
\end{equation} 
where $\omega_\eps$ is the solution of 
\[\left|\frac{d\rho_\eps}{\rho_\eps}\right|_{g_{\eps}}^2=1, \quad \omega_\eps|_{\rho_0=0}=\varphi_\eps\]
and $\varphi_\eps$ is the uniformization factor such that $h_\eps^{\rm hyp}:=e^{2\varphi_\eps}h_\eps$ is hyperbolic if $h_\eps:=(\rho_0^2g_\eps)|_{\rho_0=0}$;
The Hamilton-Jacobi equation \eqref{rhoepsinK} has a unique solution in $\mc{K}$ near $M$
and in particular one has $\omega_0|_{\mc{K}\cap M}=\varphi_0=0$.

\subsection{The Hamilton-Jacobi equation near the cusps}\label{hamilton}
In the model from Proposition~\ref{model2}, which is a neighborhood of $\{\zeta=0\}$ in $(\rr/\tfrac{1}{2}\zz)_w\x \hh^2_{\zeta=v+iu}$, 
it will be useful to forget the $\eps$ parameter and consider now $(\ell(\eps),\nu(\eps))$ as independent parameters $(\ell,\nu)$ and 
we shall study the geodesic boundary defining function as functions of $(\ell,\nu)$ where 
$\nu\in\rr$ is a bounded parameter and $\ell \in[0,\ell_0]$ for some fixed small $\ell_0>0$. 
We view $\nu$ as a parameter moving in a compact set, and the metric has a uniform behavior in terms of $\nu$ in this set, 
and for this reason we shall not emphasize the dependence in $\nu$ in the notations.
The metric $g_\eps$ of \eqref{geps} will be rewritten as 
\begin{equation}\label{gellnu}
\begin{gathered}
g_{\ell}=  \frac{{du}^2+{dv}^2+((1+\nu^2)R^4-4\ell^2\nu^2u^2)dw^2
+ 2\nu(R^2-2u^2)dwdv+4\nu uv dudw}{u^2}
\end{gathered} \end{equation}
with $R=\sqrt{u^2+v^2+\ell^2}$.

We thus consider for the moment just a neighborhood of cusps, that is we set  
\[ \bbar{\mc{U}} := \{ (w, u,v)\in (\rr/\tfrac{1}{2}\zz) \x [0,1)\x\rr ;  u^2+v^2<1\}.\]
and $\mc{U}$ its interior.
Consider the submanifold 
$H:=\{u=v=\ell=0\}\subset \bX\times [0,\ell_0)$ which corresponds to the cusp, and let $\bbar{\mc{U}}_\ell$ be the blow-up
of $\bbar{\mc{U}}\times [0,\ell_0)$ at $H$, defined to be 
\[\bbar{\mc{U}}_\ell = (\bbar{\mc{U}}\x[0,\ell_0)\setminus H)\sqcup SH 
\]
where $SH\subset T(\bX\x[0,\ell_0))|_H$ is the normal inward pointing spherical bundle of $H$. 
There is a blow-down  map $\beta :\bbar{\mc{U}}_\ell \to \bbar{\mc{U}} \x[0,\ell_0)$, which is the identity outside $SH$ and the projection 
$SH\to H$ on the base when restricted to $SH$.   The set $\bbar{\mc{U}}$
has a natural structure of smooth manifold with corners of codimension $2$ in a way that 
the functions $u,v,\ell,R$ lift by $\beta$ to smooth functions on $\bbar{\mc{U}}_\ell$; we will use the same notations for these functions and for their lifts to $\bbar{\mc{U}}_\ell$.
There are three boundary hypersurfaces in $\bbar{\mc{U}}_\ell$: the face denoted $\mc{F}_{\ell}$ whose interior is diffeomorphic 
to $\{\ell=0,u\not=0\} \subset  \bbar{\mc{U}}\x[0,\ell_0)$, the face denoted $\mc{F}_{u}$ whose interior is diffeomorphic 
to $\{u=0,\ell\not=0\} \subset  \bbar{\mc{U}}\x[0,\ell_0)$,  and the front face $\mc{F}_{R}=SH$ given by the equation $R=0$. See Figure~\ref{ul.1}. We notice that $\mc{F}_\ell$ is naturally diffeomorphic to a neighborhood of $\cf$ in 
the manifold $\bbar{X}_c$ defined in Section \ref{Sec:uniform}, with $\cf$ identified with $\mc{F}_R\cap\mc{F}_\ell$, thus studying what happens on $\mc{F}_\ell$ is equivalent to consider 
a neighborhood of the cusp in $\bbar{X}_c$.

\begin{figure}
\setlength{\unitlength}{0.7cm}
\begin{picture}(6,6)
\thicklines
\put(3,2){\oval(2,2)[t]}
\put(2.5,2.2){$\mc{F}_{R}$}
\qbezier(2,2)(3,1.5)(4,2)

\put(2.4,5.8){$u$}

\put(4,2){\vector(1,0){2}}
\put(0,2){\line(1,0){2}}
\put(6,2.2){$v$}
\put(2.8,1.7){\vector(-1,-2){1}}
\put(0.2,0.2){$\mc{F}_u$}
\put(0.2,3){$\mc{F}_{\ell}$}

\put(3,3){\vector(0,1){3}}
\put(2.2,-0.4){$\ell$}

\end{picture}
\caption{The manifold with corners $\bbar{\mc{U}}_\ell$}\label{ul.1}
\end{figure}

Consider the following smooth variables on $\bbar{\mc{U}}_\ell$:
\begin{equation}
   (U= \frac{u}R,  v , w, \ell).
\label{bdf.2}\end{equation}    
They provide coordinates outside $\mc{F}_R$. In fact, when restricted to $\mc{F}_\ell$, $(U,v,w)$ provides smooth coordinates on $\mc{F}_\ell$ near the corner $\mc{F}_R\cap\mc{F}_u$, with $U$ being a smooth defining function for $\mc{F}_u$ and $v$ being a smooth defining function for $\mc{F}_R$.
We will also sometime use the smooth variable 
\[V=\frac{v}{R}= \pm\sqrt{1-U^2-\frac{\ell^2}{R^2}}\] 
on $\bbar{\mc{U}}_\ell$. Then we have 
$$
\frac{du}u = 
\frac{dU}{U(1-U^2)}+ \frac{Vdv}{R(1-U^2)}.
$$
Hence, we see that the dual vector fields $u\pl_u$ and $\pl_v$ to $du/u$ and $dv$ become in the coordinates $(U,v,w)$  
\begin{equation}\label{uplu}
 u\pl_u \to  U(1-U^2)\pl_U, \quad \pl_v\to  \pl_v-\frac{VU}{R}\pl_U.
 \end{equation}
In terms of the variables $U,v,w$, the  metrics $g_{\ell}$ is given by
\begin{equation}
\begin{aligned}
g_{\ell} = &\frac{dU^2}{U^2(1-U^2)^2}+ \frac{2vdUdv}{U(1-U^2)(v^2+\ell^2)}+ 
\frac{v^2dv^2}{(v^2+\ell^2)^2} + \frac{(1-U^2)dv^2}{U^2(v^2+\ell^2)}+
\frac{4\nu v}{U(1-U^2)} dUdw\\
& + \Big((1+\nu^2)\frac{(v^2+\ell^2)}{U^2(1-U^2)}-4\ell^2\nu^2\Big)dw^2+2\nu
\Big(\frac{2 v^2}{v^2+\ell^2} +\frac{(1-2U^2)}{U^2}\Big)dwdv.
\end{aligned}
\label{bdf.4}\end{equation}
In particular, looking at the conformal family of metrics $\overline{g}_{\ell}= U^2 g_{\ell}$, we see that when pulling-back to $\{U=0\}=\mc{F}_u$ this metric, one has
\begin{equation}\label{hell}
h_\ell:= \left.\overline{g}_{\ell}\right|_{U=0}= \frac{dv^2}{v^2+\ell^2} + 
(1+\nu^2)(v^2+\ell^2)dw^2+2\nu dvdw,  
\end{equation}
which corresponds to the model \eqref{heps} for the formation of a cusp obtained by pinching a closed geodesic. 
In general, as described in the previous section, the global hyperbolic representative of this conformal infinity will be slightly different, of the form
\begin{equation}
    h^{\rm hyp}_{\ell}= e^{2\varphi_{\ell}} h_\ell
\label{bdf.5}\end{equation}
for some family of smooth functions $\varphi_{\ell}$, which is obtained by uniformisation and has the properties of Proposition \ref{uniform} for the case $\ell=0$.  By Proposition \ref{cf.1} (with $\ell$ playing the role of $\eps$ here), the uniformising factor $\varphi_\ell$ will tend to $\varphi_0$ as $\ell\to 0$ on the interior of $\mc{F}_u$. 
Since we want to work in a more general setting than the uniformized metric, we now just fix an arbitrary family of smooth functions $\varphi_\ell$ so that $\varphi_\ell\to \varphi_0$ on $\mc{F}_u$ as $\ell\to 0$ with the requirement that $\varphi_0$ satisfies the properties of Proposition \ref{uniform}, ie. it extends smoothly to the closure of $\mc{F}_u$ in $\mc{F}_\ell$ and $\pl_w\varphi_0$ vanishes to infinite order at $\mc{F}_R=\{v=U=0\}$ in $\mc{F}_\ell\cap\mc{F}_u$. 

To define the renormalized volume associated to such a choice of representative in the conformal class at the boundary, we need to construct a boundary defining function $\rho_\ell$ of the face $\mc{F}_u=\{U=0\}$ such that
\begin{equation}\begin{split}\label{hamjab}
   (1) & \quad \rho_\ell= e^{\omega_\ell}U \textrm{ for some function } \omega_\ell \textrm{ satisfying }\left. \omega_\ell\right|_{U=0}=\varphi_{\ell}\\
    (2) & \quad \left|  \frac{d\rho_\ell}{\rho_\ell}\right|_{g_{\ell}}^2=1.
\end{split}\end{equation}
We solve this first order differential equation near the face $\mc{F}_R$. This is an equation of Hamilton-Jacobi type which we write 
explicitly in terms of the coordinates $U,v,w$.  First, in the coordinates 
$u,v,w$ and in matrix form, the family of dual metrics $g^{-1}_{\ell}$ on the cotangent space
is given by
\begin{equation}\label{gellinv}
    g_{\ell}^{-1}= \frac{u^2}{R^4}\left( \begin{array}{ccc}
    R^4+4\nu^2u^2v^2 & 2\nu^2uv(R^2-2u^2) & -2\nu uv\\
    2\nu^2uv(R^2-2u^2)  &  R^4+\nu^2(R^2-2u^2)^2  & -\nu(R^2-2u^2)   \\
    -2\nu uv & -\nu(R^2-2u^2) &1        
     \end{array} 
     \right).
\end{equation}
Since $\tfrac{d\rho_\ell}{\rho_\ell}= \frac{(R^2-u^2)du}{uR^2} -\tfrac{vdv}{R^2}+ d\omega_\ell$, the equation $|\tfrac{d\rho_\ell}{\rho_\ell}|^2_{g_\ell}=1$ becomes
\begin{equation}\label{HJ}
\begin{gathered}
     \frac{(1-U^2)^2}{u^2} \left| du \right|^2_{g_{\ell}}+ |d\omega_\ell |^2_{g_{\ell}}+\frac{v^2}{R^4}|dv |^2_{g_{\ell}}
   +  2(1-U^2)\langle \frac{du}{u}, d\omega_\ell\rangle_{g_{\ell}} \\
   -   \frac{2v(1-U^2)}{R^2}\langle dv, \frac{du}{u}\rangle_{g_{\ell}}-\frac{2v}{R^2}\cjg dv,d\omega_\ell\cjd_{g_\ell} =1. 
\end{gathered}
\end{equation}
Now, we compute (recall that $V=v/R$) 
\begin{equation}\label{compute1}
\begin{gathered}
 \frac{(1-U^2)^2}{u^2} \left| du \right|^2_{g_{\ell}}+\frac{v^2}{R^4}|dv |^2_{g_{\ell}} -   \frac{2v(1-U^2)}{R^2}\langle dv, \frac{du}{u}\rangle_{g_{\ell}} =\\ 
(1-U^2)^2+V^2U^2\Big( 4\nu^2(1-U^2)^2+1+\nu^2(1-2U^2)^2-4\nu^2(1-U^2)(1-2U^2) \Big),
  \end{gathered}   
\end{equation}
and
\begin{equation}\label{compute2}
\begin{split}
\langle \frac{du}{u}, d\omega_\ell\rangle_{g_{\ell}} = & 
(1+4\nu^2U^2V^2)(u\pl_u \omega_\ell) + 2\nu^2 U^2(1-2U^2)VR \pl_v\omega_\ell
 -2\nu U^2\frac{V}{R}\pl_w\omega_\ell,\\
\langle dv, d\omega_\ell\rangle_{g_{\ell}}=& -2\nu^2U^2(1-2U^2)VR(u\pl_u\omega_\ell)+U^2(1+\nu^2(1-2U^2)^2)R^2\pl_v\omega_\ell \\
& -U^2\nu(1-2U^2)\pl_w\omega_\ell,
\end{split}\end{equation}
and from \eqref{gellinv},  $|d\omega_\ell|^2_{g_\ell}$ is of the form 
\[ |d\omega_\ell|^2_{g_\ell}=(u\pl_u\omega_\ell)^2 + \frac{U^2}{R^2}|\pl_w\omega_\ell|^2 +\nu U^2P_0\Big(U^2,V; u\pl_u\omega_\ell,R\pl_v \omega_\ell,\frac{1}{R}\pl_w\omega_\ell\Big)\]
for some polynomial $P_0(x,y; X,Y,Z)$, which is quadratic in $(X,Y,Z)$ with 
coefficients which are polynomial functions in $(x,y)$, independent of $\ell$ and depending smoothly on $\nu$.
Gathering these computations with \eqref{HJ}, we  obtain that $|d\rho_\ell/\rho_\ell|_{g_\ell}=1$ is equivalent to 
\[\begin{gathered} 
2(1-U^2+2\nu^2V^2U^2(3-4U^2))(u\pl_u\omega_\ell)+U^2VQ_1(U^2)(R\pl_v\omega_\ell)+\nu U^2VQ_2(U^2)(\frac{1}{R}\pl_w\omega_\ell)\\
+(u\pl_u\omega_\ell)^2 +\frac{U^2}{R^2}|\pl_w\omega_\ell|^2+ 
\nu U^2P_0(U^2,V; u\pl_u\omega_\ell,R\pl_v \omega_\ell,\frac{1}{R}\pl_w\omega_\ell)=U^2Q_3(U^2,V)
\end{gathered}\]
where $Q_i$ are polynomials, and thus using  \eqref{uplu} and dividing by $U$  
we get an 
equation of the form 
\begin{equation}\label{equchar} 
\begin{gathered}
2((1-U^2)^2+V^2U^2Q_0(U^2))\pl_U\omega_\ell +UQ_1(U^2)v\pl_v\omega_\ell+U(1-U^2)^2(\pl_U\omega_\ell)^2
+\frac{U}{R^2}|\pl_w\omega_\ell|^2\\
+\nu UVQ_2(U^2)(\frac{1}{R}\pl_w\omega_\ell)+\nu UP_1\big(U^2,V, R; U\pl_U\omega_\ell, R\pl_v \omega_\ell,\frac{1}{R}\pl_w\omega_\ell\big)=UQ_3(U^2,V)
\end{gathered}
\end{equation}
for some polynomials $Q_i$, and $P_1$ having the same properties as $P_0$, and $Q_2$.
This equation is of the form $F(D\omega_\ell,\omega_\ell,z)=0$, where $x=(U,v,w)$, 
$D\omega_\ell=(\pl_U\omega_\ell,\pl_v\omega_\ell,\pl_w\omega_\ell)$ and
\begin{equation}\label{bdf.10}
\begin{aligned}
F(p,z,x) &= F(p_U,p_v,p_w,z,U,v,w) \\
 &= 2\left[ (1-U^2)^2+V^2U^2Q_0(U^2)
 \right]p_U+
 UvQ_1(U^2) p_v + \nu \frac{UV}{R} Q_2(U^2)p_w \\
 & \quad +U(1-U^2)^2p_U^2+ \frac{Up_w^2}{R^2}+\nu UP_1(U^2,V; Up_U, Rp_v, \frac{p_w}{R})-UQ_3(U^2,V).
\end{aligned}
\end{equation}
In this definition, notice that the dependence in $z$ and $w$ is in fact trivial.  Now, since
$\pl_{p_U}F|_{U=0}=2\ne 0,$
the equation with initial condition $\omega_\ell|_{U=0}=\varphi_\ell$ is noncharacteristic.  It can therefore be resolved  using the method of characteristics for $U$ small outside $R=0$. In general, the equations for the characteristics are given by (denoting $(x_1,x_2,x_3)=(U,v,w)$ and 
$(p_1,p_2,p_3)=(p_U,p_v,p_w)$)
\begin{equation} \label{eqcharac}
\begin{split}
\dot{p}_i(s)= & -\pa_{x_i}F(p(s),z(s),x(s))- \pa_z F(p(s),z(s),x(s)),\\
 \dot{z}(s) =& \sum_i \pa_{p_i} F(p(s),z(s), x(s)) p_i(s), \\
 \dot {x}_i(s)= &\pa_{p_i}F(p(s),z(s),x(s)).
\end{split}\end{equation}
where a dot is used to denote a derivative with respect to the parameter $s$. We notice that, when $\nu=0$,
these equations have smooth coefficients 
except for all terms containing $p_w/R$.
Thus they are smooth outside the face $\mc{F}_R=\{R=0\}$, in particular they 
restrict on the face $\mc{F}_{\ell}\setminus \{R\not=0\}$ corresponding to the rank-1 cusp limiting case.
We will need to solve these equations with the following initial conditions on the face $\mc{F}_u=\{U=0\}$ 
(we restrict for the moment to the region $U=0,R\not= 0$)
\begin{equation}
\begin{gathered}
U(0)=0, \quad v(0)= v_0, \quad w(0)= w_0, \quad z(0)=\varphi_{\ell}(v_0,w_0),  \\
\quad p_v(0)=\pl_v \varphi_{\ell}(v_0,w_0), \quad 
p_w(0)= \pl_w \varphi_{\ell}(v_0,w_0), \,\,  \,\, p_U(0)=0,\end{gathered}
\label{bdf.13}\end{equation}
where the last condition follows from the fact $F(p(0),z(0),x(0))=0$.  
The behavior of the solution for $U$ small near the face $R=0$ can possibly be singular because of the singularity of the coefficients containing some $R^{-1}$ in $F$ there. The solution $\omega_\ell$ will be given by
\begin{equation}\label{Domega} 
D\omega_\ell(U(s),v(s),w(s))=(p_U(s),p_v(s),p_w(s)),\quad \omega(U(s),v(s),w(s))=z(s) 
\end{equation}
with initial condition $\omega_{\ell}(0,v_0,w_0)=\varphi_\ell(v_0,w_0)$.
We analyze the solution near $\mc{F}_R$ when $\ell=0$
($\mc{F}_R\cap\mc{F}_\ell$ corresponds to $\{u=v=0\}$ inside $\mc{F}_\ell$).
 \begin{prop}\label{proprho0}  
For each $\varphi_0\in \mc{C}^\infty(\mc{F}_\ell\cap\mc{F}_u)$ with $\pl_w\varphi_0$ vanishing to infinite order at 
$\mc{F}_R$, there exists a unique smooth function $\omega_0$ on $\mc{F}_\ell$ defined in a neighborhood of  $\mc{F}_{u}\cap \mc{F}_R$ in $\mc{F}_\ell$
  such that $\pa_w \omega_0$ vanishes to infinite order at $\mc{F}_R$ and $\rho_0= e^{\omega_0}U$ is a boundary defining function of $\mc{F}_{\ell}\cap \mc{F}_{u}$ with the property that
 $$
          \left. (\rho_0^2g_0)\right|_{\mc{F}_u}=e^{2\varphi_0}h_0,\quad 
     \left| \frac{d\rho_0}{\rho_0} \right|_{g_{0}}=1.  
 $$
\end{prop}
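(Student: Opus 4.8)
The plan is to solve the noncharacteristic Hamilton--Jacobi equation $F(D\omega_0,\omega_0,x)=0$ of \eqref{bdf.10} (at $\ell=0$, with metric $g_0$) by the method of characteristics \eqref{eqcharac} with Cauchy data \eqref{bdf.13}, and to show that the characteristics emanating from $\mc{F}_u=\{U=0\}$ foliate a full neighborhood of the corner $\mc{F}_u\cap\mc{F}_R$ in $\mc{F}_\ell$, uniformly up to $\mc{F}_R=\{v=0\}$. I would work in a coordinate chart $(U,v,w)$ near one of the two corners (say $v>0$, where $V=\sqrt{1-U^2}$ and $R=v/\sqrt{1-U^2}$ are smooth with $R\to0$ as $v\to0$; the chart $v<0$ is symmetric). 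The only obstacle is the singular behaviour of the coefficients of $F$ at $\mc{F}_R$, so the first observation I would record is that every term of $F$ carrying a negative power of $R$ arises from $\frac{Up_w^2}{R^2}$, from $\nu\frac{UV}{R}Q_2 p_w$, or from the slots $p_w/R$ of $P_1$, and hence carries a factor $p_w$. I would exploit this together with the fact, noted after \eqref{bdf.10}, that $F$ depends trivially on $z$ and $w$.

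First, the structural consequences. Since $\pa_z F=\pa_w F=0$, the equation $\dot p_w=-\pa_w F-p_w\pa_z F=0$ shows $p_w$ is conserved along each characteristic, so $p_w(s)\equiv \pa_w\varphi_0(v_0,w_0)=\mc{O}(v_0^\infty)$ by hypothesis. Moreover the four equations for $(U,v,p_U,p_v)$ involve neither $w$ nor $z$, so this subsystem is closed, with $p_w$ entering only as a constant parameter; in it every singular coefficient carries the factor $p_w=\mc{O}(v_0^\infty)$. A short computation gives $\tfrac{dv}{dU}=\mc{O}(Uv)+\mc{O}(v_0^\infty)$, so by Gronwall $v(s)$ stays comparable to $v_0$ and the characteristic never approaches $R=0$, keeping every $p_w R^{-k}$ term of size $\mc{O}(v_0^\infty)$. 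Thus the $(U,v,p_U,p_v)$-subsystem is, uniformly in $v_0\in[0,\delta)$, a smooth system plus an $\mc{O}(v_0^\infty)$ perturbation, reducing at $v_0=0$ to the genuinely smooth $w$-independent system. The equation $\dot w=\pa_{p_w}F$ does contain an un-suppressed singular term $\nu\frac{UV}{R}Q_2$ and may wind rapidly as $v_0\to0$ when $\nu\neq0$, but this is harmless: the whole dependence of $(U,v,w,p_U,p_v)$ on $w_0$ is through $p_w(0)=\pa_w\varphi_0(v_0,w_0)=\mc{O}(v_0^\infty)$ and the initial value $w(0)=w_0$, so the characteristic map $\Xi:(s,v_0,w_0)\mapsto(U,v,w)$ has Jacobian whose $w_0$-column is $(\mc{O}(v_0^\infty),\mc{O}(v_0^\infty),1+\mc{O}(v_0^\infty))^{T}$. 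Expanding $\det D\Xi$ along this column, it equals the $(s,v_0)$-minor of the $(U,v)$-block (close to its $p_w=0$ value $2$) up to $\mc{O}(v_0^\infty)$ corrections, even though the complementary minors involve the large entries $\pa_s w,\pa_{v_0}w=\mc{O}(v_0^{-k})$. Hence $\det D\Xi$ is bounded below uniformly, $\Xi$ is a diffeomorphism from $\{0\le s<\delta\}$ onto a uniform neighborhood of the corner, and $\omega_0:=z\circ\Xi^{-1}$ is smooth up to and including $\mc{F}_R$, with $\omega_0|_{U=0}=\varphi_0$ and $|d\rho_0/\rho_0|_{g_0}=1$ for $\rho_0=e^{\omega_0}U$; the remaining boundary condition $\rho_0^2g_0|_{\mc{F}_u}=e^{2\varphi_0}h_0$ then follows from $\rho_0^2g_0=e^{2\omega_0}\bbar{g}_0$ and \eqref{hell} at $\ell=0$.

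Next I would prove the infinite-order vanishing of $\pa_w\omega_0$ at $\mc{F}_R$. Differentiating $F(D\omega_0,\omega_0,x)=0$ in $w$ and using $\pa_z F=\pa_w F=0$ yields the homogeneous linear transport equation $\sum_i\pa_{p_i}F\,\pa_{x_i}(\pa_w\omega_0)=0$, whose characteristic field is exactly the projected characteristic field above. Therefore $\pa_w\omega_0$ is constant along characteristics, equal to $\pa_w\varphi_0(v_0,w_0)=\mc{O}(v_0^\infty)$; since $v\sim v_0$ this gives $\pa_w\omega_0=\mc{O}(v^\infty)$. Differentiating the equation repeatedly in $w$ produces transport equations for the higher $w$-derivatives whose inhomogeneities are built from already-controlled lower-order $w$-derivatives, so an induction (together with smoothness of $\Xi^{-1}$) shows $\pa_w\omega_0$ vanishes to infinite order at $\mc{F}_R$ with all its derivatives. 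For uniqueness I would invoke uniqueness for the noncharacteristic Cauchy problem: any smooth solution agrees with $\varphi_0$ on $\mc{F}_u$ and satisfies the same equation, hence coincides with the one traced out by the characteristics.

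The hard part is exactly the $R^{-1}$-singularity of the coefficients at the cusp face, and in particular the genuinely divergent $\dot w$-equation when $\nu\neq0$. The point that makes the argument go through is the decoupling forced by the $(z,w)$-independence of $F$, which confines the unavoidable singularity to the inessential $w$-motion while suppressing it in the $(U,v)$-dynamics through the conserved, infinitely flat $p_w$.
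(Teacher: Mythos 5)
Your overall strategy --- characteristics for \eqref{eqcharac} with data \eqref{bdf.13}, conservation of $p_w$ from the $(z,w)$-independence of $F$, a Gronwall-type lower bound $v(s)\gtrsim v_0e^{-Cs}$ so that every $p_wR^{-k}$ term stays $\mc{O}(v_0^\infty)$, a transport equation for $\pa_w\omega_0$, and uniqueness from the noncharacteristic Cauchy problem --- matches the paper's, and those parts are sound. The difference is that the paper first performs the change of variables \eqref{uvtou'v'} to coordinates $(U',v',w')$ in which the metric takes the form \eqref{model2g0}, i.e.\ the Hamilton--Jacobi equation is \eqref{equchar} with $\nu=0$. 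In those coordinates the term $\nu\frac{UV}{R}Q_2\,p_w$ is absent, so $\dot w=\pa_{p_w}F=\mc{O}(p_w/R^2)=\mc{O}(v_0^\infty)$ and the full characteristic map $(s,v_0,w_0)\mapsto(U,v,w)$ extends \emph{smoothly} to $v_0=0$; smoothness of $\omega_0$ then really is a consequence of the inverse function theorem, and the singular coordinate change back to $(U,v,w)$ is absorbed only at the very end by the flatness of $\pa_{w'}\omega_0$.

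You instead keep $\nu\neq0$, and here there is a genuine gap. When $\nu\neq0$ the $w$-component of your characteristic map $\Xi$ contains $\int_0^s\nu\frac{UV}{R}Q_2\,ds'\sim\nu s^2/(2v_0)$, so $\Xi$ does not converge as $v_0\to0$ (it winds infinitely many times around the $w$-circle), and $\Xi^{-1}$ has derivatives of size $v_0^{-k}$ in the $w_0$-slot at every order. A uniform lower bound on $\det D\Xi$ therefore does \emph{not} yield the asserted conclusion that $\omega_0=z\circ\Xi^{-1}$ is ``smooth up to and including $\mc{F}_R$'': non-degeneracy of the differential of a family of maps that itself fails to extend to the boundary controls nothing beyond first derivatives of $\omega_0$. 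What is actually needed is an induction on the order of differentiation showing that every derivative of $\omega_0$ extends continuously to $v=0$, using at each step that the unbounded factors $\pa^\alpha(\Xi^{-1})_{w_0}=\mc{O}(v_0^{-k})$ are always paired with $w_0$-derivatives of $z,p_U,p_v$, which are $\mc{O}(v_0^\infty)$ because the whole system depends on $w_0$ only through $\varphi_0(v_0,w_0)$ and $\pa_w\varphi_0(v_0,w_0)$. You state the ingredients of this but never carry out the induction; the single determinant expansion is not a substitute for it. Either supply that induction, or do as the paper does and rotate to the $\nu=0$ coordinates first, where the winding disappears and the characteristic map is honestly smooth up to $\mc{F}_R$.
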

\begin{proof}
We need to investigate if the equation \eqref{eqcharac} can be solved in a uniform way as the initial condition $v_0$ in \eqref{bdf.13} approaches zero. We first change coordinates and use the coordinates $(u',v',w')$ of 
\eqref{uvtou'v'} in which the metric $g_0$ has the simpler form \eqref{model2g0}. In fact, since 
the metric in the new coordinates has the same form as in the original coordinates $(u,v,w)$ but 
with $\nu$ replaced by $0$, we are reduced to solve a Hamilton-Jacobi equation 
which has the same form as \eqref{equchar} but with $\nu=0$, and in the coordinates $(U',v',w')$ where 
$U':=u'/\sqrt{u'^2+v'^2}$. Our first goal is to prove that $\omega_0$ viewed in the $(U',v',w')$ coordinates
is smooth near $U'=0$, and then to come back to the original coordinates using \eqref{uvtou'v'} to deduce the desired result. 

We are reduced to analyze the solution of \eqref{equchar} when $\nu=0$, which we now do (for convenience of notations we keep the expression of this equation with the variable $(U,v,w)$ for the moment, 
having in mind that they really mean $(U',v',w')$). We also allow $w$ and $w_0$ to be in $\rr$ instead of $\rr/\demi\zz$, which is the same as viewing the equation in the universal covering, since we need to work in the domain \eqref{mcD} where the coordinates $(U',v',w')$ are valid.
We notice that since we assume $\nu=0$, each of the singular terms in the equations \eqref{eqcharac}
comes now with a $p_w$ factor. 
From the initial conditions and the independence of $F$ with respect to $w$, 
we have that $p_w(s)= \pl_w \varphi_0(v_0,w_0)$ for all $s$. 
On the other hand, by hypothesis, we know that $\pl_w \varphi_0(v_0,w_0)=\mc{O}(|v_0|^\infty)$ 
when $v_0\to 0$.  To solve the ODE \eqref{eqcharac} uniformly as $v_0\to 0$, we now
check that for $v_0\ne 0$, $v(s)$ cannot approach zero rapidly.  
\begin{lem}\label{bdf.16}
There exists a positive constant $K$ depending on $\varphi_0$ but not on $v_0$ and $w_0$, as well as $C>0$ such that 
\[ |v(s)| \ge |v_0| e^{-Cs} \quad \mbox{and} \quad U(s)\ge s \quad \mbox{for} \quad s\le K.\]
\end{lem}    
\begin{proof}
We will consider the case $v_0>0$, since the case $v_0<0$ can be dealt with in a similar fashion.  
First we use that for $\ell=0$, we have that
\[ R= \frac{v}{\sqrt{1-U^2}}, \quad V=\sqrt{1-U^2}.\]
Set $y= \log v$, then from \eqref{bdf.10} and \eqref{eqcharac}, we can write, as long as $U<1$, 
$$
   \dot{y}= UQ_1(U^2)+ U\left( A_1(U^2)e^y+ A_2(U^2) p_U + A_3(U^2)e^y p_V + A_4(U^2)e^{-y} p_w \right)
$$
for some polynomials $A_i$ in the variable $U^2$.
Consider the vector $\overrightarrow{X}=(p_U(s),p_v(s)-p_v(0), U(s), y(s)-y(0))$, where $y(0)= \log v_0$.  Since $p_w$ is in fact independent of $s$, we see from \eqref{bdf.10} and \eqref{eqcharac} that there exists a positive constant $K_1$ 
depending on $\varphi_0$ such that 
$$
      \frac{d}{ds} | \overrightarrow{X}(s)| \le K_1 
$$
whenever $|\overrightarrow{X}(s)|\le \frac{1}{K_1}$.  This means that 
$$
         |\overrightarrow{X}(s)| \le K_1 s
$$
for $s\le \frac{1}{K_1^2}$.  In particular, there exists $C>0$ such that
$$
   |y(s)-y(0)|\le K_1 s \quad \Longrightarrow \quad v(s)\ge v_0 e^{-C s} 
$$
for $s\le  \frac{1}{K_1^2}$.  This gives the first half of the result for some big constant $K$.  Now, for $|\overrightarrow{X}(s)|$ sufficiently small, notice that $\dot{U}\ge 1$.  Integrating, we get that $U(s)\ge s$ for $s$ sufficiently small.  Taking the constant $K$ smaller if necessary gives the result.     
\end{proof}

We have $p_w(s)=p_w(0)=\pl_w\varphi_0(v_0,w_0)$ which 
decreases rapidly when $v_0$ tends to zero, and $V=v/R$ is close to $1$ when $U$ is 
small, thus using Lemma \ref{bdf.16}, there is $C>0$ such that for $s<1/K$, we have $U(s)\geq s$ and 
\[\Big|\frac{p_w(s)}{R(s)^2}\Big|\leq 2\Big|\frac{\pl_w\varphi_0(v_0,w_0)}{v(s)^2}\Big| \leq Ce^{CU(s)}\Big|\frac{\pl_w\varphi_0(v_0,w_0)}{v_0^2}\Big|=\mc{O}(|v_0|^\infty).\]  
Using this rapid vanishing as $v_0\to 0$, by looking at \eqref{eqcharac} and \eqref{bdf.10}, we deduce 
that the $(U(s),v(s),w(s))$ extend smoothly as the initial condition $x_i(0),p_i(0)$ 
tend to $\{v=0\}$, on a uniform time $s\in[0,s_0]$ with $s_0>0$. 
In fact, with the initial condition $v_0=0$, we have by \eqref{bdf.10} that $v(s)=p_w(s)=0$ for all $s$ and the ODE  simply becomes in the region $U<1$ (using that $V=\sqrt{1-U^2}$ in that case)
\begin{equation}
\dot{p_U} = L_1(U,p_U) , \quad
\dot{p_v}= L_2(U,p_v,p_U), \quad 
\dot{z} = L_3(U,P_U), \quad
\dot{U} = L_4(U,P_U), \quad \dot{w}=0
\label{bdf.17}\end{equation}
for some polynomials $L_j$ with $L_4(0,p_U)=2$. In particular, we see that the curves $U(s),v(s)$ are tangent to the face $v=0$ (as long as $U<1$) and they are transverse to $U=0$; moreover $U(s)=2s+\mc{O}(s^2)$ near $U=0$. We thus obtain that $\psi:(s,v_0,w_0)\to (U(s),v(s),w(s))$ is a smooth local diffeomorphism on $[0,\epsilon)\x [0,\epsilon)\x \rr$ for small $\epsilon>0$ and there is $\epsilon>0$ such that 
for each point $w_0\in\rr$ it is a diffeomorphism from $[0,\epsilon)\x[0,\epsilon)\x (w_0-\epsilon,w_0+\epsilon)$ on its image. Moreover, it is easily seen that $\psi(s,v_0,w_0+\demi)=(U(s),v(s),w(s)+\demi)$.
The same hold in the region $v_0\leq 0$ and this implies that $\omega_0$ given by \eqref{Domega} for $\ell=0$ extends as a smooth function of $(U,v,w)$ near each $(0,0,w_0)$ on $\{U\geq 0,v\geq 0\}$ and on $\{U\geq 0,v\leq 0\}$, in some neighborhood which has uniform size with respect to $w_0$. We also have that 
$\pl_w\omega_0(\psi(s,v_0,w_0))=p_w(s)=\pl_{w}\varphi_0(v_0,w_0)=\mc{O}(|v_0|^\infty)$, thus
$\pl_w\omega_0=\mc{O}(|v|^\infty)$ uniformly where it is defined.

We have thus proved that in the $(U',v',w')$ coordinates, $\omega_0$ lifted to the universal cover 
is smooth in $[0,\epsilon)\x[0,\epsilon)\x \rr$, and $\pl_{w'}\omega_0=\mc{O}(|v'|^\infty)$ .
To conclude the proof, we need to come back to the original coordinates $(U,v,w)$ by using \eqref{uvtou'v'}:
\[ \begin{gathered}
U'=\frac{U\sqrt{1+\nu^2}}{\sqrt{1+\nu^2U^2}}, \quad v'=\frac{v(1+\nu^2)}{1+\nu^2U^2},\quad 
w'=w-\frac{\nu}{1+\nu^2}\frac{1-U^2}{v}
\end{gathered}.\] 
First it is clear that $\omega_0(U',v',w')$ is smooth when viewed as a function of $(U,v,w)$ except possibly at $v=0$ where $w'$ is a singular function of $v$. Similarly to the discussion of Section \ref{formationofcusp} (which corresponds to an analysis in the boundary $U=0$), the fact that $\pl_{w'}\omega_0=\mc{O}(|v'|^\infty)$ actually implies that 
$\omega_0$ is smooth in the variable $(U,v,w)$ since $D\omega_0$ admits a smooth extension to $v=0$. This achieves the proof of the proposition, as $(U,v,w)$ are smooth coordinates near $\mc{F}_R\cap \mc{F}_u$ on the face $\mc{F}_\ell$, and $U'$ is a smooth function of $U$.
\end{proof}
 
\begin{cor}\label{hxsmooth}
Let $\rho_0$ be the function of Proposition \ref{proprho0}. There exists a diffeomorphism
$\phi:[0,\eps)_s\x \mc{O}\to \mc{Q}\subset \mc{F}_\ell$ with $\mc{Q}$ a neighborhood of $\mc{F}_u\cap\mc{F}_R$ in $\mc{F}_\ell$ and $\mc{O}$ a neighborhood of $\mc{F}_u\cap\mc{F}_R$ in $\mc{F}_u\cap\mc{F}_\ell$ such that 
$\phi^*\rho_0=s$ and 
\[\phi^*g_0= \frac{ds^2+ h_0 (A_s\cdot\cdot)}{s^2}\]
with $A_x$ is a one-parameter smooth family of smooth endomorphisms of $T\mc{O}$ up to $\mc{F}_R$, for $s\in[0,\eps)$, so that 
$h_s(\cdot,\cdot):=h_0 (A_s\cdot\cdot)$ is a smooth family of cusp symmetric tensors.
\end{cor}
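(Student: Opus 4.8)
The plan is to obtain the normal form by flowing along the gradient of $\rho_0$ in the conformally compactified metric, exactly as in the convex co-compact case described in Section~\ref{convcocomp}; the only genuinely new point is to check that everything remains smooth up to the cusp face $\mc{F}_R$. Set $\bar{g}_0:=\rho_0^2g_0$. By Proposition~\ref{proprho0} the function $\rho_0=e^{\omega_0}U$ is a smooth boundary defining function of $\mc{F}_u$ in $\mc{F}_\ell$ in a full neighborhood of $\mc{F}_u\cap\mc{F}_R$, and the characteristics analysis carried out there shows in addition that $\bar{g}_0$ extends to a smooth Riemannian metric on $\mc{F}_\ell$ up to and including $\mc{F}_R$. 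The eikonal equation $|d\rho_0/\rho_0|_{g_0}=1$ is equivalent to $|d\rho_0|_{\bar{g}_0}=1$, so $\rho_0$ is a smooth distance function for $\bar{g}_0$ near the corner $\mc{F}_u\cap\mc{F}_R$.

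First I would introduce the vector field $N:=\nabla^{\bar{g}_0}\rho_0$, which is smooth up to $\mc{F}_R$, satisfies $|N|_{\bar{g}_0}=1$, and is transverse to $\mc{O}:=\mc{F}_u\cap\mc{F}_\ell=\{\rho_0=0\}$. Differentiating the eikonal relation gives $\nabla^{\bar{g}_0}_N N=\demi\nabla^{\bar{g}_0}|N|^2_{\bar{g}_0}=0$, so the integral curves of $N$ are unit-speed $\bar{g}_0$-geodesics. Defining $\phi:[0,\eps)_s\x\mc{O}\to\mc{Q}$ by flowing from $\mc{O}$ along $N$ for time $s$, the inverse function theorem makes $\phi$ a diffeomorphism onto a neighborhood $\mc{Q}$ of $\mc{O}$ in $\mc{F}_\ell$ (uniform up to $\mc{F}_R$ precisely because $N$ and $\bar{g}_0$ are smooth there), with $\phi^*\rho_0=s$ since $N\rho_0=|d\rho_0|^2_{\bar{g}_0}=1$. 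Because $\phi^*\rho_0=s$ one has $\bar{g}_0(\partial_s,\cdot)=\langle N,\cdot\rangle_{\bar{g}_0}=d\rho_0=ds$, whence $\bar{g}_0(\partial_s,\partial_s)=1$ and all cross terms $\bar{g}_0(\partial_s,\partial_{y^i})$ vanish. Thus
\[
\phi^*\bar{g}_0=ds^2+k_s,\qquad \phi^*g_0=\frac{ds^2+k_s}{s^2},
\]
where $k_s$ is a one-parameter family of symmetric $2$-tensors on $\mc{O}$, depending smoothly on $s\in[0,\eps)$ and smooth up to $\mc{F}_R$, and where $k_0$ equals the induced boundary metric $h_0:=(\rho_0^2g_0)|_{T\mc{O}}$.

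Finally I would define the endomorphisms. Since $h_0=k_0$ is nondegenerate and smooth up to $\mc{F}_R$, set $A_s:=h_0^{-1}k_s$, so that $k_s(\cdot,\cdot)=h_0(A_s\cdot,\cdot)$ with $A_0={\rm Id}$; this family depends smoothly on $s\in[0,\eps)$ and is smooth up to $\mc{F}_R$. As $k_s$ and $h_0$ are symmetric, each $A_s$ is self-adjoint with respect to $h_0$, so $h_s:=h_0(A_s\cdot,\cdot)$ is, for every $s$, a cusp symmetric tensor in the sense of Section~\ref{Sec:uniform}, and the family is smooth. This gives the claimed form $\phi^*g_0=(ds^2+h_0(A_s\cdot,\cdot))/s^2$.

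The main obstacle is exactly the uniform smoothness up to the cusp face $\mc{F}_R$: in the naive coordinates $(U,v,w)$ the metric $g_\ell$ in \eqref{bdf.4} degenerates as $v\to 0$, so a priori the gradient flow of $\rho_0$ and the resulting family $A_s$ might fail to extend smoothly across $\mc{F}_R$, leaving one only with smoothness in the interior of $\mc{F}_u$. This is precisely what the characteristic analysis behind Proposition~\ref{proprho0} controls, since it establishes that $\omega_0$, and hence $\rho_0$ and $\bar{g}_0=\rho_0^2g_0$, are smooth near $\mc{F}_u\cap\mc{F}_R$ (with $\partial_w\omega_0$ vanishing to infinite order at $\mc{F}_R$). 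Granting this, the standard normal-geodesic construction above applies verbatim and yields a family $A_s$ that is smooth up to $\mc{F}_R$, i.e. genuinely a smooth family of cusp symmetric tensors rather than merely a family smooth on the interior.
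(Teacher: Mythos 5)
Your overall strategy is the same as the paper's: flow along $N=\nabla^{\bar g_0}\rho_0$ with $\bar g_0=\rho_0^2 g_0$, use $N\rho_0=1$ and the vanishing of the cross terms to get $\phi^*\bar g_0=ds^2+k_s$, and set $A_s=h_0^{-1}k_s$. The gap is in the step you yourself flag as the main obstacle. You resolve it by asserting that the characteristics analysis of Proposition \ref{proprho0} shows that ``$\bar g_0$ extends to a smooth Riemannian metric on $\mc{F}_\ell$ up to and including $\mc{F}_R$.'' This is false. Proposition \ref{proprho0} gives smoothness of the scalar $\omega_0$ (hence of $\rho_0=e^{\omega_0}U$) up to $\mc{F}_R$, but $\bar g_0=e^{2\omega_0}\,U^2g_0$ inherits the degeneracy of $U^2g_0$ at $\mc{F}_R$: from \eqref{bdf.4} with $\ell=\nu=0$ one has $h_0=(U^2g_0)|_{U=0}=\tfrac{dv^2}{v^2}+v^2dw^2$, which blows up in the $v$-direction and degenerates in the $w$-direction at $v=0$. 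A smooth conformal factor cannot remove this. Consequently the assertions that $N$ is smooth up to $\mc{F}_R$, that $\phi$ is a diffeomorphism uniformly up to $\mc{F}_R$, that $k_s$ is a smooth $2$-tensor up to $\mc{F}_R$, and above all that $A_s=h_0^{-1}k_s$ is a smooth endomorphism up to $\mc{F}_R$ (note $h_0^{-1}$ is itself singular in the coordinate frame) do not follow from what you have established.

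What is actually needed, and what the paper's proof supplies, is a verification in the \emph{cusp frame}. The statement that $h_s$ is a cusp symmetric tensor means precisely that $k_s(Z_i,Z_j)$ is smooth up to $v_0=0$ for $Z_1=v_0\pl_{v_0}$ and $Z_2=v_0^{-1}\pl_{w_0}$, since $h_0$ is nondegenerate on the frame $(Z_1,Z_2)$ but not on $(\pl_{v_0},\pl_{w_0})$. To check this one uses two structural facts from the proof of Proposition \ref{proprho0}: the flow (equal to the characteristic flow up to reparametrization) preserves $\{v=0\}$ and $\pl_w\omega_0=\mc{O}(|v|^\infty)$, so that writing $\phi(s,v_0,w_0)=(U,v,w)$ one gets $v=v_0(1+v_0f)$ and $w=w_0+v_0k$ with $f,k$ smooth; hence $\phi_*(v_0\pl_{v_0})=vW_1$ and $\phi_*(v_0^{-1}\pl_{w_0})=v^{-1}\pl_w+W_2$ with $W_1,W_2$ smooth, and pairing these against \eqref{bdf.4} gives the required smoothness. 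Without this step your argument only yields smoothness of $A_s$ on the interior of $\mc{F}_u$, which is not the content of the corollary.
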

\begin{proof} The diffeomorphism is given by $\phi(s,v_0,w_0)=\phi_s(v_0,w_0)$ where $\phi_s$ is the flow at time $s$ of the gradient $\nabla^{\rho_0^2g_0}\rho_0$ of $\rho_0$ with respect to $\rho_0^2g_0$.
First, we notice that this flow is exactly the diffeomorphism 
$\phi(s,v_0,w_0)=x(s/2)$ where $x(s)$ is the integral curve studied in the proof of the previous proposition (satisfying \eqref{eqcharac} with initial condition $x(0)=(0,v_0,w_0)$. Since $(\phi^*U)/s$ is a smooth function on $[0,\eps)\x \mc{O}$ for some small neighborhood $\mc{O}$ of $\{0\}\x\{0\}\x (\rr/\demi\zz)$ in $(U,v,w)\in [0,\eps)\x [0,\eps)\x (\rr/\demi\zz)$, 
the metric $s^2\phi^*g_0$ is given by a positive smooth function times $\phi^*(U^2g_0)$ with $g_0$ given in
\eqref{bdf.4} (for $\ell=0$). To prove the statement, it suffices to check that for vector fields 
$Z_1:=v_0\pl_{v_0}$ and $Z_2:=v_0^{-1}\pl_{w_0}$, we have that 
$\phi^*(U^2g_0)(Z_i,Z_j)$ is smooth near $s=v_0=0$ for $i,j\in \{1,2\}$. Since $\phi(s,0,w_0)\subset \{v=0\}$ 
by the analysis of the proof in the previous proposition, writing $\phi(s,v_0,w_0)=(U,v,w)$ we get $v=v_0(1+v_0f(s,v_0,w_0))$ 
and $w=w_0+v_0k(s,v_0,w_0)$ for some smooth functions 
$f,k$, and thus 
\[ \phi_*(v_0\pl_{v_0})=vW_1, \quad \phi_*(v_0^{-1}\pl_{w_0})=v^{-1}\pl_w+W_2\] 
for some smooth vector field $W_1,W_2$ near $v=U=0$. By inspecting \eqref{bdf.4} for $\ell=0$, 
 $\phi^*(U^2g_0)(Z_i,Z_j)$ is smooth near $s=v=0$. The same argument works in the region $v\leq 0$ covering the other neighborhood of $\mc{F}_R\cap\mc{F}_u$ in $\mc{F}_\ell$.
\end{proof} 
\subsection{Proof of Proposition \ref{deffunccusp}}
We decompose the hyperbolic $3$-manifold  with rank-1 cusps $(X,g)$ as in Section \ref{geofinite} into a region $\mc{K}\subset \bbar{X}$ and some cusp neighborhoods $\mc{U}_j^c$ for $j=1,\dots,j_1$. Recall that $\bbar{X}$ can be compactified into $\bbar{X}_c$.
 Then we fix a boundary defining function $\rho$ in a neighborhood of $M =\pl\bbar{X}$, which is equal to $\rho=u/\sqrt{u^2+v^2}$ in the coordinates of the model \eqref{Uj'} of $\mc{U}_j^c$. The hyperbolic metric $g$ there, as given by the model \eqref{Uj'}, corresponds to the case $\ell=0,\nu=0$ in the expression \eqref{gellnu} and $U=u/R$ is the chosen defining function of $\pl\bbar{X}$ in these coordinates. 
Let $h^{\rm hyp}$ be the unique hyperbolic metric on $M$ in the conformal class of 
$h:=(\rho^2g)|_{TM}$. Let $\psi\in C^\infty_r(\bbar{M})$ and 
 $\hat{h}=e^{2\psi}h^{\rm hyp}$. By Proposition \ref{uniform}, we have $e^{2\psi}h^{\rm hyp}= e^{2(\psi+\varphi)}h$ for some $\varphi\in \mathcal{C}^{\infty}_r(\bbar M)$.  Since we still have that $\psi+\varphi\in \mathcal{C}_r^{\infty}(\bbar M)$,  Proposition \ref{proprho0} shows that there exists a smooth defining function $\hat{\rho}$ 
of $\bbar{M}$ on a neighborhood of $\cf\cap\bbar{M}$ in $\bbar{X}_c$ (as explained above, $\bbar{M}$ corresponds to $\mc{F}_u$ and $\cf$ to $\mc{F}_R$ in the model $\mc{F}_\ell$ of $\bbar{X}_c$ near the cusps),
such that  $|d\hat{\rho}/\hat{\rho}|_{g}=1$ with $\rho^2g|_{TM}= e^{2\psi}h^{\rm hyp}$; it is unique where it is defined. On the other hand, outside $\mc{U}_j^c$, this equation is also a smooth non-characteristic  Hamilton-Jacobi type  equation,  thus the solution $\hat{\rho}$ defined near $\bbar{M}\cap \cf$ can be extended uniquely as a solution also in a whole neighborhood 
of $\bbar{M}$ in $\bbar{X}_c$, giving the desired function $\hat{\rho}$.  
Considering the maps $\phi:[0,\epsilon)_x\to \bbar{M}\to \bbar{X}_c$ given by $\phi(x,y)=\phi_x(y)$ where $\phi_s$ the flow at time $s$ of the gradient $\nabla^{\hat{\rho}^2g}\hat{\rho}$ of $\hat{\rho}$ with respect to $\hat{\rho}^2g$, 
we see by using Corollary \ref{hxsmooth} (recall that $\phi$ is the gradient flow in the proof of that Corollary) that on $(0,\epsilon)_x\x M$
\[ \phi^*g= \frac{dx^2+\hat{h}_x}{x^2}\]
for some 1-parameter family $\hat{h}_x$ of smooth metrics on $M$ depending smoothly on $x\in[0,\eps)$,
and $h_x$ are actually a smooth family of cusp symmetric tensors. Since $g$ is hyperbolic, we know (as it is a local computation) from \cite[Theorem~{7.4}]{FeGr} that the dependence of $\hat{h}_x$ is a polynomial of order $2$ in $x^2$
$$
       \hat{h}_x = \hat{h}(({\rm Id}+x^2A)\cdot,\cdot) 
$$
with $\Tr(A)= -\frac12 \Scal_{\hat{h}}$ and $\delta_{\hat{h}}(A)= \frac12 d \Scal_{\hat{h}}$.
It remains to  check that the complement of the region $\phi([0,\epsilon)\x \bbar{M})$, called $\mc{V}$, has finite volume with respect to $g$ in $X$. Clearly, 
$\mc{K}\cap (X\setminus \{\hat{\rho}<\epsilon\})$ is compact in $X$ thus has finite volume. Now we analyze the region $\mc{U}_j^c\setminus\{\hat{\rho}< \epsilon\}$. To show that it has finite volume, it suffices to use that $\hat{\rho}$ is a defining function of $\mc{F}_u\cap \mc{F}_\ell$ in the blown-up space 
$\mc{F}_\ell$ of $\bbar{\mc{U}}_j^c$ around the region $H=\{(u,v)=0\}$ representing the cusp, and so $\{\hat{\rho}\geq \epsilon\}$ is contained in some 
region $\{U\geq c\epsilon\}$ for some $c>0$. Now the volume form of the metric $g$ 
in coordinates $(u,v,w)$ is $\frac{u^2+v^2}{u^3}dudvdw$ and a simple computation shows that 
\begin{equation}\label{intsurV} 
\int_{0}^1 \int_{-Cu}^{Cu}\Big(\frac{u}{\sqrt{u^2+v^2}}\Big)^z \frac{u^2+v^2}{u^3}dv du<\infty
\end{equation}
for all $z\in\cc$ and thus by taking $z=0$ we see that the region has finite volume for any finite constant $C>0$. It remains to show that if $\hat{\rho}$ is extended smoothly to $\bbar{X}_c$ as a boundary defining function of $\bbar{M}$ (positive in $\bbar{X}_c\setminus \bbar{M}$) then
$H(z)=\int_{X}\hat{\rho}^z {\rm dvol}_g$
is meromorphic in $\{{\rm Re}(z)>-\epsilon\}$ for some $\epsilon>0$. We can split the integral as an integral near 
$\pl\bbar{X}\cap \mc{K}$ and the meromorphy of this part follows directly from the fact that 
$\hat{\rho}$ is a smooth boundary defining function there, and there remains the integral in each $\mc{U}_j^c$. The part of the integral in $\mc{V}$ clearly gives holomorphy in $z$ by \eqref{intsurV}. For the integral in $\mc{U}_j^c\setminus \mc{V}$, we notice that the volume form in the coordinates
$(U,R,w)$ near $\mc{F}_u=\{U=0\}$ in the model $\mc{F}_\ell$ (isometric to $\mc{U}_j^c$ with $\nu=0$) of Section \ref{hamilton} is given by $dUdRdw/(U^3\sqrt{1-U^2})$ and thus from the fact that
$\hat{\rho}/U$ is a smooth positive function in these coordinates near $U=0$, the meromorphy of the remaining part of the integral $H(z)$ follows by Taylor expanding $\hat{\rho}/U$ at $U=0$. 
\qed

\subsection{Taylor expansion of the boundary defining function to second order}
For $\ell>0$ fixed, it is also straightforward to solve the  equations \eqref{HJ} near the degenerating curve and find $\omega_\ell$ and $\rho$.  The function $\omega_\ell$ will be smooth in $s$, so smooth in $U$.  In particular, at $U=0$, it has an expansion of the form
\begin{equation}    
  \omega_\ell \sim \sum_{j=0}^{\infty} a_j U^k.
\label{bdf.19}\end{equation}
To compute the limit as $\ell\to 0$ of the renormalized volume, we will need to know the terms of order $0$ and $2$.  By assumption, we have that $a_0=\varphi_{\ell}$.  
We now compute $a_1$ and $a_2$.  

\begin{prop}\label{bdf.19a}
Near $v=0$, the coefficients $a_1$ and $a_2$ in the expansion \eqref{bdf.19} are given by
$a_1=0$ and 
$$  
a_2= -\frac14 \left(|d \varphi_{\ell}|^{2}_{h_\ell} +(1+\nu^2)\left(1-\frac{\ell^2}{v^2+\ell^2}\right)-2 +2(\nu^2-1)v\pl_v\varphi_\ell -\frac{2\nu v}{v^2+\ell^2}\pl_w\varphi_\ell\right).
$$
\end{prop}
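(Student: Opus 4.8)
The plan is to substitute the Taylor expansion \eqref{bdf.19}, $\omega_\ell\sim\sum_k a_kU^k$ with $a_0=\varphi_\ell$, into the Hamilton--Jacobi equation $|d\rho_\ell/\rho_\ell|_{g_\ell}^2=1$ and match coefficients of powers of $U$. Since $\rho_\ell=e^{\omega_\ell}U$, one has $d\rho_\ell/\rho_\ell=\tfrac{dU}{U}+d\omega_\ell$, so the equation reads
\[
\Big|\tfrac{dU}{U}\Big|^2_{g_\ell}+2\Big\langle \tfrac{dU}{U},d\omega_\ell\Big\rangle_{g_\ell}+|d\omega_\ell|^2_{g_\ell}=1,
\]
which I label $(A)+(B)+(C)=1$. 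Term $(A)$ is purely geometric: by \eqref{compute1} it is $(1-U^2)^2+V^2U^2(\cdots)$, an \emph{even} function of $U$ whose value at $U=0$ and whose $U^2$--coefficient $-2+(1+\nu^2)\tfrac{v^2}{v^2+\ell^2}=-2+(1+\nu^2)\big(1-\tfrac{\ell^2}{v^2+\ell^2}\big)$ are read off directly. Using $\tfrac{dU}{U}=(1-U^2)\tfrac{du}{u}-\tfrac{v}{R^2}dv$ and \eqref{compute2}, term $(B)=2(1-U^2)\langle \tfrac{du}{u},d\omega_\ell\rangle_{g_\ell}-\tfrac{2v}{R^2}\langle dv,d\omega_\ell\rangle_{g_\ell}$ is linear in $d\omega_\ell$, and $(C)=|d\omega_\ell|^2_{g_\ell}$ is quadratic. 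Throughout I convert derivatives to the smooth coordinates $(U,v,w)$ via \eqref{uplu}, $u\pl_u\to U(1-U^2)\pl_U$ and $\pl_v\to\pl_v-\tfrac{VU}{R}\pl_U$, and use $R^2(1-U^2)=v^2+\ell^2$, $VR=v$ and $V/R=v(1-U^2)/(v^2+\ell^2)$, so that $V^2,R^2$ are even in $U$ with $U=0$ values $\tfrac{v^2}{v^2+\ell^2}$ and $v^2+\ell^2$.

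For the quadratic term $(C)$ I would avoid expanding the polynomial $P_0$ of \eqref{bdf.10} by a conformal argument. Since $g_\ell^{-1}=U^2\overline{g}_\ell^{-1}$ with $\overline{g}_\ell=U^2g_\ell$ smooth up to $U=0$, we get $|d\omega_\ell|^2_{g_\ell}=U^2|d\omega_\ell|^2_{\overline{g}_\ell}$, so $(C)=\mc{O}(U^2)$ unconditionally. Inspecting \eqref{bdf.4} shows the $dU\,dv$ and $dU\,dw$ coefficients of $\overline{g}_\ell$ vanish at $U=0$, hence $\overline{g}_\ell|_{U=0}$ is block diagonal with tangential block the metric $h_\ell$ of \eqref{hell}. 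Once $a_1=0$ is established, $\pl_U\omega_\ell$ vanishes at $U=0$, so $d\omega_\ell|_{U=0}=d\varphi_\ell$ is tangential, and the $U^2$--coefficient of $(C)$ is $|d\omega_\ell|^2_{\overline{g}_\ell}\big|_{U=0}=|d\varphi_\ell|^2_{h_\ell}$.

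Now I match orders. At order $U^0$ one has $(A)=1$, $(B)=\mc{O}(U)$, $(C)=\mc{O}(U^2)$, so the equation holds automatically. At order $U^1$, $(A)$ is even and $(C)=\mc{O}(U^2)$, while $\langle dv,d\omega_\ell\rangle_{g_\ell}=\mc{O}(U^2)$, so the only contribution is $2(1-U^2)\langle\tfrac{du}{u},d\omega_\ell\rangle_{g_\ell}$, whose leading piece $u\pl_u\omega_\ell=U(1-U^2)\pl_U\omega_\ell$ has $U^1$--coefficient $a_1$; this forces $2a_1=0$, i.e.\ $a_1=0$. At order $U^2$, using $a_1=0$ (which kills every term carrying the factor $u\pl_u\omega_\ell=\mc{O}(U^2)$) and evaluating \eqref{compute2} at $U=0$, term $(B)$ contributes $4a_2+2(\nu^2-1)v\,\pl_v\varphi_\ell-\tfrac{2\nu v}{v^2+\ell^2}\pl_w\varphi_\ell$, after combining the two sub-terms. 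Adding the $U^2$--coefficients of $(A)$, $(B)$, $(C)$ and setting the sum to $0$ gives
\[
-2+(1+\nu^2)\Big(1-\tfrac{\ell^2}{v^2+\ell^2}\Big)+4a_2+2(\nu^2-1)v\,\pl_v\varphi_\ell-\tfrac{2\nu v}{v^2+\ell^2}\pl_w\varphi_\ell+|d\varphi_\ell|^2_{h_\ell}=0,
\]
and solving for $a_2$ yields the stated formula.

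The main obstacle is purely computational bookkeeping in term $(B)$ at order $U^2$: correctly evaluating the $\nu$--dependent cross terms of \eqref{compute2}, keeping straight whether each $\pl_v,\pl_w$ is a coordinate derivative in $(u,v,w)$ or in $(U,v,w)$, and tracking the $U$--expansions of $V$ and $R$. The features that keep this manageable are that $a_1=0$ eliminates all $u\pl_u\omega_\ell$--terms at this order, that $V^2,R^2$ are even in $U$ so $(A)$ has no $U^1$--part, and that the quadratic term assembles into $|d\varphi_\ell|^2_{h_\ell}$ by the conformal/block-diagonal observation rather than by brute force. The qualifier ``near $v=0$'' reflects that the expansion \eqref{bdf.19} is used only in a neighborhood of the cusp face $\mc{F}_R$, where moreover $\pl_w\varphi_\ell=\mc{O}(|v|^\infty)$.
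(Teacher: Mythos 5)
Your proof is correct and follows essentially the same route as the paper: substituting the expansion into the Hamilton--Jacobi equation \eqref{HJ}, reading off $a_1=0$ from the order-$U$ terms, and extracting $a_2$ from the order-$U^2$ coefficients of \eqref{compute1} and \eqref{compute2}. The only (cosmetic) difference is that you identify $|d\omega_\ell|^2_{g_\ell}=U^2|d\varphi_\ell|^2_{h_\ell}+\mc{O}(U^3)$ via the conformal rescaling and block-diagonality of $U^2g_\ell$ at $U=0$, whereas the paper reads the same fact directly off the dual metric \eqref{gellinv}.
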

\begin{proof}
We see directly from \eqref{equchar} that $a_1=0$.
Then notice that by \eqref{gellinv}, the metric dual to $\bbar{g}_\ell=U^2g_\ell$ 
is smooth near $\mc{F}_u\setminus (\mc{F}_u\cap \mc{F}_R)$ and as $U\to 0$
\[\begin{split} 
|d\omega_\ell|^2_{g_\ell}=& U^2\Big( (v^2+\ell^2)((1+\nu^2)(\pl_v\varphi_\ell)^2)+\frac{(\pl_w\varphi_\ell)^2}{v^2+\ell^2}
-2\nu \pl_v\varphi_\ell \pl_w\varphi_\ell\Big) +\mc{O}(U^3)\\
=&  U^2|d\varphi_\ell|^2_{h_\ell}+\mc{O}(U^3)
\end{split}\]
where $h_\ell$ is given by \eqref{hell}.
Combining this with  \eqref{HJ}, \eqref{compute1} and \eqref{compute2}, we have 
\[ -2+\frac{v^2}{v^2+\ell^2}(1+\nu^2)+4a_2+2(\nu^2-1) v\pl_v\varphi_\ell-2\nu \frac{v}{v^2+\ell^2}\pl_w\varphi_\ell+|d\varphi_\ell|^2_{h_\ell}=0\]
which achieves the proof.
\end{proof}

\section{Variation formulas for the renormalized volume and K\"ahler potential for Weil-Petersson metric}

In this section we describe the properties of the renormalized volume as a function on the conformal class of the conformal boundary: we first compute the variation of the renormalized volume for families of hyperbolic 
metrics with rank $1$-cusps, and we show that ${\rm Vol}_R$ is a K\"ahler potential for Weil-Petersson metric
in a Bers slice of the Quasi-Fuchsian deformation space. 

\subsection{Variation formula}
Arguing as in \cite[Prop. 3.11]{GMS}, we have the following variation formula for the renormalized volume under a change of conformal representative in the conformal boundary.  
\begin{prop}\label{vrv.1}
Let $X$ be a geometrically finite hyperbolic $3$-manifold.
Let $h^{\rm hyp}$ be the unique hyperbolic representative in the conformal boundary of $g$ and let 
$\hat{h}:=e^{2\psi}h^{\rm hyp}$ with $\psi\in\CI_r(\bbar M)$. If 
$\rho$ and $\hat{\rho}$ are geodesic boundary defining functions associated to $h^{\rm hyp}$ and $\hat{h}$ 
given by Proposition  \ref{deffunccusp}, we have
$$  
 {\rm Vol}_R(X, \hat{h})= {\rm Vol}_R(X, h^{\rm hyp}) -\frac14 \int_{M} (|\nabla\psi|^2_{h^{\rm hyp}}-2\psi) \rm dvol_{h^{\rm hyp}}.    
$$
For any $\chi\in \mc{C}_c^\infty(\bbar{X})$ satisfying $\chi=\sum_{k=0}^2\chi_k\rho^2+\mc{O}(\rho^3)$ at 
$\pl\bbar{X}$, with $\chi_k\in \mc{C}_c^\infty(M)$ 
\begin{equation}\label{FPz=0}
\begin{split}
 {\rm FP}_{z=0}\int_{X}\hat{\rho}^z\chi {\rm dvol}_g=& {\rm FP}_{z=0}\int_{X}\rho^z \chi {\rm dvol}_g \\ 
 & -\frac14 \int_{M} (\chi_0(|\nabla\psi|^2_{h^{\rm hyp}}-2\psi)-4\chi_2\psi) \rm dvol_{h^{\rm hyp}}.    
\end{split}
\end{equation}
\end{prop}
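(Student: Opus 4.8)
The plan is to follow the convex co-compact computation of \cite[Prop.~3.11]{GMS}, the genuinely new work being to control the non-compactness of $M$ and the cusp face $\cf\subset\bbar{X}_c$. Write $\hat\rho=e^{\omega}\rho$, so that $\omega|_{M}=\psi$, where $\rho,\hat\rho$ are the geodesic boundary defining functions associated to $h^{\rm hyp}$ and $\hat h$ by Proposition~\ref{deffunccusp}. Since $\hat\rho^z-\rho^z=(e^{z\omega}-1)\rho^z$, the first step is the reduction
\[
{\rm Vol}_R(X,\hat h)-{\rm Vol}_R(X,h^{\rm hyp})
={\rm FP}_{z=0}\int_X(e^{z\omega}-1)\rho^z\,{\rm dvol}_g
=\mathrm{Res}_{z=0}\int_X\omega\,\rho^z\,{\rm dvol}_g .
\]
Indeed, expanding $e^{z\omega}-1=\sum_{n\ge1}\tfrac{z^n}{n!}\omega^n$ and using that each $\int_X\omega^n\rho^z\,{\rm dvol}_g$ has at most a simple pole at $z=0$ (Proposition~\ref{deffunccusp}, the functions $\omega^n$ being bounded), the terms $n\ge2$ contribute $\mathcal{O}(z)$ to the finite part; boundedness of $\omega$ makes the interchange of the sum with ${\rm FP}_{z=0}$ legitimate.

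Next I would evaluate the residue as a boundary integral. Solving $|d\hat\rho/\hat\rho|_g=1$ to second order in the geodesic gauge for $h^{\rm hyp}$ (the analogue of \eqref{mai.1}) and writing $\omega=\psi+\omega_1\rho+\omega_2\rho^2+\mathcal{O}(\rho^3)$, the computation of Proposition~\ref{bdf.19a} gives $\omega_1=0$ and $\omega_2=-\tfrac14|\nabla\psi|^2_{h^{\rm hyp}}$. In the same gauge the volume density is ${\rm dvol}_g=\rho^{-3}v(\rho)\,d\rho\,{\rm dvol}_{h^{\rm hyp}}$ with $v(\rho)=\det(\mathrm{Id}+\tfrac12\rho^2A)=1+\tfrac12\Tr(A)\rho^2+\mathcal{O}(\rho^4)$, where $h_2=h^{\rm hyp}(A\cdot,\cdot)$ and $\Tr(A)=\Tr_{h^{\rm hyp}}(h_2)=-\tfrac12\Scal_{h^{\rm hyp}}=1$. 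The residue at $z=0$ extracts the coefficient of $\rho^2$ in $\omega(\rho)v(\rho)$, namely $\tfrac12\psi+\omega_2=\tfrac12\psi-\tfrac14|\nabla\psi|^2_{h^{\rm hyp}}$, so that
\[
\mathrm{Res}_{z=0}\int_X\omega\,\rho^z\,{\rm dvol}_g
=-\tfrac14\int_M\big(|\nabla\psi|^2_{h^{\rm hyp}}-2\psi\big)\,{\rm dvol}_{h^{\rm hyp}},
\]
which is the claimed formula.

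The main obstacle, absent from \cite{GMS}, is to justify that this residue localizes on $\bbar M$ and that the limiting boundary integral over $M$ converges. I expect the key points to be the following. The cusp face $\cf$ contributes no pole: near $\cf$ the integral $\int\omega\rho^z\,{\rm dvol}_g$ is taken over a region of finite $g$-volume (the set $\mc{V}$ of Proposition~\ref{deffunccusp}), on which $\omega$ is bounded, so it is holomorphic in a neighbourhood of $z=0$ and carries no residue. The boundary integral over $M$ converges because $\psi\in\CI_r(\bbar M)$ forces $\partial_w\psi=\mathcal{O}(v^\infty)$, whence $|\nabla\psi|^2_{h^{\rm hyp}}$ is bounded on each cusp, which has finite $h^{\rm hyp}$-volume. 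Thus only the part of the boundary expansion over $M$ survives, exactly as in the compact case.

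Finally, the weighted identity \eqref{FPz=0} is obtained by the identical scheme with $\omega$ replaced by $\chi\omega$. Since $\chi\in\mc{C}_c^\infty(\bbar X)$ vanishes near $\cf$, no cusp analysis is needed here, and
\[
{\rm FP}_{z=0}\int_X\hat\rho^z\chi\,{\rm dvol}_g-{\rm FP}_{z=0}\int_X\rho^z\chi\,{\rm dvol}_g
=\mathrm{Res}_{z=0}\int_X\chi\,\omega\,\rho^z\,{\rm dvol}_g .
\]
Writing $\chi=\chi_0+\chi_1\rho+\chi_2\rho^2+\mathcal{O}(\rho^3)$ and using $\omega_1=0$ together with the vanishing of the $\rho^1$-coefficient of $v$, the residue extracts the coefficient of $\rho^2$ in $\chi(\rho)\,\omega(\rho)\,v(\rho)$, namely $\chi_0\big(\tfrac12\psi-\tfrac14|\nabla\psi|^2_{h^{\rm hyp}}\big)+\chi_2\psi$, which equals $-\tfrac14\big(\chi_0(|\nabla\psi|^2_{h^{\rm hyp}}-2\psi)-4\chi_2\psi\big)$, as required.
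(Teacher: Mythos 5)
Your proof is correct and follows essentially the same route as the paper: both work in the geodesic gauge for $h^{\rm hyp}$, write $\hat\rho=e^{\omega}\rho$, extract the residue/finite part from the $x^2$-coefficient of $\omega(x)v(x)$ using $\omega_1=0$, $\omega_2=-\frac14|\nabla\psi|^2_{h^{\rm hyp}}$ and $v_2=-\frac14\Scal_{h^{\rm hyp}}$ (the computation of \cite[Lemmas 3.5--3.6]{GMS}), and dispose of the cusp region via the finite-volume set $\mc{V}$ of Proposition \ref{deffunccusp} together with the boundedness of $|\nabla\psi|^2_{h^{\rm hyp}}$ on the cusps coming from $\psi\in\CI_r(\bbar M)$. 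The only cosmetic difference is that you package the comparison as a residue identity rather than computing each finite part separately, and your pointer to Proposition \ref{bdf.19a} should really be to the analogous second-order Hamilton--Jacobi expansion in the geodesic gauge.
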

\begin{proof}
First, by Proposition~\ref{deffunccusp}, associated to both $h^{\rm hyp}$ (resp. to $\hat{h}$), there are product coordinates $[0,\epsilon)_x\x \bbar{M}$ near $\bbar{M}$ in the compactification $\bbar{X}_c$ of $X$ in which $g$ is of the form
$$
  g= \frac{dx^2 + h_0+ x^2 h_2 + x^4 h_4}{x^2}
$$
with $h_0=h^{\rm hyp}$ (resp. $h_0=\hat{h}$) , $h_2,h_4$  some smooth cusp symmetric tensors such that $\Tr_{h_0}(h_2)= -\frac12 \Scal_{h_0}$ and $\delta_{h_0}(h_2)= \frac12 d \Scal_{h_0}$. The complement of the regions covered by these coordinates have finite volume, thus the part of the integrals above over the region $x>\epsilon/2$ are trivial to deal with.
On the other hand, by the proof of Proposition~\ref{deffunccusp}, we can also solve the Hamilton-Jacobi equation $|\frac{dx}{x}+d\omega|^2_g=1$ near $\bbar M$ with initial condition $\left. \omega\right|_{\bbar M}=\psi$.  From the symmetries of this equation, we see that $\omega$ has to have an even expansion in $x$ at $\bbar M$, 
$\omega\sim \sum_{j=0}^{\infty} \omega_{2j} x^{2j}.$
As in \cite[Lemma~3.6]{GMS}, putting this expansion back in the Hamilton-Jacobi equation, we compute that (the computation is local)
$$
      \omega_2= -\frac14 |\nabla \omega_0|^2_{h_0}, \quad \textrm{ with }\omega_0=\psi.
$$
On the other hand, the volume form of $g$ is given by ${\rm dvol}_g= v(x) {\rm dvol}_{h_0}\frac{dx}{x^3}$ with $v(x)= v_0+x^2v_2+\mc{O}(x^3)$ for $v_0=1$ and $v_2=-\frac14 \Scal_{h_0}.$  Hence, we compute just as in the proof of \cite[Lemma~{3.5}]{GMS} that 
\begin{equation}\label{Volomega}
\begin{aligned}
{\rm Vol}_R(X,\hat{h})&= {\rm Vol}_R(X) + \int_{M} \left(v_0\omega_2  +v_2\omega_0\right)\ {\rm dvol}_{h^{\rm hyp}} \\
        &=   {\rm Vol}_R(X) -\frac14 \int_{M} (|\nabla\omega_0|^2_{h^{\rm hyp}}-2\omega_0) \rm dvol_{h^{\rm hyp}}.   \end{aligned}   
\end{equation}
For \eqref{FPz=0}, the calculation is similar but one has to replace $v(x)$ by $v(x)\chi(x)$ in the reasoning, thus 
$v_0\omega_2$ and $v_2\omega_0$ become $v_0\chi_0\omega_2$ and 
$(v_2\chi_0+v_0\chi_2)\omega_0$.
\end{proof}

First we say that $(X,g^t)$ for $t\in(-t_0,t_0)$ is a smooth family of geometrically finite hyperbolic manifolds if $g:=g^0$ is a geometrically finite metric on $X$ with $j_1$ cusps of rank-$1$, represented by some disjoint curves 
$H=\cup_{j=1}^{j_1}H_j$  in the boundary $\pl\bbar{\bf{X}}$ of the compactification $\bbar{\bf{X}}$ 
as in Section \ref{geofinite}, $g^t$ is hyperbolic for all $t$
and there is a neighborhood $\mc{U}_j$ of $H_j$ in $\bbar{\bf{X}}$ such that $\rho^2g^t$ extends to a smooth 
metric on $\bbar{\bf{X}}\setminus \cup_{j}\mc{U}_j$ if $\rho$ is a boundary defining function of $\pl\bbar{\bf{X}}$, and there exists a smooth family of diffeomorphisms $\psi_j^t:\mc{U}_j\to \psi^t(\mc{U}_j)\subset \bbar{\hh_\zeta^2}\x (\rr/\demi\zz)_w$ such that for $\zeta=v+iu\in\hh^2$
\[ (\psi_j^t)_*g^t= \frac{du^2+dv^2+(u^2+v^2)dw^2}{u^2}.\]
For such a family of metrics, it is easy by extending $(\psi_j^t)^{-1}\circ \psi_j^0$ 
to $\bbar{\bf{X}}$ to construct a diffeomorphism $\theta^t$ of 
${\bf{X}}$ such that $\rho^2(\theta^t)^*g^t$ extend smoothly as a metric on 
$\bbar{X}=\bbar{\bf{X}}\setminus H$ and near $H_j$
\[(\psi^0_j)_*(\theta^t)^*g^t=\frac{du^2+dv^2+(u^2+v^2)dw^2}{u^2}.\] 
We can thus reduce the analysis to the family of metrics $(\theta^t)^*g^t$ 
with a cusp singularity at $H$, which we do now and to avoid heavy notation we write $g^t$ instead of $(\theta^t)^*g^t$. Denote by $h^t$ the hyperbolic metric in the conformal boundary of $(X,g^t)$, it is a smooth family in 
$t$ of hyperbolic metric with finite volume and cusps.
Proceeding as in the proof of Proposition~\ref{proprho0} and using $h^t$ as the representative of the conformal infinity of $g^t$, we can then solve the Hamilton-Jacobi equation 
$$
      \left| \frac{d\rho^t}{\rho^t}\right|_{g^t}=1, \quad \left.  (\rho^t)^2g^t\right|_{TM} =h^t
$$  
smoothly in $t$ to get a smooth family of boundary defining functions $\rho^t$ of $\bbar M$ in $\bbar{X}$.   As we have seen in the proof of Proposition~\ref{deffunccusp}, the gradient vector field 
$\nabla^{\bbar g^t}\rho^t$, where $\bbar g^t= (\rho^t)^2g^t$, will be defined and smooth in a neighborhood of $\bbar M$ in  $\bbar{X}_c$ and will be tangent to the cusp face $\cf$.  Integrating this vector field for each $t$ then gives a smooth family of collar neighborhood $\phi^t: \bbar M\times [0,\epsilon)_x \to \bbar{X}$ such that 
\begin{equation}\label{phi^tg^t}
        (\phi^t)^*g^t= \frac{dx^2+ h_0^t+x^2h_2^t+x^4h_4^t}{x^2}
\end{equation}
with $h^t_{2j}$ some smooth families (in $t$) of cusps symmetric tensors such that $h_0^t= h^t$.

\begin{theo}\label{variationVolR}
Let $(X,g^t)$ be a smooth family of geometrically finite hyperbolic metrics. 
Let $h^t$ be the unique hyperbolic representative of the conformal infinity of $g^t$ and $h_2^t$ the second fundamental form at $\pl \bbar{X}$ given by \eqref{phi^tg^t}. If ${\rm Vol}^t_R(X)$ denotes the renomalized volume of $(X,g^t)$, then 
$$
  \left. \pa_t {\rm Vol}_R^t(X) \right|_{t=0} = -\frac14 \int_{M} \langle \dot{h}_0, h_2 -h_0\rangle_{h_0} {\rm dvol}_{h_0},
$$
where $h_2= \left.  h_2^t\right|_{t=0}$, $h_0= h^t|_{t=0}$ and  the dot denotes a derivative in the $t$ variable evaluated at $t=0$.  
\label{vrv.2}\end{theo}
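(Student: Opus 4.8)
The plan is to differentiate the regularized volume directly, convert the resulting bulk integral into a boundary integral by exploiting the Einstein equation satisfied by each $g^t$, and then read off the answer from the Fefferman--Graham expansion \eqref{phi^tg^t}. I will first fix a gauge in which the geodesic defining function is frozen. Using the collar diffeomorphisms $\phi^t$ of \eqref{phi^tg^t}, extended to a smooth family of diffeomorphisms $\Phi^t$ of $\bbar{X}_c$ with $\Phi^0={\rm id}$ and $\Phi^t=\phi^t\circ(\phi^0)^{-1}$ near $\bbar M$, I replace $g^t$ by $(\Phi^t)^*g^t$; since both ${\rm Vol}_R$ and the construction of Proposition \ref{deffunccusp} are diffeomorphism invariant, this leaves ${\rm Vol}_R^t(X)$ unchanged, while the geodesic defining function $\rho^t$ now pulls back to a single $t$-independent function $x$, so that $\Omega_\epsilon:=\{x\ge\epsilon\}$ is a fixed region. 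Writing ${\rm Vol}_{g^t}(\Omega_\epsilon)=a_2^t\epsilon^{-2}+a_1^t\log\epsilon+a_0^t+\mc{O}(\epsilon)$ with ${\rm Vol}_R^t(X)=a_0^t$, the same local computation of the volume density as in Proposition \ref{vrv.1} (using $\Tr_{h^t}(h_2^t)=-\demi\Scal_{h^t}=1$) gives $a_2^t=\demi\,{\rm Vol}(M,h^t)$ and $a_1^t=-\demi\,{\rm Vol}(M,h^t)$. Because $h^t$ is hyperbolic on a fixed surface, Gauss--Bonnet forces ${\rm Vol}(M,h^t)=2\pi|\chi(M)|$, so $\pl_t a_2^t=\pl_t a_1^t=0$, and hence
\[
\pl_t{\rm Vol}_R^t(X)\big|_{t=0}=\lim_{\epsilon\to0}\int_{\Omega_\epsilon}\demi\Tr_{g}(\dot g)\,{\rm dvol}_{g},
\]
where $g=g^0$, $\dot g=\pl_t g^t|_{t=0}$, and I used $\pl_t{\rm dvol}_{g^t}=\demi\Tr_g(\dot g)\,{\rm dvol}_g$ together with the uniformity in $t$ of the finite expansion.

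\textbf{Reduction to a boundary integral.} The crucial point is that each $g^t$ is hyperbolic, so $\Scal_{g^t}\equiv -6$ and ${\rm Ric}_{g^t}=-2g^t$. Differentiating the linearization of the scalar curvature (Besse's convention, $\Delta_g=-\Tr\nabla^2$),
\[
0=\pl_t\Scal_{g^t}=-\Delta_g(\Tr_g\dot g)+\delta_g\delta_g\dot g-\langle\dot g,{\rm Ric}_g\rangle,
\]
and inserting $\langle\dot g,{\rm Ric}_g\rangle=-2\Tr_g\dot g$, one obtains the pointwise identity
\[
\demi\Tr_g(\dot g)=\tfrac14\big(\Delta_g u-\delta_g\delta_g\dot g\big),\qquad u:=\Tr_g(\dot g).
\]
The right-hand side is a divergence, so $\demi\Tr_g(\dot g)\,{\rm dvol}_g$ is exact, and the divergence theorem on $\Omega_\epsilon$ expresses the bulk integral as a flux through $\pl\Omega_\epsilon$, whose components in $\bbar{X}_c$ are the slice $\{x=\epsilon\}$ and a portion of the cusp face $\cf$. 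Thus the problem becomes the evaluation of boundary integrals of $\langle\nabla u,\nu\rangle$ and $\langle\delta_g\dot g,\nu\rangle$.

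\textbf{Vanishing of the cusp-face flux.} I would then show the $\cf$-contribution is zero. Near the cusp the model metric is \eqref{gellnu} with $\ell=0$, and by Proposition \ref{deffunccusp} the variation tensors $\dot h_0,\dot h_2,\dot h_4$ are cusp symmetric with $\pl_w$ decaying to infinite order at $\cf$; estimating the flux vector $W:=\nabla u-\delta_g\dot g$ and the induced boundary measure in these coordinates shows that the flux through $\cf$ tends to $0$. Combined with the finiteness ${\rm Vol}_g(\mc{V})<\infty$, this also legitimizes applying Stokes' theorem on the non-compact corner domain $\Omega_\epsilon$. This cusp-face analysis, absent in the convex co-compact treatments of Krasnov--Schlenker and Guillarmou--Moroianu, is the step I expect to be the main obstacle, both for controlling $W$ against the degenerating measure and for justifying the integration by parts.

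\textbf{Evaluation of the slice term.} Finally, with $g=x^{-2}(dx^2+h_x)$ and $h_x=h_0+x^2h_2+x^4h_4$, the inner unit normal is $\nu=-x\pl_x$ and $dS_\epsilon=\epsilon^{-2}\sqrt{\det h_\epsilon}\,dy$. Expanding $u=\Tr_{h_x}\dot h_x=u_0+x^2u_2+\mc{O}(x^4)$ with $u_0=\Tr_{h_0}\dot h_0$ and $u_2=\Tr_{h_0}\dot h_2-\langle h_2,\dot h_0\rangle_{h_0}$, and similarly expanding $\delta_g\dot g$, the powers of $\epsilon$ recombine so that the $\epsilon^{-2}$ and $\log\epsilon$ terms cancel, consistently with $\pl_t a_2=\pl_t a_1=0$, leaving a finite limit. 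Differentiating the constraints \eqref{constraint} in $t$ to relate $\dot h_2$ to $\dot h_0$ and integrating by parts over $M$, the remaining flux collapses to $-\tfrac14\int_{M}\langle\dot h_0,h_2-h_0\rangle_{h_0}\,{\rm dvol}_{h_0}$, as claimed; the discrepancy $\demi\Tr_{h_0}\dot h_0$ with the form $-\tfrac14\int_M\langle\dot h_0,h_2^0\rangle$ of Theorem \ref{thintro:2} integrates to zero, again by constancy of ${\rm Vol}(M,h^t)$.
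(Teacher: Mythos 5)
Your proposal is correct and follows essentially the same route as the paper: after gauge-fixing with the collar diffeomorphisms, the paper also differentiates the volume of the truncated region, uses the linearization of the scalar curvature for the hyperbolic family to write $\Tr_g(\dot g)$ as the divergence $\Delta_g\Tr_g(\dot g)+d^*\delta^g(\dot g)$, integrates by parts while checking (via the $\mc{O}(\la^2)$ area bound on slices of the cusp face and uniform bounds on the flux components) that the cusp face contributes nothing, and then evaluates the resulting boundary term at $\{\rho=\delta\}$ through the Fefferman--Graham expansion. The only cosmetic difference is that the paper packages the intermediate boundary flux as a Schl\"afli-type formula $\frac12\int_{\rho=\delta}(\dot H+\frac12\langle\dot g,\II\rangle_g)$ following \cite{GMS}, which is the same quantity you compute directly.
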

\begin{proof} The proof is very similar to the proof of \cite[Theorem 5.3]{GMS} and is based on Schl\"afli formula, but here one has to be careful about the degeneracy near the cusps to perform the argument. Like in \cite[Theorem 5.3]{GMS}, we can pull-back $g^t$ (using an extension of $(\phi^t)^{-1}\circ \phi^0$) by a family of diffeomorphisms of $\bbar{X}_c$ which is the Identity outside a neighborhood of $\bbar{M}$ so that the new metric, is isometric to the right hand side of \eqref{phi^tg^t} near $\bbar{M}$ via the diffeomorphism $\phi:=\phi^0$ that is independent of $t$.
For $\delta\in (0,\delta_0)$, consider the region $V_{\delta}:= \phi(\bbar M\times [0,\delta))\subset \bbar{X}$. 
Then, as in the proof of Proposition~\ref{deffunccusp}, $\bbar{X}\setminus V_{\delta}$ is of finite volume with respect to $g^t$, and we claim that  
\begin{equation}
       \left.  \pa_t {\rm Vol}(\bbar{X}\setminus V_{\delta}, g^t)\right|_{t=0} = \frac12 \int_{\rho=\delta } \left(  \dot H + \frac12 \langle \dot{g}, \II  \rangle_{g} \right) {\rm dvol}_{g},
\label{sch.2}\end{equation}
where $H^t$ is the mean curvature of $\phi(\bbar M\times \{\delta\})$ with respect to the metric $g^t$, $\II^t$ is its second fundamental form and $g:=g^t|_{t=0}$.  The proof of \eqref{sch.2} is then the same as the one of \cite[Lemma~{5.1}]{GMS}: using the variation formula for the scalar curvature, we find 
\[ -4\pl_t{\rm Vol}(\bbar{X}\setminus V_{\delta},g^t)=\int_{X}(\Delta_g{\rm Tr}_g(\dot{g})+
d^*\delta^g(\dot{g})){\rm dvol}_g\]
and the integration by parts of $\Delta_g \Tr_g(\dot g)$ and $d^* \delta^g \dot g$ can be done but 
there could possibly be a new contribution coming from the cusp face $\cf$ in the compactification 
$\bbar{X}_c$ of $\bbar{X}$ ($\cf\cap\{\rho=\delta\}$ corresponds to the cusp point at infinity of the Riemann surface $\{\rho=\delta\}$). In order to analyze this, we can apply Green's formula on 
$\{R\geq \la,\rho\geq \delta\}$ where $R$ is a boundary defining function of $\cf$.
If $\phi': \cf \times [0,1)\to \bbar{X}$ is a collar neighborhood of the cusp face in $\bbar{X}$, we know from the local form \eqref{Uj'} of $g^t$ that 
$$
         {\rm Area}(\phi'(\cf\times \{\la\}))\cap (\bbar{X}\setminus V_{\delta}))= \mathcal{O}(\lambda^2).
$$
It is direct to check (using \eqref{gellinv} with $\nu=0$) that $\pl_R\Tr_g(\dot g)$ and $\delta^g (\dot g)(\pl_R)$ are uniformly bounded in $\lambda$ on $\phi'(\cf\times \{\lambda\}))\cap (\bbar{X}\setminus V_{\delta})$, where $\pl_R$ is the unit normal vector to $\phi'(\cf\times \{\lambda\}))$ with respect to $g$, this means that there is in fact no contribution coming from the cusp face when we take the limit $\lambda\searrow 0$.  Thus, when we integrate by parts, we obtain the same formula as in \cite{GMS} and \eqref{sch.2} follows.   
\end{proof}

\subsection{A K\"ahler potential for Weil-Petersson metric}
 
Consider the quasi-Fuchsian space associated to Riemann surfaces with $n$-cusps.  
For each pair $(M,h_-)$ and $(M,h_+)$ of hyperbolic surfaces of finite volume with $n$ cusps,  denoting $h:=(h_-,h_+)$, there exists a unique (up to diffeomorphism) complete 
hyperbolic metric $g=g_h$ on the cylinder $X:=\rr_t\x M$, which is realized as a quotient $\Gamma\backslash\hh^3$  for $\Gamma\subset {\rm PSL}_2(\cc)$ a quasi-Fuchsian group.
The quasi-Fuchsian space is identified with $\mc{T}(M)\x \mc{T}(M)$ 
where $\mc{T}(M)$ is the Teichm\"uller space of $M$.
Fixing $h_-=h_0$, the map $h_+\mapsto g_h$ provides an embedding of $\mc{T}(M)$ into the quasi-Fuchsian deformation space, called Bers embedding,  
and we view the renormalized volume as a function on $\mc{T}(M)$: $V_{h_0}: h_+\mapsto {\rm Vol}_R(X,g_{h})$.

\begin{proof}[Proof of Theorem \ref{Kahlerpot}] 
First, we notice that applying the proof of Proposition 7.1 in \cite{GMS} mutatis mutandis, we can compute the Hessian of the renormalized volume at the Fuchsian locus $h_+=h_-$ :
\begin{equation}\label{calculHessien}
{\rm Hess}_{h_0}(V_{h_0})(k)=\frac{1}{8}\int_{M}|k|^2_{h_0}{\rm dvol}_{h_0}=
\frac{1}{8}|k|^2_{\rm WP}.\end{equation}
By Theorem \ref{variationVolR} and 
the form $x^{-2}(dx^2+(1+\frac{x^2}{2})^2h_0)$  of the quasi-Fuchsian metric for $h_-=h_+=h_0$, the metric $h_+=h_0$ is a critical point of $V_{h_0}$ and a direct computation (as in \cite[Section 8]{KrSc}) shows that for $k_1,k_2\in T\mc{T}_{h_0}(M)$ 
\[
\bbar{\pl} \pl V_{h_0}(h_0).(k_1,k_2)=\frac{i}{4}\Big({\rm Hess}_{h_0}(V_{h_0})(Jk_1,k_2)-{\rm Hess}_{h_0}(V_{h_0})(k_1,Jk_2)\Big)
\] 
if $J$ is the complex structure on $T\mc{T}(M)$.  Combining with \eqref{calculHessien}, we obtain that
$\bbar{\pl} \pl V_{h_0}(h_0)=\frac{i}{16}\omega_{\rm WP}(h_0)$. 

To obtain the final result we need to show 
that $\bbar{\pl} \pl V_{h_-}(h_0)$ does not depend on $h_-$. This follows from quasi-Fuchsian reciprocity  like in \cite[Proposition 8.9]{KrSc}.  For the convenience of the reader,  we briefly repeat  the argument. 
 Thus, consider the Lie derivative $\mc{L}_{Y_-}(d\pl V_{h_-}(h_+))$ with respect to the variable $h_-$, 
where $Y_-$ is a vector field on $\mc{T}(M)$ and $h_+$ is fixed.   To prove it vanishes, 
it then suffices to show that $\mc{L}_{Y_-}(\pl V_{h_-}(h_+))$ is the exterior derivative of a function of $h_+$.
We claim that $\mc{L}_{Y_-}(\pl V_{h_-}(h_+))=dF_{(h_-,Y_-)}(h_+)$ where $F_{(h_-,Y_-)}$ is the function on 
$\mc{T}(M)$ defined by 
$$
F_{(h_-,Y_-)}(h_+)= \pl {\rm Vol}_R(h_-,h_+).(Y_-,0).
$$
To prove this, we will need to use the Bers embedding of Teichm\"uller space and especially its relation to ${\rm Vol}_R$.
The universal cover of $M$ is the upper half plane $U=\hh^2\subset \cc$ and (after composing by an isometry) the metric $h_+$ lifts to the hyperbolic metric $g_{\hh^2}=\frac{|dz|^2}{{\rm Im}(z)^2}$. The covering map is denoted $\pi: \hh^2\to M$, the fundamental group $\pi_1(M)$ is represented by a Fuchsian co-compact group $\Gamma\subset {\rm PSL}_2(\rr)$.
The metric $h_-$ on $M$ lifts by $\pi$ to a metric $\til{h}_-$ on $\hh^2$ which is $\Gamma$-invariant 
and of curvature $-1$. Using the map $z\mapsto \bar{z}$, we can equip the lower half-plane $L=\{{\rm Im}(z)<0\}\subset \cc$ with the metric $\til{h}_-$ (the orientation of $h_-$ is then reversed), 
and this metric can be written as $\til{h}_-=a(z)|dz+\mu d\bar{z}|^2$ for some smooth $a>0$ and some complex valued Beltrami coefficient $\mu$ with $|\mu|<1$. Extend $\mu$ by $0$ on $U$, then by Ahlfors-Bers result, 
there  is a unique quasiconformal map $f:\cc\to \cc$ which satisfies 
\[ \pl_{\bar{z}}f=\mu\pl_z f\]
and $f$ fixes the points $0,1,\infty$.
Notice that $f$ is a conformal map from $(\cc,|dz+\mu d\bar{z}|^2)$ to 
$(\cc,|dz|^2)$, and thus $f$ is holomorphic in $U$. The Bers embedding is the map 
\[ \Theta_{h_+}: h_- \mapsto S(f|_{U}) ,\quad  S(f)=\Big( \pl_z\Big(\frac{\pl_z^2f}{\pl_zf}\Big)-\demi \Big(\frac{\pl_z^2f}{\pl_zf}\Big)^2\Big)dz^2.\] 
where $S$ is the Schwarzian derivative. The element $S(f)$ is a holomorphic quadratic differential with respect to the complex structure of $h_+$ on $U$, and which is $\Gamma$-equivariant, thus descends to an element 
in $(T^*_{h_+}\mc{T})^{1,0}$ if $\mc{T}$ denotes Teichm\"uller space of $M$. The Bers map $\Theta_{h_+}$ is holomorphic as a map $\mc{T}\to (T_{h_+}^*\mc{T})^{1,0}$.
The group $\Gamma':=f\Gamma f^{-1}$ is a quasi-Fuchsian subgroup of ${\rm PSL}_2(\cc)$.
Let $J=f^{-1}$ where now we consider $f: U\to \Omega_+$. Here $\Omega_+$ is the upper component of the domain of discontinuity of $\Gamma'$ on $\cc$ and $\Gamma'\backslash \Omega_+$ equipped with the complex structure induced by $\cc$ is conformal to $(M,h_+)$ by $f$. One has moreover 
$J^*g_{\hh^2}=e^{\phi}|dz|^2$ for some smooth Liouville field $\phi$ on $\Omega_+$, $\Gamma'$-equivariant, and $e^{\phi}|dz|^2$ is a hyperbolic metric on $\Omega_+$. Thus 
\[ \pl_z\pl_{\bar{z}}\phi=\demi e^{\phi}.\] 
We have also $J^*S(J)=-S(f)$ and we would like to express $S(f)$ in terms of the Liouville field $\phi$.
Remark that 
\[ |\pl_z J|^2=e^{\phi}({\rm Im}(J(z)))^2 \]
thus 
\[ \pl_z \phi= \frac{\pl_z^2J}{\pl_zJ}+i\frac{\pl_zJ}{{\rm Im}J}.\]
Now we compute 
\begin{equation}\label{SJ} 
\pl_z^2\phi -\demi (\pl_z\phi)^2= S(J)+i\frac{\pl_z^2J}{{\rm Im}J}-\demi \frac{(\pl_zJ)^2}{({\rm Im}J)^2}-
\demi\Big(i\frac{\pl_zJ}{{\rm Im}J}\Big)^2-i\frac{\pl_z^2J}{\pl_zJ}.\frac{\pl_zJ}{{\rm Im}J}=S(J)
\end{equation}
and thus $\Theta_{h_+}(h_-)=-J^*((\pl_z^2\phi -\demi (\pl_z\phi)^2)dz^2)$. Next we can use Epstein description of the equidistant foliation in \cite{Ep}, combined with Theorem \ref{variationVolR}, which show that 
$\pl V_{h_-}(h_+)= \Theta_{h_+}$: indeed we lift the quasi-Fuchsian hyperbolic metric to $\hh^3$ 
(in the half-space model $\hh^3=(0,\infty)_x\x\cc_z$), the geodesic boundary defining function $\rho$ associated to 
$h_+$ and the equidistant foliation given by the level sets $\{\rho={\rm const}\}$ also lifts to $\hh^3$, the lift of the boundary metric $h_+$ is given by $e^{\phi}dz^2$ on the domain of discontinuity $\Omega_+\subset \cc=\pl \hh^3$, 
and \cite[formula (5.5)]{Ep} gives near $\Omega_+$ as $\rho\to 0$
\[ g_{\hh^3}= \frac{d\rho^2 + e^{\phi}|dz|^2 +({\rm Re}((\pl_z^2\phi -\demi (\pl_z\phi)^2)dz^2)+\pl_z\pl_{\bar{z}}\phi |dz|^2)\rho^2+\mc{O}(\rho^4)}{\rho^2}.\]
This implies that $d V_{h_{-}}(h_{+})=\frac{1}{4} \Re(\Theta_{h_+}(h_-))$ and thus we obtain 
\[\pl V_{h_-}(h_+)=\frac{1}{4}\Theta_{h_+}(h_-)\] 
by using \eqref{SJ}. The same holds by reversing the r\^ole of $h_-$ and $h_+$.
On the other hand, if $\Theta(h_-,h_+):=\Theta_{h_+}(h_-)$, one has for any section $Y_\pm$  of $T\mc{T}(M)$ (here $Y_\pm$ depends only on the $h_\pm$ variable),
\begin{equation}
\begin{split} 
\Re\cjg\mc{L}_{Y_-}\Theta_{h_+},Y_+\cjd =&\Re (\mc{L}_{(Y_-,0)}\Theta)_{(h_-,h_+)}(0,Y_+)=  4\mc{L}_{(Y_-,0)}d{\rm Vol}_R(h_-,h_+).(0,Y_+)   \\
=& 4\nabla^2 {\rm Vol}_R(h_-,h_+).((Y_-,0),(0,Y_+))  
\\
  =& 4\mc{L}_{Y_+}(d{\rm Vol}_R(h_-,h_+).(Y_-,0)) \\
  =& \Re\cjg\mc{L}_{Y_+}\Theta_{h_-},Y_-\cjd. 
  \end{split}
  \label{qfr.1}\end{equation} 
Since $\Theta_{h_{\pm}}$ is a family of holomorphic differentials on $\mc{T}(M)$ that depends holomorphically on $h_{\pm}$, we see that \eqref{qfr.1} in fact implies the quasi-Fuchsian reciprocity
\begin{equation}
  \cjg\mc{L}_{Y_-}\Theta_{h_+},Y_+\cjd  = \cjg\mc{L}_{Y_+}\Theta_{h_-},Y_-\cjd.
  \label{qfr.2}\end{equation}
Coming back to the renormalized volume, this finally yields
\begin{equation}
\begin{split} 
4\cjg \mc{L}_{Y_-}(\pl V_{h_-}(h_+)),Y_+\cjd =& \cjg\mc{L}_{Y_-}\Theta_+,Y_+\cjd  
= \cjg\mc{L}_{Y_+}\Theta_-,Y_-\cjd  \\
  =& 4\cjg \mc{L}_{Y_+}(\pl V_{h_+}(h_-)),Y_-\cjd \\
  =& 4\cjg dF_{(h_-,Y_-)}(h_+), Y_+ \cjd
  \end{split}
  \label{qfr.2}\end{equation} 
  as claimed.  
\end{proof}   
  
\section{Limit of the renormalized volume under the formation of a rank-1 cusp}

We consider an admissible 
degeneration of convex co-compact hyperbolic metrics $g_\eps$ on a 
manifold $X$ in the sense of Definition \ref{admisdeg}; $X$ is thus the interior of a smooth compact manifold $\bbar{{\bf X}}$ with boundary $N:=\pl\bbar{{\bf X}}$, with degenerating curve $H=\cup_{j=1}^{j_1} H_j\subset N$ 
and $\bbar{X}=\bbar{{\bf X}}\setminus H$. Recall that $\bbar{X}_c$ is the smooth manifold with corners obtained by blowing-up $H$ in $\bbar{{\bf X}}$, with boundary faces $\bbar{M}$ and $\cf$, see Section \ref{Sec:uniform}.
The goal of this Section is to show 
\begin{theo}\label{mainth}
Let $g_\eps$ be an admissible 
degeneration of convex co-compact hyperbolic metrics on $X$  in the sense of Definition \ref{admisdeg}, with limiting geometrically finite hyperbolic metric $g_0$. Then 
\[ \lim_{\eps\to 0}{\rm Vol}_R(X,g_\eps)={\rm Vol}_R(X,g_0).\] 
\end{theo}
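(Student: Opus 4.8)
The plan is to use the variation formula of Proposition \ref{vrv.1} --- which is available both for the convex co-compact metrics $g_\eps$ ($\eps>0$) and for the limiting cusped metric $g_0$ --- in order to peel off the uniformisation factor, and then to establish convergence of the remaining purely geometric quantity by means of the explicit model of Section \ref{analysis model}. First I would fix the boundary defining function $\rho_0$ equal to $u/\sqrt{u^2+v^2}$ in each cusp model, so that $h_\eps:=(\rho_0^2g_\eps)|_{TM}$ is hyperbolic near $H$ by Lemma \ref{lem:hyperb} and \eqref{heps}, with uniformisation factor $\varphi_\eps$ and $h_\eps^{\rm hyp}=e^{2\varphi_\eps}h_\eps$. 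Applying Proposition \ref{vrv.1} with $\psi=-\varphi_\eps$ (which lies in $\mc{C}^\infty_r(\bbar M)$ for $\eps=0$ by Proposition \ref{uniform}, and reduces to the standard convex co-compact formula for $\eps>0$), and using that in dimension two the Dirichlet energy $|d\varphi_\eps|^2_h\,{\rm dvol}_h$ is conformally invariant, one gets for every $\eps\ge0$
\[
{\rm Vol}_R(X,g_\eps)={\rm Vol}_R(X,g_\eps;h_\eps)+\tfrac14\int_M|d\varphi_\eps|^2_{h_\eps}\,{\rm dvol}_{h_\eps}+\tfrac12\int_M\varphi_\eps\,{\rm dvol}_{h_\eps^{\rm hyp}}.
\]
The first correction integral converges to its $\eps=0$ value by Corollary \ref{cor:limitenergy}. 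The second converges too: by Gauss--Bonnet the total area $\int_M{\rm dvol}_{h_\eps^{\rm hyp}}=2\pi|\chi(M)|$ is independent of $\eps$, while $\varphi_\eps\to\varphi_0$ uniformly by Proposition \ref{cf.1} and $\varphi_\eps$ is uniformly small on the thin part near $H$ whose $h_\eps^{\rm hyp}$-area stays bounded; splitting $M$ into a fixed compact piece and the thin collars then gives convergence. It therefore remains to prove ${\rm Vol}_R(X,g_\eps;h_\eps)\to{\rm Vol}_R(X,g_0;h_0)$, where ${\rm Vol}_R(X,g_\eps;h_\eps)={\rm FP}_{z=0}\int_X\hat\rho_\eps^z\,{\rm dvol}_{g_\eps}$ and $\hat\rho_\eps$ is the geodesic boundary defining function associated with $h_\eps$.

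For this, I would split $X=(X\setminus\mc{U})\cup\mc{U}$ with $\mc{U}=\bigcup_j\mc{U}_j$ a neighbourhood of $H$ of fixed size, and correspondingly
\[
\int_X\hat\rho_\eps^z\,{\rm dvol}_{g_\eps}=\int_{X\setminus\mc{U}}\hat\rho_\eps^z\,{\rm dvol}_{g_\eps}+\sum_j\int_{\mc{U}_j}\hat\rho_\eps^z\,{\rm dvol}_{g_\eps}.
\]
On $X\setminus\mc{U}$ the metrics converge smoothly ($\rho^2g_\eps\to\rho^2g_0$ in all $\mc{C}^k$-norms by Assumption 2 of Definition \ref{admisdeg}), and by Lemma \ref{lemhatomega} the factor $\hat\rho_\eps=e^{\hat\omega_\eps}\rho_0$ converges with all derivatives near the conformal boundary $M\setminus H$. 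Since the only pole of the meromorphic continuation of this integral comes from the boundary expansion \eqref{mai.1}, whose coefficients converge, the finite part at $z=0$ of $\int_{X\setminus\mc{U}}\hat\rho_\eps^z\,{\rm dvol}_{g_\eps}$ converges to its $g_0$-value.

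The remaining and main difficulty is the contribution of the cusp neighbourhoods $\mc{U}_j$, where a closed geodesic of length $\sim\ell_j(\eps)$ is being pinched. Here I would work on the blown-up space $\bbar{\mc{U}}_\ell$ of Section \ref{hamilton}, on which the metric coefficients \eqref{gellnu}, the function $U=u/R$, and --- by Proposition \ref{proprho0} and Corollary \ref{hxsmooth} --- the quotient $\hat\rho_\eps/U$ are all smooth uniformly down to $\ell=0$. Writing $\mc{U}_j=\mc{V}_\eps\cup(\mc{U}_j\setminus\mc{V}_\eps)$ with $\mc{V}_\eps$ the finite-volume region not reached by the equidistant foliation, the plan is: (i) to show ${\rm Vol}_{g_\eps}(\mc{V}_\eps)\to{\rm Vol}_{g_0}(\mc{V})$ by dominated convergence, the uniform integrability coming from the explicit volume form $\frac{u^2+v^2}{u^3}\,du\,dv\,dw$ and the uniform bound \eqref{intsurV}; and (ii) to show that on the foliated part ${\rm FP}_{z=0}\int_{\mc{U}_j\setminus\mc{V}_\eps}\hat\rho_\eps^z\,{\rm dvol}_{g_\eps}$ depends continuously on $\ell$ down to $\ell=0$, by Taylor expanding $\hat\rho_\eps/U$ at $U=0$ in the coordinates $(U,R,w)$ (where ${\rm dvol}_{g_\eps}=dU\,dR\,dw/(U^3\sqrt{1-U^2})$) and invoking the smooth dependence of the collar expansion coefficients $\hat h_{0,\eps},\hat h_{2,\eps}$ on the parameter, as in Proposition \ref{model2} and Corollary \ref{hxsmooth}. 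The point that has to be checked most carefully --- and which I expect to be the principal obstacle --- is that the finite-part regularisation commutes with the degeneration $\ell\to0$, i.e.\ that the pole of the meromorphic continuation at $z=0$ and its residue vary continuously in $\ell$ uniformly across the pinching region; this is exactly what the uniform (polyhomogeneous) smooth structure on $\bbar{\mc{U}}_\ell$ is designed to deliver. Combining the three regions gives ${\rm Vol}_R(X,g_\eps;h_\eps)\to{\rm Vol}_R(X,g_0;h_0)$, and together with the reduction of the first step this yields $\lim_{\eps\to0}{\rm Vol}_R(X,g_\eps)={\rm Vol}_R(X,g_0)$.
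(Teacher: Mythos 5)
Your opening reduction is valid and is genuinely different from the paper's organization: by applying Proposition \ref{vrv.1} with $\psi=-\varphi_\eps$ you peel off the uniformisation factor once and for all, so that the remaining quantity ${\rm Vol}_R(X,g_\eps;h_\eps)$ is computed with a geodesic boundary defining function whose boundary data near the pinching curve is an explicit function (essentially the constant $\tfrac12\log(1+\nu^2)$ of Lemma \ref{lem:hyperb}) rather than the uniformising factor $\varphi_\eps$. The paper instead keeps the hyperbolic representative throughout and absorbs $\varphi_\eps$ into the cusp-region computation through the expansion coefficients $a_0=\varphi_\ell$ and $a_2$ of Proposition \ref{bdf.19a}, controlling them with Proposition \ref{cf.1} and Corollary \ref{cor:limitenergy}; in your route those two results are still needed, but only for the two global correction integrals, whose convergence you handle correctly (the Dirichlet term by Corollary \ref{cor:limitenergy}, the $\int\varphi_\eps\,{\rm dvol}_{h^{\rm hyp}_\eps}$ term by Gauss--Bonnet and uniform convergence). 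The treatment away from the cusp is essentially Proposition \ref{limoutsidecusp}.

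The gap is in the cusp region, which is where the actual content of the theorem lies, and the mechanism you propose there would not work as stated. You want to Taylor expand $\hat\rho_\eps/U$ at $U=0$ in the single chart $(U,R,w)$ and invoke "continuity of the pole and residue in $\ell$". Two concrete obstructions: first, for $\ell>0$ the volume form in these coordinates is $dU\,dR\,dw/(U^3\sqrt{1-U^2-\ell^2/R^2})$, not $dU\,dR\,dw/(U^3\sqrt{1-U^2})$, and it degenerates on $\{v=0\}$, i.e.\ exactly over the meridian of the pinching geodesic; second, the order-two coefficient $a_2$ of Proposition \ref{bdf.19a} contains $\ell^2/(v^2+\ell^2)$, which is bounded but has no limit at $(v,\ell)=(0,0)$ downstairs, so the finite part is an integral of coefficients that are only continuous on the blown-up space over regions whose shape changes with $\ell$. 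Whether $\lim_{\ell\to 0}$ commutes with ${\rm FP}_{z=0}$ is therefore not an abstract continuity statement: it has to be verified separately in the three regimes $u\gtrsim\ell$, $|v|\lesssim\ell$ and $\ell\lesssim|v|\le\delta$, each with its own projective coordinates (the regions $R_1$, $R_2$, $R_3^{\pm}$ of the paper's Proposition \ref{nearthecusp}). In particular the contribution of the deep pinching region $|v|\le\ell$ does not converge to "its value at $\ell=0$" by continuity --- that region collapses to the corner, and one must show by explicit computation that its three pieces $A_1(\ell),A_2(\ell),A_3(\ell)$ all tend to zero. Your reduction does simplify this computation (the $\varphi_\ell$ and $|d\varphi_\ell|^2$ terms in $A_2$ and $I_2$ become explicit), but the region-by-region evaluation of the finite parts and their limits still has to be carried out; as written, the proposal only names the difficulty without resolving it.
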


\subsection{Limit far from the cusp}
First we describe the limit of the renormalized volume of the part far from the cusp, that in a fixed compact 
region $\mc{K}\subset \bbar{X}$. 
\begin{prop}\label{limoutsidecusp}
Let $\rho_\eps\in\mc{C}^\infty(\bbar{{\bf X}})$ be a geodesic boundary defining function such that $h_\eps:=
(\rho_\eps^2g_\eps)|_{N}$ is the unique hyperbolic metric in the conformal boundary 
($\rho_\eps$ is uniquely defined near $N$). 
Let $\rho_0\in \mc{C}^\infty(\bbar{X}_c)$ be a geodesic boundary defining function of $\bbar{M}$ of Proposition \ref{deffunccusp} 
with $h_0:=(\rho_0^2g_0)|_{TM}$ being the unique finite volume hyperbolic metric in the conformal boundary ($\rho_0$ is uniquely defined near $\bbar{M}$). 
Let $\theta_\eps$ be a family of smooth functions on  $\bbar{{\bf X}}$ vanishing in a uniform neighborhood of the degenerating curve $H$ and converging in all $\mc{C}^k$-norms to $\theta$. The following limit holds 
\[ \lim_{\eps\to 0}\Big({\rm FP}_{z=0} \int_{X}\theta_\eps \rho_\eps^{z}\, {\rm dvol}_{g_\eps}\Big)= 
{\rm FP}_{z=0} \int_{X}\theta \rho_0^{z}\, {\rm dvol}_{g_0}.\]
\end{prop}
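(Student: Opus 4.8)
The plan is to exploit that $\theta_\eps$ vanishes in a fixed neighborhood of the degenerating curve $H$, so that the entire computation is confined to a region where the metrics converge smoothly, and only the meromorphy coming from the boundary $\bbar M$ requires attention. Write $H_\eps(z):=\int_X \theta_\eps \rho_\eps^z\,{\rm dvol}_{g_\eps}$. Since $\theta_\eps$ vanishes near $H$, the set $\supp(\theta_\eps)$ is a compact subset of $\bbar{X}=\bbar{\bf X}\setminus H$, and $\supp(\theta_\eps)\cap M$ is a compact set $M'\subset M$ staying away from the cusps of $M$. On a fixed compact neighborhood $\mc{K}\subset\bbar X$ of this support, Assumption 2 of Definition \ref{admisdeg} gives $\rho^2 g_\eps\to\rho^2 g_0$ in every $\mc{C}^k$-norm for any fixed boundary defining function $\rho$; in particular ${\rm dvol}_{g_\eps}\to{\rm dvol}_{g_0}$ and $\theta_\eps\to\theta$ smoothly there. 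I would then fix a small $\delta>0$ and split $H_\eps(z)=A_\eps(z)+B_\eps(z)$, with $A_\eps$ the integral over $\{\rho_\eps>\delta\}$ and $B_\eps$ the integral over $\{\rho_\eps<\delta\}$.

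The technical heart is the convergence $\rho_\eps\to\rho_0$ in $\mc{C}^k$-norms on a collar of $M'$. First, the uniformising factors $\varphi_\eps$ (with $h_\eps^{\rm hyp}=e^{2\varphi_\eps}h_\eps$) must be shown to converge: Proposition \ref{cf.1} gives $\varphi_\eps\to\varphi_0$ only in $\mc{C}^0$, but since $h_\eps\to h_0$ smoothly on $M'$, the Liouville equation $2\Delta_{h_\eps}\varphi_\eps=-2e^{2\varphi_\eps}-R_{h_\eps}$ lets me bootstrap this to $\mc{C}^\infty$-convergence on $M'$ by interior Schauder estimates. Then, exactly as in Lemma \ref{lemhatomega} and the discussion of Section \ref{outsidecusp}, $\rho_\eps$ solves a non-characteristic Hamilton--Jacobi equation whose coefficients ($\bar g_\eps=\rho^2 g_\eps$) and whose boundary datum ($\varphi_\eps$) now converge smoothly on $\mc{K}$; by smooth dependence of the characteristics on these data the solution $\rho_\eps$ converges to $\rho_0$ in $\mc{C}^k$ on a collar of $M'$ of uniform width. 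This is the step I expect to require the most care, precisely because Proposition \ref{cf.1} furnishes only $\mc{C}^0$ control a priori, so the elliptic bootstrap away from the cusps is essential to upgrade it.

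With these convergences in hand, the finite part passes to the limit. The function $A_\eps(z)$ is entire, and $A_\eps(0)=\int_{\rho_\eps>\delta}\theta_\eps\,{\rm dvol}_{g_\eps}\to\int_{\rho_0>\delta}\theta\,{\rm dvol}_{g_0}=A_0(0)$ by dominated convergence on the fixed compact support of $\theta_\eps$, using $\rho_\eps\to\rho_0$, ${\rm dvol}_{g_\eps}\to{\rm dvol}_{g_0}$, and that the characteristic functions of $\{\rho_\eps>\delta\}$ converge almost everywhere for almost every choice of $\delta$. For $B_\eps$, I would use the product coordinates of Proposition \ref{deffunccusp}, in which $\phi_\eps^*g_\eps=x^{-2}(dx^2+\hat h_0^\eps+x^2\hat h_2^\eps+\tfrac14 x^4\hat h_4^\eps)$ with $x=\rho_\eps$ and ${\rm dvol}_{g_\eps}=x^{-3}v_\eps(x,\cdot)\,dx\,{\rm dvol}_{h_\eps}$, $v_\eps$ polynomial in $x$. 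Expanding $\theta_\eps$ in powers of $x$ and integrating over $(0,\delta)$ shows that $B_\eps(z)$ is meromorphic with at most a simple pole at $z=0$ (only the coefficient of $x^2$ in $\theta_\eps v_\eps$ contributes to it), and that ${\rm FP}_{z=0}B_\eps$ is an explicit integral over $M'$ of that coefficient together with a $\delta$-dependent boundary term; all these data converge as $\eps\to 0$ by the previous paragraph, so ${\rm FP}_{z=0}B_\eps\to{\rm FP}_{z=0}B_0$. Since $A_\eps$ is holomorphic and $B_\eps$ has at most a simple pole at $z=0$, the finite part is additive, ${\rm FP}_{z=0}H_\eps={\rm FP}_{z=0}A_\eps+{\rm FP}_{z=0}B_\eps$; letting $\eps\to0$ with $\delta$ fixed, and using that the total ${\rm FP}_{z=0}H_\eps$ is independent of $\delta$, yields the claimed limit.
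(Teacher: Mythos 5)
Your argument is correct, but it takes a genuinely different route from the paper at the key technical step. The paper never upgrades the convergence $\varphi_\eps\to\varphi_0$ beyond what Proposition \ref{cf.1} and Corollary \ref{cor:limitenergy} supply, namely $\mc{C}^0$ convergence plus convergence of the Dirichlet energy. Instead it factors the geodesic defining function as $\rho_\eps=e^{\omega_\eps}\hat\rho_\eps$, where $\hat\rho_\eps=e^{\hat\omega_\eps}\rho_0$ is the geodesic defining function with \emph{trivial} boundary datum, so that Lemma \ref{lemhatomega} gives $\hat\omega_\eps\to0$ in all $\mc{C}^k$ without any uniformization entering; it then proves convergence of ${\rm FP}_{z=0}\int\theta_\eps\hat\rho_\eps^{\,z}\,{\rm dvol}_{g_\eps}$ by the elementary expansion $e^{z\hat\omega_\eps}=1+z\hat\omega_\eps+z^2F_\eps$, and finally passes from $\hat\rho_\eps$ to $\rho_\eps$ via the conformal-change formula \eqref{FPz=0} of Proposition \ref{vrv.1}, whose correction term is a boundary integral involving only $\varphi_\eps$, $|d\varphi_\eps|^2_{h_\eps}$ and ${\rm Scal}_{h_\eps}$ --- exactly the quantities controlled by Proposition \ref{cf.1} and Corollary \ref{cor:limitenergy}. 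You instead bootstrap $\varphi_\eps\to\varphi_0$ to $\mc{C}^\infty$ convergence on compact subsets of $M$ away from the cusps via interior Schauder estimates for the Liouville equation (legitimate there, since $h_\eps\to h_0$ smoothly and the uniform $\mc{C}^0$ bound pins down the limit along subsequences), deduce smooth convergence of $\rho_\eps$ on a collar of $\supp\theta\cap M$, and pass to the limit directly in the finite-part computation. Your approach buys a more self-contained and elementary proof of this particular proposition; the paper's approach buys uniformity of method, since only the $\mc{C}^0$/energy inputs survive near the degenerating curve and the same anomaly-formula mechanism is reused essentially verbatim in Proposition \ref{nearthecusp}, where an interior bootstrap would not give uniform control. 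One point worth making explicit in your write-up: the splitting $H_\eps=A_\eps+B_\eps$ and the convergence of $A_\eps(0)$ presuppose that the non-canonical interior extensions of $\rho_\eps$ are chosen convergently, which is harmless because the finite part does not depend on them.
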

\begin{proof} Let $\mc{K}$ be a compact neighborhood of $\supp \theta$. First, we can write 
${\rm dvol}_{g_\eps}=e^{G_\eps}{\rm dvol}_{g_0}$ for some smooth function $G_\eps$ converging to $0$ in $\mc{C}^\infty(\mc{K})$.
We use the notations of Section \ref{outsidecusp}:  the geodesic boundary defining function 
$\hat{\rho}_\eps$ in $\mc{K}$ is defined by \eqref{hamjab0}. Then we get 
\begin{equation}\label{intonK0}
\begin{gathered}
\int_{X}\theta_\eps\hat{\rho}_\eps^{z}\, {\rm dvol}_{g_\eps}-\int_{X}\theta\rho_0^{z}\, {\rm dvol}_{g_0}=
\int_{X}\rho_0^{z}(\theta_\eps e^{G_\eps+z\hat{\omega}_\eps}-\theta)\, {\rm dvol}_{g_0}
\end{gathered}\end{equation}
where $\hat{\rho}_\eps=e^{\hat{\omega}_\eps}\rho_0$, and $\hat{\omega}_\eps$ and $\theta_\eps-\theta$ converge to $0$ in 
$\mc{C}^\infty(\mc{K})$ by Lemma~\ref{lemhatomega}. Now the volume form of 
$g_0$ near $\rho_0=0$ is of the form $\rho_0^{-3}e^{H}d\rho_0 d\mu$ where $d\mu$ is a smooth measure on $\mc{K}\cap M$ 
and $H$ a smooth function of $\mc{K}$, thus writing
\[e^{z\hat{\omega}_\eps}= 1+ z\hat{\omega}_\eps+z^2 F_\eps
\]
for some smooth function $F_\eps$ on $\cc_z \x \mc{K}$ and using that for small $\delta>0$, 
$\int_{0}^\delta\rho^{z-1}d\rho_0$ has a pole of order $1$ at $z=0$ with residue $1$,
we directly obtain that 
\[ \begin{split}
{\rm FP}_{z=0}\int_{X}\rho_0^{z}(\theta_\eps e^{G_\eps+z\hat{\omega}_\eps}-\theta)\, {\rm dvol}_{g_0}=&
{\rm FP}_{z=0}\int_{X}\rho_0^{z}(\theta_\eps e^{G_\eps}-\theta)\, {\rm dvol}_{g_0}\\
& +\demi \int_{\mc{K}\cap M} \pl_{\rho_0}^2(\theta_\eps e^{G_\eps+H}\hat{\omega}_\eps)|_{\rho_0=0}\, d\mu
\end{split}\]
where $\pl_{\rho_0}$ is the vector field given by the gradient of $\rho_0$ with respect to $\rho^2g_0$.
Using that $G_\eps\to 0$, $\hat{\omega}_\eps\to 0$ and $\theta_\eps\to \theta$ in $\mc{C}^\infty(\mc{K})$, 
as $\eps\to 0$, we obtain 
that the finite part of \eqref{intonK0} at $z=0$
converges to $0$ as $\eps\to 0$. We write $h_\eps=e^{2\varphi_\eps}\hat{h}_\eps$.
To conclude, we may use Proposition \ref{vrv.1}, which of course also works in the convex co-compact case: 
that is for each $\eps>0$, we get with $\theta_\eps=\sum_{k=0}^2\theta_{\eps,k}\rho_\eps^k+\mc{O}(\rho_\eps^3)$ for some $\theta_{\eps,k}\in \mc{C}_0^\infty(M)$
\[\begin{split}
{\rm FP}_{z=0}\int_{X}\hat{\rho}_\eps^{z}\theta_{\eps} 
\, {\rm dvol}_{g_\eps}=& {\rm FP}_{z=0}\int_{X}\rho_\eps^{z}\theta_\eps\, {\rm dvol}_{g_\eps}\\
& -\frac{1}{4}\int_{\mc{K}\cap M}(\theta_{\eps,k} (|d\varphi_\eps|^2_{h_\eps}+{\rm Scal}_{h_\eps} \varphi_\eps)-
4\theta_{\eps,k} \varphi_\eps){\rm dvol}_{h_\eps}.
\end{split}\]
By assumption we have $\theta_{\eps,k}\to \theta_{k}$ with $\theta=\sum_{k=0}^2\theta_{k}\rho_0^k+\mc{O}(\rho_0^3)$.
Using Proposition \ref{cf.1} and Corollary \ref{cor:limitenergy} 
we directly obtain that  (recall that $\varphi_0=0$) 
\[\lim_{\eps\to 0}\int_{\mc{K}\cap M}(\theta_{\eps,0}(|d\varphi_\eps|^2_{h_\eps}+{\rm Scal}_{h_\eps} \varphi_\eps)-4\theta_{\eps,2} \varphi_\eps){\rm dvol}_{h_\eps}=0\]
which achieves the proof since \eqref{intonK0} has finite part at $z=0$ tending to $0$.
\end{proof} 

\subsection{Limit near the cusp}
We next study the behaviour of the renormalized volume in the regions $\mc{U}_j^\eps$ containing the degeneration. We notice that Theorem \ref{mainth} follows from Propositions \ref{limoutsidecusp} and 
the following 
\begin{prop}\label{nearthecusp}
With the notations and assumptions of Proposition \ref{limoutsidecusp} and Theorem \ref{mainth}, we have
\[
\lim_{\eps\to 0}\,  {\rm FP}_{z=0}\, \int_{X}(1-\theta^\eps)\rho^z_\eps{\rm dvol}_{g_\eps}=
{\rm FP}_{z=0}\, \int_{X}(1-\theta)\rho^z_0{\rm dvol}_{g_0}.
\]
\end{prop}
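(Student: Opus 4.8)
The plan is to localize the integral to the cusp regions and then reduce the finite part to an explicit cross-sectional integral of Taylor coefficients that have already been computed and controlled on the manifold with corners $\bbar{\mc{U}}_\ell$. Since $1-\theta^\eps$ is supported in a fixed neighborhood of the degenerating curve $H=\cup_j H_j$, the integral splits as a sum over the cusp neighborhoods $\mc{U}_j^\eps$, and on each such piece Assumption~1 of Definition~\ref{admisdeg} together with Proposition~\ref{model2} identifies $(\mc{U}_j^\eps, g_\eps)$ with the model $(\mc{W}_j^\eps, g_{\ell_j(\eps)})$ in the coordinates $(U,v,w)$ of Section~\ref{hamilton}, $U=u/R$ being the boundary defining function of $\mc{F}_u$. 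In these coordinates the geodesic boundary defining function of the hyperbolic representative is $\rho_\eps=e^{\omega_{\ell_j(\eps)}}U$, where $\omega_\ell$ solves the Hamilton--Jacobi equation \eqref{hamjab} with boundary value $\varphi_\ell$ and is smooth on $\bbar{\mc{U}}_\ell$ by Proposition~\ref{proprho0} and the characteristic analysis preceding it; moreover $\omega_\ell$ and all the geometric data depend smoothly on $\ell$ down to $\ell=0$, where they restrict to the data defining $\rho_0=e^{\omega_0}U$.

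Next I would make the finite part explicit. Writing the volume density in the form $\,{\rm dvol}_{g_\ell}=\mc{J}_\ell\, U^{-3}\,dU\,dv\,dw\,$ with $\mc{J}_\ell$ a smooth density on $\bbar{\mc{U}}_\ell$ near $\mc{F}_u$, one has
\[
\int_{X}(1-\theta^\eps)\rho_\eps^z\,{\rm dvol}_{g_\eps}=\sum_j\int (1-\theta^\eps)\,U^{z-3}e^{z\omega_{\ell_j(\eps)}}\mc{J}_{\ell_j(\eps)}\,dU\,dv\,dw,
\]
and the meromorphy in $z$ near $z=0$, together with the simple pole, come entirely from the $U$-integration near $U=0$, exactly as in the proof of Proposition~\ref{deffunccusp}. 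Expanding $e^{z\omega_\ell}\mc{J}_\ell=\sum_{k\ge 0}b_k(z,v,w)U^k$ and using that the contribution to the pole at $z=0$ arises from $\int U^{z-1}\,dU$, the finite part at $z=0$ is a cross-sectional integral whose density is a universal expression in the Taylor coefficients of $\omega_\ell$ and $\mc{J}_\ell$ at $U=0$. Since the odd Taylor coefficients of $\omega_\ell$ vanish ($a_1=0$ in Proposition~\ref{bdf.19a}), $a_0=\varphi_\ell$, and $a_2$ is given explicitly in Proposition~\ref{bdf.19a}, this density is an explicit function of $(v,w)$ built from $\varphi_\ell$, $d\varphi_\ell$, $a_2$, $\nu_j(\eps)$, $\ell_j(\eps)$ and the smooth coefficients of $\mc{J}_\ell$. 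The same formula evaluated at $\ell=0$ computes ${\rm FP}_{z=0}\int_X(1-\theta)\rho_0^z\,{\rm dvol}_{g_0}$.

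Finally I would pass to the limit $\eps\to 0$ on the $(v,w)$ cross-section, which is bounded in these coordinates. The terms of the finite-part density that are polynomial in $\varphi_\ell$ but not in its gradient converge by the uniform convergence $\varphi_\ell\to\varphi_0$ of Proposition~\ref{cf.1}, together with $\ell_j(\eps)\to 0$, $\nu_j(\eps)\to\nu_j$ and dominated convergence; the terms quadratic in $d\varphi_\ell$, in particular $|d\varphi_\ell|^2_{h_\ell}$ coming from $a_2$, are handled by the energy convergence of Corollary~\ref{cor:limitenergy}. The main obstacle is to rule out concentration of these quantities near the forming cusp $\{v=0\}$, i.e.\ near the front face $\mc{F}_R$, as $\ell\to 0$. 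For the gradient terms this is precisely the content of the first identity in Corollary~\ref{cor:limitenergy}, which forces $\int_{\rr/\demi\zz}\int_{I_\eps}|d\varphi_\ell|^2\,dv\,dw\to 0$ on any interval $I_\eps$ of shrinking length around $v=0$; for the first-order term $\tfrac{2\nu v}{v^2+\ell^2}\pl_w\varphi_\ell$ in $a_2$ one uses the infinite-order vanishing of $\pl_w\varphi_\ell$ at $\mc{F}_R$ imposed in Section~\ref{hamilton} (consistent with \eqref{plw}), which gives a bound $\mc{O}((\ell+|v|)^\infty)$ dominating the singular factor $\tfrac{|v|}{v^2+\ell^2}\le\tfrac1{2\ell}$ uniformly in $\eps$. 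With these uniform controls the cross-sectional integrals converge to their values at $\ell=0$, which yields the convergence of the finite parts; together with Proposition~\ref{limoutsidecusp} and the linearity of ${\rm FP}_{z=0}$ this proves Theorem~\ref{mainth}.
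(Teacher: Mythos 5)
Your overall strategy---localize to the model $\bbar{\mc{U}}_\ell$, write $\rho_\eps=e^{\omega_\ell}U$ with $U=u/R$, extract the finite part from the Taylor expansion of $\omega_\ell$ at $U=0$, and pass to the limit using Propositions \ref{cf.1}, \ref{bdf.19a} and Corollary \ref{cor:limitenergy}---is aligned with the paper's. But there is a genuine gap in the key estimate. For the term $\tfrac{2\nu v}{v^2+\ell^2}\pl_w\varphi_\ell$ in $a_2$ you invoke ``the infinite-order vanishing of $\pl_w\varphi_\ell$ at $\mc{F}_R$,'' but that property is only imposed on, and only known for, the limiting factor $\varphi_0$: for $\eps>0$ the uniformization factor $\varphi_\eps$ of the degenerating surface is merely known to converge to $\varphi_0$ in $\cC^0$ (Proposition \ref{cf.1}) and in energy (Corollary \ref{cor:limitenergy}), and \eqref{plw} concerns the auxiliary mollified extension $\tvarphi$, not $\varphi_\eps$ itself. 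So the bound $\mc{O}((\ell+|v|)^\infty)$ you use to dominate the singular factor $|v|/(v^2+\ell^2)\le 1/(2\ell)$ is not available, and this step fails as written. The term must instead be controlled by Cauchy--Schwarz, using $\tfrac{|v|\,|\pl_w\varphi_\ell|}{v^2+\ell^2}\le C|d\varphi_\ell|_{h_\ell}$ together with the localized energy estimates \eqref{cf.3} and \eqref{cf.3a}, which is how the paper proceeds.

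A second, structural issue: reducing ${\rm FP}_{z=0}$ to ``a cross-sectional integral of Taylor coefficients'' over the whole cross-section $\{|v|\le\delta\}$ is too quick. Only the residue-type contributions (from $z\omega_\ell$ and from the $\log$ factors) are cross-sectional; the leading term is a genuine renormalized bulk integral. More importantly, integrating the Taylor remainder $\mc{O}(U^3)$ and passing to the limit requires uniform control of $\omega_\ell$ near the corner $\{U=v=\ell=0\}$, i.e.\ near the front face $\mc{F}_R$ where the Hamilton--Jacobi coefficients are singular; Proposition \ref{proprho0} only gives smoothness on the face $\ell=0$. The paper obtains the needed uniformity by decomposing $\bbar{\mc{U}}_\ell$ into the regions $R_1$ (finite volume, dominated convergence), $R_2$ (coordinates $u/\ell$, $v/\ell$, whose contribution tends to $0$) and $R_3^\pm$ (coordinates $u/|v|$, $\ell/|v|$ with $|v|\ge\ell$, where your cross-sectional computation is legitimate). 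Without some such decomposition adapted to the blow-up, the convergence of the cross-sectional integrals over the shrinking region $\{|v|\le\ell\}$ is not justified by the results you cite.
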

\begin{proof} We can assume that $(1-\theta^\eps)$ is supported in $\cup_j\mc{U}_j^\eps$, we are reduced to a local analysis and we can use the model $\bbar{\mc{U}}_\ell$ with metric $g_\ell$ of Section \ref{hamilton}, where we have forgot the $\eps$ parameter and use rather $\ell$ with $\ell\to 0$, and $\nu=\nu(\ell)$ is converging to some limit $\nu_0$ 
as $\ell\to 0$.
First, an easy computation gives that the volume form of $g_\ell$ is given by 
\[{\rm dvol}_{g_\ell}=\frac{R^2dudvdw}{u^3}\]
where $R^2=u^2+v^2+\ell^2$. We need to prove that 
\begin{equation}\label{renormint} 
\lim_{\ell\to 0}{\rm FP}_{z=0}\int_{(u,v,w,\ell)\in \bbar{\mc{U}}_\ell} \rho_\ell^z \chi\frac{R^2dudvdw}{u^3}=
{\rm FP}_{z=0}\int\rho_0^z \chi\frac{R^2dudvdw}{u^3}
\end{equation}
where $\rho_\ell=\rho_\eps$ is the function solving \eqref{hamjab} with $e^{2\varphi_\ell}h_\ell$ being hyperbolic
if $h_\ell$ is given by \eqref{hell}, and $\chi\in \mc{C}^\infty_c(\bbar{\mc{U}}_\ell)$ is independent of $\ell$ and equal to $1$ near $u=v=0$. To study the renormalized integral \eqref{renormint} we decompose 
$\bbar{\mc{U}}_\ell$ in several regions, see Figure~\ref{ul.2}.  
\begin{figure}
\setlength{\unitlength}{0.7cm}
\begin{picture}(6,6)
\thicklines
\put(3,2){\oval(2,2)[t]}
\put(2.5,2.2){$\mc{F}_{R}$}
\qbezier(2,2)(3,1.5)(4,2)

\put(2.4,5.8){$u$}

\put(4,2){\vector(1,0){2}}
\put(0,2){\line(1,0){2}}
\put(6,2.2){$v$}
\put(2.8,1.7){\vector(-1,-2){1}}
\put(0.2,0.2){$\mc{F}_u$}
\put(0.2,3){$\mc{F}_{\ell}$}
\put(3.1,3.2){$R_1$}
\put(4.1,2,2){$R_3^+$}
\put(1.2,2.2){$R_3^-$}
\put(3,1.2){$R_2$}
\put(3,3){\vector(0,1){3}}
\put(2.2,-0.4){$\ell$}

\end{picture}
\caption{The manifold with corners $\bbar{\mc{U}}_\ell$}\label{ul.2}
\end{figure}

We start with a region of finite volume (with the notations of Section \ref{hamilton})
$$
R_1(\ell)= \{ (u,v,w) \; | \;  u\leq \delta, \; -1\le V\le 1, \; 0< L\le 1 \},
$$
where we use the following coordinates,
\begin{equation}
  u, \quad V= \frac{v}u, \quad L=\frac{\ell}u,\quad  w.
\label{bdf.21}\end{equation}
In fact, for $\ell>0$ fixed, we have that
$$
   0\le L\le 1, \; 0\le u\le \delta \; \Longrightarrow \;   \ell\le u\le \delta.
$$
Take $\delta$ so that $\chi$ is supported in $\sqrt{u^2+v^2}\leq \delta$. 
In these coordinates, the volume form of $g_{\ell}$ is for $\ell$ fixed given by
$$
\Vol_{g_{\ell}}=\frac{(\ell^2+u^2+v^2)dudvdw}{u^3}= (1+V^2+L^2)dudVdw.
$$
Restricted to this region, the volume is thus clearly finite and there is no need to renormalize.  Thus,
\[
\begin{aligned}
{\rm FP}_{z=0}\int_{R_1(\ell)} \rho_\ell^z \chi\frac{R^2dudvdw}{u^3}&= \int_{-\frac{1}{4}}^{-\frac{1}{4}}
\int_{0}^1\int_{\ell}^\delta \chi(u,Vu,w)
\left( 1+V^2+ \frac{\ell^2}{u^2} \right)dudVdw
\end{aligned}
\]
We can use dominated convergence (using $L^2\indic_{[\ell,\delta]}(u)\leq \indic_{[0,\delta]}$) to deduce 
that 
\begin{equation}\label{bdf.22}
\begin{split}
\lim_{\ell\to 0} {\rm FP}_{z=0}\int_{(u,V,L,w)\in R_1} \rho_\ell^z \chi\frac{R^2dudvdw}{u^3}=&
\int_{-\frac{1}{4}}^{-\frac{1}{4}}
\int_{0}^1\int_{0}^\delta \chi(u,Vu,w)
\left( 1+V^2 \right)dudVdw\\
=&{\rm FP}_{z=0}\int_{R_1(0)} \rho_0^z \chi\frac{(u^2+v^2)dudvdw}{u^3}.
\end{split}\end{equation}
Next we analyze the region  $R_2(\ell)$ near the intersection $\mc{F}_u\cap \mc{F}_R$ but away from the corners $\mc{F}_R\cap\mc{F}_u\cap\mc{F}_\ell$. In this region, we can use the coordinates
$$
   \ell, \quad \tU= \frac{u}{\ell}, \quad \til{V}= \frac{v}{\ell}, \quad w.
$$
In these coordinates, we can define more precisely the region $R_2(\ell)$ by
$$
 R_2(\ell)=\{ (u,v,w) \; | \; \; 0\le \tU \le 1, \; -1 \le \til{V} \le 1 \}
$$
In these coordinates, the volume form of $g_{\ell}$ is given (for $\ell$ fixed) by
$$
   \Vol_{g_{\ell}}= \frac{\ell (1+\tU^2+\til{V}^2)d\tU d\til{V}dw}{\tU^3}.
$$
Since $U:=\frac{u}{R}=\frac{\tU}{\sqrt{1+\tU^2+\til{V}^2}}$ and $\rho_\ell=e^{\omega_\ell}U$ with the notation of \eqref{hamjab}, we have
\[
\begin{aligned}
{\rm FP}_{z=0}\int_{R_2(\ell)} \chi \rho_\ell^z & \frac{R^2dudvdw}{u^3}= {\rm FP}_{z=0} \int_{-\frac{1}{4}}^{\frac{1}{4}}\int_{-1}^{1}\int_0^{1} \rho_\ell^{z}\chi\frac{\ell(1+\tU^2+\til{V}^2)d\tU d\til{V}dw}{\tU^3} \\
&= {\rm FP}_{z=0} \int_{-\frac{1}{4}}^{\frac{1}{4}}\int_{-1}^{1}\int_0^{1}\chi \frac{\ell \tU^z e^{z\omega_\ell}(1+\tU^2+\til{V}^2)d\tU d\til{V}dw }{(1+\tU^2+\til{V}^2)^{\frac{z}2}\tU^3} \\
&= A_1(\ell) + A_2(\ell) +A_3(\ell).
\end{aligned}
\]
with 
\[\begin{gathered}
A_1(\ell):={\rm FP}_{z=0} \int_{-\frac{1}{4}}^{\frac{1}{4}}\int_{-1}^{1}\int_0^{1} \chi\frac{\ell \tU^z (1+\tU^2+\til{V}^2)d\tU d\til{V}dw}{\tU^3} ,\\
A_2(\ell):= \res_{z=0} \int_{-\frac{1}{4}}^{\frac{1}{4}}\int_{-1}^{1}\int_0^{1}\chi\frac{\ell \tU^z \omega_\ell(1+\tU^2+\til{V}^2)d\tU d\til{V}dw}{\tU^3} , \\
A_3(\ell):= -\frac12 \res_{z=0} \int_{-\frac{1}{4}}^{\frac{1}{4}}\int_{-1}^{1}\int_0^{1}\chi\frac{\ell \tU^z \log(1+\tU^2+\til{V}^2)(1+\tU^2+\til{V}^2)d\tU d\til{V}dw}{\tU^3} .
\end{gathered}\]
For $j=0,1$, the function $\ell\chi(\ell\til{U},\ell\til{V},w)(1+\tU^2+\til{V}^2)(\log(1+\tU^2+\til{V}^2))^j$ converges to $0$ in $\mc{C}^k$-norms for all $k$, and thus it is direct to see 
$\lim_{\ell\to 0} A_1(\ell)= \lim_{\ell\to 0}A_3(\ell)=0$. 
For the second term, we use the Taylor expansion of $\omega_\ell$ 
in terms of $\tU$ using \eqref{bdf.19} 
$$
     \omega_\ell= a_0 + a_2 U^2 + \mathcal{O}(U^3)= a_0 + \frac{a_2 \tU^2}{1+\til{V}^2}+ \mathcal{O}(\tU^3).
$$ 
Thus, we compute that
\begin{equation}
\begin{aligned}
A_2(\ell) &=\ell\int_{-\frac{1}{4}}^{\frac{1}{4}}\int_{-1}^{1} \left( (a_0+a_2)(\chi(0,\ell\til{V},w)+
a_0\ell^2(1+\til{V}^2)\pl_u^2\chi(0,\ell\til{V},w) \right)dVdw \\
&=\ell\int_{-\frac{1}{4}}^{\frac{1}{4}}\int_{-1}^{1}\chi(0,\ell\til{V},w) 
(a_0+a_2)dVdw+\mc{O}(\ell^3). \\
&= \int_{-\frac{1}{4}}^{\frac{1}{4}}\int_{-\ell}^{\ell} \chi(0,v,w) 
\left( \varphi_{\ell}-\frac14 |d\varphi_{\ell}|^2_{h_{\ell}}+ \frac{C_1\ell^2+C_2v\pl_w\varphi_\ell}{(\ell^2+v^2)} 
+C_3v\pl_v\varphi_\ell +\frac12 \right)dvdw.
\end{aligned}
\end{equation}
where $C_j$ are constant depending smoothly on $\nu$, and we used that $a_0=\varphi_\ell$ is uniformly bounded in $\ell$ in the second line.
From Proposition~\ref{cf.1} and Corollary~\ref{cor:limitenergy}, we see that
\[\int_{-\frac{1}{4}}^{\frac{1}{4}}\int_{-\ell}^{\ell} \chi(0,v,w) 
\Big( \varphi_{\ell}-\frac14 |d\varphi_{\ell}|^2_{h_{\ell}}+ \frac{C_1\ell^2}{(\ell^2+v^2)} \Big)dvdw\to 0.\]
Using Cauchy-Schwartz and 
$|d\varphi_\ell|_{h_\ell}\geq C(|(v^2+\ell^2)^{-\demi}\pl_w\varphi_\ell|+|v\pl_v\varphi_\ell|)$ we also get that
\[\begin{gathered}
\int_{-\frac{1}{4}}^{\frac{1}{4}}\int_{-\ell}^{\ell} |\chi(0,v,w)| 
\left( \frac{C_2|v\pl_w\varphi_\ell|}{(\ell^2+v^2)} +C_3|v\pl_v\varphi_\ell| \right)dvdw\leq \\
C'\Big(\sqrt{\ell}||d\varphi_\ell||_{L^2}+
\Big(\int_{-\frac{1}{4}}^{\frac{1}{4}}\int_{-\ell}^{\ell}|d\varphi_\ell|_{h_\ell}^2dvdw\Big)^{\demi}\Big)
\end{gathered}\]
for some $C'$ independent of $\ell$, thus this converges to $0$ by Corollary~\ref{cor:limitenergy}, and we conclude that $\lim_{\ell\to 0}A_2(\ell)=0$ and 
\[\lim_{\ell \to 0}{\rm FP}_{z=0}\int_{R_2(\ell)} \chi \rho_\ell^z \frac{R^2dudvdw}{u^3}=0.\]

Next, we consider the coordinates, smooth near the corners $\mc{F}_R\cap \mc{F}_u\cap\mc{F}_\ell$
$$
  v, \hU= \frac{u}{|v|}, \hat{L}= \frac{\ell}{|v|}, w
$$
and taking region $R_3(\ell)\cup R_4(\ell)$ given by 
$$
  \begin{gathered}
   R_3(\ell)= \{ (u,v,w)\; | \;  |v|\leq \delta, \, \hat{L} \leq 1 , \hU \le 1\}.
  \end{gathered} 
$$
we see that $\chi$ can written as $\sum_{j=1}^4\chi \indic_{R_j(\ell)}.$
In these coordinates, the volume form of $g_{\ell}$ is given for fixed $\ell$ by
$$
\Vol_{g_{\ell}}= \frac{(1+\hat{L}^2+\hU^2)d\hU dvdw}{\hU^3}.
$$
 Thus, since $U= \frac{\hU}{\sqrt{1+\hat{L}^2+\hU^2}}$, we have 
\begin{equation}\label{FPR3}
\begin{aligned}
 {\rm FP}_{z=0}\int_{R_3(\ell)} \chi \rho_\ell^z \frac{R^2dudvdw}{u^3} &= 
 {\rm FP}_{z=0}\int_{-\frac{1}{4}}^{\frac{1}{4}}\int_{\ell\leq |v|\leq \delta}\int_{0}^{1} \chi \frac{\hat{U}^ze^{z\omega_\ell}(1+ \frac{\ell^2}{v^2}+\hU^2)d\hU dvdw}{(1+\frac{\ell^2}{v^2}+\hat{U}^2)^{z/2}\hU^3} \\
 &=  I_1(\ell)+ I_2(\ell)+ I_3(\ell).\end{aligned}\end{equation} 
 with 
 \[\begin{gathered}
I_1(\ell):= {\rm FP}_{z=0}\int_{-\frac{1}{4}}^{\frac{1}{4}}\int_{\ell\leq |v|\leq \delta}\int_{0}^{1} \chi\frac{\hU^z(1+ \frac{\ell^2}{v^2}+\hU^2)d\hU dvdw}{\hU^3}, \\
I_2(\ell):=  \res_{z=0}\int_{-\frac{1}{4}}^{\frac{1}{4}}\int_{\ell\leq |v|\leq \delta} \int_{0}^{1}  \chi \frac{\hU^z \omega_\ell(1+ \frac{\ell^2}{v^2}+\hU^2)d\hU dvdw}{\hU^3}, \\
 I_3(\ell):= -\frac12  \res_{z=0}\int_{-\frac{1}{4}}^{\frac{1}{4}}\int_{\ell\leq |v|\leq \delta} \int_{0}^{1} \chi \log\left( 1+\hU^2+ \frac{\ell^2}{v^2}  \right) \frac{\hU^z (1+ \frac{\ell^2}{v^2}+\hU^2)d\hU dvdw}{\hU^3}.   \end{gathered}
\]
We notice that, in view of the smoothness of $\omega_\ell$ as a function of $U,v,w$, these three terms 
also make sense for $\ell=0$, and \eqref{FPR3} for $\ell=0$ is given by $\sum_{j=1}^3I_j(0)$.
To conclude the proof, we want to prove that $I_j(\ell)\to I_j(0)$ as $\ell\to 0$ for $j=1,2,3$.
For the first term, we compute that
\begin{equation}
I_1(\ell)= \int_{-\frac{1}{4}}^{\frac{1}{4}}\int_{\ell\leq |v|\leq \delta}((1+\frac{\ell^2}{v^2})q_1(v,w)+q_2(v,w))
 dvdw,
\end{equation}
where $q_1$ and $q_2$ are smooth and independent of $\ell$, and it is then clear that 
$$
  \lim_{\ell\to 0} I_1(\ell)=I_1(0)
$$
To deal with $I_3(\ell)$, we can proceed similarly: we remark that for $\ell\geq 0$, the integrand in $I_3(\ell)$ is of the form 
$\hat{U}^{z-3}Q(\hat{U},\frac{\ell^2}{v^2},v,w)$ where $Q$ is some smooth function of its parameters, thus it is straightforward to see that 
\[ I_3(\ell)=\int_{-\frac{1}{4}}^{\frac{1}{4}}\int_{\ell\leq |v|\leq \delta} q_3(v,\tfrac{\ell^2}{v^2},w)dvdw \]
for some smooth function $q_3$ of its parameters. We conclude as in the case of  $I_1$ that 
$$
  \lim_{\ell\to 0} I_3(\ell)=I_3(0).
$$
Finally we study $I_2(\ell)$. From the expansion \eqref{bdf.19}, we have for $\ell\geq 0$ that
$$
  \omega_\ell= a_0 +a_2U^2 + \mathcal{O}(U^3)= a_0 + \frac{a_2 \hU^2}{1+\frac{\ell^2}{v^2}}+ \mathcal{O}(\hU^3).
$$
Hence, we compute that for $\ell\geq 0$
\[
\begin{aligned}
   I_2(\ell) =& \int_{-\frac{1}{4}}^{\frac{1}{4}}\int_{\ell\leq |v|\leq \delta}\left( (a_0+a_2)(\chi(0,v,w)+
a_0(v^2+\ell^2)\pl_u^2\chi(0,v,w) \right)dv dw \\
   =&\int_{-\frac{1}{4}}^{\frac{1}{4}}\int_{\ell\leq |v|\leq \delta}  \chi(0,v,w) 
\left( \varphi_{\ell}-\frac14 |d\varphi_{\ell}|^2_{h_{\ell}}+ \frac{C_1\ell^2+C_2v\pl_w\varphi_\ell}{(\ell^2+v^2)} 
+C_3v\pl_v\varphi_\ell +\frac12 \right) dv dw \\
 &+ \int_{-\frac{1}{4}}^{\frac{1}{4}}\int_{\ell\leq |v|\leq \delta}  \varphi_\ell (v^2+\ell^2)\pl_u^2\chi(0,v,w)dvdw.
   \end{aligned}
\]
for some constant $C_j$ depending smoothly on $\nu$. By Proposition~\ref{cf.1}, the last line is continuous at $\ell= 0$, and using Corollary~\ref{cf.3} with the stronger estimate \eqref{cf.3a}, it is direct to check (like we did for the term $A_2(\ell)$) that $I_2(\ell)$ is continuous at $\ell=0$, ie. $\lim_{\ell \to 0}I_2(\ell)=I_2(0)$. We have finished the proof.
\end{proof}

\section{Appendix}

\textsl{Proof of Proposition \ref{model2}}.
We will construct $\Phi_L$ in two steps, as a composition 
$\Phi_L=\Xi_L\circ \Upsilon_L$. Let us first construct the diffeomorphism 
$\Upsilon_L$, which is done by changing coordinates on $X_{m(q)}$. 

Let $r=\sqrt{x^2+|z|^2}$ be the Euclidean radial coordinate in $\hh^3=\rr^+_x\x \hh^2_z$,
 then the hyperbolic metric takes the form in the Euclidean radial coordinates
$(r,\omega)$ with $\omega\in \mathbb{S}^2$
\[ g_{\hh^3}= \frac{dr^2+r^2 g_{\mathbb{S}^2}}{r^2\omega^2_x}\]
where $r\omega_x=x$ and $\omega_x=x(\omega)$ is the vertical coordinate on the sphere. We denote by 
$\omega_1={\rm Re}(z(\omega))$ and $\omega_2={\rm Im}(z(\omega))$ the coordinates of $\omega$ in the horizontal direction 
$z$. Consider the stereographic projection $\mathbb{S}^2\to \rr^2$ from the point 
$(x,z)=(0,-1)\in \mathbb{S}^2\subset \rr^3$, providing coordinates $\hat{u},\hat{v}\in \rr^2$ 
so that 
\[ \hat{u}=\frac{\omega_x}{\omega_1+1}, \quad \hat{v}=\frac{\omega_2}{\omega_1+1},
\textrm{ and the metric } g_{\mathbb{S}^2}=\frac{4(d\hat{u}^2+d\hat{v}^2)}{(1+\hat{u}^2+\hat{v}^2)^2}.\]
In the coordinates $(r,\hat{u},\hat{v})\in \rr^+\x \rr^+\x \rr$, the hyperbolic metric takes the form
\[ g_{\hh^3}= \frac{(1+\hat{u}^2+\hat{v}^2)^2dr^2}{4\hat{u}^2 r^2} + \frac{d\hat{u}^2 + d\hat{v}^2}{\hat{u}^2}.\]
Notice that $\hat{v}+i\hat{u}$ define coordinates on the hyperbolic plane $\hh^2$ 
(viewed as the upper half-space in $\cc$), and the stereographic projection is an isometry 
from the half-sphere $H(0,1)$ equipped with the metric induced from $\hh^3$ to this hyperbolic 
plane. The action $z\mapsto qz = e^{\ell(1+i\nu)}z$ in $\cc$ 
corresponds in $\hh^3$ to a dilation by $e^{\ell}$ centered at $(x,z)=(0,0)$ 
followed by a hyperbolic rotation $R_{\hh^3}(\nu \ell,x)$ 
of angle $\nu\ell$ around the $x$ axis in $\hh^3=\rr^+_x\x \cc_z$.  The latter is an elliptic isometry for $g_{\hh^3}$ and so, its restriction to $H(0,1)$ 
becomes an elliptic isometry of the  hyperbolic half-plane $\hh^2$ with coordinate $z=\hat{v}+i\hat{u}$, 
fixing the point $z=i$, and considering the derivative at this point shows that $R_{\hh^3}(\nu\ell,x)|_{H(0,1)}$, 
viewed in the variable $z=\hat{v}+i\hat{u}\in \hh^2$ via the stereographic projection, acts 
as the hyperbolic rotation of angle $\nu\ell$ and center $z=i\in\hh^2$. 
We denote by 
\[R_{\nu\ell}=\left(\begin{array}{cc} \cos \frac{\nu\ell}2 & 
\sin \frac{\nu\ell}2 \\
-\sin\frac{\nu\ell}2 & \cos\frac{\nu\ell}2\end{array}\right)\in {\rm PSL}_2(\rr)\]
this hyperbolic rotation.

In the quotient \eqref{quotientLq}, the fundamental domain is 
$e^{-\demi\ell}\leq r\leq e^{\demi\ell}$ so to have coordinates with uniform 
behavior with respect to the deformation parameters $\ell$, we introduce 
the rescaled coordinates
\[u'= \ell\hat{u}, \quad v'=\ell\hat{v}, \quad w= \frac{\log r}{2\ell}.\]  
We denote by $\Upsilon_L: (x,z)\mapsto (w,v'+iu')$ the diffeomorphism corresponding to the change of coordinates.
In these coordinates, the hyperbolic metric on $e^{-\demi\ell}\leq r\leq e^{\demi\ell}$ takes the form:
\[(\Upsilon_L)_*g_{\hh^3}= \frac{  du'^2 + dv'^2 + (\ell^2+ u'^2+ v'^2)^2 dw^2}{u'^2},\]
where $w\in [-\frac{1}{4},\frac{1}{4}]$. Moreover the transformation $\gamma_L$ becomes 
in these coordinates 
\[ (w,v'+iu')\mapsto (w+\tfrac{1}{2}, \ell R_{-\nu\ell}(\ell^{-1} (v'+iu'))).\]

The intersection of the half-sphere $\pl B(e(L),\rho(L))$ of \eqref{quotientLq} 
with the half-sphere $H(0,e^{2\ell w})$ (with $|w|<1/4$) is the half-circle 
obtained by intersecting the plane 
\[{\rm Re} (z)= \kappa(w,\ell,\delta)= 
\frac{e(L)^2+ e^{4\ell w}-\rho(L)^2}{2e(L)}\]
with $H(0,e^{2\ell w})$. Under the stereographic projection 
$H(0,e^{2\ell w})\to \{(x,z); {\rm Re}(z)=0\}=\rr^2$ from the point 
$(x,z)=(0,-e^{2\ell w})$, a small computation shows that it is thus sent to the 
half circle centered at $0$ of radius
\begin{align*}e^{2\ell w}\sqrt{\frac{e^{2\ell w}+\kappa(w,\ell(L),\delta)}
{e^{2\ell w}-\kappa(w,\ell,\delta)} }= 
\frac{r_{\la}(w)}{\ell}+\mc{O}_{\delta}(1),\quad 
r_{\la}(w):=\left(\frac{\lambda^2}{2\delta^2}-4w^2\right)^{-\frac{1}{2}}\end{align*}
where we have used \eqref{ckrhok} in the last equality. Consequently, the intersection 
of the half-ball $B(e(L),\rho(L))$ of \eqref{quotientLq} with the half-sphere 
$H(0,e^{2\ell w})$ (with $|w|<1/4$) 
becomes, in the coordinates $\zeta'=v'+iu'\in\hh^2$, a half-disc of the form
\begin{equation}\label{halfcircle}
  {\rm Im}(\zeta')>0,\quad |\zeta'|\le \ell 
\sqrt{\frac{(e^{2\ell w}-\kappa(w,\ell,\delta))e^{2\ell w}}
{e^{2\ell w}+ \kappa(w,\ell,\delta)}  }=r_{\la}(w) + \mc{O}_{\delta}(\ell).
  \end{equation}
and thus, taking $\delta$ small enough (independent of $\ell$) so that $\la/\delta-4>1$ 
this set is asymptotic to the half-disk
\begin{equation}\label{rq0w} 
\{\zeta'\in\cc;{\rm Im}(\zeta')>0, |\zeta'|\le r_{\la}(w)\}.
\end{equation}
We have thus showed the following 
\begin{lem}\label{model1}
There is an isometry $\Upsilon_L$ between 
$\cjg \gamma_L\cjd \backslash \hh^3$ and 
\begin{equation}\label{Xq}  
X_{\gamma_{q}}:= \cjg \gamma_{q}\cjd\backslash \left(\rr_w\x \hh^2_{\zeta'=v'+iu'}, \,\, \frac{  du'^2 + dv'^2 + (\ell^2+ u'^2+ v'^2)^2 dw^2}{u'^2}\right),
\end{equation}
where $\gamma_{q}$ is the map  
\[\gamma_{q}: (w, \zeta')\mapsto \left(w+\tfrac{1}{2},  
\frac{ \cos(\nu\ell/2)\zeta' +\ell \sin(\nu\ell/2)}
{-\ell^{-1}\sin(\nu\ell/2)\zeta' + \cos(\nu\ell/2)}
\right).\]
Moreover, if $\delta>0$ is small enough, the model neighborhood 
\eqref{quotientLq} is mapped via $\Upsilon_L$  to  
\begin{equation}\label{subsetofX}
\pi_{\gamma_{q}}\Big(\{ (w,\zeta')\in [-\tfrac{1}{4},\tfrac{1}{4})\x \hh^2; |\zeta'|<r_{q}(w)\}\Big)
\end{equation} 
where $\pi_{\gamma_q}: \rr\x\hh^2\to X_{\gamma_q}$ is the covering map, and $r_{q}(w)$ 
is the radius of the half-circle given by equation \eqref{halfcircle} and converging to $r_{\la}(w)>0$ 
with $r_\la(w)=\mc{O}(\delta)$ uniformly in $|w|<1/4$.
\end{lem}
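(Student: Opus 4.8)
The plan is to construct $\Upsilon_L$ as a finite chain of explicit coordinate changes on $\hh^3=\rr^+_x\x\cc_z$ and then simply read off the metric \eqref{Xq} together with the form of $\gamma_L$ from the resulting formulas. First I would pass to Euclidean radial coordinates $(r,\om)$ with $r=\sqrt{x^2+|z|^2}$ and $\om\in\mathbb{S}^2$, in which $g_{\hh^3}=(dr^2+r^2g_{\mathbb{S}^2})/(r^2\om_x^2)$, and then stereographically project $\mathbb{S}^2$ from the boundary point $(0,-1)$ to get coordinates $(\hat u,\hat v)$ with $\hat u>0$. The point of this step is that the totally geodesic hemispheres $\{r=\mathrm{const}\}$ become hyperbolic half-planes $\hh^2_{\hat v+i\hat u}$, so that the second summand of the metric is exactly the $\hh^2$-metric $(d\hat u^2+d\hat v^2)/\hat u^2$; this decouples the dilation and rotation parts of $m(q)$.

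The key computation is to track $m(q):z\mapsto e^{\ell(1+i\nu)}z$ through these coordinates. Its Poincar\'e extension to $\hh^3$ is the dilation by $e^{\ell}$ centered at the origin composed with the rotation $R_{\hh^3}(\nu\ell,x)$ of angle $\nu\ell$ about the $x$-axis; the dilation shifts the radial coordinate and the rotation preserves each leaf $\{r=\mathrm{const}\}$. Restricting $R_{\hh^3}(\nu\ell,x)$ to $H(0,1)$, noting that it fixes $z=i\in\hh^2$ and inspecting its derivative there, identifies it with the elliptic element $R_{\nu\ell}\in{\rm PSL}_2(\rr)$. After the rescaling $u'=\ell\hat u$, $v'=\ell\hat v$, $w=\log r/(2\ell)$ the metric becomes precisely that of \eqref{Xq} with $w\in[-\tfrac14,\tfrac14]$ on the fundamental domain $e^{-\ell/2}\le r\le e^{\ell/2}$, while the radial dilation contributes the half-period shift $w\mapsto w+\tfrac12$ and the conjugation of $R_{\nu\ell}$ by $\zeta'\mapsto\ell^{-1}\zeta'$ produces exactly the M\"obius formula for $\gamma_q$ in the statement. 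This last identification is the step requiring the most care, since one must propagate the scaling by $\ell$ and the sign/angle conventions of $R_{\nu\ell}$ consistently into the generator.

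For the "moreover" part I would intersect the half-sphere $\pl B(e(L),\rho(L))$ with each leaf $H(0,e^{2\ell w})$, observe that this intersection is the half-circle cut out by the vertical plane $\Re(z)=\kappa(w,\ell,\delta)$, and stereographically project it. Feeding the asymptotics \eqref{ckrhok} for $e(L)$ and $\rho(L)$ into the elementary radius computation gives the radius $r_q(w)$ of \eqref{halfcircle} together with its limit $r_\la(w)=(\la^2/(2\delta^2)-4w^2)^{-1/2}$; choosing $\delta$ small enough that $\la/\delta-4>1$ then guarantees $r_\la(w)>0$ and $r_\la(w)=\mc{O}(\delta)$ uniformly for $|w|<\tfrac14$. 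The main obstacle throughout is bookkeeping rather than any conceptual difficulty: the several coordinate changes must be composed in the correct order and the rescaling by $\ell$ carried correctly into both the metric and the generator, which is exactly why it is cleaner to record this intermediate outcome as the separate Lemma \ref{model1} before proceeding to construct $\Xi_L$ and complete the proof of Proposition \ref{model2}.
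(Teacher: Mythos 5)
Your proposal is correct and follows essentially the same route as the paper: radial coordinates, stereographic projection from $(0,-1)$ so that the leaves $\{r=\mathrm{const}\}$ become copies of $\hh^2$, identification of the rotational part of $m(q)$ with the elliptic element $R_{\nu\ell}$ fixing $i$, the rescaling $u'=\ell\hat u$, $v'=\ell\hat v$, $w=\log r/(2\ell)$, and the intersection of $\pl B(e(L),\rho(L))$ with the hemispheres $H(0,e^{2\ell w})$ for the description of the image region. The only point to watch, which you already flag, is the sign convention in the angle (the paper records the generator as $\ell R_{-\nu\ell}(\ell^{-1}\zeta')$), but this is bookkeeping rather than a gap.
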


Notice that $\ell\to 0$, then $\gamma_{q}$ converges to some transformation $\gamma_{\nu}:(w, \zeta')\to (w+\tfrac{1}{2}, P_\nu(\zeta'))$ 
with $P_\nu\in {\rm PSL}_2(\rr)$ the parabolic transformation $\zeta'\mapsto \frac{2\zeta'}{\nu \zeta'+2}$, and $X_{\gamma_q}$ converges to 
\[ X_{\gamma_{\nu}}:=\cjg \gamma_{\nu}\cjd\backslash  \left(\rr_w\x \hh^2_{\zeta'=v'+iu'}, \,\, g_0=\frac{  du'^2 + dv'^2 + (u'^2+ v'^2)^2 dw^2}{u'^2}\right).\] 
Conjugating by an inversion $\zeta'\mapsto -1/\zeta'$ on $\hh^2$, $P_\nu$ becomes the transformation $\zeta'\mapsto \zeta'-\nu/2$
 and the transformation $\gamma_{\nu}$ viewed in the coordinates $(w,y+ix)$
defined by $y+ix=-1/(v+iu)$ is the parabolic isometry of  $\hh^3=\rr_w\x \hh^2_{y+ix}$ fixing $\infty$ and given by 
$T_\nu: (w,y+ix)\mapsto (w+\tfrac{1}{2},y-\tfrac{\nu}{2}+ix)$.
Then $X_{\gamma_\nu}$ is isometric to $\cjg T_\nu\cjd\backslash \hh^3$, which is the model of a hyperbolic cusp of rank $1$.  Clearly, the model of Lemma \ref{model1} extends smoothly to 
the parabolic boundary $\{\ell=0\}$ of $\bbar{\mc{Q}}$.

We also need to control  the change of coordinates from the neighborhood $\mc{U}^\delta_{L}$ of \eqref{ULqk} to this new model when $\ell\to 0$, that is we want to know 
$\Upsilon_L\circ \Theta_L$.
A direct computation gives  
\begin{equation}\label{r2} 
\begin{gathered}
r^2(\Theta_L(x,z))=\frac{x^2\la^2\ell^2+|x^2+|z|^2-z\la\ell|^2}{(x^2+|z-\la\ell|^2)^2},\\
\omega_x(\Theta_L(x,z))= \frac{x\la\ell}{\eta_L(x,z)},\quad 
\omega_1(\Theta_L(x,z))=\frac{-x^2-|z|^2+{\rm Re}(z)\la\ell}{\eta_L(x,z)}\\
\omega_2(\Theta_L(x,z))=\frac{{\rm Im}(z)\la\ell}{\eta_L(x,z)}
\end{gathered}\end{equation} 
with $\eta_L(x,z):=\sqrt{(x^2+{\rm Im}(z)^2)\la^2\ell^2+(x^2+|z|^2-{\rm Re}(z)\la\ell)^2}$, thus 
\begin{equation}\label{ucirctheta}
\begin{gathered}
u'(\Theta_L(x,z))=\frac{x}{\la(x^2+{\rm Im}(z)^2)}(\eta_L(x,z)+x^2+|z|^2-{\rm Re}(z)\la\ell),\\
v'(\Theta_L(x,z))=\frac{{\rm Im}(z)}{\la(x^2+{\rm Im}(z)^2)}(\eta_L(x,z)+x^2+|z|^2-{\rm Re}(z)\la\ell).
\end{gathered}\end{equation}
Notice that $\Upsilon_L\circ \Theta_L$ extends smoothly in a neighborhood of the cusp region of $\mc{X}$ of the form
\[\mc{V}^\delta:=\{(L,x,z)\in \bbar{\mc{Q}}\x \pi_{\gamma_L}(\bbar{B(0,\delta)}); (x,z)\in \bbar{\til{F}_L}\setminus \{0\}\}.\]
Indeed one has $w(\Theta_L(x,z))=\frac{\log(r\circ \Theta_L(x,z))}{2\ell}$, and by \eqref{r2} 
we can write it under the form $w(\Theta_L(x,z))=\frac{\log(1+\ell F(L,x,z))}{2\ell}$ 
for some $F(L,x,z)$ smooth in $\mc{V}^\delta$ and thus $w$ extends smoothly in $\mc{V}^\delta$. 
It is also easily checked that $(u',v')$ extend smoothly to $\mc{V}^\delta$ by \eqref{ucirctheta}. 
The inverse also admits a smooth extension to $\{\ell=0,(u',v')\not= (0,0)\} $ by a similar computation.

To finish the proof of the Proposition, we shall construct a diffeomorphism $\Xi_L$ corresponding to a new change of coordinates.
In the $\hh^3=\rr_w\x \hh^2_{\zeta'=v'+iu'}$ hyperbolic space, we define the function  
\[ \mu(w,\zeta') := d_{\hh^2}(\zeta'; i\ell)\]
which is invariant by the transformation $\gamma_{q}$. One has in particular 
\[\cosh(\mu)=\frac{u'^2+v'^2+\ell^2}{2u'\ell}.\]
Let us make the following change of coordinates on $[-1/4,1/4] \x \hh^2$, which defines  
$\Xi_L$,
\[ \Xi_L: (w,\zeta')\mapsto (w,\zeta:=\ell R_{-2\nu\ell w}(\ell^{-1}\zeta')) \]
where $R_{\theta}\in {\rm PSL}_2(\rr)$ is the hyperbolic rotation of angle $\theta$ and center $i$.
The transformation $\gamma_{q}$ becomes in the $(w,\zeta)$ coordinates (ie. after conjugation with $\Xi_L$) the transformation
\[ \Xi_L\circ \gamma_{q}\circ (\Xi_L)^{-1}: (w,\zeta)\mapsto (w+\tfrac{1}{2}, \zeta).\]
We see that $\Xi_L$ extends smoothly to $\{\ell=0; |\zeta'|<\delta\}$ if $\delta$ is small enough, with value 
\[ \Xi_{(0,\nu,\la)}(w,\zeta')= \frac{\zeta'}{\nu w\zeta'+1}\] 
and the same holds for its inverse. Thus we deduce that $\Phi_L:=\Xi_L\circ \Upsilon_L$ is such that 
$(L,x,z)\mapsto \Phi_L\circ \Theta_L$ extends smoothly to $\mc{V}^\delta$ if $\delta>0$ is chosen small enough.
We write $\zeta=v+iu\in \hh^2$, then the function $\cosh(\mu)$ is clearly invariant by rotation, so 
\begin{equation}\label{RR'}
\frac{u'^2+v'^2+\ell^2}{u'}=\frac{u^2+v^2+\ell^2}{u}
\end{equation}
and we compute
\begin{equation}\label{uu'}
\begin{gathered}
u'=\frac{u}{|-\ell^{-1}\sin(\nu\ell w)\zeta+\cos(\nu\ell w)|^2} , \quad 
d\zeta'=\frac{d\zeta +dw(\nu{\zeta}^2+\nu\ell^2)}{(-\ell^{-1}\sin(\nu\ell w)\zeta+\cos(\nu\ell w))^2}
\end{gathered}\end{equation}
Therefore the metric $g_L$ becomes in the new coordinates.
\[\begin{split} 
g_L:=(\Xi_L\circ \Upsilon_L)_*g_{\hh^3}= & \frac{du^2+dv^2+((1+\nu^2){R}^4-4\nu^2\ell^2{u}^2)dw^2}{u^2}\\
&+\frac{2\nu(R^2-2u^2)dwdv+4\nu uv dudw}{u^2} 
\end{split}\]
where  $R:=\sqrt{u^2+v^2+\ell^2}$.
Here, we notice that the change of coordinates $v'+iu'\mapsto v+iu$ for a fixed $w$ is a hyperbolic rotation of angle $-2\nu\ell w$ and center $i\ell$ in $\hh^2$. In particular it maps the half-circle \eqref{halfcircle} (which is a geodesic of $\hh^2$) 
to the half-circle in $\hh^2$ which intersects the real axis at the two points 
\[ v_{\pm}(q)= \frac{\pm r_{q}(w) \cos(\nu\ell w)+\ell\sin(\nu\ell w)}{\mp r_{q}(w) \ell^{-1}\sin(\nu\ell w)+\cos(\nu\ell w)}=\frac{\pm r_{q}(w)}{1\mp \nu wr_{q}(w)}+\mc{O}(\ell).\]
This shows that the region \eqref{subsetofX} in the coordinates $(w,\zeta)$ becomes the set
\[\{ (w,\zeta)\in [-\tfrac{1}{4},\tfrac{1}{4})\x \hh^2; |\zeta-v_{q}(w)|\leq \tau_{q}(w)\}/ \{w\sim w+\tfrac{1}{2}\}\]
for $v_{q}(w)=\demi(v_{+}(q)+v_-(q))$ and $\tau_{q}(w)=\demi(v_{+}(q)-v_-(q))$ which clearly converge as $\ell\to 0$, and satisfy the desired properties (recall that $r_{q}=r_\la(w)+o(1)$ as $\ell\to 0$ with the notation of \eqref{rq0w}).

\end{document}